\documentclass[a4paper,11pt,oneside]{scrbook}


\usepackage[left=2.5cm,right=2.5cm,top=2.7cm,bottom=2.7cm]{geometry} 
\usepackage[utf8]{inputenc}
\usepackage[english]{babel}
\usepackage{amssymb}
\usepackage{amsmath}
\usepackage{latexsym}
\usepackage[bookmarks]{hyperref}
\usepackage{amsthm}
\usepackage{graphicx}
\usepackage{bbm}
\usepackage{verbatim}
\usepackage{epigraph}
\usepackage{mathtools}
\usepackage{authblk}
\usepackage[pdflatex]{crop}
\usepackage{tikz}
\usepackage{tikz-cd}
\usepackage{wasysym}
\usepackage{adjustbox}
\usepackage{microtype}
\usepackage{chngcntr}
\usepackage{tocbibind}
\usepackage{csquotes}
\usepackage[capitalize]{cleveref}
\usepackage{stmaryrd}	
\usetikzlibrary{shapes.geometric,fit,arrows,babel}
\usepackage{pgfplots}
\usepgfplotslibrary{polar}
\pgfplotsset{compat=newest}

\tikzset{%
    bullet/.style={
       fill=black,
       circle,
       minimum width=1pt,
       inner sep=1pt
     },
     relation/.style={
       -,
       thick,
       shorten <=2pt,
       shorten >=2pt
     },
     function/.style={
       ->,
       thick,
       shorten <=2pt,
       shorten >=2pt
     },
     every fit/.style={
       ellipse,
       draw,
       inner sep=0pt
     }
}

\makeatletter
\newcommand*{\currentname}{\@currentlabelname}
\makeatother

\usepackage[T1]{fontenc}
\usepackage{newpxtext,newpxmath}

\setkomafont{disposition}{\normalfont\bfseries}
\setlength{\jot}{2ex}
\linespread{1.2} 

\newcommand{\auth}[0]{Paolo Perrone}
\newcommand{\tit}[0]{Notes on Category Theory}
\newcommand{\subtit}[0]{with examples from basic mathematics}
\newcommand{\kw}[0]{Categories, Category Theory}

\definecolor{darkblue}{rgb}{0,0,0.5}
\definecolor{darkred}{rgb}{0.5,0,0}

\hypersetup{
pdfauthor={\auth},%
pdftitle={\tit},%
colorlinks, linktocpage=true, pdfstartpage=1, pdfstartview=FitV,%
breaklinks=true, pdfpagemode=UseNone, pageanchor=true, pdfpagemode=UseOutlines,%
plainpages=false, bookmarksnumbered, bookmarksopen=true, bookmarksopenlevel=1,%
hypertexnames=true, pdfhighlight=/O,%
urlcolor=darkblue, linkcolor=black, citecolor=black, 
}
\pdfinfo{%
  /Title    (\tit)
  /Author   (\auth)
  /Creator  (\auth)
  /Subject  (Category Theory)
  /Keywords (\kw)
}


\numberwithin{equation}{section}

\theoremstyle{plain}
\newtheorem{thm}{Theorem}[section]
\newtheorem{lemma}[thm]{Lemma}
\newtheorem{prop}[thm]{Proposition}
\newtheorem{cor}[thm]{Corollary}

\newtheorem{deph}[thm]{Definition}

\newtheorem*{idea}{Idea}
\theoremstyle{definition}
\newtheorem{remark}[thm]{Remark}
\newtheorem{eg}[thm]{Example}

\newtheorem*{caveat}{Caveat}
\newtheorem{ex}[thm]{Exercise}

\DeclareMathOperator{\Hom}{Hom}
\DeclareMathOperator{\End}{End}

\DeclareMathOperator{\Aut}{Aut}


\newcommand{\Z}{\mathbb{Z}}
\newcommand{\N}{\mathbb{N}}

\newcommand{\C}{\mathbb{C}}
\newcommand{\R}{\mathbb{R}}

\newcommand{\cat}[1]{{\mathbf{#1}}} 
\newcommand{\ar}[2][]{\arrow{#2}{#1}}
\newcommand{\mono}[2][]{\arrow[hookrightarrow]{#2}{#1}} 
\newcommand{\uni}[2][]{\arrow[dashrightarrow]{#2}{#1}} 
\newcommand{\nat}[2][]{\arrow[Rightarrow]{#2}{#1}} 
\newcommand{\pullback}{\arrow[dr, phantom, "\ulcorner", very near start]}
\newcommand{\pushout}{\arrow[ul, phantom, "\lrcorner", very near start]}
\newcommand{\B}{\cat{B}} 
\newcommand{\op}{\mathrm{op}} 
\DeclareMathOperator{\Cone}{Cone}
\newcommand{\Set}{\cat{Set}}
\newcommand{\Grp}{\cat{Grp}}
\newcommand{\Vect}{\cat{Vect}}

\newcommand{\e}{\varepsilon}

\DeclareMathOperator*{\colim}{colim}
\newcommand{\id}{\mathrm{id}} 
\newcommand{\ladj}{\dashv} 

\newcommand{\meet}{\wedge}

\newcommand{\join}{\vee}

\DeclareMathOperator*{\bigintersection}{\bigcap}
\newcommand{\union}{\cup}

\let\originalleft\left
\let\originalright\right
\renewcommand{\left}{\mathopen{}\mathclose\bgroup\originalleft}
\renewcommand{\right}{\aftergroup\egroup\originalright}


\title{\tit}
\subtitle{\subtit}
\author{\auth}
\affil{\small \href{http://www.paoloperrone.org}{www.paoloperrone.org}}
\date{\small Unrevised version, last updated February 2021}

\begin{document}

\begin{titlepage}
	
	\phantom{.}  
	\vspace{3cm}

	\begin{center}
	
	{\Huge \textbf{ Notes on Category Theory}\par}
	\vspace{0.3cm}
	{\Large \textbf{with examples from basic mathematics}}
	\vspace{1cm}
	
	{\LARGE Paolo Perrone\par}
	{\small \href{http://www.paoloperrone.org}{www.paoloperrone.org}}\\[1.5cm]
	
	{\small Unrevised version, last updated February 2021}
	
	\vspace{3cm}
	
		\setlength{\fboxsep}{15pt}
		\fbox{
			\begin{minipage}{0.33\textwidth}
				\centering
				\includegraphics[width=0.8\linewidth]{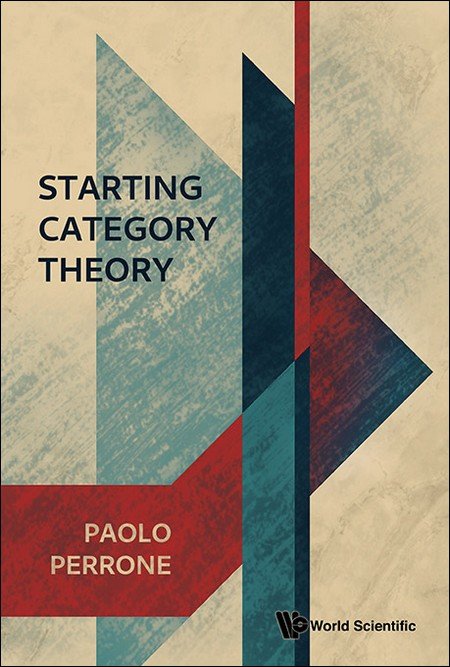}
			\end{minipage}
			\begin{minipage}{0.6\textwidth}
				\begin{center}
					\textbf{These notes have been revised and extended to a book!}
					
					Paolo Perrone, 
					\emph{Starting Category Theory},
					
					World Scientific, 2024.
					
					\href{https://doi.org/10.1142/13670}{doi.org/10.1142/13670}
					
					There is even an extra chapter on monoidal categories. Check it out!
				\end{center}
			\end{minipage}
		}
	\end{center}

\end{titlepage}

\tableofcontents


\newpage
\chapter*{About these notes}
\addcontentsline{toc}{chapter}{\currentname}

These notes were originally developed as lecture notes for a course that I taught at the Max Planck Institute of Leipzig in the Summer semester of 2019.
They are rather different from other material on category theory, partly in content, and largely in form:
\begin{itemize}
 \item The audience of the course was very varied, there were algebraic geometers and topologists, as well as computer scientists, physicists and chemists;
 \item The lecture notes were always written \emph{after} the lecture had taken place, to reflect what was discussed in class, and include all the questions, remarks, and views of the participants. 
\end{itemize}

These notes should be well-suited to anyone that wants to learn category theory from scratch and has a scientific mind.
There is no need to know advanced mathematics, nor any of the disciplines where category theory is traditionally applied, such as algebraic geometry or theoretical computer science. The only knowledge that is assumed from the reader is linear algebra.
Other assets of these notes are:
\begin{itemize}
 \item Thorough explanation of the Yoneda lemma and its significance, with both intuitive explanations, detailed proofs, and many specific examples;
 \item A treatment of monads and comonads on an equal footing, since comonads in the literature are often unjustly overlooked as ``just the dual to monads'';
 \item A detailed treatment of the relationship between categories and directed multigraphs. From the applied point of view, this shows why categorical thinking can help whenever some process is taking place on a graph. From the pure math point of view, this can be seen as the 1-dimensional first step into the theory of simplicial sets;
 \item Many examples from different areas of mathematics (group theory, graph theory, probability,\dots). Not every example is helpful for every reader, but hopefully every reader can find at least one helpful example per concept.  The reader is encouraged to read all the examples, this way they may even learn something new about a different field.
\end{itemize}

One thing that is unfortunately not contained in these notes, since they are already quite long, is string diagrams. For those, we refer the interested reader for example to \cite{marsden-string} and \cite{sevensketches}.

While the tone of these notes is informal, the mathematics is rigorous, and every theorem is proven. 
I make no claim of scientific originality. Some of the technical content, and some of the examples, are taken from Emily Riehl's book \cite{ctcontext}, which was the recommended textbook for the ``pure math'' part of the course.
Another considerable part, such as the adjoint functor theorem for preorders, is taken from Brendan Fong and David Spivak's book \cite{sevensketches}, which was the recommended textbook for the ``applied'' part of the course.
Finally, many ideas about how to present certain mathematical concepts are novel, and are to be credited not only to me, but also to the participants of the course.
(Part of the treatment of monads already appeared in the introduction of my PhD thesis \cite{thesis}.)

Since these are informal lecture notes, most results are presented without citing the original work in which they were discovered. If however you find that you should be cited for a result used here, or that some citation here is incorrect or incomplete, please contact me.

Just as well, feel free to contact me if you find any mistakes, or if you find that something is not clearly written. (Thanks to all the readers who reported such mistakes already.)

\begin{flushright}
 Paolo Perrone.
\end{flushright}

\section*{Acknowledgements}
\addcontentsline{toc}{section}{\currentname}

There are many people who helped me developing these notes, helped me understanding the ideas behind these notes, and helped me finding ways to explain these ideas to new learners. 
\begin{itemize}
 \item First of all, I want to thank all the participants of the course, without whom these notes wouldn't exist. I want to thank in particular Emma Chollet, Wilmer Leal, Guillermo Restrepo and Sharwin Rezagholi, who came up with many original ideas during the lectures, now recorded in these notes.
 \item I also want to thank Carmen Constantin, Jules Hedges, Jürgen Jost, Slava Matveev, David Spivak, and more recently Walter Tholen, for the interesting discussions, some of which were reflected in the way I taught this course and wrote these notes.
 \item Finally I want to thank Tobias Fritz, who had the patience to teach category theory to me. 
\end{itemize}

\section*{Notation and conventions}
\addcontentsline{toc}{section}{\currentname}

We will follow for the most part the notation and conventions of \cite{ctcontext}. 
The typesetting is slightly different. 
\begin{itemize}
 \item We denote categories by boldface capitalized words, such as $\cat{C}$ and $\cat{Set}$.
 \item We denote functors by uppercase letters, such as $F:\cat{C}\to\cat{Set}$. (We compose functors on the left.)
 \item We denote objects of a category by uppercase letters such as $X,Y,A,B$.
 \item We denote morphisms of a category by lowercase letters, such as $f:A\to B$. (We compose morphisms on the left.)
 \item We denote natural transformations by lowercase Greek letters, such as $\alpha:F\Rightarrow G$. 
\end{itemize}
If you don't know what any of the above are, don't worry, everything will be introduced shortly.

\section*{About citing this work}
\addcontentsline{toc}{section}{\currentname}
If you want to cite these notes, you can also cite the book \cite{startingcats}, which is a published reference. 
Most of the numbers for definitions, theorems, etc.\ in the book are the same as here, except for the following:
\begin{itemize}
	\item \Cref{1.2.10};
	\item \Cref{1.3.30};
	\item Exercises \ref{1.3.40} and \ref{1.3.41} (in the book there are definitions instead);
	\item In the book there are extra exercises at the end of \Cref{sec_nat};
	\item \Cref{1.5.20};
	\item Examples \ref{grptoset} and \ref{2.1.9}
	\item \Cref{2.3.9};
	\item Most of \Cref{sec_tens};
	\item In the book there is a new section, 3.2.7, on sequential and filtered limits;
	\item Most of \Cref{sec_fun_lim,proofrepfulcomplete,sec_adj_lim};
	\item In \Cref{adjunctions,secmonads}, \emph{sharp} and \emph{flat} are switched;
	\item In the book there is a new exercise, 4.2.31;
	\item \Cref{5.4.19};
	\item \Cref{5.5.4} (in the book there's a lemma instead);
	\item In the book there is a new chapter, 6, on monoidal categories.
\end{itemize}
Of course, the page numbers are different as well.

In general, I recommend to use the \emph{book} as a reference, since there have been quite some revisions and corrections compared to the material here.

\vspace{1cm}

We are now ready to get started.

\newpage
\chapter{Basic concepts}

Whenever we introduce a new concept we will first write the rigorous definition, and right after give an intuitive interpretation.

\section{Categories}

\begin{deph}\label{defcat}
 A \emph{category} $\cat{C}$ consists of
 \begin{itemize}
  \item A collection $\cat{C}_0$, whose elements are called the \emph{objects} of $\cat{C}$ and are usually denoted by uppercase letters $X$, $Y$, $Z$,\dots
  \item A collection $\cat{C}_1$, whose elements are called the \emph{morphisms} or \emph{arrows} of $\cat{C}$ and are usually denoted by lowercase letters $f$, $g$, $h$,\dots
 \end{itemize}
 such that
 \begin{itemize}
  \item Each morphism has assigned two objects, called \emph{source} and \emph{target}, or \emph{domain} and \emph{codomain}. We denote the source and target of the morphism $f$ by $s(f)$ and $t(f)$, respectively. If the morphism $f$ has source $X$ and target $Y$, we also write $f:X\to Y$, or more graphically, $X\xrightarrow{f} Y$. 
  \item Each object $X$ has a distinguished morphism $\id_X:X\to X$, called \emph{identity morphism}.
  \item For each pair of morphisms $f,g$ such that $t(f)=s(g)$ there exists a specified morphism $g\circ f$, called the \emph{composite morphism}, such that $s(g\circ f) = s(f)$ and $t(g\circ f) = t(g)$. More graphically: 
  $$
  f:X\to Y \,,\; g:Y\to Z \qquad \rightsquigarrow \qquad g\circ f: X\to Z.
  $$
 \end{itemize}
 These structures need to satisfy the following axioms.
 \begin{itemize}
  \item Unitality: for every morphism $f:X\to Y$, the compositions $f\circ \id_X$ and $\id_Y\circ f$ are both equal to $f$.
  \item Associativity: for $f:X\to Y$, $g:Y\to Z$, and $h:Z\to W$, the compositions $h\circ(g\circ f)$ and $(h\circ g)\circ f$ are equal. 
 \end{itemize}
\end{deph}

A category is a very general structure, the objects and morphisms can be anything, provided that they satisfy the properties given above. Below we will give some examples of categories arising from standard mathematical practice. 

\begin{caveat}
 The composite $g\circ f$ means \emph{first} applying $f$, then $g$:
 $$
\begin{tikzcd}
 X \ar{r}{f} \ar[bend right,swap]{rr}{g\circ f} & Y \ar{r}{g} & Z .
\end{tikzcd}
$$
This choice of order, which is unfortunately incompatible with diagrams, comes from traditional mathematics: in expressions such as 
$$
\sin(\cos(x))
$$
one must first take the cosine, and then the sine. A way to avoid confusion is to read $g\circ f$ as ``$g$ \emph{after} $f$''. 
\end{caveat}

Here is a depiction of associativity: given three composable morphisms
$$
\begin{tikzcd}
 X \ar{r}{f} & Y \ar{r}{g} & Z \ar{r}{h} & W
\end{tikzcd}
$$
we have a priori two different ways to compose them. We could first compose $f$ and $g$,
$$
\begin{tikzcd}
 X \ar{r}{f} \ar[bend right,swap]{rr}{g\circ f} & Y \ar{r}{g} & Z \ar{r}{h} & W
\end{tikzcd}
$$
and then compose the resulting $g\circ f$ with $h$, to obtain
$$
\begin{tikzcd}
 X \ar{r}{f} \ar[bend right]{rr}{g\circ f} \ar[bend right,swap]{rrr}{h\circ(g\circ f)} & Y \ar{r}{g} & Z . \ar{r}{h} & W
\end{tikzcd}
$$
Alternatively, we could first compose $g$ with $h$, 
$$
\begin{tikzcd}
 X \ar{r}{f} & Y \ar{r}{g} \ar[bend right,swap]{rr}{h\circ g} & Z \ar{r}{h} & W
\end{tikzcd}
$$
and then compose $f$ with the resulting $h\circ g$, to obtain 
$$
\begin{tikzcd}
 X \ar{r}{f} \ar[bend right,swap]{rrr}{(h\circ g)\circ f} & Y \ar{r}{g} \ar[bend right]{rr}{h\circ g} & Z \ar{r}{h} & W
\end{tikzcd}
$$
The associativity condition is then saying that the two methods give the same result, that is, $h\circ(g\circ f)=(h\circ g)\circ f$. 
We can therefore write the triple composition without brackets, $h\circ g\circ f$, without any ambiguity. 

A first intuitive way to think about a category is the following.

\subsection{Categories as relations}

Here is the first intuitive idea of what a category is.

\begin{idea} 
 A category is a collection of objects which are related to each other in a consistent way. 
\end{idea}

Let's see what this means by looking at examples.

\begin{eg}[sets and relations]\label{egeq}
 An \emph{equivalence relation} on a set $X$ is a relation $\sim$ satisfying the following properties.
 \begin{enumerate}
  \item Reflexivity: for each $x\in X$ we have $x\sim x$;
  \item Transitivity: for each $x,y,z\in X$, if $x\sim y$ and $y\sim z$, then $x\sim z$;
  \item Symmetry: for each $x,y\in X$, if $x\sim y$ then $y\sim x$ as well.
 \end{enumerate}
 An equivalence relation is a mathematical way of formalizing the idea of ``having something in common'', such as ``having the same shape''. For example, here is what we obtain if, in a set of shapes, we draw an arrow whenever we want to say ``has the same shape as'' (regardless of color and size):
 $$
 \begin{tikzcd}[column sep=small]
  & \circ \ar[out=60,in=120,loop,distance=1cm] \ar[shift left]{ddr} \ar[shift left]{ddl} \\
  \\
  \bigcirc \ar[out=180,in=240,loop,distance=1cm] \ar[shift left]{uur} \ar[shift left]{rr} && \bullet \ar[out=300,in=0,loop,distance=1cm] \ar[shift left]{ll} \ar[shift left]{uul}
 \end{tikzcd}
 \qquad
 \begin{tikzcd}
  \blacksquare \ar[out=150,in=210,loop,distance=1cm] \ar[shift left]{r} & \Box \ar[out=330,in=30,loop,distance=1cm] \ar[shift left]{l} 
  \end{tikzcd}
  \qquad
  \begin{tikzcd}
  \triangle \ar[out=330,in=30,loop,distance=1cm]
  \end{tikzcd}
 $$
 We note that:
 \begin{enumerate}
  \item $x$ has the same shape as itself;
  \item If $x$ has the same shape as $y$ and $y$ has the same shape as $z$, then $x$ has the same shape as $z$;
  \item If $x$ has the same shape as $y$, then $y$ has the same shape as $x$. 
 \end{enumerate}
 An equivalence relation defines a category in the following way. It intuitively looks like the picture above.
 \begin{itemize}
  \item The objects are the elements of $X$;
  \item Given $x,y\in X$, there exists a unique morphism $x\to y$ if and only if $x\sim y$. 
 \end{itemize}
 The identity at an object $x$ is the unique arrow $x\to x$ given by $x\sim x$. We know there is one, by reflexivity, and by definition there is only one, so we don't have to worry about \emph{which} arrow $x\to x$ is the distinguished one. Just as well, the composition is given by transitivity: if we have arrows $x\to y$ and $y\to z$ it means that $x\sim y$ and $y\sim z$, and so by transitivity $x\sim z$ as well, which means that there is an arrow $x\to z$. Again, we don't have to worry about which arrow $x\to z$ we want, since there is only one.
 The symmetry property says that if there is an arrow $x\to y$, then there is an arrow $y\to x$ as well. In other words, ``we can always go back''. This last property is not important to have a category, it is merely an extra property that this category will have. 
\end{eg}

\begin{eg}[sets and relations]\label{egorder}
 An \emph{order relation} on a set $X$ is a relation $\le$ satisfying the following axioms.
 \begin{enumerate}
  \item Reflexivity: for each $x\in X$ we have $x\le x$;
  \item Transitivity: for each $x,y,z\in X$, if $x\le y$ and $y\le z$, then $x\le z$;
  \item Antisymmetry: for each $x,y\in X$, if $x\le y$ and $y\le x$, then necessarily $x=y$.
 \end{enumerate}
 An order relation is a mathematical way of formalizing the idea of ``comparing sizes'', such as ``being smaller or equal''. For example:
 \begin{enumerate}
  \item $x$ is smaller or equal to itself;
  \item If $x$ is smaller or equal to $y$ and $y$ is smaller or equal to $z$, then $x$ is smaller or equal to $z$;
  \item If $x$ is smaller or equal to $y$ and $y$ is smaller or equal to $x$, then $x=y$. 
 \end{enumerate}
 An order relation defines a category in an analogous way to what happens with equivalence relations: objects are elements of $X$, and there is a unique arrow $x\to y$ if and only if $x\le y$. Again, reflexivity and transitivity are enough to have a category structure, antisymmetry is an additional property. 
\end{eg}

The most general relation which gives rise to a category is called a preorder, and it has no conditions on symmetry.

\begin{eg}[sets and relations, economics]\label{egpreorder}
 A \emph{preorder relation} on a set $X$ is a relation $\lesssim$ satisfying only the axioms:
 \begin{enumerate}
  \item Reflexivity: for each $x\in X$ we have $x\lesssim x$;
  \item Transitivity: for each $x,y,z\in X$, if $x\lesssim y$ and $y\lesssim z$, then $x\lesssim z$.
 \end{enumerate}
 Again, a preorder defines a category where there is a unique morphism between $x$ and $y$ if and only if $x\lesssim y$, with identities and composition given by reflexivity and transitivity.
 Equivalence relations and order relations are both special cases of preorder relations. 
 A non-trivial example of a preorder is the \emph{price}, in economics, i.e.~the relation of being ``cheaper or equally priced''. For example:
 \begin{enumerate}
  \item $x$ has lower or equal price to itself;
  \item If $x$ has lower or equal price to $y$ and $y$ has lower or equal price to $z$, then $x$ has lower or equal price to $z$.
 \end{enumerate}
 However, differently from a partial order, if $x$ has lower or equal price to $y$ and $y$ has lower or equal price to $x$ we don't have that necessarily $x=y$: they will simply have the same price. They are not the same object, but they are equivalent: you can trade one for the other one and vice versa. 
\end{eg}

\begin{ex}[sets and relations]\label{corepreorder}
 Let $(X,\lesssim)$ be a preorder. Prove that the relation $\sim$ given by 
 $$
 x\sim y \; \mbox{ if and only if } \; x\lesssim y \mbox{ and } y \lesssim x
 $$
 is an equivalence relation. 
\end{ex}

For all these examples, given two objects, there is at most one morphism between them. This is not the case in general, and sometimes the \emph{choice} of the morphism matters. 

\subsection{Categories as operations}\label{ssscatop}

Here is another way to think about categories. 

\begin{idea}
 A category is a collection of operations which can be composed in a consistent way.
\end{idea}

Here are some examples of categories where this interpretation is most helpful. We encourage the readers unfamiliar with group theory \emph{not} to skip the examples below: once one gets the basic idea of a group, many concepts in category theory are much easier to understand.

\begin{eg}[group theory]\label{eggroup}
 A \emph{group} is a nonempty set $G$ together with
 \begin{enumerate}
  \item A distinguished element $1\in G$ called the \emph{neutral element} or \emph{unit};
  \item A binary operation $G\times G\to G$ called \emph{multiplication}, which we denote by $(g,h)\mapsto g\cdot h$;
  \item For each element $g\in G$ an element $g^{-1}$ called the \emph{inverse};
 \end{enumerate}
 such that the following properties hold.
 \begin{enumerate}
  \item Unitality: for each $g\in G$, the multiplications $g\cdot 1$ and $1\cdot g$ are both equal to $g$;
  \item Associativity: for each $g,h,i\in G$, the multiplications $(g\cdot h)\cdot i$ and $g\cdot(h\cdot i)$ are equal;
  \item Inverse law: for each $g\in G$, the multiplications $g\cdot g^{-1}$ and $g^{-1}\cdot g$ are both equal to $1$.
 \end{enumerate}
\end{eg}

A group in mathematics is used to model the \emph{symmetries} of some structures, namely the ways in which we can act on some object while keeping it ``the same''. For example, rotations of the plane form a group, and also permutations of a set. 
A group is similar to a category: there is the neutral element $1$, which behaves like an identity, and there is the multiplication, which behaves like composition (it is even associative). How can we turn then a group into a category?
An important feature of a group $G$ is that \emph{we can compose any two elements of $G$}. There is no requirement of ``matching source and target'', as there is for a category. Therefore, in order to view a group as a particular category, we should make sure that the source and target of all morphisms always match. The only way to do this in general is to have \emph{a unique object}. Here is the construction in detail.

\begin{deph}
 Let $G$ be a group. The \emph{delooping} of $G$, denoted by $\B G$, is the following category.
 \begin{itemize}
  \item There is a single object $\bullet$;
  \item There is a morphism $\bullet \to \bullet$ for each element $g\in G$. We denote the morphism with the same letter, such as $g:\bullet \to \bullet$.
  \item The identity of the object $\bullet$ is the morphism given by $1\in G$;
  \item The composition is given by the multiplication of $G$. That is, the composition $g\circ h$ of the morphisms given by $g,h\in G$ is the morphism given by the element $g\cdot h$ of $G$. 
 \end{itemize} 
 Since associativity and unitality hold for elements of $G$, they also hold for the morphisms of $\B G$. Therefore, $\B G$ is a category.
\end{deph}

Graphically, $\B G$ looks like a point with loops, with one special loop (the identity):
$$
\begin{tikzcd}[row sep=huge]
  \bullet \ar[out=60,in=120,loop,"1",swap,distance=1.5cm] \ar[out=132,in=192,loop,"",swap,distance=1.5cm] \ar[out=204,in=264,loop,"",swap,distance=1.5cm] \ar[out=276,in=336,loop,"",swap,distance=1.5cm] \ar[out=348,in=408,loop,"",swap,distance=1.5cm]  
 \end{tikzcd}
$$

The term ``delooping'' comes from algebraic topology. There is a deep connection between algebraic topology and category theory, which is widely explored on the \href{http://ncatlab.org}{nLab}, and the delooping is part of this connection. For us, however, it will be merely a way of obtaining a category from a group (or monoid, see below). 

\begin{remark}
 The category $\B G$ contains the same information as the group $G$, it is just a different way to express it. This is why some authors prefer omitting the symbol $\B$ and simply say that ``a group is a certain category with a single object''. In this course we will keep the two notions distinct. Remember, however, that both encode the same data. 
\end{remark}

\begin{remark}
Since a group is usually modeling the ``symmetries of something'', sometimes we can make this delooping construction more concrete. For example, for the group of rotations of $\R^2$, we can take the object $\bullet$ to be $\R^2$ and the arrows to be rotations $\R^2\to\R^2$ (as linear maps). This will be made precise in the future. However, for now, the construction is purely formal.
\end{remark}

In order to form the category $\B G$ from the group $G$ we never used the inverse elements and the inverse law. These, just as the symmetry property of equivalence relations, are not needed to have a category, and merely give to $\B G$ an additional structure, namely an ``inverse'' morphism $g^{-1}$ to each morphism $g$ (more on this in \ref{iso}). Therefore, if we drop the inverse requirement, we can still get a category.

\begin{eg}
 A \emph{monoid} is a nonempty set $M$ together with
 \begin{enumerate}
  \item A distinguished element $1\in M$ called the \emph{neutral element} or \emph{unit};
  \item A binary operation $M\times M\to M$ called \emph{multiplication}, which we denote by $(g,h)\mapsto g\cdot h$;
 \end{enumerate}
 such that the following properties hold.
 \begin{enumerate}
  \item Unitality: for each $g\in M$, the multiplications $g\cdot 1$ and $1\cdot g$ are both equal to $g$;
  \item Associativity: for each $g,h,i\in M$, the multiplications $(g\cdot h)\cdot i$ and $g\cdot(h\cdot i)$ are equal.
 \end{enumerate}
 
 The \emph{delooping} of a monoid $M$ is the category $\B M$ with, just as for groups, a single object $\bullet$, and as morphisms the elements of $M$ with their identity and composition. 
\end{eg}

Just as a group is used to model the symmetries of some object, a monoid is used to model the transformations of an object which are not necessarily invertible. For example consider, instead of only the rotations of the plane $\R^2$, the set of \emph{all linear maps} $\R^2\to\R^2$, including the one that maps everything to $(0,0)$. These maps include the identity and can be composed, so they form a monoid. However, not all of them are invertible, so they don't form a group. There are many examples similar to this:
\begin{itemize}
 \item Given a vector space $V$, all linear maps $V\to V$ form a monoid;
 \item Given a topological space $X$, all continuous maps $X\to X$ form a monoid;
 \item Given a set $X$, all functions $X\to X$ form a monoid;
 \item ...and so on.
\end{itemize}

Here is another couple of examples from probability theory:
\begin{itemize}
 \item Given a measurable space $X$ and a Markov kernel $k:X\to X$, the set given by the identity $\id_X$ and the repeated applications of $k$, that is, the set 
 $$
 \{\id_X,\,k,\,k^2,\,k^3,\,\dots\}
 $$
 is a monoid. Sometimes this is called the \emph{Markov semigroup} generated by $k$.
 \item More generally, given a measurable space $X$, all Markov kernels $X\to X$ form a monoid. Sometimes this is called the \emph{full Markov semigroup on $X$}.
\end{itemize}

\begin{remark}\label{semigroup}
 In fields such as probability theory, people sometimes prefer the notion of ``semigroup'' to the one of monoid. A semigroup is like a monoid, but without the unit (and the unitality property). In category theory, by convention, we always use monoids. The difference is minimal, one can almost always add the unit, since it corresponds to ``doing nothing'', and it is largely only a matter of convention. (Requiring the identity is analogous to admitting the zero in the set of natural numbers.) 
\end{remark}

We conclude this part with an exercise.

\begin{ex}[important!]
 Prove that, in a group or in a monoid, the neutral element is unique. In other words, if both $1$ and $1'$ satisfy the properties of the neutral element, then $1=1'$. 
\end{ex}

\subsection{Categories as spaces and maps with extra structure}

In the case of the categories defined by equivalence relations, orders, and preorders, we didn't need to worry about the equality of composite arrows (for example to check associativity), since between any two objects, there is at most one arrow. In categories arising as delooping of groups or monoids, we didn't need to worry about matching source and target of arrows, since there is only a single object. In the most general case, one needs always to check both. Here is another idea, which is helpful to study the categories of this more general kind.

\begin{idea}
 A category is a collection of sets or spaces equipped with extra structure, and maps between them which are compatible with that structure.
\end{idea}

Categories of this form are usually named after their objects. 

\begin{eg}[sets and relations]
 The category $\Set$ is the category whose objects are sets, and whose morphisms are maps (functions) between them. 
\end{eg}
For each set $X$, the identity function is a function $X\to X$. Functions can be composed, and the composition is associative. Therefore $\Set$ is a category. 

\begin{eg}[several fields]
 Analogously to the category $\Set$, the following are categories.
 \begin{itemize}
  \item $\cat{Top}$ has topological spaces as objects and continuous maps as morphisms.
  \item $\cat{Mfd}$ has smooth manifolds as objects and smooth maps as morphisms.
  \item $\Vect$ has vector spaces as objects and linear maps as morphisms.
  \item $\cat{Meas}$ has as objects measurable spaces, and as morphisms measurable maps.
  \item $\Grp$ has groups as objects and group homomorphisms as morphisms, i.e.~maps $f:G\to H$ such that for each $g,g'\in G$, we have that $f(g\cdot g')=f(g)\cdot f(g')$.
  \item $\cat{Poset}$ has partially ordered sets as objects, and monotone maps as morphisms, i.e.~maps $f:X\to Y$ such that for each $x,x'\in X$ with $x\le x'$ in $X$, we have that $f(x)\le f(x')$ in $Y$.
 \end{itemize}
\end{eg}

A category with sets as objects may have many choices of morphisms, for example, instead of functions, we may choose injective functions, or bijections, or even relations. The choice of morphisms reflects the choice of \emph{context} which we want to consider, or the choice of \emph{structure} that we want to preserve (and study). 

\begin{eg}[graph theory]
For graphs there are many choices of morphisms, depending on what one wants to do with graphs (graph theory is so general and versatile that many choices are meaningful). For example, for undirected, unweighted graphs, a possible choice of morphisms $f:G\to H$ is functions between the sets of vertices which preserve the adjacency relation: if the vertices $x$ and $y$ are connected by an edge in $G$, then $f(x)$ and $f(y)$ must be connected by an edge in $H$. With this choice of morphisms, graphs and these morphisms form a category.
This is however only one of many possible choices.
\end{eg}

\begin{ex}
 In your field of mathematics (or physics, chemistry, computer science, economics,\dots) which structures do you work with the most? Can you construct a category with such structures? Are those structures better represented as objects or as morphisms of a category?
\end{ex}

Here is also an example of what is \emph{not} a category, because composition fails.

\begin{eg}[calculus]
 Consider the following convex functions on $\R$:
 \begin{itemize}
  \item $x\mapsto f(x) \coloneqq  x^2$;
  \item $x\mapsto g(x) \coloneqq  x^2 - 1$.
 \end{itemize}
 Both are even strictly convex (their graphs are parabolas, see the picture below). However, the composite $f\circ g:\R\to\R$ is not convex: we have
 $$
 f(g(x)) = (x^2 - 1)^2,
 $$
 which is equal to $0$ for $x=\pm 1$, and equal to $1$ for $x=0$. 
\end{eg}

\begin{center}
\begin{tikzpicture}
      \draw[->] (-2.5,0) -- (2.5,0) node[above left] {$x$};
      \draw[->] (0,-2.5) -- (0,2.5) node[below right] {$y$};
      \draw[scale=1,domain=-1.5:1.5,smooth,variable=\x,darkblue,thick] plot ({\x},{\x*\x)}) node[below right] {$f$};
\end{tikzpicture}
$\;$
\begin{tikzpicture}
      \draw[->] (-2.5,0) -- (2.5,0) node[above left] {$x$};
      \draw[->] (0,-2.5) -- (0,2.5) node[below right] {$y$};
      \draw[scale=1,domain=-1.8:1.8,smooth,variable=\x,darkblue,thick] plot ({\x},{\x*\x-1}) node[below right] {$g$};
\end{tikzpicture}
$\;$
\begin{tikzpicture}
      \draw[->] (-2.5,0) -- (2.5,0) node[above left] {$x$};
      \draw[->] (0,-2.5) -- (0,2.5) node[below right] {$y$};
      \draw[scale=1,domain=-1.58:1.58,smooth,variable=\x,darkblue,thick] plot ({\x},{(\x*\x-1)*(\x*\x-1)}) node[below right] {$f\circ g$};
\end{tikzpicture}
\end{center}

Therefore, convex functions on $\R$ do not form a category. The same is true for lower semicontinuous maps.

\begin{ex}[analysis]
 Show that there exists two lower semicontinuous maps $f,g:\R\to\R$ such that $f\circ g$ is not lower semicontinuous.
\end{ex}

Therefore, whenever you define a category, check that all the axioms hold. Not everything is automatically a category.

\begin{ex}[analysis]
 Can we give an additional requirement to convex maps, in such a way to make them closed under composition? Will they form a category that way? What about lower semicontinuous maps?
\end{ex}

\subsection{Set-theoretical considerations}

The content of this part is a bit technical, it will not be really needed to understand the rest of the course. It should nevertheless be mentioned, in order to have a rigorous treatment and to avoid possible confusion. Moreover, the notation given in \Cref{defhomset} will be used in the future.

You may know from your set theory knowledge that there is no such thing as the ``set of all sets'' (the reason being: does that set contain itself?). Therefore, if we want a ``category of all sets'', the objects of this category cannot form a set. That's why, in the definition of a category (\Cref{defcat}) we said \emph{collections} rather than sets: the objects and morphisms may in general be proper classes. 

\begin{deph}
 A category $\cat{C}$ is called \emph{small} if $\cat{C}_0$ and $\cat{C}_1$ are sets. 
 
 A category $\cat{C}$ is called \emph{locally small} if for every two objects $X$ and $Y$ of $\cat{C}$, the morphisms $X\to Y$ form a set. 
\end{deph}

Alternatively, one may speak of universes and \emph{small and large sets} instead of sets and proper classes. In this formalism, a category is small if $\cat{C}_0$ and $\cat{C}_1$ are small sets, and locally small categories are defined analogously.

Most categories of interest are locally small. In particular, all the examples of categories analyzed so far are locally small. 
By construction, moreover, the categories obtained from relations on a set or as delooping of a monoid are small. You can prove converses of this last statement by exercise:

\begin{ex}[sets and relations]
 Prove that a small category with at most a single arrow between any two objects is a preorder. (Hint: there is not much to prove.)
\end{ex}
Just as well:
\begin{ex}
 Prove that a locally small category with a single object is a monoid. 
\end{ex}

We conclude this section with a useful piece of notation, which we will use in the rest of this course.

\begin{deph}\label{defhomset}
 Let $X$ and $Y$ be objects of a locally small category $\cat{C}$. The \emph{hom-set} or \emph{hom-space} of $X$ and $Y$ is the set of morphisms of $\cat{C}$ from $X$ to $Y$. We will denote it by $\Hom_\cat{C}(X,Y)$. 
\end{deph}

From now on we will mostly work with locally small categories. 

\begin{caveat}
 Sometimes a mathematical structure can be an object of different categories. For example $\R$ and $\R^2$ are sets, topological spaces, vector spaces, and so on. This is why it is important to keep track of the category when we write the hom-sets. For example,
 \begin{itemize}
  \item $\Hom_\Set(\R,\R^2)$ is the set of \emph{all} functions $\R\to\R^2$, i.e.~the morphisms of $\Set$;
  \item $\Hom_\cat{Top}(\R,\R^2)$ is the set of all \emph{continuous} functions $\R\to\R^2$, i.e.~the morphisms of $\cat{Top}$;
  \item $\Hom_\Vect(\R,\R^2)$ is the set of all \emph{linear} functions $\R\to\R^2$;
  \item \dots and so on.
 \end{itemize}
\end{caveat}

If you want to know more about the set-theoretical issues in category theory, you can read these notes by Mike Shulman, \cite{shulman-sets}. However I recommend you read them after learning a bit more about category theory.

\subsection{Isomorphisms and groupoids}\label{iso}

For the case of equivalence relations, and for delooping of groups, we had categories with a special property, which intuitively said that ``we can always go back''. Let's now try to make this intuition precise.

\begin{deph}\label{defiso}
 Let $X$ and $Y$ be objects in a category $\cat{C}$. An \emph{isomorphism} is a pair of morphisms 
 $$
 \begin{tikzcd}
  X \ar[shift left]{r}{f} & Y \ar[shift left]{l}{g}
 \end{tikzcd}
 $$
 such that 
 \begin{enumerate}
  \item $g\circ f = \id_X$, and
  \item $f\circ g = \id_Y$. 
 \end{enumerate}
 If there exists an isomorphism between $X$ and $Y$, we say that $X$ and $Y$ are \emph{isomorphic}.
\end{deph}

We see in this definition that $g$ \emph{not only} is in the opposite direction as $f$, but also ``undoes $f$'', in the sense that applying $g$ after $f$ is like doing nothing at all. And just as well, $f$ undoes $g$.

\begin{caveat}
 The two conditions do not imply one another, as the following example shows. Therefore, in order to have an isomorphism, both conditions need to be satisfied separately.
\end{caveat}

\begin{eg}[calculus, linear algebra]
 Let $X=\R^2$ and $Y=\R$. Let $f:\R^2\to \R$ be the projection onto the $x$-axis $(x,y)\mapsto x$, and let $g:\R\to \R^2$ be the inclusion of the $x$-axis $x\mapsto (x,0)$. Then $f\circ g = \id_{\R}$, but $g\circ f \ne \id_{\R^2}$: in particular, $g\circ f(x,y) = g(x) = (x,0)$, which is in general different from $(x,y)$. 
\end{eg}

\begin{ex}[important!]
 Let $f:X\to Y$ be a morphism in a category. Suppose that both $g$ and $g':Y\to X$ are inverse to $f$. Prove that $g=g'$. Conclude that, in particular, inverses in a group are unique. 
\end{ex}

Here are some examples of isomorphisms.

\begin{eg}[several fields]
 \begin{itemize}
  \item In the category $\Set$, the isomorphisms are the bijective functions.
  \item In $\cat{Top}$, the isomorphisms are the homeomorphisms: invertible continuous maps whose inverse is continuous. Mind: this is strictly stronger than saying continuous and bijective (can you give a counterexample?).
  \item In $\cat{Mfd}$, the isomorphisms are the diffeomorphisms.
  \item In $\Vect$, the morphisms are bijective linear maps.
 \end{itemize}
\end{eg}

\begin{ex}[important!]
 Prove that the composition of invertible morphisms is invertible.
\end{ex}

Let's now generalize equivalence relations and groups, i.e.~categories where ``we can always go back''.

\begin{deph}
 A \emph{groupoid} is a category where all the morphisms are invertible.
\end{deph}

Here are some examples of groupoids. 

\begin{eg}[sets and relations]
 We have already seen equivalence relations. For an equivalence relation $\sim$ on a set $X$, we have that if $x\sim y$, i.e.~we have an arrow $x\to y$, then $y\sim x$, i.e.~we have an arrow $y\to x$. Checking that the two arrows invert each other is trivial, guaranteed by uniqueness. In detail, the composition $x\to y \to x$ has to be equal to the identity $x\to x$, because there is at most one arrow $x\to x$.
\end{eg}

\begin{eg}[group theory]
 The delooping $\B G$ of a group $G$ is a groupoid (this is why it's called ``groupoid'': it generalizes a group). The inverses are given by the inverses in $G$.
\end{eg}

Another example of a groupoid is the groupoid of sets and \emph{bijections}, since those are exactly the invertible morphisms. We can do this more generally. 

\begin{deph}\label{core}
 Let $\cat{C}$ be a category. The \emph{core} of $\cat{C}$ is the groupoid whose objects are the objects of $C$ and whose morphisms are the isomorphisms of $C$. 
\end{deph}

We have seen another example of a core already: in \Cref{corepreorder}, we were canonically obtaining an equivalence relation from a preorder. Therefore, the core of a preorder is an equivalence relation.

We conclude this section with a ``philosophical'' remark. In category theory, one usually never talks about two objects being \emph{equal}, but only \emph{isomorphic}. In some sense, this reflects standard mathematical practice. For example, as vector spaces, every three-dimensional space is isomorphic to $\R^3$. But are they really \emph{equal}? What would equality even mean in that context? What can be done in $\R^3$ can be done in any three-dimensional vector space, and that's what matters in the end. Therefore, isomorphism is the important notion between objects. 
When it comes to \emph{morphisms}, on the other hand, two morphisms are often required to be \emph{equal}. For example, in the definition of an isomorphism, $f^{-1}\circ f$ is \emph{equal} to the identity $\id_X$, not ``isomorphic''.
Again, this reflects standard mathematical practice: functions between spaces can be equal. For example, two functions $f,g:X\to Y$ are equal if and only if for every $x\in X$ we have $f(x)=g(x)$. 

One may be tempted to ask whether it is possible to also talk about \emph{isomorphisms of maps} instead of equalities, so that objects and morphisms are treated more on an equal footing. This is exactly the subject of \emph{higher category theory}, in which one can talk about morphisms between morphisms, and so on. Higher category theory is very helpful, for example, in algebraic geometry and algebraic topology. The \href{http://ncatlab.org}{nLab} is the standard online wiki on this subject. 
In this course we will for the most part do \emph{ordinary} category theory, where morphisms are allowed to be equal.

\subsection{Diagrams, informally}\label{infdiagrams}

One of the most powerful methods of category theory is \emph{reasoning in terms of diagrams}. 
We start with an informal (but consistent) definition of a diagram. A more formal definition will be given later on in terms of functors.\footnote{See \Cref{functordiagrams}.}

\begin{deph}[informal]
 A \emph{diagram} in a category $\cat{C}$ is a directed (multi-)graph formed out of objects and arrows of $\cat{C}$ such that:
 \begin{itemize}
  \item Each object and morphism may appear more than once in the diagram;
  \item Between any two objects there may be also more than one morphism;
  \item For each object $X$ in the diagram, the identity is implicitly present in the diagram (but generally not drawn);
  \item For each composable edges (arrows) $f:X\to Y$ and $g:Y\to Z$ which are appearing head-to-tail in the diagram, the composite $g\circ f:X\to Z$ is implicitly present in the diagram (but generally not drawn).
 \end{itemize}
\end{deph}

Since objects and morphisms may appear more than once in the diagram, as different vertices and edges, we will refer to vertices and edges to avoid ambiguity. 

\begin{ex}\label{exdiags}
 Which morphisms are contained in the following diagrams, implicitly or explicitly? (Without assuming any additional equalities other than those required by our axioms.)
 \begin{enumerate}
  \item $$
   \begin{tikzcd}
    X \ar{r}{f} & Y
   \end{tikzcd}
  $$
  \item 
  $$
   \begin{tikzcd}
    X \ar{r}{f} \ar{dr}[swap]{g\circ f} & Y \ar{d}{g} \\
    & Z
   \end{tikzcd}
  $$
  \item 
  $$
   \begin{tikzcd}
    X \ar{r}{f} \ar{dr}[swap]{h} & Y \ar{d}{g} \\
    & Z
   \end{tikzcd}
  $$
  \item 
  $$
   \begin{tikzcd}
    X \ar{r}{f} \ar{dr}[swap]{\id_X} & Y \ar{d}{g} \\
    & X
   \end{tikzcd}
  $$
  \item 
  $$
   \begin{tikzcd}
    X \ar[shift left]{r}{f} & Y \ar[shift left]{l}{g} 
   \end{tikzcd}
  $$
  \item 
  $$
   \begin{tikzcd}
    X \ar[shift left]{r}{f} & Y \ar[shift left]{l}{f^{-1}} 
   \end{tikzcd}
  $$
  \item 
  $$
   \begin{tikzcd}
    X \ar{r}{f} & X
   \end{tikzcd}
  $$
  \item $$
   \begin{tikzcd}
    X \ar{r}{f} \ar{d}{k} & Y \ar{d}{h} \\
    A \ar{r}{g} & B
   \end{tikzcd}
  $$
 \end{enumerate}
\end{ex}

\begin{deph}
 A diagram is \emph{commutative} (or it \emph{commutes}) if for each two vertices $X$ and $Y$ in the diagram, all compositions along paths of composable arrows connecting $X$ to $Y$ are equal.
\end{deph}

\begin{caveat}
 Not every diagram commutes!
\end{caveat}

\begin{eg}
\begin{itemize}
 \item The diagram 
 $$
   \begin{tikzcd}
    X \ar{r}{f} \ar{dr}[swap]{h} & Y \ar{d}{g} \\
    & Z
   \end{tikzcd}
  $$
  is commutative if and only if $g\circ f = h$. 
  \item The diagram
  $$
   \begin{tikzcd}
    X \ar{r}{f} \ar{d}{k} & Y \ar{d}{h} \\
    A \ar{r}{g} & B
   \end{tikzcd}
  $$
  is commutative if and only if $h\circ f = g\circ k$.
\end{itemize}
\end{eg}

\begin{ex}
 Which diagrams in \Cref{exdiags} commute? For those which do not commute, which morphisms should we require to be equal in order for the diagrams to be commutative?
\end{ex}

\begin{ex}[sets and relations]
 Prove that in a preorder (or in a poset, or in an equivalence relation) every diagram commutes. 
\end{ex}

Commutative diagrams can be pasted, as the following exercise shows.

\begin{ex}
 Consider the following diagram, made out of two adjacent squares:
 $$
   \begin{tikzcd}
    X \ar{r}{f} \ar{d}{k} & Y \ar{d}{h} \ar{r}{g} & Z \ar{d}{\ell}\\
    A \ar{r}{p} & B \ar{r}{q} & C
   \end{tikzcd}
  $$
  Suppose that the left square and the right square, separately, commute. Prove that the whole diagram then commutes. 
\end{ex}

More generally, commutative diagrams of any shape can be pasted to give again commutative diagrams. 

Recall the definition of an isomorphism (\Cref{defiso}). We can rewrite it purely in terms of commutative diagrams.

\begin{deph}[alternative]\label{altdefiso}
 Two morphisms $f:X\to Y$ and $g:Y\to X$ of a category $\cat{C}$ are inverse to each other, forming an isomorphism, if and only if the following two diagrams are commutative.
$$
\begin{tikzcd}
 X \ar{dr}[swap]{\id_X}\ar{r}{f} & Y \ar{d}{g} \\
 & X
\end{tikzcd}
\qquad
\begin{tikzcd}
 Y \ar{dr}[swap]{\id_Y}\ar{r}{g} & X \ar{d}{f} \\
 & Y
\end{tikzcd}
$$
\end{deph}
Again, keep in mind that both diagrams need to be checked separately, the two conditions are independent.

\subsection{The opposite category}

Another useful technique in category theory is ``reversing all the arrows''. 

\begin{deph}
 Let $C$ be a category. The \emph{opposite category} of $\cat{C}$, denoted as $\cat{C}^\op$, is the category obtained as follows.
 \begin{itemize}
  \item Objects are just the objects of $\cat{C}$;
  \item A morphism, denoted as $f^\op$, between two objects $X$ and $Y$ is a morphism $f$ in $\cat{C}$ from $Y$ to $X$ (mind the direction). Graphically:
  $$
  \begin{tikzcd}
   X \ar{d}{f^\op} \\ Y
  \end{tikzcd}
  \quad\leftrightsquigarrow\quad
  \begin{tikzcd}
   X  \\ Y \ar{u}[swap]{f}
  \end{tikzcd}
  $$
  \item The identity at each object $X$ is given by ${\id_X}^\op$;
  \item Composition is the same as in $\cat{C}$, but the order is reversed: $g^\op\circ f^\op$ is defined to be $(f\circ g)^\op$. In diagrams:
  $$
  \begin{tikzcd}
   X \ar{d}{f^\op} \\ Y \ar{d}{g^\op} \\ Z
  \end{tikzcd}
  \quad\leftrightsquigarrow\quad
  \begin{tikzcd}
   X  \\ Y \ar{u}[swap]{f} \\ Z \ar{u}[swap]{g}
  \end{tikzcd}
  $$
 \end{itemize}
\end{deph}

Here are examples on how the opposite category looks in practice.

\begin{eg}[sets and relations]
 Let $(X,\le)$ be a poset, i.e.~a set equipped with a partial order. As we have seen in \Cref{egorder}, this is in particular a category, with an arrow $x\to y$ if and only if $x\le y$. The opposite category has then an arrow $x\to y$ if and only if $y\le x$. We can consider it as the category obtained by the opposite relation, $\ge$ instead of $\le$. For example, for $X=\R$ with its usual order, $(\R,\le)^\op\cong (\R,\ge)$, ordered downward.
\end{eg}

\begin{eg}[group theory]
 Let $G$ be a group. We have seen in \Cref{ssscatop} that the delooping $\B G$ is a category. The opposite category $(\B G)^\op$ will again have a single object, and morphisms will be again elements of $G$, however the composition is reversed: $(g\circ h)^\op=h^\op \circ g^\op$.
\end{eg}

Since the axioms of categories are symmetric with respect to reversing all arrows, the following is true: \emph{a statement in a category $\cat{C}$ is true if and only if the dual statement is true in $\cat{C}^\op$}. 

\begin{caveat}
 A statement is true in $\cat{C}$ if and only if the dual statement is true in $\cat{C}^\op$, but in general, the two statements do not look the same.
\end{caveat}
This is why this principle is so powerful: for each result we prove in category theory there is always a dual result that we get for free. Here is an example.

\begin{ex}
 Let $f,g:X\to Y$ be morphisms in $\cat{C}$. Prove that $f=g$ if and only if $f^\op=g^\op$ in $\cat{C}^\op$.  (Hint: there is not much to prove.)
\end{ex}

Given the result of the exercise, the following statements then just follow as corollaries.

\begin{cor}
 A diagram in $\cat{C}$ commutes if and only if the dual diagram in $\cat{C}^\op$ commutes. 
\end{cor}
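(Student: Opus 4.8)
The plan is to reduce both sides of the biconditional to assertions about equality of morphisms, and then to shuttle those assertions across the dualization $f\mapsto f^\op$ using the exercise immediately above (that $u=v$ in $\cat{C}$ iff $u^\op=v^\op$ in $\cat{C}^\op$). First I would spell out the passage from a diagram $D$ in $\cat{C}$ to its dual diagram $D^\op$ in $\cat{C}^\op$: it has the same vertices, and each edge $f:X\to Y$ of $D$ becomes an edge $f^\op:Y\to X$ of $D^\op$. Hence a directed path $p$ of composable edges in $D$,
$$
X=X_0\xrightarrow{f_1}X_1\xrightarrow{f_2}\cdots\xrightarrow{f_n}X_n=Y,
$$
corresponds to the reversed path $p^\op$,
$$
Y=X_n\xrightarrow{f_n^\op}X_{n-1}\xrightarrow{f_{n-1}^\op}\cdots\xrightarrow{f_1^\op}X_0=X
$$
in $D^\op$, and $p\mapsto p^\op$ is a bijection between the paths from $X$ to $Y$ in $D$ and the paths from $Y$ to $X$ in $D^\op$.

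Next I would record the key bookkeeping fact, by induction on the length $n$: the composite of $p^\op$ equals $(f_n\circ\cdots\circ f_1)^\op$, i.e.~the $\op$ of the composite of $p$. The base case is immediate (a length-one path, or the empty path at $X$ whose composite is $\id_X$ and whose dual has composite ${\id_X}^\op$ by definition), and the inductive step is exactly the rule $g^\op\circ f^\op=(f\circ g)^\op$ from the definition of the opposite category. Now fix two vertices $X$ and $Y$ and two paths $p,q$ from $X$ to $Y$ in $D$. By the exercise above, the composites of $p$ and $q$ agree in $\cat{C}$ if and only if their $\op$'s agree in $\cat{C}^\op$, which by the previous fact says exactly that the composites of $p^\op$ and $q^\op$ agree. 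Since $p\mapsto p^\op$ is a bijection from the $X$-to-$Y$ paths of $D$ onto the $Y$-to-$X$ paths of $D^\op$, quantifying over all parallel pairs of paths on the left matches quantifying over all parallel pairs on the right. Therefore every parallel pair of composites in $D$ is equal if and only if every parallel pair in $D^\op$ is equal; that is, $D$ commutes if and only if $D^\op$ commutes.

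I do not expect a genuine obstacle: the content is entirely in keeping track of the direction reversal (paths from $X$ to $Y$ become paths from $Y$ to $X$) so that the universal quantifier over parallel composites transports correctly, together with the one-line induction for ``composite of $p^\op$ equals $\op$ of composite of $p$''. One could also package the whole thing as a direct instance of the duality principle recalled just above: the statement ``$D$ commutes'' unwinds into a conjunction of equalities of morphisms, whose dual statement is precisely ``$D^\op$ commutes'', so the two are equivalent for free; the argument above is just that observation made explicit.
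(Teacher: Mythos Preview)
Your proof is correct and follows the same approach as the paper: both derive the corollary from the preceding exercise that $f=g$ in $\cat{C}$ iff $f^\op=g^\op$ in $\cat{C}^\op$. The paper simply states that the corollary ``just follows'' from that exercise (illustrating with a single square afterward), whereas you have spelled out the path-bijection and the induction on composites explicitly; this extra detail is fine but not needed at the level of rigor the paper adopts here.
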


As an example, the following diagram
$$
   \begin{tikzcd}
    X \ar{r}{f} \ar{d}{k} & Y \ar{d}{h} \\
    A \ar{r}{g} & B
   \end{tikzcd}
  $$
  commutes if and only if $h\circ f = g\circ k$. This happens if and only if $f^\op\circ h^\op = k^\op\circ g^\op$, i.e.~if the dual diagram commutes:
$$
   \begin{tikzcd}
    X \ar[leftarrow]{r}{f^\op} \ar[leftarrow]{d}{k^\op} & Y \ar[leftarrow]{d}{h^\op} \\
    A \ar[leftarrow]{r}{g^\op} & B
   \end{tikzcd}
  $$

Moreover, since isomorphisms can be defined purely in terms of commutative diagrams (\Cref{altdefiso}), we get:

\begin{cor}
 A morphisms $f:X\to Y$ is invertible if and only if $f^\op:Y\to X$ is invertible. (What will the inverse be?)
 
 In particular, if $X$ and $Y$ are isomorphic in $\cat{C}$, then they are also isomorphic in $\cat{C}^\op$.
\end{cor}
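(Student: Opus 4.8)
The plan is to deduce the result from the diagrammatic characterisation of isomorphisms in \Cref{altdefiso}, together with the fact just established that a diagram in $\cat{C}$ commutes if and only if its dual diagram in $\cat{C}^\op$ commutes.

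First I would unpack invertibility of $f\colon X\to Y$: by \Cref{altdefiso} it means there is a morphism $g\colon Y\to X$ of $\cat{C}$ making the two triangles of \Cref{altdefiso} (the ones expressing $g\circ f=\id_X$ and $f\circ g=\id_Y$) commute. Dualising each of these triangles with the corollary above, I obtain two commuting triangles in $\cat{C}^\op$; and since ${\id_X}^\op$ is by definition the identity of $X$ in $\cat{C}^\op$, those dual triangles are exactly the two diagrams of \Cref{altdefiso} witnessing that $f^\op\colon Y\to X$ and $g^\op\colon X\to Y$ are inverse to one another in $\cat{C}^\op$. Hence $f^\op$ is invertible, with inverse $g^\op=(f^{-1})^\op$ — which answers the parenthetical question. (If one prefers not to route through the diagram corollary, the same conclusion follows from the one-line check $g^\op\circ f^\op=(f\circ g)^\op={\id_Y}^\op$ and $f^\op\circ g^\op=(g\circ f)^\op={\id_X}^\op$, read straight off the definition of composition in $\cat{C}^\op$.)

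For the converse I would not rerun the argument, but apply what has just been proved to the category $\cat{C}^\op$ in place of $\cat{C}$, using $(\cat{C}^\op)^\op=\cat{C}$ and $(f^\op)^\op=f$: if $f^\op$ is invertible in $\cat{C}^\op$, then $f=(f^\op)^\op$ is invertible in $\cat{C}$. Finally the ``in particular'' clause is immediate, since $\cat{C}$ and $\cat{C}^\op$ share the same objects: an isomorphism between $X$ and $Y$ in $\cat{C}$ is carried by $(-)^\op$ to an isomorphism between the same two objects in $\cat{C}^\op$.

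I do not expect a genuine obstacle here; the entire content is careful bookkeeping with the direction-reversal and with the definition of composition in the opposite category. The one spot that needs attention is keeping the order of composition straight when passing to $\cat{C}^\op$ — i.e.\ that $g^\op\circ f^\op$ unpacks as $(f\circ g)^\op$ rather than $(g\circ f)^\op$, so that the identities come out as $\id_Y$ and $\id_X$ on the correct sides and the dual triangles really are the ones demanded by \Cref{altdefiso}.
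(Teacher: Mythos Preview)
Your proposal is correct and matches the paper's approach exactly: the paper derives this corollary immediately from \Cref{altdefiso} together with the preceding corollary that a diagram commutes in $\cat{C}$ if and only if its dual commutes in $\cat{C}^\op$. You have simply spelled out that inference in detail (and correctly identified the inverse as $g^\op=(f^{-1})^\op$); the alternative one-line check and the $(\cat{C}^\op)^\op=\cat{C}$ argument for the converse are fine elaborations.
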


In the next section we will see more examples of how reversing the arrows can give meaningful statements.

\section{Mono and epi}

So far, we have expressed in terms of diagrams the notion of isomorphism, which gives the idea of when two objects are, in some sense, ``the same''. This generalizes the notion of ``bijective map'' between sets, and gives the right notion of ``sameness'' in other categories (such as homeomorphisms between topological spaces). We now would like to present a diagrammatic approach to talk about subspaces and quotients, generalizing injective and surjective maps. In category theory these maps will be called \emph{monomorphisms} (from the Greek work meaning ``one'', as in ``one-to-one''), and \emph{epimorphisms} (from the Greek word meaning ``onto''). 

\subsection{Monomorphisms}

\begin{deph}\label{defmono}
 A morphism $m:X\to Y$ in a category $\cat{C}$ is called a \emph{monomorphism}, or \emph{mono}, if the following holds. For every object $A$ of $\cat{C}$ and for every pair of maps $f,g:A\to X$, i.e.~fitting into the diagram
 $$
 \begin{tikzcd}
 A \ar[shift left]{r}{f} \ar[shift right]{r}[swap]{g} & X \ar{r}{m} & Y
 \end{tikzcd}
 $$
 and such that $m\circ f = m\circ g$, we have that $f=g$ (i.e.~the whole diagram commutes). 
\end{deph}

Intuitively, a monomorphism is a map which ``does not map different things of $X$ to the same thing in $Y$''.  Let's try to see what this means, using the interpretation of categories in terms of spaces and maps. If $A$ is a space, then the image $f(A)$ of $A$ through $f$ will be a subspace of $X$ which, in general, may be different from the image $g(A)$ along $g$. We now want to say that if the images $f(A)$ and $g(A)$ are different in $X$, then applying $m$ to both will \emph{keep them different} in $Y$. In other words, if $f(A)\ne g(A)$, then $m\circ f(A)\ne m\circ g(A)$. Equivalently, if the images are equal in $Y$, that is, if $m\circ f(A) = m\circ g(A)$, then the images must have been equal already in $X$, that is, $f(A) = g(A)$. This is what the definition is saying. If this property holds for every $A$, then the map $m$ ``behaves as if it did not identify different things of $X$'', and we call it a monomorphism. 

Here are some examples. 

\begin{eg}[several fields]
 \begin{itemize}
  \item In $\Set$, the monomorphisms are the injective maps.
  \item In $\cat{Top}$, the monomorphisms are the injective continuous maps.
  \item In the category $\cat{FVect}$ of finite-dimensional vector spaces and linear maps, the monomorphisms are the injective linear maps.
  \item In the category $\Grp$, the monomorphisms are the injective group homomorphisms.
 \end{itemize}
\end{eg}

\begin{ex}[several fields]
 Prove the above claims (or at least, the ones related to a field of math which is familiar to you). Hint: a convenient choice of the object $A$ may help you. 
\end{ex}

In general, not every monomorphism looks like an injective map. For example:

\begin{ex}[sets and relations]
 Prove that in a preorder every morphism is a monomorphism.
\end{ex}

\begin{ex}[sets and relations; difficult!]
 What are the monomorphisms of $\Set^\op$? 
 
 (The answer to this question will hopefully be clear at the end of this section.)
\end{ex}

From the examples given above you may have noticed something: isomorphisms seem to be monomorphisms in all the categories seen so far. For example, in $\Set$, every invertible function is in particular injective. This is always the case.

\begin{prop}\label{isoismono}
 Every isomorphism of $\cat{C}$ is in particular a monomorphism of $\cat{C}$. 
\end{prop}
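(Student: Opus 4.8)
The plan is a direct verification from the definition of monomorphism (\Cref{defmono}), exploiting the left inverse that an isomorphism provides. Let $f:X\to Y$ be an isomorphism, and let $g:Y\to X$ be its inverse, so in particular $g\circ f=\id_X$. To show $f$ is a monomorphism, I would take an arbitrary object $A$ and an arbitrary pair of morphisms $h,k:A\to X$ satisfying $f\circ h=f\circ k$, and aim to conclude $h=k$.

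The key step is to precompose — really postcompose on the source side — both sides of the equation $f\circ h=f\circ k$ with $g$ on the left, obtaining $g\circ(f\circ h)=g\circ(f\circ k)$. Then I would invoke associativity to rewrite this as $(g\circ f)\circ h=(g\circ f)\circ k$, use $g\circ f=\id_X$ to get $\id_X\circ h=\id_X\circ k$, and finally apply unitality to conclude $h=k$. Since $A$, $h$, and $k$ were arbitrary, this establishes that $f$ is a monomorphism.

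There is essentially no obstacle here: the only ingredients are the existence of a one-sided (left) inverse, associativity, and unitality, all of which are part of the axioms of \Cref{defcat}. It is worth remarking that only the identity $g\circ f=\id_X$ is used — the other half $f\circ g=\id_Y$ plays no role — which foreshadows the dual fact that every isomorphism is also an epimorphism (using instead a right inverse), and more generally that any morphism admitting a left inverse is already a monomorphism.
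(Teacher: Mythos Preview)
Your proof is correct and follows the same approach as the paper: compose on the left with the inverse and use $g\circ f=\id_X$ to cancel. Your observation that only the left-inverse identity is needed is exactly the point the paper makes immediately after this proof, leading into split monomorphisms.
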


Before looking at the proof, try to do it yourself.

\begin{proof}
 Let $m:X\to Y$ be an isomorphism with inverse $m^{-1}$. Let now $A$ be any object of $\cat{C}$, and let $f,g:A\to X$ be such that $m\circ f = m\circ g$. We can apply $m^{-1}$ to both terms, and since $m^{-1}\circ m = \id_{X}$, we get
 \begin{align*}
  m^{-1}\circ m\circ f \;&=\; m^{-1}\circ m\circ g \\
  f \;&=\; g . \qedhere
 \end{align*} 
\end{proof}

\subsection{Split monomorphisms}

You may have noticed that, in the proof of \Cref{isoismono}, we only applied $m^{-1}$ on the left of $m$, never on the right. In other words, we only used the \emph{left} diagram of \Cref{altdefiso}, not the right one. Therefore, the proof above, works also for maps which admit only a ``left inverse''. Here is what this means in rigor.

\begin{deph}\label{defsplitmono}
 Let $m:X\to Y$ be a morphism of $\cat{C}$. A \emph{left inverse}, or \emph{retraction} of $m$ is a map $r:Y\to X$ such that $r\circ m = \id_X$. Equivalently, $r$ is such that the following diagram commutes:
 $$
 \begin{tikzcd}
  X \ar{dr}[swap]{\id_X} \ar{r}{m} & Y \ar{d}{r} \\
  & X
 \end{tikzcd}
 $$
 
 If $m$ admits a left inverse, we call $m$ a \emph{split monomorphism}, and call $r$ its \emph{splitting}.
\end{deph}

A split monomorphism is less than an isomorphism, since an isomorphisms is required to have a \emph{two-sided} inverse (there is an additional diagram that has to commute). In particular: 

\begin{prop}
 Every isomorphism is a split monomorphism. 
\end{prop}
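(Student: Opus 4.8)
The plan is to unwind the two definitions and observe that a split monomorphism is exactly what an isomorphism becomes once we forget one of its two defining conditions. Concretely, suppose $f:X\to Y$ is an isomorphism in $\cat{C}$. By \Cref{defiso} there is a morphism $f^{-1}:Y\to X$ with $f^{-1}\circ f=\id_X$ and $f\circ f^{-1}=\id_Y$.

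First I would point at the first of these two equations, $f^{-1}\circ f=\id_X$. Comparing with \Cref{defsplitmono}, this says precisely that $r\coloneqq f^{-1}$ is a left inverse (retraction) of $f$: the required triangle
$$
\begin{tikzcd}
 X \ar{dr}[swap]{\id_X} \ar{r}{f} & Y \ar{d}{r} \\
 & X
\end{tikzcd}
$$
commutes. Hence $f$ admits a left inverse, so by definition $f$ is a split monomorphism, with splitting $f^{-1}$. That is the whole argument.

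There is essentially no obstacle here — the statement is a definitional weakening, and the only thing to notice is that we use just one of the two isomorphism identities (the left one), discarding $f\circ f^{-1}=\id_Y$ entirely. If one wanted to phrase it even more economically, one could simply remark that this is the same bookkeeping already carried out in the proof of \Cref{isoismono}, where only the left inverse was ever invoked; so one could alternatively say "by the proof of \Cref{isoismono}" and be done. I would, however, spell out the one line above for clarity, since the point of stating the proposition separately is to make the logical dependence transparent.
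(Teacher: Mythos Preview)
Your proof is correct and matches the paper's approach exactly: the paper treats this proposition as immediate from the definitions, noting just before it that a split monomorphism is ``less than an isomorphism, since an isomorphism is required to have a \emph{two-sided} inverse (there is an additional diagram that has to commute).'' Your unwinding of \Cref{defiso} and \Cref{defsplitmono} is precisely this observation spelled out.
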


Moreover, as we have said, the proof of \Cref{isoismono} only requires left inverses, so we have:

\begin{prop}\label{splitmonoismono}
 Every split monomorphism is a monomorphism. 
\end{prop}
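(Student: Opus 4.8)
The plan is to imitate the proof of Proposition~\ref{isoismono} (that every isomorphism is a monomorphism), keeping only the half of the argument that actually uses a left inverse. Indeed, the remark preceding the statement already points this out: the earlier proof applied $m^{-1}$ only on the left of $m$, and a split monomorphism comes equipped with exactly such a left inverse.

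Concretely, let $m:X\to Y$ be a split monomorphism, and let $r:Y\to X$ be a splitting, so that $r\circ m=\id_X$. To check that $m$ is a monomorphism, take an arbitrary object $A$ and morphisms $f,g:A\to X$ with $m\circ f=m\circ g$. Then I would compose both sides on the left with $r$, obtaining $r\circ m\circ f=r\circ m\circ g$; using associativity and $r\circ m=\id_X$, the left side becomes $\id_X\circ f=f$ and the right side becomes $\id_X\circ g=g$, so $f=g$. Since $A$, $f$, $g$ were arbitrary, $m$ satisfies \Cref{defmono}.

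There is essentially no obstacle here: the only subtlety worth stating explicitly is that one must be careful to apply $r$ on the \emph{left}, which is exactly where the hypothesis $r\circ m=\id_X$ (rather than $m\circ r=\id_Y$) is used — this is why a split mono, having only a one-sided inverse, still suffices. One could even remark that Proposition~\ref{isoismono} and the preceding Proposition (every isomorphism is a split monomorphism) together give an alternative, shorter proof: an isomorphism is a split mono and a split mono is a mono, but that would be circular if used to prove the present statement, so the direct computation above is the right route. I would present the proof as a short \texttt{align*} display mirroring the one in \Cref{isoismono}:
\begin{align*}
 r\circ m\circ f \;&=\; r\circ m\circ g \\
 f \;&=\; g.
\end{align*}
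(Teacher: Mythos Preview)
Your proof is correct and is exactly the approach the paper takes: the paper does not even write out a separate proof, but simply observes (in the paragraph preceding \Cref{defsplitmono}) that the proof of \Cref{isoismono} used $m^{-1}$ only on the left, and hence goes through verbatim with a mere left inverse $r$. Your explicit write-up is precisely that argument.
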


In general, the converse is not true: you may know from topology, or from graph theory, that there are are injective maps which do not have a retraction. Let's see some examples more in detail.

\begin{eg}[sets and relations, linear algebra]\label{1.2.10}
 \begin{itemize}
  \item In $\Set$, every injective map has a left inverse. Therefore, every monomorphism is split. 
  \item In $\cat{FVect}$, every injective linear map has a left inverse. Therefore, again, every monomorphism is split.
 \end{itemize}
\end{eg}

Let's look now at the case of topological spaces.
\begin{eg}[geometry, topology]\label{egcircledisc}
 Let $S^1$ be the circle, and $D^2$ be the two-dimensional disc in $\R^2$. Let $m:S^1\to D^2$ be the embedding of the circle as the boundary of $D^2$. This map is injective and continuous, therefore it is a monomorphism of $\cat{Top}$. However, there is no retraction of $m$: any such retraction $r:D^2\to S^1$ would have to be such that $r\circ m=\id_{S_1}$, that is, it has to map the boundary of $D^2$ to the corresponding point of the circle $S^1$. This cannot be done in a continuous way: where would $r$ map the center of the disc? Any such assignment would have to ``pierce a hole'' somewhere in the disc. 
\end{eg}

Whenever a continuous map $m:X\to Y$ admits a continuous retraction $Y\to X$, we say that $X$ is a \emph{retract} of $Y$. As the previous example shows, this is a stronger condition than simply requiring $m$ to be injective (but of course, it is \emph{necessary} that $m$ is injective, by \Cref{{splitmonoismono}}).
Therefore, almost by definition, the split monomorphisms of $\cat{Top}$ are precisely the embeddings of retracts. 

A similar phenomenon happens in the category of groups.
\begin{ex}[group theory]
 Prove that the inclusion $\Z \hookrightarrow \R$ is a monomorphism in $\Grp$, but it is not split mono. 
\end{ex}

In a generic category, monomorphisms and split monomorphisms are not the same. This is something that may seem counterintuitive, because they are the same in $\Set$. But keep in mind that the generic case is what happens in $\cat{Top}$.

\begin{ex}[graph theory]
 In the category of graphs, are all monomorphisms split? What about the category $\cat{Poset}$ of partial orders and monotone maps?
\end{ex}

\subsection{Epimorphisms}

Let's now present the dual notion.
\begin{deph}\label{defepi}
 A morphism $e:X\to Y$ is called an \emph{epimorphism}, or \emph{epi}, if the following holds. For every object $A$ of $\cat{C}$ and every $f,g:Y\to A$, sitting in the diagram
 $$
 \begin{tikzcd}
  X \ar{r}{e} & Y \ar[shift left]{r}{f} \ar[shift right]{r}[swap]{g} & A
 \end{tikzcd}
 $$
 and such that $f\circ e = g\circ e$, we have that $f=g$ (i.e.~the full diagram commutes).
\end{deph}

Let's try to interpret this definition. An epimorphism, intuitively, is ``something having full image''.
Again, if we consider objects and morphisms as spaces and maps, the definition means the following. Suppose that the image of $e$ in $Y$ is the whole of $Y$. Then if any two maps $f$ and $g$ on $Y$ agree on the image of $e$, they \emph{must agree on the whole of $Y$}. That is, if $f\circ e = g\circ e$, then $f=g$. This is what the definition is saying. If this holds for each $f,g$ into $A$ and for each $A$, then $e$ ``behaves as if it had full image'', and we call it an epimorphism.

This notion of epimorphism is exactly the dual notion to the notion of monomorphism:
\begin{ex}
 Prove that $f:X\to Y$ is epi in $\cat{C}$ if and only if $f^\op:Y\to X$ is mono in $\cat{C}^\op$.
\end{ex}

Therefore, we get for free the dual statement to \Cref{isoismono} :

\begin{prop}\label{isoisepi}
 Every isomorphism is an epimorphism.
\end{prop}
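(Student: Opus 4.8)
The plan is to prove Proposition~\ref{isoisepi} by invoking duality. The key observations are: (1) by the exercise just above, a morphism $f$ is epi in $\cat{C}$ if and only if $f^\op$ is mono in $\cat{C}^\op$; (2) by an earlier corollary, $f:X\to Y$ is invertible in $\cat{C}$ if and only if $f^\op:Y\to X$ is invertible in $\cat{C}^\op$; and (3) we already know (\Cref{isoismono}) that every isomorphism is a monomorphism, in \emph{any} category.

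So the argument runs as follows. First I would let $e:X\to Y$ be an isomorphism in $\cat{C}$. Then $e^\op:Y\to X$ is an isomorphism in $\cat{C}^\op$. By \Cref{isoismono} applied to the category $\cat{C}^\op$, the morphism $e^\op$ is a monomorphism in $\cat{C}^\op$. Finally, by the exercise characterizing epimorphisms dually, since $e^\op$ is mono in $\cat{C}^\op=(\cat{C}^\op)$, the morphism $e = (e^\op)^\op$ is epi in $\cat{C}$. This closes the proof with essentially no computation.

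Alternatively — and perhaps more honestly given that the dualization machinery is itself a few lines — one can just give the direct proof, which mirrors the proof of \Cref{isoismono} with multiplication on the other side: let $e:X\to Y$ have inverse $e^{-1}$, take any $A$ and any $f,g:Y\to A$ with $f\circ e = g\circ e$, and compose with $e^{-1}$ \emph{on the right}: $f\circ e\circ e^{-1} = g\circ e\circ e^{-1}$, and since $e\circ e^{-1}=\id_Y$ this yields $f=g$.

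I do not expect any real obstacle here; the only thing to be careful about is the bookkeeping in the duality route — making sure the direction of $e^\op$ is tracked correctly (it goes $Y\to X$) and that \Cref{isoismono} is being applied in $\cat{C}^\op$ rather than $\cat{C}$. Given that the paper has just emphasized the ``for free'' nature of dual statements, the cleanest exposition is the one-line duality argument, possibly with the direct proof mentioned as a remark for readers who want to see it explicitly.
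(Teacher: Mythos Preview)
Your proposal is correct and matches the paper's approach exactly: the paper obtains \Cref{isoisepi} ``for free'' as the dual statement to \Cref{isoismono}, and then, in the exercise immediately following, invites the reader to write out the direct proof and observe that it mirrors that of \Cref{isoismono}. Your two routes are precisely these.
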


This makes intuitive sense: for example, every invertible map is in particular surjective. 

\begin{ex}
You may not be comfortable yet with believing that \Cref{isoisepi} is true just because the dual statement is true. If that is the case, try to prove the proposition yourself from scratch, and notice that the diagrams appearing in the proof are dual to those appearing in the proof of \Cref{isoismono}. 
\end{ex}

Here are some examples of epimorphisms.
\begin{eg}[sets and relations, linear algebra, topology]
 \begin{itemize}
  \item In $\Set$, the epimorphisms are the surjective maps.
  \item In $\cat{FVect}$, the epimorphisms are the surjective linear maps.
  \item In $\cat{Top}$, the epimorphisms are the surjective continuous maps. 
 \end{itemize}
\end{eg}
Again, not in every category epimorphisms look like surjective maps. For example, in $\Set^\op$, an epimorphism is the opposite of an injective map. Again, in a preorder every arrow is an epimorphism. 
More interestingly, try to prove the following.
\begin{ex}[algebra]
 In the category $\cat{Mon}$ of monoids and monoid morphisms, the inclusion $\N\hookrightarrow\Z$ is an \emph{epimorphism}. 
\end{ex}

\subsection{Split epimorphisms}

\begin{deph}\label{defsplitepi}
 Let $e:X\to Y$ be a morphism of $\cat{C}$. A \emph{right inverse}, or \emph{section} of $e$ is a map $s:Y\to X$ such that $e\circ s = \id_Y$. Equivalently, $s$ is such that the following diagrams commutes:
 $$
 \begin{tikzcd}
  Y \ar{dr}[swap]{\id_Y} \ar{r}{s} & X \ar{d}{e} \\
  & Y
 \end{tikzcd}
 $$
 
 If $e$ admits a right inverse, we call $e$ a \emph{split epimorphism}, and call $s$ its \emph{splitting}.
\end{deph}

Again, this is the dual notion to the one of split monomorphism. Therefore we get the dual results:

\begin{prop}
 Every isomorphism is a split epimorphism.
\end{prop}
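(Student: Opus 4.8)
The plan is to give the direct argument, which is the formal dual of the proof that every isomorphism is a split monomorphism. First I would take an isomorphism $f:X\to Y$ together with its two-sided inverse $f^{-1}:Y\to X$, so that by \Cref{defiso} we have $f\circ f^{-1}=\id_Y$ and $f^{-1}\circ f=\id_X$. The first of these equations says precisely that $f^{-1}$ is a section (right inverse) of $f$ in the sense of \Cref{defsplitepi}: the relevant triangle
$$
\begin{tikzcd}
 Y \ar{dr}[swap]{\id_Y} \ar{r}{f^{-1}} & X \ar{d}{f} \\
 & Y
\end{tikzcd}
$$
commutes. Hence $f$ is a split epimorphism with splitting $f^{-1}$, and that is all there is to it.

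Alternatively, one can route through duality: by the corollary stated earlier that a morphism is invertible in $\cat{C}$ if and only if its opposite is invertible in $\cat{C}^\op$, an isomorphism $f$ yields an isomorphism $f^\op$ in $\cat{C}^\op$, which is a split monomorphism there by the earlier proposition. Unwinding the definition of the opposite category, a left inverse of $f^\op$ in $\cat{C}^\op$ is exactly a right inverse of $f$ in $\cat{C}$, so $f$ is a split epimorphism. I would present the direct version, since it is a single line, and only mention the dual derivation as a remark (paralleling how \Cref{isoisepi} was obtained from \Cref{isoismono}).

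There is essentially no obstacle here; the only point requiring a moment's attention is the bookkeeping of left versus right. One must be sure to invoke the equation $f\circ f^{-1}=\id_Y$ rather than $f^{-1}\circ f=\id_X$, so that the composite matches the commuting triangle of \Cref{defsplitepi} — exactly as the proof of \Cref{isoismono} used only the left-hand diagram of \Cref{altdefiso} and discarded the other.
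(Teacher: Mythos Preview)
Your proof is correct. The paper does not write out a separate proof for this proposition at all: it simply remarks that split epimorphisms are dual to split monomorphisms and that ``therefore we get the dual results'', listing this statement and the next one without further comment. Your direct argument --- observing that $f\circ f^{-1}=\id_Y$ exhibits $f^{-1}$ as a section of $f$ --- is exactly what unwinding that duality gives, and your alternative duality paragraph is the paper's own (implicit) route. Either version is fine; there is nothing to add.
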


\begin{prop}
 Every split epimorphism is an epimorphism.
\end{prop}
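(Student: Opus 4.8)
The plan is to prove that every split epimorphism is an epimorphism by dualizing the proof of Proposition~\ref{splitmonoismono}, exactly as the dual statement for split monomorphisms was dualized from \Cref{isoismono}. Since \Cref{defsplitepi} is literally the dual of \Cref{defsplitmono}, and \Cref{defepi} is the dual of \Cref{defmono}, the duality principle (a statement is true in $\cat{C}$ iff the dual statement is true in $\cat{C}^\op$) gives the result immediately once we observe that ``split epimorphism'' is dual to ``split monomorphism''. But since the reader may not yet be comfortable with invoking duality, I would also give the direct argument, which is short.

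First I would unwind the definitions. Let $e:X\to Y$ be a split epimorphism, with section $s:Y\to X$, so that $e\circ s = \id_Y$. To check that $e$ is an epimorphism, take any object $A$ and any pair of morphisms $f,g:Y\to A$ with $f\circ e = g\circ e$. The goal is to conclude $f=g$.

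The key step is to precompose both sides with $s$ on the right: from $f\circ e = g\circ e$ we get $f\circ e\circ s = g\circ e\circ s$. Now using $e\circ s = \id_Y$ and unitality, the left side is $f\circ \id_Y = f$ and the right side is $g\circ \id_Y = g$, so $f = g$. Concretely:
\begin{align*}
 f\circ e\circ s \;&=\; g\circ e\circ s \\
 f \;&=\; g .
\end{align*}
This establishes the claim. I do not expect any real obstacle here: the entire content is the one line $e\circ s = \id_Y$, applied on the correct side, and the argument is genuinely the mirror image of the proof of \Cref{isoismono} (which used $m^{-1}\circ m = \id_X$ on the \emph{left}), now with the composition order reversed throughout — precisely as the duality principle predicts. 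The only thing to be careful about is composing on the \emph{right} with $s$ rather than on the left, since $s$ is a one-sided (right) inverse.
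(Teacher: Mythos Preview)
Your proposal is correct and matches the paper's approach: the paper simply states this as the dual result to \Cref{splitmonoismono} and offers no separate proof, relying entirely on the duality principle. Your explicit direct argument (precomposing with the section $s$ on the right) is exactly the dualization the paper has in mind but leaves to the reader, so you have gone slightly further than the text itself.
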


Moreover, notice the following: if $s$ is a right inverse of $e$, then $e$ is a left-inverse of $s$. So in particular, if $e$ is split epi, then its section $s$ is split mono, and vice versa. 
The pair of maps 
$$
\begin{tikzcd}
 X \ar[shift left]{r}{e} & Y \ar[shift left]{l}{s}
\end{tikzcd}
$$
is sometimes called a \emph{retract}, or it is said (especially in the context of topological spaces) that $Y$ is a \emph{retract} of $X$.

This gives then an intuitive way to think about split monos and epis: \emph{a retract is at the same time a subspace and a quotient}. 
For example, $\R$ is at the same time a subspace and a quotient of $\R^2$, both in $\Vect$ and in $\cat{Top}$. The notion is not symmetric: $\R^2$ is not a retract of $\R$ (intuitively, the retract is often ``smaller'' in some sense). In \Cref{egcircledisc} we saw that the circle is a subspace of the disc, but not a quotient, therefore it is not a retract. 
This should give us an idea of what to expect as examples of split epimorphisms.

\begin{eg}[sets and relations, linear algebra]
 \begin{itemize}
  \item In $\Set$, every surjective map has a right inverse. Therefore, every epimorphism is split.\footnote{If you are familiar with set theory, try to show that the axiom of choice is equivalent to the statement that in $\Set$ every epimorphism is split. If you are not familiar with set theory, you can safely ignore this comment.}
  \item In $\cat{FVect}$, every surjective linear map has a right inverse. Therefore, again, every epimorphism is split.
 \end{itemize}
\end{eg}

\begin{eg}[topology]\label{closecircle}
 Consider the half-open interval $[0,1)$ and the map $e:[0,1)\to S^1$ which closes the interval to a circle (mapping $t$ to the pair $(\cos(2\pi\,t), \sin(2\pi\,t))$ if we consider the circle as a subset of $\R^2$). 
 This map is surjective, since it reaches the whole circle, so it is an epimorphism of $\cat{Top}$. However, it is not split: any splitting would have to map the circle back to the interval, and to do so, it would have to ``break the circle open''. This cannot be done in a continuous way.
\end{eg}
In $\cat{Top}$ the split epimorphisms are the retractions (or the quotients to a retract), and being a retraction is a strictly stronger requirement than just being a surjective map. 

\begin{ex}[algebra]\label{circleretract}
Consider the map $\R\to S^1$ of $\Grp$ mapping all integers to the neutral element of $S^1$ (as a time is mapped to a clock, or as $t\mapsto e^{2\pi i t}$ if we consider the circle as a subset of $\C$). This is surjective, so it is an epimorphism of $\Grp$. Show that it does not admit a section. 
\end{ex}

Let's observe again the map $e$ from \Cref{closecircle}. We have that $e:[0,1)\to S^1$ is even \emph{bijective}. That is, it is mono and epi. But it is not an isomorphism, in particular it is not split epi (or more intuitively, the circle and the interval are different spaces). Therefore, \emph{a map which is both epi and mono is not necessarily an isomorphism}. 

However, the following is true:
\begin{prop}\label{splitmonoepi}
 If $f:X\to Y$ is epi and split mono, it is an isomorphism. Dually, if it is mono and split epi, it is an isomorphism.
\end{prop}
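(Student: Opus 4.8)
The plan is to exploit the fact that once $f$ has a one-sided inverse, the epi (resp.\ mono) property upgrades it to a two-sided inverse, which by \Cref{defiso} makes $f$ an isomorphism. So the whole proof is a short cancellation argument, and there is essentially no hard part; the only thing to be careful about is applying the epi/mono cancellation on the correct side.

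For the first statement, suppose $f:X\to Y$ is epi and split mono, and let $r:Y\to X$ be a retraction, so that $r\circ f=\id_X$. I already have one of the two diagrams of \Cref{altdefiso}; I need the other, namely $f\circ r=\id_Y$. First I would observe that $f\circ r$ and $\id_Y$ become equal after precomposition with $f$: indeed $(f\circ r)\circ f = f\circ(r\circ f) = f\circ\id_X = f = \id_Y\circ f$, using associativity and unitality. Then, since $f$ is an epimorphism, I can cancel $f$ on the right (this is exactly \Cref{defepi} applied to the pair $f\circ r,\ \id_Y:Y\to Y$), concluding $f\circ r=\id_Y$. Hence $r$ is a two-sided inverse of $f$ and $f$ is an isomorphism.

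For the dual statement, suppose $f:X\to Y$ is mono and split epi, with section $s:Y\to X$ satisfying $f\circ s=\id_Y$. Now I need $s\circ f=\id_X$. The same trick works on the other side: $f\circ(s\circ f) = (f\circ s)\circ f = \id_Y\circ f = f = f\circ\id_X$, and since $f$ is a monomorphism I may cancel $f$ on the left (\Cref{defmono} applied to $s\circ f,\ \id_X:X\to X$), yielding $s\circ f=\id_X$. So again $f$ has a two-sided inverse and is an isomorphism. Alternatively, one could note that this second half is literally the dual of the first half under passing to $\cat{C}^\op$ (using the exercise that $f$ is epi in $\cat{C}$ iff $f^\op$ is mono in $\cat{C}^\op$, and that isomorphisms are preserved by $(-)^\op$), so it comes for free; I would probably still spell out the direct argument for the reader's comfort.

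I expect no real obstacle here — the subtlety is purely bookkeeping: making sure that in the epi case one cancels on the right and in the mono case on the left, and that one is genuinely producing the \emph{missing} triangle of \Cref{altdefiso} rather than re-deriving the one already given.
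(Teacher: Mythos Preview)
Your argument is correct and is exactly the intended one: the paper leaves this as an exercise with the hint that only one half needs to be proved (the other following by duality), and the cancellation you give is the standard proof. Your remark that the second half is the $\cat{C}^\op$-dual of the first matches the paper's hint precisely.
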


This is why, in $\Set$, a map is invertible when it is injective and surjective: those are not just mono and epi, they are also \emph{split}. But keep in mind that this is not the general case.

\begin{ex}[important!]
 Prove \Cref{splitmonoepi}. (Hint: you only need to prove one of the two statements, the other one is dual.)
\end{ex}

\section{Functors and functoriality}

\begin{deph}
 Let $\cat{C}$ and $\cat{D}$ be categories. A \emph{functor} $F:\cat{C}\to\cat{D}$ consists of the following data. 
 \begin{itemize}
  \item For each object $X$ of $\cat{C}$, an object $FX$ of $\cat{D}$;
  \item For each morphism $f:X\to Y$ of $\cat{C}$, a morphism $Ff:FX\to FY$ of $\cat{D}$ (note that the domain and codomain of $Ff$ must be exactly $FX$ and $FY$);
 \end{itemize}
 such that the following \emph{functoriality axioms} hold.
 \begin{itemize}
  \item \emph{Unitality} (or sometimes \emph{normalization}): for every object $X$ of $\cat{C}$, $F(\id_X)=\id_{FX}$. That is, $F$ maps identities into identities.
  \item \emph{Compositionality} (or sometimes \emph{cocycle condition}): for every pair of composable morphisms
  $$
  \begin{tikzcd}
  X \ar{r}{f} & Y \ar{r}{g} & Z
  \end{tikzcd}
  $$
  in $\cat{C}$, we have that $F(g\circ f)=Fg\circ Ff$. That is, the following diagram must commute. 
  $$
  \begin{tikzcd}
  & FY \ar{dr}{Fg} \\
  FX \ar{ur}{Ff} \ar{rr}[swap]{F(g\circ f)} & & FZ
  \end{tikzcd}
  $$
  In other words, $F$ respects the composition of arrows.
 \end{itemize}
\end{deph}

Note that in the expression $F(\id_X)=\id_{FX}$ we have on the left the identity of $X$ in $\cat{C}$, and on the right the identity of $FX$ in $\cat{D}$. Just as well, in the expression $F(g\circ f)=Fg\circ Ff$ we have on the left the composition in $\cat{C}$, and on the right the composition in $\cat{D}$. 

In the following we look at some ways to interpret functors between different types of categories.

\subsection{Functors as mappings preserving relations}

\begin{idea}
 A functor is a mapping preserving or respecting the relations between the objects.
\end{idea}

\begin{eg}[sets and relations]
 Let $(X,\le)$ and $(Y,\le)$ be partial orders. As we have seen in \Cref{egorder}, these are in particular categories. A functor $F:(X,\le)\to (Y,\le)$ consists first of all of a mapping between the objects, that is, a function $F:X\to Y$. Moreover, to each arrow in $X$ there has to be a corresponding arrow of $Y$. That is, if $x\le x'$ in $X$, then $Fx \le Fx'$ in $Y$. The functoriality axioms are immediately satisfied, since between any two objects there is at most one morphism: the identity at $x\in X$ has to be mapped to the identity at $Fx\in Y$, since there is no other arrow $Fx\to Fx$, and the same is true for composition. 
 Therefore \emph{a functor between partial orders is the same thing as a monotone map}. If $x$ and $x'$ are related, ($x$ is less or equal than $x'$), then their images under $F$, the elements $Fx$ and $Fx'$, must be related too ($Fx$ is less or equal than $Fx'$). 
\end{eg}

\begin{eg}[sets and relations]\label{equequ}
 Let $(X,\sim)$ and $(Y,\sim)$ be equivalence relations. As we have seen in \Cref{egeq}, these are in particular categories. A functor $F:(X,\sim)\to(Y,\sim)$ is first of all a function $F:X\to Y$. Again, arrows of $X$ have to be mapped into arrows of $Y$, that is, if $x\sim x'$ in $X$, then $Fx \sim Fx'$ in $Y$. Again, the functoriality axioms are guaranteed by uniqueness.
 Therefore \emph{a functor between equivalence relations is the same thing as a map respecting the equivalence}.
 If $x$ and $x'$ are equivalent in $X$, then they have to be mapped into equivalent elements of $Y$. This is sometimes called an \emph{equivariant map for the equivalence relations}. 
\end{eg}

\begin{caveat}
 In the target category there may be \emph{more relations} than in the source category. In the partial order case, $F:(X,\le)\to (Y,\le)$, it could be that $x\nleq x'$, but still $Fx \leq Fx'$. Relations can be ``created''. The important thing is that they are not ``destructed''.
 Just as well, for $F:(X,\sim)\to(Y,\sim)$ it could be that $x\nsim x'$, but still $Fx \sim Fx'$. The important thing is that if $x\sim x'$, then $Fx \sim Fx'$.
 In the terminology of category theory, relations have to be \emph{preserved}, but not necessarily \emph{reflected}. 
\end{caveat}

\begin{ex}[sets and relations]\label{quotient}
 Let $(X,\sim)$ be an equivalence relation. Let $X/\sim$ be the \emph{quotient space} of the relation, that is, the set of equivalence classes. We can consider $X/\sim$ a category, where the only arrows are identities. Prove that the quotient map $(X,\sim)\to X/\sim$ assigning to each $x\in X$ its equivalence class $[x]\in X/\sim$ is a functor.
\end{ex}

\begin{ex}[sets and relations]
 More generally, let $(X,\lesssim)$ be a preorder. Consider the equivalence relation $\sim$ given by \Cref{corepreorder}. Define a partial order $\le$ on the quotient space $X/\sim$ in such a way that the quotient map $(X,\lesssim)\to (X/\sim, \le)$ is a functor. 
\end{ex}

For all these examples, by uniqueness we didn't have to worry about checking the functoriality axioms. In general, this is something that needs to be checked, as the next examples will show.

\subsection{Functors as mappings preserving operations}

\begin{idea}
 A functor is a mapping preserving or respecting the operations of our structures, and their composition.
\end{idea}

\begin{eg}[group theory]\label{eghomomorphism}
 Let $G$ and $H$ be groups. \emph{A function $f:G\to H$ is a group homomorphism if and only if the induced mapping $\B G \to \B H$ is a functor.} Let's see why. 
 First of all, $f$ is a group morphism if and only if the following properties are satisfied.
 \begin{itemize}
  \item Unit condition: $f(1)=1$, where on the left we have the unit of $G$, and on the right we have the unit of $H$.
  \item Composition condition: for each $g,g'\in G$, $f(g\cdot g') = f(g)\cdot f(g')$, where on the left we have composition in $G$, and on the right we have composition in $H$. 
 \end{itemize}
 Instead, a functor $\B G \to \B H$ consists of the following data.
 \begin{itemize}
  \item First of all, a mapping between the objects of $\B G$ to the objects of $\B H$. But since both categories have only a single object, this is trivial: we just map the single object of $\B G$ to the single object of $\B H$.
  \item Now, we need a map from the morphisms of $\B G$ to the morphisms of $\B H$. Their source and targets will always match, since there is just one object, therefore this amounts simply to a function $f:G\to H$ (remember that the morphisms of $\B G$ are just the elements of the group $G$, and the same for $H$).
  \item The function $f:G\to H$ has to map the identity to the identity and the composition to the composition, by functoriality. This says precisely that $f$ has to be a group homomorphism.
 \end{itemize}
\end{eg}

\begin{eg}
 Just as well, let $M$ and $N$ be monoids. \emph{A function $f:M\to N$ is a monoid homomorphism if and only if it induces a functor $\B M \to \B N$.} The proof is the same, since in \Cref{eghomomorphism} we never used inverses.
\end{eg}

Here are two very important examples.

\begin{eg}[group theory]\label{egrepresentation}
 Let $G$ be a group. A \emph{linear representation} of $G$ is a functor $R:\B G\to \Vect$. (If you don't know what a linear representation is, you can take this as a definition.) Let's see what this means in practice. 
 \begin{itemize}
  \item First of all, we need to map the single object $\bullet$ of $\B G$ to an object of $\Vect$, that is, a vector space. In other words, we need to select a particular vector space, which will be the space where our representation acts. Let's call this space $V$, so $R(\bullet)=V$. In representation theory, one says that \emph{$G$ is acting on the space $V$}.
  \item For each morphism of $\B G$, that is, for each element $g\in G$, we need a linear map $Rg:V\to V$. This map is an isomorphism (see the next exercise).
  \item The assignment $g\mapsto Rg$ has to be such that the neutral element $1$ is mapped to the identity $\id_V$, and that composition is preserved. 
 \end{itemize}
\end{eg}

\begin{ex}[group theory, important!]
 Show that for each $g\in G$, the morphism $Rg$ is an isomorphism.
\end{ex}

Therefore, $R:\B G\to \Vect$ induces a group homomorphism $G\to\Aut(V)$. ($\Aut(V)$, called the \emph{automorphism group} of $V$, is the set of isomorphisms from $V$ to itself. If you are not familiar with it, prove that it has a natural group structure.)

A possible interpretation of a group representation is the following. A group is a rather abstract object, but it is usually an abstraction of the idea of \emph{symmetry} of some more concrete object. A representation is then a way to assign to the group $G$ a \emph{concrete} space where $G$ acts. For example, you may view the group $S^1$ as ``rotations'' in a plane. A way to make this intuition rigorous is then to take a vector space modeling our plane, for example $\R^2$, and viewing $S^1$ as acting on $\R^2$ via linear maps $\R^2\to\R^2$, the rotations, one for each element of $S^1$. 
This allows us to have a much more concrete idea of what $S^1$ looks like ``in practice''. 

Of course, just like not every function is a bijection, just as well in general the representation of a group may lose information. For example, one could represent $S^1\times S^1$ again on the plain $\R^2$ by letting the first factor $S^1$ act on the plane, and forgetting the second factor. This is a well-defined representation, but it is not a \emph{faithful} representation of the group $S^1\times S^1$. 

An analogous way to represent a group more concretely is, instead of letting it act on a vector space, letting it act on a set, by just permuting its elements.

\begin{eg}[group theory]\label{perm}
 Let $G$ be a group. A \emph{permutation representation} of $G$ is a functor $R:\B G\to \Set$.
\end{eg}

As in the vector space case, this picks out a particular set $X$, on which the group $G$ acts. A set equipped with such a group action is sometimes called a \emph{$G$-set}.

\begin{ex}[group theory]
 Show that a permutation representation of $G$ is the same as a set $X$ together with a group homomorphism $G\to\Aut(X)$. 
\end{ex}

Again, in general a permutation representation may lose information. 
However, for each group there is always a \emph{faithful} permutation representation, that is, a representation which does not lose any information:

\begin{thm}[Cayley]\label{thmcayley}
 Let $G$ be any group. Then there exists a set $X$ such that $G$ is isomorphic to a subgroup of $\Aut(X)$. 
\end{thm}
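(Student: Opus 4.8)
The plan is to use the \emph{left regular representation}, taking $X$ to be the underlying set of $G$ itself. First I would define a functor $R : \B G \to \Set$ by sending the single object $\bullet$ to the set $G$, and each morphism $g : \bullet \to \bullet$ (i.e.\ each element $g \in G$) to the function $L_g : G \to G$ given by $h \mapsto g \cdot h$. Checking functoriality is immediate from the group axioms: $R(1) = L_1 = \id_G$ by unitality, and $R(g \cdot g') = L_{g g'} = L_g \circ L_{g'}$ because $(g g') \cdot h = g \cdot (g' \cdot h)$ by associativity. So $R$ is a permutation representation in the sense of \Cref{perm}.

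Next I would observe that each $L_g$ is a bijection, hence an isomorphism in $\Set$: indeed $L_{g^{-1}}$ is a two-sided inverse, since $L_{g^{-1}} \circ L_g = L_{g^{-1} g} = L_1 = \id_G$ and symmetrically $L_g \circ L_{g^{-1}} = \id_G$. (This is also the content of the exercise attached to permutation representations.) Therefore $R$ corresponds to a group homomorphism $\phi : G \to \Aut(G)$ sending $g$ to $L_g$, where here $\Aut(G)$ denotes the automorphism group of the \emph{set} $G$, i.e.\ all bijections $G \to G$.

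The crucial point is that $\phi$ is \emph{injective}. Suppose $\phi(g) = \phi(g')$, that is $L_g = L_{g'}$ as functions $G \to G$. Evaluating both sides at the neutral element $1 \in G$ gives $g = g \cdot 1 = g' \cdot 1 = g'$. Hence $\phi$ is injective, so it restricts to an isomorphism $G \cong \phi(G)$ onto its image, and $\phi(G)$ is a subgroup of $\Aut(G)$ (the image of a group homomorphism contains the unit and is closed under multiplication and inverses). Taking $X = G$ then completes the proof.

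I do not expect a serious obstacle: the argument is essentially an unwinding of definitions. The two points worth flagging are the choice of \emph{left} multiplication — using $h \mapsto h \cdot g$ instead would reverse the order of composition and give a homomorphism $G^{\op} \to \Aut(G)$ rather than $G \to \Aut(G)$ — and the routine lemma that an injective group homomorphism is an isomorphism onto its image, which is standard but should be stated explicitly.
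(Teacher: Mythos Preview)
Your proof is correct and is the classical direct argument via the left regular representation. The paper, however, takes a deliberately different route: it postpones the proof until \Cref{egcayley} and derives Cayley's theorem as a corollary of the Yoneda embedding theorem. Concretely, the paper applies the Yoneda embedding to $\cat{B}G$, obtaining
\[
\Hom_{\cat{B}G}(\bullet,\bullet) \;\cong\; \Hom_{[\cat{B}G^\op,\Set]}\big(\Hom_{\cat{B}G}(-,\bullet),\Hom_{\cat{B}G}(-,\bullet)\big),
\]
identifies the left-hand side with $G$ and the representable presheaf with the underlying set $U(G)$ carrying the right regular action, and observes that the $G$-equivariant self-maps of $U(G)$ form a subgroup of $\Aut(U(G))$. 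Your argument is essentially the same content unpacked by hand --- your injectivity check ``evaluate at $1$'' is exactly the Yoneda-lemma step $\alpha \mapsto \alpha_\bullet(\id_\bullet)$ specialized to $\cat{B}G$ --- but you reach the conclusion without invoking any general machinery. What the paper's approach buys is the conceptual point that Cayley is literally a one-object instance of Yoneda; what yours buys is a self-contained proof accessible before the Yoneda lemma is available.
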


\begin{ex}[group theory, linear algebra]
 Using Cayley's theorem, derive an analogous statement for linear representations.
\end{ex}

Cayley's theorem can be considered a special case of a more general result in category theory, known as the \emph{Yoneda lemma}. We will look at this in more detail (and prove it) in the future.\footnote{\Cref{secyoneda}, see in particular \Cref{egcayley}.}

Let us now look at the most general case of functors, where neither objects nor morphisms between two given objects are unique. 

\subsection{Functors defining induced maps}

\begin{idea}
 A functor is a consistent way of defining new \emph{induced maps} from existing maps.
\end{idea}

\begin{eg}[sets and relations]\label{egpowerset}
 The \emph{power set functor} is a functor $P:\Set\to\Set$ defined in the following way. 
 \begin{itemize}
  \item On objects, it maps a set $X$ to its \emph{power set} $PX$, that is, the set of subsets of $X$. An element $S\in PX$ is a subset $S\subseteq X$. 
  \item On morphisms, it maps a function $f:X\to Y$ to the induced function $Pf:PX\to PY$ given by the \emph{image of subsets}: given a subset $S\subseteq X$, we obtain a subset of $Y$ as follows. We apply $f$ to all the elements in $S$, and we will get a bunch of elements of $Y$. These form a subset of $Y$, that is, an element of $PY$. Equivalently,
  $$
  (Pf)(S) \; \coloneqq  \; \{y\in Y \,|\, y = f(x) \mbox{ for some } x\in S\}.
  $$
 \end{itemize}
 In set theory, usually one denotes the image of subsets with the same letter as the original function, namely as $f(S)$. This makes sense, since the map $PX\to PY$ is \emph{induced} by $f:X\to Y$. However, technically, it is not the same map as $f$ (it has different domain and codomain). This intuitive idea of ``having a map which is still defined by $f$, but with different domain and codomain'' is made precise exactly by functoriality. So, in category-theoretical terminology, the map $f:X\to Y$ induces, via the \emph{functor} $P$, a map $Pf:PX\to PY$.
\end{eg}

\begin{ex}[sets and relation]
 Check that $P$ satisfies the functoriality axioms.
\end{ex}

Just as we can induce maps between subsets, we can also induce maps between probability measures. We first look at finitely supported probability measures, for readers which are not familiar with measure theory. 
More sophisticated constructions can be done as well, using measure theory, and will be given in the subsequent exercises.

\begin{eg}[basic probability]\label{egprob}
 The \emph{probability functor}, called also \emph{distribution functor}, is a functor $\mathcal{P}:\Set\to\Set$ defined as follows.
 \begin{itemize}
  \item On objects, it maps a set $X$ to the set of \emph{finitely supported probability measures} on $X$. Those are functions $p:X\to [0,1]$ with only finitely many nonzero entries, such that their sum is $1$, that is:
  $$
  \sum_{x\in X} p(x) \; = \; 1.
  $$
  These can be thought of as ``finite normalized histograms over the elements of $X$''. 
  \item On morphisms, to a function $f:X\to Y$ we assign the function $\mathcal{P}f:\mathcal{P}X\to\mathcal{P}(Y)$ given by the \emph{pushforward of measures} along $f$. In particular, given $p\in\mathcal\mathcal{P}X$, the probability measure $(\mathcal{P}f)(p)\in \mathcal{P}Y$ is given by
  $$
  (\mathcal{P}f)(p) \, (y) \; \coloneqq  \; \sum_{x\in f^{-1}(y)} p(x) .
  $$
  Intuitively, this ``moves the columns of the histogram on $X$ along $f$ to give a histogram on $Y$, stacking columns on top of each other whenever they end up over the same element''.
 \end{itemize}
 In the probability and measure theory literature, usually the map $\mathcal{P}f$ is written as $f_*$. In terms of random variables, the map giving \emph{image random variables} (which have the pushforward of the measure as law) is instead denoted again by $f$. Just like for the power set case, these notations reflect the fact that the map $\mathcal{P}f$ is induced by $f$, functorially. 
\end{eg}

Here are more general construction, for people with a background in probability (or who want to learn more about probability).

\begin{ex}[measure theory, probability]\label{giryfunctor}
 Consider the category $\cat{Meas}$ of measurable spaces and measurable maps. 
 Recall that the pushforward of measures in general is defined as follows. Let $f:X\to Y$ be measurable. Let $p$ be a measure on $X$ and let $A$ be a measurable subset of $Y$. Then, since $f$ is measurable, by definition $f^{-1}(A)$ is a measurable subset of $X$, and the pushforward of $p$ along $f$ is defined to be the assignment
 $$
 A \;\longmapsto\; (f_*p)(A) \coloneqq   p(f^{-1}(A)) .
 $$
 Now define a probability functor $\mathcal{P}:\cat{Meas}\to\cat{Meas}$, in such a way that
 \begin{itemize}
  \item To each measurable space $X$ we assign a space $\mathcal{P}X$ of probability measures on $X$;
  \item To each measurable map $f:X\to Y$ we get a map $\mathcal{P}f:\mathcal{P}X\to\mathcal{P}Y$ via the pushforward of measures. 
 \end{itemize}
 Keep in mind that, in order for $\mathcal{P}$ to be a functor $\cat{Meas}\to\cat{Meas}$, we need $\mathcal{P}X$ to be an object of $\cat{Meas}$, i.e.~a measurable set, and $\mathcal{P}f$ to be a morphism of $\cat{Meas}$, i.e.~a measurable function. 
 Therefore, when you define the spaces $\mathcal{P}X$, you have to equip them with a $\sigma$-algebra, and you have to do it in such a way that the map $\mathcal{P}f$ will be measurable. 
 
 One convenient way to do it is the following.
 Given $A\subseteq X$ measurable, consider the mapping $i_A:\mathcal{P}X\to[0,1]$ given by evaluation:
 $$
 i_A(p) \;\coloneqq\; p(A) .
 $$
 Equip now $\mathcal{P}X$ with the initial $\sigma$-algebra of all the maps $i_A$ in the form above, i.e.~the coarsest $\sigma$-algebra which makes all the maps $i_A$ measurable. Prove that this way, for every measurable map $f:X\to Y$, the map $\mathcal{P}f:\mathcal{P}X\to\mathcal{P}Y$ is measurable, and check the functoriality axioms, so that $\mathcal{P}:\cat{Meas}\to\cat{Meas}$ is a functor.
 
 This functor is called the \emph{Giry functor}.
\end{ex}

\begin{ex}[measure theory, probability]\label{radonfunctor}
 Consider the category $\cat{CHaus}$ of compact Hausdorff spaces and continuous maps. Assign to each object $X$ (which is a compact Hausdorff space) the space of Radon probability measures on $X$, equipped with the weak topology (which is again a compact Hausdorff space). Show that this assignment is part of a functor, with the assignment on morphisms again given by the pushforward of measures. 
 
 This functor is called the \emph{Radon functor}. 
\end{ex}

Here is again a functor giving an ``induced map'', this time coming from computer science.

\begin{eg}[basic computer science, combinatorics]\label{listfunctor}
Consider again the category $\cat{Set}$ of sets and functions. (Readers familiar with computer science may want to use types instead of sets.) 
Given a set $X$ we can form a new set $LX$ whose elements are \emph{lists of elements of $X$}. A list is an expression in the form $[x_1,\dots,x_n]$, where the $x_i$ are elements of $X$, and the length $n$ is finite but can be arbitrarily large (it can also be zero, giving the empty list $[\;]$). 

The list construction is functorial: given $f:X\to Y$ we can induce a function $Lf:LX\to LY$ by applying $f$ elementwise in the list. The empty list of $LX$ is mapped to the empty list of $LY$, and $[x_1,\dots,x_n]$ is mapped to $[f(x_1),\dots,f(x_n)]$.
Extending a function $f$ to \emph{lists of possible inputs}, giving as output the list of the results, is then the action of the functor on the morphisms.
This functionality is for example given (roughly) by \emph{map} in Python, \emph{fmap} in Haskell, and corresponds to the attribute \emph{Listable} in Mathematica.
\end{eg}

Here is another class of examples which are widely used in the applications of category theory: forgetful functors. These are functors which intuitively ``forget the extra structures''. Let's see what this means by examples.

\begin{eg}[topology]\label{forgetfulfunctor}
 Consider the following functor $U:\cat{Top}\to\Set$. 
 \begin{itemize}
  \item On objects, it maps a topological space $X$ to the \emph{underlying set} $X$. That is, it considers $X$ only as a set now, ``forgetting'' the topology. (The letter $U$ comes from ``underlying''.)
  \item On morphisms, it maps a continuous function $f:X\to Y$ to the \emph{underlying function} $f:X\to Y$, considered now only a functions between sets, ``forgetting'' the continuity. 
 \end{itemize}
 Here are some things to keep in mind:
 \begin{itemize}
  \item A continuous function is in particular a function, so we can always do this. However, not every function is continuous: therefore, the functions between the \emph{sets} $X$ and $Y$ may in general be more than those obtained by forgetting the continuity. 
  \item One can define many possible topologies on the same set. So, in general, the ``underlying set'' assignment may map different spaces to the same set, it is many-to-one (up to isomorphism). 
 \end{itemize}
\end{eg}

This idea of ``forgetting the structure'' can be done for many structures, not just topologies. 

\begin{ex}[several fields]
 Define the following forgetful functors:
 \begin{itemize}
  \item $\Grp\to\Set$, forgetting the group structure;
  \item $\cat{Ring}\to\Grp$, forgetting the multiplication (but keeping the addition);
  \item Let $\cat{TopGrp}$ be the category of topological groups and continuous group homomorphisms. Define forgetful functors $\cat{TopGrp}\to\cat{Top}$ and $\cat{TopGrp}\to\Grp$ forgetting the group structures and the topology, respectively.
 \end{itemize}
\end{ex}

In category theory, many ideas have intuitively to do with \emph{structures}, \emph{preserving structures}, and \emph{forgetting structures}. The idea of a forgetful functor is a way of making this intuition rigorous. 

For the people with a more pure math background, here is a classic example of a functor: the fundamental group. The construction is quite long, and the details will be given as exercise for who is interested. (This is difficult to people without a background in topology. If you cannot solve this exercise, but you still would like to know about the fundamental group, an excellent reference is \cite{hatcher}.)

\begin{eg}[algebraic topology]\label{fundamentalgroup}
 Let $\cat{Top}_*$ be the category of \emph{pointed topological spaces}, defined as follows.
 \begin{itemize}
  \item Objects are topological spaces with a distinguished point, that is, pairs $(X,x)$ where $X$ is a topological space and $x$ is a point of $X$, usually called the \emph{base point}.
  \item A morphism $f:(X,x)\to(Y,y)$ is a continuous map $f:X\to Y$ which preserves the base point, that is, such that $f(x)=y$. 
 \end{itemize}
 
 Let now $(X,x)$ be a pointed topological space. A \emph{loop in $X$ based at $x$} is a continuous function $l:[0,1]\to X$ such that $f(0)=f(1)=x$. Intuitively, this looks indeed like a loop inside $X$ from the point $x$ to itself. 
 
 Consider now loops $l,m:[0,1]\to X$ at $x$. The \emph{concatenation of loops} $lm$ is the loop at $x$ given as follows. First we walk along the loop $l$, and then we walk along the loop $m$. This takes twice the time, that is, this gives a function $[0,2]\to X$ rather than $[0,1]\to X$. To have a function $[0,1]\to X$, then, we have to walk twice as fast. Formally, $lm:[0,1]\to X$ is the loop at $x$ given by:
 $$
 lm(t) \; \coloneqq  \; \begin{cases}
                l(2t) & 0\le t \le 1/2 \\
                m(2t - 1) & 1/2 < t \le 1 .
                \end{cases}
 $$
 
 A \emph{homotopy} between the loops $l,m:[0,1]\to X$ at $x$ is a continuous map $h:[0,1]\times[0,1]\to X$ such that:
 \begin{itemize}
  \item For each $s\in [0,1]$, the map $t\mapsto h(s,t)$ is a loop in $X$ based at $x$;
  \item The map $h$ is equal to $l$ at $s=0$ and equal to $m$ at $s=1$. That is, for each $t\in [0,1]$, $h(0,t)=l(t)$ and $h(1,t)=m(t)$.
 \end{itemize}
 Intuitively, $h$ is a way to deform the loop $l$ continuously into the loop $m$, while keeping the base points fixed. 
 If there exists a homotopy $h$ between $l$ and $m$ we say that $l$ and $m$ are \emph{homotopic}. This is an equivalence relation (see the next exercise), and the equivalence classes are called \emph{homotopy classes}. We denote the space of homotopy classes by $\pi_1(X,x)$
 
 The concatenation of loops induces a concatenation between the homotopy classes which equips $\pi_1(X,x)$ with a group structure. The unit is given by the constant loop at $x$, and inverses are given by ``walking the loop backwards'' (again, see the next exercise). We call therefore $\pi_1(X,x)$ the \emph{fundamental group of $X$ at $x$}. 
 
 Let now $f:(X,x)\to(Y,y)$ be a base point-preserving continuous function. We can map a loop at $x$ to a loop of $y$ by just applying $f$ to it, that is, given a loop $l$ at $x\in X$, we form the loop
 $$
 \begin{tikzcd}
  \left[0,1\right] \ar{r}{l} & X \ar{r}{f} & Y . 
 \end{tikzcd}
 $$
 Intuitively, this applies $f$ to each point of the loop $l$, and since $f$ is continuous, the resulting points in $Y$ will form again a loop, based at $y$. We then get a function from loops at $x$ in $X$ to loops at $y$ in $Y$. This function respects homotopy equivalence (see the next exercise), and therefore it induces a map between the equivalence classes, $\pi_1(X,x) \to \pi_1(Y,y)$. We denote this resulting map $\pi_1(f)$.
 
 The assignment given by $(X,x)\mapsto \pi_1(X,x)$ and $f\mapsto \pi_1(f)$ is a functor $\cat{Top_*}\to\Grp$.
\end{eg}

\begin{ex}[algebraic topology]
In the notation of the previous example,
\begin{itemize}
 \item Show that homotopy of loops is an equivalence relation.
 \item Show that the concatenation of loops up to homotopy makes $\pi_1(X,x)$ a group. (Hint: to form inverses, walk the loop backward. Why is it an inverse, up to homotopy?)
 \item Show that if $l$ and $m$ are homotopic, then applying $f$ to both gives homotopic loops. Conclude that $f$ will induce a well-defined mapping $\pi_1(f): \pi_1(X,x) \to \pi_1(Y,y)$.
 \item Show, moreover, that the induced map $\pi_1(f): \pi_1(X,x) \to \pi_1(Y,y)$ is a group homomorphism. 
 \item Verify the functoriality axioms and conclude that $\pi_1:\cat{Top_*}\to\Grp$ is a functor.
\end{itemize}
\end{ex}

\begin{ex}[graph theory, algebraic topology]
 Can one construct a similar functor on a category of graphs?
\end{ex}

In different fields of mathematics, the functoriality condition $F(g\circ f) = Ff\circ Fg$ appears under different names. One name under which this condition is known is \emph{chain rule}. This is the case, for example, of the chain rule of derivatives.

The usual \emph{derivative} of calculus (or \emph{differential}, or \emph{gradient}, or \emph{Jacobian matrix}) can be thought of as a \emph{linear approximation} of a differentiable function $f:X\to Y$ in the neighborhood of a point $x$. Linear maps live between vector spaces, therefore, in order to have a linear map, we need to first replace our original spaces $X$ and $Y$ with vector spaces, and then replace $f$ with a linear map. This is precisely what we can do with a functor: we assign to each space $X$ a vector space, and to each map $f$ a linear map, the derivative. 
There are many ways of constructing this in practice. Here is one.

\begin{eg}[calculus]\label{derivative}
 Define the category of \emph{pointed Euclidean spaces} $\cat{Euc_*}$ as follows.
 \begin{itemize}
  \item As objects, we take Euclidean spaces ($\R^n$ for different $n\in \N$) with a distinguished point $x\in \R^n$. Let's denote these objects by $(\R^n,x)$.
  \item As morphisms $f:(\R^n,x)\to(\R^m,y)$ we take smooth (i.e.~differentiable infinitely many times) functions $\R^n\to \R^m$ such that $f(x)=y$. 
 \end{itemize}
 These correspond to the functions that we want to differentiate, at the point at which we want to take the differential. 
 
 The derivative is now a functor $D:\cat{Euc_*}\to\Vect$ defined in the following way.
 \begin{itemize}
  \item On objects, it maps $(\R^n,x)$ to $\R^n$ (now seen as a vector space).
  \item On morphisms, it maps $f:(\R^n,x)\to(\R^m,y)$ to the derivative $Df|_x$ of the function $f$ at the point $x$, which is a linear function $\R^n\to\R^m$ (usually represented by the Jacobian matrix).
 \end{itemize}
 The intuition is that on objects it maps $(\R^n,x)$ to the space of ``vectors starting at $x$'', which is canonically isomorphic to $\R^n$. On morphisms, the differential gives the map between vectors at $x$ and vectors at $f(x)\in \R^m$ which is given by the differential.
 This is functorial because:
 \begin{itemize}
  \item The derivative of the identity map $(\R^n,x)\to(\R^n,x)$ is just the identity of $\R^n$ (the identity matrix).
  \item Consider now composable maps
  $$
  \begin{tikzcd}
   (\R^n,x) \ar{r}{f} & (\R^m,y) \ar{r}{g} & (R^p,z) .
  \end{tikzcd}
  $$
  We have that, \emph{by the chain rule of derivatives},
  $$
  D(g\circ f)|_x \;=\; Dg|_{y} \circ Df|_x ,
  $$
  where the composition on the right is the composition of linear maps. Equivalently, in components,
  $$
  \dfrac{\partial(g\circ f)^k}{\partial x^i} \;=\; \sum_{j=1}^m \dfrac{\partial g^k}{\partial y^j}\,\dfrac{\partial f^j}{\partial x^i} 
  $$
  for each $i=1,\dots,n$ and $k=1,\dots,p$.
 \end{itemize}
 Therefore, the derivative is a functor, with functoriality guaranteed by the chain rule.
\end{eg}

The readers with a background in differential geometry can also try to do the same with manifolds. The intuition is the very same, replacing the spaces and the maps with \emph{linear approximations}.

\begin{ex}[differential geometry]
 Define the category of \emph{pointed manifolds} $\cat{Mfd_*}$ as follows.
 \begin{itemize}
  \item As objects, we take smooth manifolds $M$ with a distinguished point $x\in M$. Let's denote these objects by $(M,x)$.
  \item As morphisms $f:(M,x)\to(N,y)$ we take smooth maps $M\to N$ such that $f(x)=y$. 
 \end{itemize}
 Consider the following assignment.
 \begin{itemize}
  \item On objects, it maps $(M,x)$ to the tangent space $T_xM$, which is a vector space.
  \item On morphisms, it maps $f:(M,x)\to(N,y)$ to the derivative $Df|_x$ of the function $f$ at the point $x$, which is a linear function $T_xM\to T_yN$.
 \end{itemize}
 Show that this is a functor $\cat{Mfd_*}\to\Vect$.
\end{ex}

\begin{ex}[differential geometry; difficult!]
 Can you construct a functor which takes the tangent \emph{bundle} of a space, rather than the tangent space at a single point?
\end{ex}

For the readers with a background on information theory or dynamical systems, here is another exercise. 
\begin{ex}[information theory; difficult!]
It is well-known that Shannon entropy also satisfies a sort of chain rule: for random variables $X,Y$ on finite state spaces,
$$
H(X,Y) \;=\; H(X) + H(Y|X) .
$$
Let now $\cat{FinProb}$ be the category of \emph{finite probability spaces and measure-preserving maps}, that is,
\begin{itemize}
 \item An object $(X,p)$ consists of a finite set $X$ equipped with a probability measure $p$;
 \item A morphism $f:(X,p)\to(Y,q)$ is a function $f:X\to Y$ such that $f_*p=q$ (the latter denotes the pushforward of measures, see \Cref{egprob}). 
\end{itemize}
Show now that, thanks to the chain rule, entropy is functorial on $\cat{FinProb}$. The main question is: \emph{to which category}?
\end{ex}

\subsection{Functors and cocycles}

In some fields of mathematics functoriality appears under the name of \emph{cocycle condition}, or more specifically, \emph{1-cocycle condition}. The reason for this terminology comes from algebraic topology. 
The intuitive idea is that a 1-cocycle is a map which only depends on the endpoints. For example, given a 2-simplex (triangle) $$
\begin{tikzcd}
A \ar[-]{dr}[swap]{h} \ar[-]{r}{f} & B \ar[-]{d}{g} \\
& C
\end{tikzcd}
$$
in a simplicial complex, a 1-cocycle $F$ with coefficients in an abelian group (say, $\Z$) satisfies the identity  
$$
Ff + Fg \; = \;Fh.
$$
This is analogous to a functor in the following sense: suppose that we have a commutative triangle in a category $\cat{C}$
$$
\begin{tikzcd}
A \ar{dr}[swap]{h} \ar{r}{f} & B \ar{d}{g} \\
& C
\end{tikzcd}
$$
and a functor $F:\cat{C}\to \B \Z$. Then we have
$$
Ff \circ Fg \; = \; F(g\circ f) \;= \;Fh.
$$
Both condition can be intuitively interpreted as ``$F$ only depends on the endpoints''.
This intuition can be made precise and concrete, and the resulting correspondence between functors and cocycles is actually very deep. This is however quite advanced in its full generality, well outside the scope of the present lecture. If you are interested, you can look at the \href{http://ncatlab.org/nlab/show/cocycle}{nLab page about cocycles (link)}, maybe first starting at \href{https://ncatlab.org/nlab/show/motivation+for+sheaves\%2C+cohomology+and+higher+stacks}{this introductory page (link)}.
What is important is to keep in mind that \emph{a functor is similar to a 1-cocycle}. That's why in different fields of math people call ``cocycle conditions'' some conditions that make some mappings analogous to cocycles. More than often, those mappings are actually \emph{functors}.

The term ``cocycle condition'' is also used outside geometry and topology: for example, in stochastic processes. In particular, we have that \emph{the cocycle condition for a Markov process} can be seen as an instance of functoriality. 
\begin{eg}[probability]
 Let $X$ be a measurable space. A \emph{continuous-time Markov process} on $X$ is a collection of Markov kernels $K_{s,t}:X\to X$ for each $s\le t\in \R$ satisfying the following conditions.
 \begin{itemize}
  \item Unitarity: For each $t\in \R$, $K_{t,t}=\id_X$.
  \item Cocycle condition: for each $r\le s \le t\in \R$, 
  $$
  K_{r,s} \circ K_{s,t}\;=\; K_{r,t} .
  $$
 \end{itemize}
 These conditions are precisely saying that the assignment $(r,s)\mapsto K_{r,s}$ is a functor from $(\R,\le)$ (which is a partial order, hence a category) into the category of measurable spaces and Markov kernels. The same can be said for discrete time processes, for processes defined only for positive time, and so on.
\end{eg}
Strictly speaking, we don't really have a cohomology theory for Markov processes (at least not in an obvious way). The term ``cocycle'' here is rather used because it's \emph{analogous} to a cocycle. But actually it is a \emph{functor}, and functors are analogous to 1-cocycles. 

People with a background in geometry and topology may find the following example helpful: the (Čech) cocycle condition for vector and principal bundles is also a form of functoriality.
\begin{eg}[algebraic topology, differential geometry]\label{1.3.30}
 Let $X$ be a topological space, and let $\mathcal{U}=(U_i)_{i\in I}$ be an open cover of $X$. The \emph{Čech groupoid} $\cat{C}(\mathcal{U})$ of the cover $\mathcal{U}$ is the following groupoid.
 \begin{itemize}
  \item The objects of $\cat{C}(\mathcal{U})$ are the open sets $U_i$ of the cover $\mathcal{U}$.
  \item There is an isomorphism $U_i\to U_j$ if and only if $U_i$ and $U_j$ have nonempty intersection.
  \item Given three sets $U_i$, $U_j$ and $U_k$ with nonempty pairwise intersection, the diagram of isomorphisms
  \begin{equation}\label{cechcocycle}
  \begin{tikzcd}
   U_i \ar{rr}{\cong} && U_j \ar{dl}{\cong} \\
   & U_k \ar{ul}{\cong}
  \end{tikzcd}
  \end{equation}
  commutes if and only if the triple intersection $U_i\cap U_j \cap U_k$ is nonempty.
 \end{itemize}
 The definition of vector bundle in terms of local trivializations and transition functions can be seen as a functor $F:\cat{C}(\mathcal{U})\to\Vect$. All the objects $FU_i$ in the image of $F$ are isomorphic whenever $X$ is connected, and correspond to the \emph{fiber} of the bundle. 
 To each morphism of $\cat{C}(\mathcal{U})$, i.e.~to each intersection $U_i\cap U_j$, there corresponds an invertible linear map $F_{ij}:FU_i\to FU_j$, or  \emph{transition function}, with inverse $F_{ji}$. By functoriality, the transition functions have to satisfy:
 \begin{itemize}
  \item Identity: for $i=j$, the transition function $F_{ii}$ has to be the identity;
  \item Composition: for every commutative diagram in the form \eqref{cechcocycle}, the $F$-image of that diagram has to commute as well. In other words, whenever a triple intersection $U_i\cap U_j\cap U_k$ is nonempty, then $F_{jk}\circ F_{ij}=F_{ik}$. This is usually called the \emph{cocycle condition for transition functions}.  
 \end{itemize}
Note that a vector bundle is not simply defined as an open cover $\mathcal{U}$ together with a functor $\cat{C}(\mathcal{U})\to\cat{Vect}$, since different choices of covers may still give the same bundle. To obtain the definition of vector bundle, you have to take equivalence classes of such structures (via the usual refinements of covers).
\end{eg}
In this case there actually \emph{is} a cohomology theory where this construction is a cocycle, namely \emph{Čech cohomology}. However, to make the analogy precise, one has to take a non-Abelian version of this theory, and so, on the surface, it does not look like the usual cohomology (if you are interested, see \href{http://ncatlab.org/nlab/show/nonabelian+cohomology}{the nLab entry on nonabelian cohomology}).

\subsection{Functors, mono and epi}\label{funepimono}

We have said that functors preserve relations between objects. Let's now see what they preserve more concretely, and also what they do \emph{not} preserve. 

Let's start with a simple example. Let $F:\cat{C}\to\cat{D}$ be a functor, and consider the following diagram in $\cat{C}$:
$$
\begin{tikzcd}
 X \ar[shift left]{r}{f} \ar[shift right,swap]{r}{g} & Y
\end{tikzcd}
$$
If we apply $F$, we get a diagram in $\cat{D}$:
$$
\begin{tikzcd}
 FX \ar[shift left]{r}{Ff} \ar[shift right,swap]{r}{Fg} & FY
\end{tikzcd}
$$
Now, 
\begin{remark}
 If the first diagram commutes (in $\cat{C}$), then the second diagram commutes (in $\cat{D}$).
\end{remark}
The reason is actually trivial: the first diagram commutes if and only if $f=g$. But that implies that $Ff=Fg$, which means exactly that the second diagram commutes.
This can be done for each pair of objects in a diagram, and for each pair of arrows between them. Therefore,
\begin{cor}
 Functors preserve commutative diagrams. If a diagram commutes in $\cat{C}$, its image under a functor $F:\cat{C}\to\cat{D}$ commutes in $\cat{D}$.
\end{cor}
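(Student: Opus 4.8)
The plan is to reduce the statement to the two-parallel-arrows case already handled in the preceding Remark, by unwinding the definition of a commutative diagram: commutativity is the assertion that, for every ordered pair of vertices, all composites along directed paths joining them coincide. Since a functor turns a composite path into the composite of the images of its edges, equality of two parallel composites in $\cat{C}$ will be transported verbatim to $\cat{D}$.

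Concretely, I would fix a commutative diagram in $\cat{C}$ and two of its vertices $X$ and $Y$. Let $f_n\circ\cdots\circ f_1$ and $g_m\circ\cdots\circ g_1$ be the composites along any two directed paths in the diagram from $X$ to $Y$; by commutativity these are equal morphisms of $\cat{C}$. Applying $F$ and iterating the compositionality axiom $F(b\circ a)=Fb\circ Fa$ (with $F(\id)=\id$ taking care of the implicit identity edges), one gets $Ff_n\circ\cdots\circ Ff_1=F(f_n\circ\cdots\circ f_1)$, and similarly for the $g$-path. Since $f_n\circ\cdots\circ f_1=g_m\circ\cdots\circ g_1$, equal morphisms have equal images under $F$, so $Ff_n\circ\cdots\circ Ff_1=Fg_m\circ\cdots\circ Fg_1$. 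As $X$ and $Y$ were arbitrary vertices, the image diagram commutes in $\cat{D}$.

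The only thing to check carefully — and it is pure bookkeeping — is that $F$ really does send a directed path in the source diagram to a directed path of the same shape in the image diagram: if $t(f_i)=s(f_{i+1})$ then $t(Ff_i)=F\big(t(f_i)\big)=F\big(s(f_{i+1})\big)=s(Ff_{i+1})$, so the composites above are defined, and the induction on path length via compositionality is legitimate. There is no genuine obstacle here; all the content is the Remark together with the observation that ``commutes'' is a universally quantified statement over pairs of vertices and pairs of parallel paths, each instance of which is exactly the two-arrow case.
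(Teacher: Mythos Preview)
Your proof is correct and follows the same approach as the paper: reduce to the two-parallel-arrows Remark by quantifying over pairs of vertices and pairs of paths. The paper's own argument is the single sentence ``This can be done for each pair of objects in a diagram, and for each pair of arrows between them''; you have simply made explicit the one step the paper leaves tacit, namely that iterated compositionality gives $F(f_n\circ\cdots\circ f_1)=Ff_n\circ\cdots\circ Ff_1$, so that the implicit composite arrows in the image diagram really are the $F$-images of the implicit composites in the source diagram.
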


This is trivial, but very important. Here is another very important fact.

\begin{prop}
 Let $F:\cat{C}\to\cat{D}$ be a functor, and suppose that we have an isomorphism 
 $$
 \begin{tikzcd}
  X \ar[shift left]{r}{f} & Y \ar[shift left]{l}{g}
 \end{tikzcd}
 $$
 in $\cat{C}$. Then the diagram 
 $$
 \begin{tikzcd}
  FX \ar[shift left]{r}{Ff} & FY \ar[shift left]{l}{Fg}
 \end{tikzcd}
 $$
 is an isomorphism in $\cat{D}$. In particular, $F$ maps isomorphic objects of $\cat{C}$ to isomorphic objects of $\cat{D}$.
\end{prop}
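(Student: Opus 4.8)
The plan is to unwind the definition of isomorphism (\Cref{defiso}) and then apply the two functoriality axioms directly. By hypothesis we have $g\circ f=\id_X$ and $f\circ g=\id_Y$ in $\cat{C}$. First I would apply $F$ to the equation $g\circ f=\id_X$: since $F$ is a functor, compositionality gives $F(g\circ f)=Fg\circ Ff$, and unitality gives $F(\id_X)=\id_{FX}$, so we conclude $Fg\circ Ff=\id_{FX}$ in $\cat{D}$.

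Next I would do the symmetric computation with the equation $f\circ g=\id_Y$, obtaining in the same way $Ff\circ Fg=\id_{FY}$. These are exactly the two conditions in \Cref{defiso} for the pair $(Ff,Fg)$, so $Ff$ and $Fg$ constitute an isomorphism between $FX$ and $FY$ in $\cat{D}$, with $Fg=(Ff)^{-1}$.

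For the final sentence, suppose $X$ and $Y$ are isomorphic objects of $\cat{C}$. By definition there exists an isomorphism between them, i.e.\ morphisms $f,g$ as above; by what we just proved, $Ff$ and $Fg$ form an isomorphism between $FX$ and $FY$, hence $FX$ and $FY$ are isomorphic in $\cat{D}$.

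I do not expect any real obstacle here: the statement is an immediate consequence of the two functoriality axioms, and the only thing to be careful about is to invoke \emph{both} defining equations of an isomorphism separately (just as emphasized after \Cref{altdefiso}), rather than assuming one follows from the other.
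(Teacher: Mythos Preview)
Your proof is correct and follows essentially the same approach as the paper: unwind the two equations defining an isomorphism, apply $F$ to each, and use compositionality and unitality to conclude that $(Ff,Fg)$ satisfies the two conditions of \Cref{defiso}. The paper's version is slightly terser (it does not spell out the final sentence about isomorphic objects), but the argument is the same.
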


\begin{proof}
 By definition of isomorphism, we have that 
 $$
 g\circ f = \id_X \quad \mbox{and} \quad f\circ g = \id_Y.
 $$ 
 Applying $F$ we get  
 $$
 F(g\circ f) = F(\id_X) \quad \mbox{and} \quad F(f\circ g) = F(\id_Y) ,
 $$ 
 which by functoriality of $F$ gives
 $$
 Fg\circ Ff = \id_{FX} \quad \mbox{and} \quad Ff\circ Fg = \id_{FY} .
 $$ 
 This means precisely that the pair $(Ff,Fg)$ is an isomorphism between $FX$ and $FY$.
\end{proof}

Recall that split monomorphisms and split epimorphisms satisfy by definition only one of the two conditions of isomorphism (Definitions \ref{defsplitmono} and \ref{defsplitepi}), namely, $g\circ f = \id_X$ means that $f$ is split mono, and $g$ is split epi. Therefore, the same proof gives us also the following result.

\begin{prop}
 Let $F:\cat{C}\to\cat{D}$ be a functor, and let $m:X\to Y$ be a split monomorphism of $\cat{C}$, with retraction $r:Y\to X$. Then $Fm$ is a split monomorphism of $\cat{D}$, with retraction $Fr$.
 
 Dually, let $e:X\to Y$ be a split epimorphism of $\cat{C}$, with section $s:Y\to X$. Then $Fe$ is a split epimorphism of $\cat{D}$ with section $Fs$.
\end{prop}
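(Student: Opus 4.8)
The plan is to imitate, almost verbatim, the one-line computation used in the proof of \Cref{isoismono} and in the proof of the preceding proposition about isomorphisms — only now I will use just one of the two triangle identities, since a split monomorphism satisfies only one of the two conditions defining an isomorphism. Concretely, for the first assertion I would start from the defining equation of a split mono with retraction: $r\circ m = \id_X$. Applying $F$ to both sides gives $F(r\circ m) = F(\id_X)$, and then the two functoriality axioms (compositionality on the left, unitality on the right) rewrite this as $Fr\circ Fm = \id_{FX}$. By \Cref{defsplitmono}, this equation says precisely that $Fr:FY\to FX$ is a retraction of $Fm:FX\to FY$, so $Fm$ is a split monomorphism with splitting $Fr$, which is what we wanted.

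For the dual assertion about split epimorphisms, I would observe that it is formally dual and hence follows for free by the duality principle discussed earlier (a statement holds in $\cat{C}$ iff the dual statement holds in $\cat{C}^\op$, together with the fact that $e:X\to Y$ is split epi in $\cat{C}$ iff $e^\op$ is split mono in $\cat{C}^\op$). Alternatively, for readers not yet comfortable with invoking duality, I could just repeat the computation: from $e\circ s = \id_Y$ apply $F$ and use functoriality to get $Fe\circ Fs = \id_{FY}$, which by \Cref{defsplitepi} exhibits $Fs$ as a section of $Fe$.

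There is essentially no obstacle here: the whole content is the interplay between the functoriality axioms and the single equation defining the splitting, so the proof is a two-line calculation. The only thing worth being slightly careful about is the bookkeeping of domains and codomains — that $Fr$ really does have the right source and target to be a retraction of $Fm$ — but this is immediate from the definition of a functor, which requires $F$ applied to a morphism $X\to Y$ to land in $FX\to FY$. So I would flag that the proof is deliberately short and emphasize that it reuses exactly the computation already seen for isomorphisms, merely dropping one of the two identities.
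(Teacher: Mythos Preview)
Your proposal is correct and matches the paper's approach exactly: the paper does not give a separate proof of this proposition, but simply remarks that split monos and split epis satisfy only one of the two conditions defining an isomorphism, so ``the same proof'' (i.e., apply $F$ to the single equation and use functoriality) works. Your write-up carries out precisely that computation.
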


In particular, functors map split monomorphisms to split monomorphisms and split epimorphisms to split epimorphisms.
However,

\begin{caveat}
 If a monomorphism $f$ is not split, its image $Ff$ under a functor $f$ may not be a monomorphism. The same is true for epimorphisms.
\end{caveat}

Here is a simple counterexample.
\begin{eg}[algebra]
 The inclusion map $i:(\N,+)\hookrightarrow(\Z,+)$ is an epimorphism of the category $\cat{Mon}$ of monoids and monoid homomorphisms. Take now the forgetful functor $U:\cat{Mon}\to\Set$ mapping a monoid to the underlying set and a monoid homomorphism to the underlying function. The function $Ui:\N\to\Z$ is not surjective, so it is not an epimorphism of $\Set$.
\end{eg}

Another example, which is quite helpful in practice, is the following. We encourage readers unfamiliar with graph theory to try to understand this example and the ones that will follow on the same topic: graphs and categories are very intimately related.
\begin{eg}[graph theory]\label{graphcomponents}
 Let $\cat{Graph}$ be the category of undirected graphs and functions between the vertices preserving adjacency. There is a functor $\pi_0:\cat{Graph}\to\Set$ which:
 \begin{itemize}
  \item To each graph $G$ it assigns the set of the connected components of $G$, denoted by $\pi_0(G)$;
  \item To each morphism $f:G\to H$ it assigns the induced maps between the connected components, in the following way. For each connected component $[x]\in\pi_0(G)$ take a representative vertex $x\in[x]$. Map it with $f$ to $f(x)\in H$, and take its connected component $[f(x)]\in\pi_0(H)$. This is well-defined, since if we take another $x'\in[x]$, then there is a path of edges $x\to x'$, and since $f$ preserves adjacency, there is a path of edges $f(x)\to f(x')$. Therefore $[f(x)]=[f(x')]$. 
 \end{itemize}
 
 Consider now the morphism $f$ between the following two graphs:
 \begin{center}
 \begin{tikzpicture}[baseline=(current  bounding  box.center),>=stealth]
  \node[bullet] (tl) at (0,1) {};
  \node[bullet] (bl) at (0,0) {};
  \node[bullet] (tr) at (1,1) {};
  \node[bullet] (br) at (1,0) {};
  
  \draw[relation] (tl) -- (bl);
  \draw[relation] (tr) -- (br);
  
  \node[label=below:$G$] () at (0.5,-0.5) {};

  \node[bullet] (tl2) at (4,1) {};
  \node[bullet] (bl2) at (4,0) {};
  \node[bullet] (tr2) at (5,1) {};
  \node[bullet] (br2) at (5,0) {};
  
  \draw[relation] (tl2) -- (bl2);
  \draw[relation] (tr2) -- (br2);
  \draw[relation] (bl2) -- (br2);
  
  \node[label=below:$H$] () at (4.5,-0.5) {};

  \draw[function,dotted] (tl) to [bend left] (tl2);
  \draw[function,dotted] (tr) to [bend left] (tr2);
  \draw[function,dotted] (bl) to [bend right] (bl2);
  \draw[function,dotted] (br) to [bend right] (br2);
  
  \node[label=below:$f$] () at (2.5,-0.5) {};
 \end{tikzpicture}
 \end{center}
 The map $f:G\to H$ is injective, and injective maps preserving adjacency are monomorphisms of $\cat{Graph}$. However, the induced map $\pi_0(f):\pi_0(G)\to\pi_0(H)$ is the following function:
 \begin{center}
 \begin{tikzpicture}[baseline=(current  bounding  box.center),>=stealth]
  \node[bullet] (tl) at (0,0) {};
  \node[bullet] (bl) at (1,0) {};
  
  \node[label=below:$\pi_0(G)$] () at (0.5,-0.5) {};

  \node[bullet] (tl2) at (4.5,0) {};
  
  \node[label=below:$\pi_0(H)$] () at (4.5,-0.5) {};

  \draw[function,dotted] (tl) to [bend left] (tl2);
  \draw[function,dotted] (bl) to [bend right] (tl2);

  \node[label=below:$\pi_0(f)$] () at (2.5,-0.5) {};
 \end{tikzpicture}
 \end{center}
 which is clearly not injective. Therefore, even if $f$ is mono, $\pi_0(f)$ is not.
\end{eg}

We can use this fact to our own advantage. Suppose that $f$ were \emph{split} mono. Then $\pi_0(f)$ would have been split mono too. The fact that $\pi_0(f)$ is not mono (and so in particular not split mono) necessarily means that $f$ cannot be split. In other words,

\begin{cor}
 Let $F:\cat{C}\to\cat{D}$ be a functor. Let $f:X\to Y$ be mono (resp.~epi). Suppose that $Ff$ is \emph{not} mono (resp.~epi). Then $f$ is not split mono (resp.~split epi). In other words, $f$ does not admit a retraction (resp.~section).
\end{cor}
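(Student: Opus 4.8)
The plan is to argue by contraposition, using two facts established earlier in the excerpt: first, that functors send split monomorphisms to split monomorphisms (and split epimorphisms to split epimorphisms), namely the proposition right after \Cref{funepimono}; and second, that every split monomorphism is a monomorphism, i.e.~\Cref{splitmonoismono} (together with its dual for epimorphisms).

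First I would treat the mono case. Assume, for contradiction, that $f:X\to Y$ \emph{is} split mono, with some retraction $r:Y\to X$ satisfying $r\circ f=\id_X$. Applying $F$ and using functoriality gives $Fr\circ Ff=F(r\circ f)=F(\id_X)=\id_{FX}$, so $Ff$ is split mono with retraction $Fr$. By \Cref{splitmonoismono}, $Ff$ is then a monomorphism — contradicting the hypothesis that $Ff$ is not mono. Hence $f$ cannot be split mono, which is exactly the claim that $f$ admits no retraction.

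The epi case is obtained by dualizing: assume $f$ is split epi with section $s:Y\to X$, so $f\circ s=\id_Y$; then $Ff\circ Fs=F(\id_Y)=\id_{FY}$ shows $Ff$ is split epi, hence epi by the dual of \Cref{splitmonoismono}, contradicting the assumption. Equivalently, one may just invoke the opposite-category principle and the already-proven mono statement applied to $F^\op:\cat{C}^\op\to\cat{D}^\op$.

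There is essentially no obstacle here: the only thing one must be careful about is the direction of the implication — we are \emph{not} claiming that $f$ mono forces $Ff$ mono (which is false, as the graph-component example $\pi_0$ shows), but only the weaker contrapositive statement that failure of $Ff$ to be mono rules out $f$ being \emph{split} mono. So the one point to state clearly is that the argument uses the ``split'' hypothesis in an essential way, and that without it the conclusion fails.
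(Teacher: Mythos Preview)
Your proposal is correct and follows essentially the same argument as the paper. The paper does not give a separate formal proof of this corollary; it is presented as an immediate consequence of the two facts you cite (functors preserve split monos/epis, and split monos/epis are monos/epis), with the contrapositive reasoning spelled out in the paragraph just before the statement.
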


This is quite useful in practice: \emph{functors can be used to prove nonexistence of retractions and sections}. Proving that something does not exist is often quite hard, and functors give us a tool to do exactly that. Sometimes one says that functors \emph{detect obstructions to the existence of retractions and sections}. This is the case for example for many functors in homological algebra. 
Let's show how this is used for example in algebraic topology. The following example is a category-theoretical solution to \Cref{circleretract}. 

\begin{eg}[algebraic topology]
 Consider the inclusion map $i:S^1\to D^2$. This is injective and continuous, so it is a monomorphism of $\cat{Top}$. Let's now show that it has no retract. Fix for convenience a point $x\in S^1$ and call $y\coloneqq i(x)\in D^2$ so that $i:(S^1,x)\to (D^2,y)$ is a morphism of the category $\cat{Top}_*$ of pointed topological spaces. Consider now the fundamental group functor $\pi_1:\cat{Top}_*\to\Grp$ (\Cref{fundamentalgroup}). The fundamental group of $S^1$ is (isomorphic to) $(\Z,+)$, as loops are generated by going once around the perimeter, while the fundamental group of $D^2$ is the trivial group $0$, since the disk is contractible. Therefore the map $\pi_1(i):\pi_1(S^1)\to\pi_1(D^2)$ is the zero map $\Z\to 0$, sending everything to zero. This map is not injective, so it is not a monomorphism of $\Grp$. Therefore $i$ is not split mono, i.e.~it has no retract. 
\end{eg}

The same can be done for spheres and discs of higher dimension, using higher homotopy groups. The nonexistence of this retraction can be used, for example, to give a very short proof of Brouwer's fixed point theorem.

\subsection{What is not a functor?}

Of course, \emph{many} things are not functors. 
Here are however two examples of a construction which \emph{may seem functorial at a first look, but is not}. Given a set $X$, we can assign to it its automorphism group $\Aut(X)$, the group of all invertible functions from $X$ to itself. (If $X$ is finite this is usually called the \emph{permutation group} or \emph{symmetric group} of $X$.)

One may be tempted to think of $\Aut$ as a functor $\Set\to\Grp$, but there is no nontrivial way to make this construction functorial. The reason is what it would do on \emph{morphisms}: a function $f:X\to Y$ does not induce a well-defined mapping $\Aut(X)\to\Aut(Y)$.\footnote{Of course, one could always pick the map sending everything to the identity of $\Aut(Y)$, but there is no \emph{nontrivial} way.} 

Just as well, the set of endomorphisms $\End(X)$, i.e.~the functions from $X$ to itself, is a monoid. Again, this does not give a functor $\Set\to\cat{Mon}$.

This is true in most categories (where $\Aut$ and $\End$ are the isomorphisms and endomorphisms in the category). There are categories where this construction can be made functorial in a convenient way, but in the generic case this does not work, so $\Aut$ and $\End$ are not functors.

\begin{ex}[group theory]
 Take the category of finite sets and \emph{injective} maps. Can you make $\Aut$ functorial in a nontrivial way on this category?
\end{ex}

\subsection{Contravariant functors and presheaves}\label{presheaf}

Some constructions in mathematics seem to be functorial, except that \emph{they reverse the direction of the arrows and the order of composition}. Here is an example. Take the category of sets (or vector spaces, or topological spaces, et cetera), and fix an object, for example $\R$. Given a set (or object) $X$, we can form the \emph{space of functions} $X\to\R$. This is also a set, namely $\Hom_\Set(X,\R)$. To make this functorial, we should say what this does on morphisms. So, given a function $f:X\to Y$, we want to construct a function $\Hom_\Set(X,\R)\to \Hom_\Set(Y,\R)$ induced by $f$. However, there is no natural choice of such a function. What we \emph{can} do, instead, is induce a function the other way, namely $\Hom_\Set(Y,\R)\to \Hom_\Set(X,\R)$, in the following way. Given $g\in \Hom_\Set(Y,\R)$, that is, $g:Y\to\R$, we can \emph{precompose it} with $f$ to obtain the function $g\circ f:X\to \R$,
$$
\begin{tikzcd}
 X \ar{r}{f} & Y \ar{r}{g} & \R .
\end{tikzcd}
$$
This is assignment is like a functor $\Set\to\Set$, except that \emph{it reverses all the arrows}. Therefore, it is not quite a functor $\Set\to\Set$, but rather, a functor $\Set^\op\to\Set$. 

\begin{ex}[important!]\label{1.3.40}
 Prove that this indeed is a functor $\Set^\op\to\Set$. How is composition of arrows preserved?
\end{ex}

More in general, given categories $\cat{C}$ and $\cat{D}$ a functor $\cat{C}^\op\to\cat{D}$ behaves like a usual functor, except that the direction of the arrows is reversed. 

\begin{ex}\label{1.3.41}
 You may be wondering, why $\cat{C}^\op\to\cat{D}$ and not $\cat{C}\to\cat{D}^\op$? Prove that the two concepts are actually the same (modulo notation). 
\end{ex}

\begin{remark}
 In some areas of mathematics, instead of a functor $\cat{C}^\op\to\cat{D}$, one speaks of a \emph{contravariant functor} $\cat{C}\to\cat{D}$ (and the ordinary functors are called \emph{covariant functors}). This is analogous to what happens in linear algebra, where one could alternatively speak of covariant and contravariant vectors, or of vectors belonging to the dual space. 
 In category theory it is customary not to use the word ``contravariant'' except informally, and stick with functors $\cat{C}^\op\to\cat{D}$.
 
 Whichever convention you choose, be consistent: avoid saying things like ``a contravariant functor $\cat{C}^\op\to\cat{D}$'' -- this could lead to confusion.
\end{remark}

\begin{eg}[linear algebra]\label{egdoubledual}
 Consider the category $\Vect$ of vector spaces and linear maps. The \emph{dual space} is a functor $\Vect^\op\to\Vect$ (or, a contravariant functor) constructed in the following way, similar to the function spaces above.
 \begin{itemize}
  \item It assigns to each vector space $V$ its dual space $V^*$, that is, the vector space of linear functionals (i.e.~linear maps) $V\to\R$;
  \item It assigns to each linear map $f:V\to W$ the map $f^*:W^*\to V^*$. The map $f^*$ takes an element $\omega$ of $W^*$, which is a linear functional $\omega:W\to\R$, and gives the linear functional $V\to \R$ (i.e.~element of $V^*$) given by the composition
  $$
  \begin{tikzcd}
  V \ar{r}{f} & W \ar{r}{\omega} & \R .
  \end{tikzcd}
  $$
 \end{itemize}
 This functor reverses all the arrows. We now want to talk about the \emph{double dual}, so we apply the functor again, and this times the arrows are back to the normal direction. That is, the double dual is a functor $\Vect\to\Vect$ which explicitly works as follows.
 \begin{itemize}
  \item It assigns to each vector space $V$ its \emph{double dual space} $V^{**}$, that is, the vector space of linear functionals $V^*\to\R$;
  \item It assigns to each linear map $f:V\to W$ the map $f^{**}:V^{**}\to W^{**}$. 
  This map takes an element $k$ of $V^{**}$, which is a linear functional $k:V^*\to\R$, and gives the linear functional $W^*\to \R$ (i.e.~element of $W^{**}$) given by the composition
  $$
  \begin{tikzcd}
  W^* \ar{r}{f^*} & V^* \ar{r}{k} & \R ,
  \end{tikzcd}
  $$
  where $f^*$ is the dual map to $f$ defined above. 
 \end{itemize}
\end{eg}

Now a very important definition.

\begin{deph}
 Let $\cat{C}$ be a category. A \emph{presheaf} on $\cat{C}$ is a functor $\cat{C}^\op\to\Set$.
\end{deph}

The reason why we pick $\Set$ as the target category is that, in our setting, \emph{the hom-spaces are sets}. That is, given any two objects $X$ and $Y$ of $\cat{C}$, then $\Hom_\cat{C}(X,Y)$ is a \emph{set}.
(Of course, for this to be technically correct, we need to be in a locally small category. Let's assume we are, from now on.)

\begin{remark}
 There are settings, such as algebraic geometry, where one works in categories whose hom-sets admit additional structure. For example in the category of abelian groups the hom-sets can be themselves considered abelian groups. In that case, one may be interested in functors to $\cat{AbGrp}$ instead of $\Set$. 
 In other applications, one may be interested in different structures that one can put on the hom-sets. These additional structures are called \emph{enrichments} and are the topic of \emph{enriched category theory}. In this course we will only study ordinary category theory, where hom-sets are just sets. (If you are interested in enriched category theory, this course can still help you: most concepts in ordinary category theory have analogues in the enriched case.)
\end{remark}

The ``function space functor'' $\Hom_{\Set}(-,\R)$ is for example a presheaf on $\Set$. More generally, given a category $\cat{C}$ and an object $X$ of $\cat{C}$, one can analogously form a presheaf $\Hom_\cat{C}(-,X):\cat{C}^\op\to\Set$. 

Not all presheaves are in this form. Here is a more general example, from basic graph theory. 

\begin{eg}[graph theory]\label{egmultigraph}
 Let $\cat{Par}$ be the category consisting only of the objects and morphisms specified by this diagram of ``parallel arrows''
 $$
 \begin{tikzcd}
  V \ar[shift left]{r}{s} \ar[shift right, swap]{r}{t} & E
 \end{tikzcd}
 $$
 plus the implicit identities at the two objects. A presheaf on $\cat{Par}$, that is, a functor $F:\cat{Par}^\op\to\Set$, consists explicitly of the following.
 \begin{itemize}
  \item Two distinguished sets $FV$ and $FE$;
  \item Two functions $Fs,Ft:FE\to FV$ (mind the direction).
 \end{itemize}
 These can be interpreted as \emph{directed multigraphs}. The set $FV$ is the set of vertices, the set $FE$ is the set of edges, and the maps $Fs$ and $Ft$ associate to each edge its source and target, respectively.
 In this case, between any two vertices there can be multiple edges, and there can be loops. (It looks a bit like a category, but mind that identities and compositions here are missing.)
\end{eg}

Presheaves come originally from algebraic geometry, but are not only useful in algebraic geometry. As the example above shows, they can also be used to understand graphs. This will be of use later on.

Here is another example of presheaf, which historically has been the first, and which still is very important. 

\begin{eg}[topology]\label{presheavesonopen}
 Let $X$ be a topological space. Let $O(X)$ be the topology of $X$, i.e.~the poset of open subsets of $X$, ordered by inclusion. As a poset, $O(X)$ can be thought of as a category. We can form a presheaf $O(X)^\op\to\Set$ in the following way.
 \begin{itemize}
  \item To each open subset $U\in O(X)$ we associate the set of continuous functions $U\to\R$.
  \item To each inclusion $U\subseteq V$, which is a morphism of $O(X)$, we need a mapping from functions $V\to\R$ to functions $U\to\R$. As such mapping we take the restriction: given $g:V\to\R$, we can restrict $g$ to the subset $U\subseteq V$, to get the function $g|_U:U\to \R$. This is again continuous.
 \end{itemize}
This preserves the identities and composition, therefore it is a functor $O(X)^\op\to\Set$, i.e.~a presheaf on $O(X)$. Note that, by construction, this reverses the arrows.
\end{eg}

\begin{remark}
 The term \emph{presheaf} is one of the few words in category theory whose translation to other languages is not obvious. In German the word is \emph{Prägarbe}, in French \emph{prefaisceau}, in Spanish \emph{prehaz}, in Italian \emph{prefascio}. It derives from the word \emph{sheaf}, which is itself inspired by agriculture (such as in \emph{a sheaf of grain}). We will not cover sheaves in this course. 
\end{remark}

\section{Natural transformations}\label{sec_nat}

\begin{deph}
 Let $\cat{C}$ and $\cat{D}$ be categories, and let $F$ and $G$ be functors $\cat{C}\to\cat{D}$. A \emph{natural transformation $\alpha$ from $F$ to $G$}, consists of the following data.
 \begin{itemize}
  \item For each object $C$ of $\cat{C}$, a morphism $\alpha_C:FC\to GC$ in $\cat{D}$, called the \emph{component of $\alpha$ at $C$};
  \item For each morphism $f:C\to C'$ of $\cat{C}$, the following diagram has to commute (\emph{naturality condition}):
  $$
  \begin{tikzcd}
   FC \ar{r}{Ff} \ar{d}{\alpha_C} & FC' \ar{d}{\alpha_{C'}} \\
   GC \ar{r}{Gf} & GC' 
  \end{tikzcd}
  $$
 \end{itemize}
\end{deph}

We denote natural transformations as double arrows, $\alpha: F \Rightarrow G$, to distinguish them in diagrams from functors (which are denoted by single arrows):
 $$
 \begin{tikzcd}[column sep=large]
  \cat{C} \ar[bend left,""{below,name=F}]{r}{F} \ar[bend right,""{above,name=G}]{r}[swap]{G} & \cat{D}
  \ar[Rightarrow,from=F,to=G,"\alpha"]
 \end{tikzcd}
 $$
The notation suggests that these are ``arrows between functors''. We will make this precise shortly. Let's first give some intuition.

\subsection{Natural transformations as systems of arrows}

\begin{idea} 
 A natural transformation is a consistent system of arrows between the images of two functors.
\end{idea}

\begin{eg}[sets and relations]\label{egpointwiseorder}
 Let $(X,\le)$ and $(Y,\le)$ be partial orders, and consider monotone maps $f,g:(X,\le)\to(Y,\le)$. We can view $f$ and $g$ as functors between two categories, where the arrows of $Y$ (and $X$) are just inequalities $y\le y'$. A natural transformation between $f$ and $g$ is, for each $x\in X$, an arrow $f(x)\to g(x)$, that is, for each $x\in X$, we must have $f(x)\le g(x)$. Traditionally, one says that $f\le g$ pointwise. The naturality condition is trivial here, since in a partial order all diagrams commute.
 So there is a unique natural transformation $f\Rightarrow g$ if and only if $f\le g$ pointwise.
\end{eg}

\begin{ex}[sets and relations]
 Suppose that $Y$ is an equivalence relation instead of a partial order. What do natural transformations between functors into $Y$ look like?
\end{ex}

\subsection{Natural transformations as structure-preserving mappings}\label{equivariant}

\begin{idea} 
 A natural transformation is a mapping between functors preserving specified actions, symmetries, or other structures. 
\end{idea}

\begin{eg}[group theory]\label{egmorphrepr}
 Let $G$ be a group. We know (\Cref{egrepresentation}) that a linear representation of $G$ is a functor $\cat{B}G\to \Vect$, mapping the single object of $\cat{B}G$ to a specified vector space $V$, on which the group $G$ ``acts''. 
 Let now $R,S:\cat{B}G\to \Vect$ be linear representations acting on $V$ and $W$, respectively. A natural transformation $\alpha:R\to S$ consists of the following.
 \begin{itemize}
  \item For each object of $\cat{B}G$ we need a morphism of $\Vect$ between the images of this object under $R$ and $S$. As $\cat{B}G$ has a single object and its images under $R$ and $S$ are $V$ and $W$, this amounts to a linear map $\alpha:V\to W$.
  \item For each morphism of $\cat{B}G$, that is, for each $g\in G$, the following diagram has to commute:
  $$
  \begin{tikzcd}
   V \ar{d}{\alpha} \ar{r}{Rg} & V \ar{d}{\alpha} \\
   W \ar{r}{Sg} & W
  \end{tikzcd}
  $$
  That is, for each vector $v\in V$, acting with $g$ commutes with the map $\alpha$. Denoting the two actions of $G$ by just $g\cdot$, this reads
  $$
  \alpha( g\cdot v) \; = \;  g\cdot \alpha(v).
  $$
 \end{itemize}
 A map commuting with a group action is called an \emph{equivariant map}. The natural transformations between two representation of $G$ are then the linear $G$-equivariant maps. In representation theory, these are also known as \emph{morphisms of representations}. 
\end{eg}

\begin{eg}[group theory]\label{egmorphsym}
 A group $G$ acting on a space encodes an idea of \emph{symmetry} of the given space. Analogously with the linear case, a \emph{$G$-space} is a functor $\cat{B}G\to\cat{Top}$. 
 For example, the group $D_3$, the group of symmetries of an equilateral triangle, can also act on the following spaces via rotations and reflections:
 \begin{center}
 \def\ra{3.1}
  \begin{tikzpicture}[scale=0.5]
   \begin{polaraxis}[grid=none, axis lines=none]
     \addplot[thick,mark=none,domain=0:360,samples=300,white] {1};
     \addplot[thick,mark=none,domain=0:360,samples=300] {-sin(x*3)};
   \end{polaraxis}
   
   \node[fill,circle,inner sep=0pt,minimum size=2pt] (c) at (16,3.45) {};
   \draw[thick] (c) -- ++(90:\ra);
   \draw[thick] (c) -- ++(210:\ra);
   \draw[thick] (c) -- ++(330:\ra);
   \draw[thick] (c) circle (\ra);
 \end{tikzpicture}
\end{center}
 The natural transformations between $G$-spaces are exactly the $G$-equivariant continuous maps. These can be interpreted as the maps which respect the symmetry: take for example denote by $\phi$ the map from the figure on the left to the figure on the right which collapses the petals into segments, and then includes them into figure on the right.
 \begin{center}
 \def\ra{3.1}
  \begin{tikzpicture}[scale=0.5]
   \begin{polaraxis}[grid=none, axis lines=none]
     \addplot[thick,mark=none,domain=0:360,samples=300,white] {1};
     \addplot[thick,mark=none,domain=0:360,samples=300] {-sin(x*3)};
   \end{polaraxis}
   
   \draw[->] (6.8,3.45) -- (8.8,3.45);
   
   \node[fill,circle,inner sep=0pt,minimum size=2pt] (c) at (12,3.45) {};
   \draw[thick] (c) -- ++(90:\ra);
   \draw[thick] (c) -- ++(210:\ra);
   \draw[thick] (c) -- ++(330:\ra);
   
   \draw[->] (15.7,3.45) -- (17.7,3.45);
   
   \node[fill,circle,inner sep=0pt,minimum size=2pt] (d) at (22,3.45) {};
   \draw[thick] (d) -- ++(90:\ra);
   \draw[thick] (d) -- ++(210:\ra);
   \draw[thick] (d) -- ++(330:\ra);
   \draw[thick] (d) circle (\ra);
 \end{tikzpicture}
\end{center}
 Rotating $120$ degrees counterclockwise before and after applying the map gives the same result, as the following illustration shows,
 \begin{center}
 \def\ra{3.1}
 \begin{tikzcd}[column sep=huge, row sep=large]
  \begin{tikzpicture}[scale=0.5, baseline=(current  bounding box.center)]
   \begin{polaraxis}[grid=none, axis lines=none]
     \addplot[thick,mark=none,domain=0:360,samples=300,white] {1};
     \addplot[thick,mark=none,domain=0:360,samples=300] {-sin(x*3)};
     \node[fill,circle] (p) at (90,1) {};
   \end{polaraxis}
  \end{tikzpicture}
  \ar{r}{\phi} \ar{d}{\circlearrowleft 120^{\circ}} & 
  \begin{tikzpicture}[scale=0.5, baseline=(current  bounding box.center)]
   \node[fill,circle,inner sep=0pt,minimum size=2pt] (d) at (0,0) {};
   \draw[thick] (d) -- ++(90:\ra);
   \draw[thick] (d) -- ++(210:\ra);
   \draw[thick] (d) -- ++(330:\ra);
   \draw[thick] (d) circle (\ra);
   \node[fill,circle,inner sep=0pt,minimum size=6pt] (p) at (90:\ra) {};
 \end{tikzpicture}
  \ar{d}{\circlearrowleft 120^{\circ}} \\
  \begin{tikzpicture}[scale=0.5, baseline=(current  bounding box.center)]
   \begin{polaraxis}[grid=none, axis lines=none]
     \addplot[thick,mark=none,domain=0:360,samples=300,white] {1};
     \addplot[thick,mark=none,domain=0:360,samples=300] {-sin(x*3)};
    \node[fill,circle] (p) at (210,1) {};
   \end{polaraxis}
  \end{tikzpicture}
  \ar{r}{\phi} & 
  \begin{tikzpicture}[scale=0.5, baseline=(current  bounding box.center)]
   \node[fill,circle,inner sep=0pt,minimum size=2pt] (d) at (0,0) {};
   \draw[thick] (d) -- ++(90:\ra);
   \draw[thick] (d) -- ++(210:\ra);
   \draw[thick] (d) -- ++(330:\ra);
   \draw[thick] (d) circle (\ra);
   \node[fill,circle,inner sep=0pt,minimum size=6pt] (p) at (210:\ra) {};
 \end{tikzpicture}
 \end{tikzcd}
\end{center}
 where we marked with a dot one of the petals to make the rotation visible. 
\end{eg}

\begin{eg}[dynamical systems]\label{egmorphdynsys}
 Let $M$ be a monoid, for example $(\N,+)$. A \emph{topological dynamical system with monoid $M$} is a functor $\cat{B}M\to\cat{Top}$. This picks a space $X$ and lets the dynamics indexed by the monoid $M$ act on the given space $X$. Given dynamical systems $F,G:\cat{B}M\to\cat{Top}$ acting on the spaces $X$ and $Y$ respectively, a \emph{morphism of dynamical systems} is a natural transformation $\alpha:F\to G$. This amounts to the following: first of all, we need a map $\alpha:X\to Y$. Moreover, for each $m\in M$, which we can interpret as a time interval, the following diagram has to commute.
 $$
  \begin{tikzcd}
   X \ar{d}{\alpha} \ar{r}{Fm} & X \ar{d}{\alpha} \\
   Y \ar{r}{Gm} & Y
  \end{tikzcd}
  $$
 In a way, this means that $\alpha$ is compatible with the dynamics.
 
 Keep in mind that, in the field of dynamical systems,
 \begin{itemize}
  \item People prefer to talk about semigroups rather than monoids (see \Cref{semigroup});
  \item People may only want to restrict to \emph{compact} spaces instead of all topological spaces;
  \item People are mostly interested in \emph{surjective} maps $\alpha:X\to Y$.
 \end{itemize}
\end{eg}

\begin{eg}[graph theory]\label{egmorphgraph}
 We have seen in \Cref{egmultigraph} that presheaves on the category $\cat{Par}$ (defined there)
 can be seen as directed multigraphs. Now let $F,G:\cat{Par}^\op\to\Set$ be presheaves (i.e.~multigraphs). A natural transformation $\alpha:F\to G$ consists of two functions $\alpha_E:FE\to GE$ and $\alpha_V:FV\to GV$ such that the following diagram commutes.
 $$
 \begin{tikzcd}
  & FE \ar{rrr}{\alpha_E} \ar{ddl}[swap]{Fs} \ar{dr}{Ft} &&& GE  \ar{dr}{Gt} \\
  && FV \ar{rrr}[near start]{\alpha_V} &&& GV\\
  FV \ar{rrr}{\alpha_V} &&& GV \ar[leftarrow, crossing over]{uur}[near end]{Gs}
 \end{tikzcd}
 $$
 This means that, for an edge $e\in FE$, the source of $\alpha_E(e)$ is exactly the result of applying $\alpha_V$ to the source of $e$. The same is true for targets. In other words, $\alpha$ is a mapping between edges and vertices \emph{preserving the incidence relation}.
\end{eg}

\subsection{Natural transformations as canonical maps}

\begin{idea}
 A natural transformation is a mapping or assignment which is canonical, systematic, or ``natural'' (hence the name).
\end{idea}

\begin{eg}[sets and relations]\label{egsingletonmap}
 Consider the power set functor $P:\Set\to\Set$ of \Cref{egpowerset}. Given a set $X$, there is a canonical embedding of $X$ into its power set $PX$, namely the map $\sigma:X\to PX$ which assign to each $x\in X$ the singleton $\{x\}\in PX$. This map is natural in the following sense. Let $f:X\to Y$ be any function. Then the following diagram commutes:
 $$
 \begin{tikzcd}
  X \ar{r}{f} \ar{d}{\sigma} & Y \ar{d}{\sigma} \\
  PX \ar{r}{Pf} & PY
 \end{tikzcd}
 $$
 Indeed, let $x\in X$. Then on one side of the diagram,
 $$
 \sigma(f(x)) \;=\; \{f(x)\} ,
 $$
 and on the other side,
 $$
 Pf(\sigma(x)) \;=\; \{f(x)\} ,
 $$
 recalling that the action of $Pf$ on a set is just applying $f$ to all the elements of the set (in this case the set $\{x\}$ has only one element).
 
 Note that the top of the diagram has $f:X\to Y$, without any functor applied to it, or, with \emph{the identity functor} applied to it. Therefore $\sigma$ is a natural transformation from the identity functor on $\Set$ to $P$, that is, $\sigma:\id_\Set\Rightarrow P$. 
\end{eg}

What does it mean that the map above is \emph{canonical}? This means the following. Let $p:X\to X$ be a permutation, i.e.~a bijective map, which we can view as a ``relabeling'' of the elements of $X$. By naturality of $\sigma$, the following diagram commutes:
$$
 \begin{tikzcd}
  X \ar{r}{p} \ar{d}{\sigma} & X \ar{d}{\sigma} \\
  PX \ar{r}{Pp} & PX
 \end{tikzcd}
 $$
This can be interpreted as follows: if we relabel the elements of $X$ and we relabel the elements of $PX$ (i.e.~the subsets of $X$) accordingly, then the map $\sigma$ will look the same after the relabeling. In other words, the assignment $\sigma:X\to PX$ \emph{does not depend on the way we label the elements}. In this sense the map $\sigma$ is canonical. 

Here is an analogous map for the probability functor of~\Cref{egprob}. Again, we first give it for the finitely-supported case, without using measure theory -- the equivalents for more general probability measures are given in the next exercises. 

\begin{ex}[basic probability]\label{setdelta}
 Consider the probability functor $\mathcal{P}$ on $\Set$, given in \Cref{egprob}. 
 Given $x\in X$ define $\delta_x\in PX$ to be the function 
 $$
 \delta_x(y) \;\coloneqq\;
 \begin{cases}
  1 & x=y \\
  0 & x\ne y .
 \end{cases}
 $$
 Show that the map $\delta:X\to \mathcal{P}X$ assigning to each $x\in X$ the measure $\delta_x$ is a natural transformation from the identity functor to $\mathcal{P}$, analogous to the singleton map for the power set.
\end{ex}

Let's now see the analogous for general probability measures, not necessarily finitely supported.

\begin{ex}[measure theory, probability]\label{measdelta}
 Let $X$ be a measurable space, and let $x\in X$. Recall that the \emph{Dirac measure at $x$} is defined to be the one mapping each measurable set $A\subseteq X$ to 
 $$
 \delta_x(A) \;\coloneqq\;
 \begin{cases}
  1 & x\in A \\
  0 & x\notin A .
 \end{cases}
 $$
 Consider now either the Giry functor $\mathcal{P}$ on $\cat{Meas}$, given in \Cref{giryfunctor}, or the Radon functor $\mathcal{P}$ on $\cat{CHaus}$, given in \Cref{radonfunctor}.
 Show that in both cases, the map $\delta:X\to \mathcal{P}X$ assigning to each $x\in X$ the measure $\delta_x$ is again a natural transformation from the identity functor to $\mathcal{P}$.
\end{ex}

\begin{ex}[basic computer science, combinatorics]\label{singlelist}
 Construct a natural transformation from the identity functor to the list functor of \Cref{listfunctor}, analogous to the ones given in the exercises above.
\end{ex}

A similar phenomenon to the ``natural maps'' above can be seen also in vector spaces. In particular, you probably know from linear algebra that a finite-dimensional vector space $V$ is always isomorphic to its dual $V^*$, and so also to the dual of the dual, i.e.~the double dual $V^{**}$. However, in some way, $V^{**}$ is ``a lot more similar to $V$ than $V^*$ is''. That's why sometimes (such as in differential geometry) $V^{**}$ is directly identified with $V$. (Hence the term ``dual'': it's as if there were only two of them, $V$ and $V^*$.)
In terms of natural transformation we can make this intuition precise: there is a \emph{natural} isomorphism $V\to V^{**}$ (but not $V\to V^*$). 
 
\begin{eg}[linear algebra]
 Consider the double dual functor $(-)^{**}:\Vect\to\Vect$ of \Cref{egdoubledual}, mapping a vector space $V$ to its double dual $V^{**}$, i.e.~the space of linear functionals $V^*\to\R$. Every element $v$ of $V$ defines canonically an element of the double dual $V^{**}$ by \emph{evaluation}. Let's see what this means. An element of $V^{**}$ is a linear functional $V^*\to \R$. We can map a linear functional $\omega\in V^*$ to $\R$ by just feeding it $v$, that is: $\omega\mapsto \omega(v)\in\R$. This way from $v\in V$ we get a linear map $V^*\to\R$, i.e.~an element of $V^{**}$. Let's call this map $\eta:V\to V^{**}$.
 Let's now prove that this map is natural. This means that for every linear map $f:V\to W$, the following diagram must commute:
 $$
 \begin{tikzcd}
  V \ar{r}{f} \ar{d}{\eta} & W \ar{d}{\eta} \\
  V^{**} \ar{r}{f^{**}} & W^{**}
 \end{tikzcd}
 $$
 That is, for each $v\in V$ we need to prove that $\eta(f(v))$ and $f^{**}(\eta(v))$ are equal (as elements of $W^{**}$). Now elements of $W^{**}$ are linear maps $W^*\to\R$, so let $\omega\in W^*$. We have to prove that 
 $$
 \eta(f(v))(\omega) \;=\; f^{**}(\eta(v))(\omega).
 $$
 We have, by the definition of $\eta$:
 $$
 \eta(f(v))(\omega) \;=\; \omega(f(v)) .
 $$
 On the other hand, by definition of $f^{**}$ (\Cref{egdoubledual}), again of $\eta$, and of $f^*$,
 $$
 f^{**}(\eta(v))(\omega) \;=\; \eta(v)(f^*\omega) \;=\; (f^*\omega)(v) \;=\; \omega(f(v)) .
 $$
 Therefore $\eta$ is natural, i.e.~it is a natural transformation from the identity of $\Vect$ to the double dual functor.
 
 You probably know from linear algebra that for finite-dimensional vector spaces, $\eta$ is even an isomorphism. Therefore a finite-dimensional vector space $V$ is \emph{naturally} isomorphic to its double dual $V$. If the space is not finite-dimensional, the map $\eta$ is not an isomorphism, but it is still natural. 
\end{eg}

Again, let's see why $\eta$ is ``canonical''. Let $c:V\to V$ be a linear isomorphism, which we can interpret as a ``change of coordinates''. Then the following diagram commutes.
$$
 \begin{tikzcd}
  V \ar{r}{c} \ar{d}{\eta} & V \ar{d}{\eta} \\
  V^{**} \ar{r}{c^{**}} & V^{**}
 \end{tikzcd}
 $$
We can interpret this as follows: if we change the coordinates of $V$, and also change those of $V^{**}$ accordingly, then the map $\eta$ will look the same after the change of coordinates. That is, $\eta$ \emph{does not depend on the choice of coordinates.}
In particular, if $V$ is finite-dimensional, then $\eta$ is an isomorphism identifying the elements of $V$ with those of $V^{**}$, and this identification does not depend on the choice of coordinates. Again, in this sense the map $\eta$ is canonical. This idea is made precise by the naturality condition -- actually, the naturality condition is precisely this idea, except that the map $c$ is allowed to be an arbitrary morphism, not necessarily an isomorphism.

For finite-dimensional vector spaces, we have seen that the map $\eta$ is an isomorphism. Such natural transformations are called natural isomorphisms:

\begin{deph}
 Let $\cat{C}$ and $\cat{D}$ be categories, let $F,G:\cat{C}\to\cat{D}$ be functors, and let $\alpha:F\Rightarrow G$ be a natural transformation. We call $\alpha$ a \emph{natural isomorphism} if for each object $C$ of $\cat{C}$, the component $\alpha_C:FC\to GC$ is an isomorphism.
 In that case, we also say that the functors $F$ and $G$ are \emph{(naturally) isomorphic}.
\end{deph}

Some authors use the term \emph{natural equivalence} instead of \emph{natural isomorphism}. Equivalence is however a different concept from isomorphism in category theory (we will study it in \Cref{equcat}), so we will not use the term \emph{equivalence} in this context.

\subsection{What is not natural?}

In order to understand better what naturality really means, here are examples of constructions which may seem natural, but are not. 

\begin{eg}[linear algebra]\label{egsingledual}
 The \emph{single} dual $V^*$ is not naturally isomorphic to $V$, not even in finite dimension. A reason is that, on the category of vector spaces, the single dual functor is contravariant, it reverses all the arrows (see \Cref{egdoubledual}). Therefore there can be no natural transformation from the identity to the single dual: one is a functor $\Vect\to\Vect$, the other one is a functor $\Vect^\op\to\Vect$. 
 But there is a deeper reason: even if we try to generalize the notion of naturality appropriately, all isomorphisms $V\to V^*$ that one can construct (in finite dimension) involve the choice of a basis (such as the dual basis), and this is not natural. 
 
 For example, let $V$ be an $n$-dimensional vector space, and let $e_1,\dots,e_n$ be a basis of $V$. We can construct the dual basis $e^1,\dots,e^n$ of $V^*$, as the unique one such that $e^i(e_j)=\delta^i_j$. 
 Given these bases, one can construct an isomorphism $d:V\to V^*$ by setting $d(e_i)\coloneqq e^i$ for all $i$. This corresponds to ``copying the coordinates'', in the sense that we map a vector $v$ of $V$ to the vector of $V^*$ that \emph{has the same coordinates as $V$} (where the coordinates in $V$ are with respect to the basis $e_1,\dots,e_n$, and the coordinates in $V^*$ are with respect to the dual basis). That is, almost by definition,
 $$
 d\left( \sum_{i=1}^n v^i e_i \right) \; = \; \sum_{i=1}^n v^i e^i .
 $$
 However, this assignment depends crucially on the choice of the first basis, a different choice gives us a different map (show this!). Therefore, the map $d$ is not natural.
\end{eg}

\begin{ex}[linear algebra]
 You may know from linear algebra that a finite-dimensional vector space equipped with a positive-definite scalar product is ``canonically'' isomorphic to its dual space. Can you make this statement precise in terms of natural transformations? (The question mostly asks: in which category?)
\end{ex}

Here is another well-known failure of a map to be natural in linear algebra (and quantum mechanics).

\begin{eg}[linear algebra, quantum physics]\label{nocloning}
 Consider the category $\Vect$. The tensor product of vector spaces gives us a functor $F:\Vect\to\Vect$ sending:
 \begin{itemize}
  \item A vector space $V$ to the vector space $FV\coloneqq V\otimes V$;
  \item A linear map $f:V\to W$ to the linear map $Ff:V\otimes V \to W\otimes W$, defined on a basis as 
  $$
  Ff(e_i\otimes e_j) \;\coloneqq\; f(e_i)\otimes f(e_j) .
  $$
 \end{itemize}
 The expression of the map $f\otimes f$ does not depend on the choice of the coordinates used (prove it!), so that we have a well-defined functor. 
 
 However, suppose we want a natural transformation $\alpha:\id\Rightarrow F$, of components $\alpha:V\to V\otimes V$. It turns out that \emph{there is no such natural map except the zero map}. This is known in the quantum information literature as the \emph{no-cloning theorem}, since it can be interpreted in quantum physics as the fact that we cannot duplicate a quantum state. 
 One may be tempted to define such a map, for example, in the following way using coordinates:
 $$
 \alpha\left( \sum_{i=1}^n v^i e_i \right) \; \coloneqq \; \sum_{i=1}^n v^i e_i\otimes e_i.
 $$
 However, just as in \Cref{egsingledual}, this map depends crucially on the choice of the basis, and so it is not natural. In physics, this means that this map cannot encode meaningful physical information.
\end{eg}

\subsection{Functor categories and diagrams}\label{functordiagrams}

\begin{deph}
 Let $\cat{C}$ and $\cat{D}$ be categories. The \emph{functor category} $[\cat{C},\cat{D}]$ is constructed as follows.
 \begin{itemize}
  \item Objects are functors $F:\cat{C}\to\cat{D}$;
  \item Morphisms are natural transformations $\alpha:F\Rightarrow G$.
 \end{itemize}
\end{deph}

Note that the functors are the \emph{objects} of the category, not the morphisms. 

\begin{ex}[important!]\label{vertcomp}
 Prove that $[\cat{C},\cat{D}]$ is indeed a category, where the identity of a functor $F$ is the natural transformation of components $\id_{FC}:FC\to FC$, and the composition of two natural transformations 
 $\alpha:F\Rightarrow G$ and $\beta:G \Rightarrow H$ is the natural transformation $\beta\circ\alpha :F\Rightarrow H$ of components 
 $$
 \begin{tikzcd}
  FC \ar{r}{\alpha_C} & GC \ar{r}{\beta_C} & HC .
 \end{tikzcd}
 $$
 Why are the identity and the composite natural transformations natural again?
\end{ex}

\begin{eg}[several fields]\label{fcats}
 The functor category generalizes the following known constructions.
 \begin{itemize}
  \item If $\cat{D}$ is a partial order, $[\cat{C},\cat{D}]$ is the partial order of functions into $\cat{D}$ given by the pointwise order (see \Cref{egpointwiseorder}).
  \item If $\cat{C}=\cat{B}G$ for a group $G$ and $\cat{D}=\Vect$, the category $[\cat{C},\cat{D}]$ is the category of linear representations of $G$ and $G$-equivariant (linear) maps (see \Cref{egmorphrepr}).
  \item Analogously, if instead $\cat{D}=\cat{Top}$, the category $[\cat{C},\cat{D}]$ is the category of $G$-spaces and $G$-equivariant (continuous) maps (see \Cref{egmorphsym}).
  \item Again in the same spirit, if instead $\cat{C}=\cat{B}M$ for $M$ a monoid, the category $[\cat{C},\cat{D}]$ is the category of dynamical systems with dynamics indexed by $M$, and their morphisms (see \Cref{egmorphdynsys}).
  \item If $\cat{C}$ is the category $\cat{Par}$ with only two parallel arrows of \Cref{egmultigraph}, then $[\cat{C},\cat{D}]$ is the category of directed multigraphs and incidence-preserving mappings (see \Cref{egmorphgraph}). We denote such category by $\cat{MGraph}$.
 \end{itemize}
\end{eg}

\begin{caveat}
 The functor category $[\cat{C},\cat{D}]$ may be not locally small, even if $\cat{C}$ and $\cat{D}$ are (why?). 
\end{caveat}

Now that we know about functors, we are also ready for the rigorous definition of a diagram. We had defined diagrams explicitly, but informally, in \Cref{infdiagrams}. Here is the real, but more abstract, definition.

\begin{deph}[this time for real]
 Let $\cat{C}$ be a category, and $\cat{I}$ be a small category.\footnote{Some authors allow diagrams to be indexed by any category, not necessarily small.} A \emph{diagram in $\cat{C}$ of shape $\cat{I}$} is a functor $\cat{I}\to\cat{C}$. 
 
 The \emph{category of $\cat{I}$-shaped diagrams in $\cat{C}$} is the functor category $[\cat{I},\cat{C}]$.
\end{deph}

This is analogous to how one defines a loop in a topological space $X$ as a continuous map $S^1\to X$, or a curve as a continuous map $\R\to X$. We have a category $\cat{I}$ of a certain shape, for example
$$
\begin{tikzcd}
 & \bullet \ar{dr} \ar{dl} \\
 \bullet \ar{rr} && \bullet 
\end{tikzcd}
$$
(identity and compositions are not drawn, but are there!), and therefore a functor $D:\cat{I}\to\cat{C}$ will pick a diagram in $\cat{C}$ as its ``image'', such as
$$
\begin{tikzcd}
 & X \ar{dr}{g} \ar{dl}[swap]{f} \\
 Y \ar{rr}{h} && Y 
\end{tikzcd}
$$
Note that $D$ doesn't need to be injective in any sense, therefore objects and morphisms can appear more than once in the diagram. 
Also note that this triangle doesn't need to be commutative. However, if the original triangle in $\cat{I}$ commutes, then this has to commute in $\cat{C}$ too (and we have a commutative diagram).

A morphism of diagrams $\alpha:D\Rightarrow D'$ is a natural transformation. This means that we have a diagram:
$$
\begin{tikzcd}[row sep=small]
 & X \ar{dd}[near end]{\alpha} \ar{dr}{g} \ar{dl}[swap]{f} \\
 Y\ar{dd}{\alpha} \ar{rr}[near end]{h} && Y \ar{dd}{\alpha} \\
 & A \ar{dr}{m} \ar{dl}[swap]{n} \\
 B \ar{rr}{p} && C 
\end{tikzcd}
$$
such that all the vertical squares (the ones involving $\alpha$) are commutative.
Readers familiar with algebraic topology may find it helpful to think of this diagram as something analogous to a \emph{homotopy} between two embeddings $S^1\to X$.

\subsection{Whiskering and horizontal composition}

Consider three categories $\cat{C},\cat{D},\cat{E}$, a functor $F:\cat{C}\to\cat{D}$, functors $H,I:\cat{D}\to\cat{E}$, and a natural transformation $\beta:H\Rightarrow I$. The situation is better represented as the following diagram.
$$
 \begin{tikzcd}[column sep=large]
  \cat{C} \ar{r}{F} & \cat{D} \ar[bend left,""{below,name=H}]{r}{H} \ar[bend right,""{above,name=I}]{r}[swap]{I} & \cat{E}
  \ar[Rightarrow,from=H,to=I,"\beta"]
 \end{tikzcd}
$$
Then we also have a natural transformation $H\circ F \Rightarrow I\circ F$, that is, between the top composition  and the bottom composition in the diagram. This is obtained as follows. The natural transformation $\beta$ has as components arrows $\beta_D:HD\to ID$ for each object $D$ of $\cat{D}$. In particular, if we take a $D$ in the image of $F$, that is $D=FC$ for some object $C$ of $\cat{C}$, there is an arrow $\beta_{FC}:HFC\to IFC$ of $\cat{E}$. We can define therefore a natural transformation, which we denote by $\beta F: H\circ F \Rightarrow I\circ F$, where
\begin{itemize}
 \item Its component at each object $C$ of $\cat{C}$, is the arrow given by the component $\beta_{FC}:HFC\to IFC$ of $\beta$, applied to $FC$;
 \item For each arrow $f:C\to C'$ of $\cat{C}$, the naturality diagram 
 $$
 \begin{tikzcd}
  HFC \ar{r}{HFf} \ar{d}{\beta_{FC}} & HFC' \ar{d}{\beta_{FC'}} \\
  IFC \ar{r}{HFf} & IFC' 
 \end{tikzcd}
 $$
 commutes, because of the naturality of $\beta$.
\end{itemize}
We call the natural transformation $\beta F$ the \emph{whiskering} of the natural transformation $\beta$ on the left by the functor $F$, since the functor $F$ is added as a sort of ``whisker'' on the left of $\beta$. 

Analogously, now suppose that we have the situation depicted in the following diagram.
$$
 \begin{tikzcd}[column sep=large]
  \cat{C} \ar[bend left,""{below,name=F}]{r}{F} \ar[bend right,""{above,name=G}]{r}[swap]{G} & \cat{D}
  \ar{r}{H} & \cat{E}
    \ar[Rightarrow,from=F,to=G,"\alpha"] 
 \end{tikzcd}
$$
Again, we get a natural transformation $HF\Rightarrow HG$, which we denote as $H\alpha$, as follows.
For each object $C$ of $\cat{C}$, the natural transformation $\alpha$ gives an arrow $\alpha_C:FC\to GC$ of $\cat{D}$. We can now apply $H$ to this arrow to get the arrow $H(\alpha_C):HFC\to HGC$ of $\cat{E}$. This is how the natural transformation $H\alpha:HF\Rightarrow HG$ is constructed.
\begin{itemize}
 \item Its component at each object $C$ of $\cat{C}$ is given by the arrow $H\alpha_C:HFC\to HGC$ of $\cat{E}$;
 \item For each arrow $f:C\to C'$ of $\cat{C}$, consider the following diagrams.
 $$
  \begin{tikzcd}
   FC \ar{r}{Ff} \ar{d}{\alpha_C} & FC' \ar{d}{\alpha_{C'}} \\
   GC \ar{r}{Gf} & GC' 
  \end{tikzcd} 
  \qquad
  \begin{tikzcd}
   HFC \ar{r}{HFf} \ar{d}{H\alpha_C} & HFC' \ar{d}{H\alpha_{C'}} \\
   HGC \ar{r}{HGf} & HGC' 
  \end{tikzcd} 
  $$
 The diagram on the left commutes, by naturality of $\alpha$. The diagram on the right can be obtained from the diagram on the left by applying the functor $H$, and functors preserve commutative diagrams, therefore the diagram on the right commutes too. The right diagram is precisely the naturality condition for $H\alpha_C$.
\end{itemize}
We call the natural transformation $H\alpha_C$ the \emph{whiskering} of $\alpha$ on the right by $H$.
Note that for each object $C$ of $\cat{C}$, the map $H\alpha_C$ may be interpreted both as $H(\alpha_C)$, that is, the map obtained by applying $H$ to the component $\alpha_C$ of $\alpha$, or as $(H\alpha)_C$, that is, the component at $C$ of $H\alpha$. These two coincide by definition, so writing $H\alpha_C$ does not lead to ambiguity.

\begin{ex}[important!]\label{exfunwhisk}
 Consider the situation depicted in the following diagram
 $$
 \begin{tikzcd}[column sep=large]
  \cat{C} \ar[bend left=60,""{below,name=F}]{r}{F} \ar[""{above,name=G},""{below,name=GG}]{r}[near start]{G} \ar[bend right=60,""{above,name=P}]{r}[swap]{P} & \cat{D}
  \ar[""{above,name=I},""{below,name=II}]{r}{H} & \cat{E}
    \ar[Rightarrow,from=F,to=G,"\alpha"] 
  \ar[Rightarrow,from=GG,to=P,"\gamma"]
 \end{tikzcd}
$$
Prove, using functoriality of $H$, that $H(\gamma\circ\alpha) = (H\gamma)\circ (H\alpha)$.
(How about the dual situation?)
\end{ex}

Suppose now that we have the following situation. 
$$
 \begin{tikzcd}[column sep=large]
  \cat{C} \ar[bend left,""{below,name=F}]{r}{F} \ar[bend right,""{above,name=G}]{r}[swap]{G} & \cat{D}
  \ar[bend left,""{below,name=H}]{r}{H} \ar[bend right,""{above,name=I}]{r}[swap]{I} & \cat{E}
    \ar[Rightarrow,from=F,to=G,"\alpha"] 
  \ar[Rightarrow,from=H,to=I,"\beta"]
 \end{tikzcd}
$$
One could view this situation as follow. First we whisker $\alpha$ on the right with $H$,
$$
 \begin{tikzcd}[column sep=large]
  \cat{C} \ar[bend left,""{below,name=F}]{r}{F} \ar[bend right,""{above,name=G}]{r}[swap]{G} & \cat{D}
  \ar[bend left,""{below,name=H}]{r}{H}  & \cat{E}
    \ar[Rightarrow,from=F,to=G,"\alpha"] 
 \end{tikzcd}
$$
then we whisker $\beta$ on the left with $G$,
$$
 \begin{tikzcd}[column sep=large]
  \cat{C} \ar[bend right,""{above,name=G}]{r}[swap]{G} & \cat{D}
  \ar[bend left,""{below,name=H}]{r}{H} \ar[bend right,""{above,name=I}]{r}[swap]{I} & \cat{E}
  \ar[Rightarrow,from=H,to=I,"\beta"]
 \end{tikzcd}
$$
and then we compose the two, ``gluing the diagrams'', 
$$
 \begin{tikzcd}[column sep=large]
  \cat{C} \ar[bend left,""{below,name=F}]{r}{F} \ar[bend right,""{above,name=G}]{r}[swap]{G} & \cat{D}
  \ar[bend left,""{below,name=H}]{r}{H} \ar[bend right,""{above,name=I}]{r}[swap]{I} & \cat{E}
    \ar[Rightarrow,from=F,to=G,"\alpha"] 
  \ar[Rightarrow,from=H,to=I,"\beta"]
 \end{tikzcd}
$$
Alternatively, we could first whisker $\beta$ with $F$ on the left,
$$
 \begin{tikzcd}[column sep=large]
  \cat{C} \ar[bend left,""{below,name=F}]{r}{F} & \cat{D}
  \ar[bend left,""{below,name=H}]{r}{H} \ar[bend right,""{above,name=I}]{r}[swap]{I} & \cat{E}
  \ar[Rightarrow,from=H,to=I,"\beta"]
 \end{tikzcd}
$$
whisker $\alpha$ with $I$ on the right,
$$
 \begin{tikzcd}[column sep=large]
  \cat{C} \ar[bend left,""{below,name=F}]{r}{F} \ar[bend right,""{above,name=G}]{r}[swap]{G} & \cat{D}
  \ar[bend right,""{above,name=I}]{r}[swap]{I} & \cat{E}
    \ar[Rightarrow,from=F,to=G,"\alpha"] 
 \end{tikzcd}
$$
and again compose the two, 
$$
 \begin{tikzcd}[column sep=large]
  \cat{C} \ar[bend left,""{below,name=F}]{r}{F} \ar[bend right,""{above,name=G}]{r}[swap]{G} & \cat{D}
  \ar[bend left,""{below,name=H}]{r}{H} \ar[bend right,""{above,name=I}]{r}[swap]{I} & \cat{E}
    \ar[Rightarrow,from=F,to=G,"\alpha"] 
  \ar[Rightarrow,from=H,to=I,"\beta"]
 \end{tikzcd}
$$
the diagram would look the same. This turns out to yield exactly the same result, so that the diagram is not ambiguous. In other words:

\begin{prop}\label{horwhisker}
 Let $\cat{C}$, $\cat{D}$ and $\cat{E}$ be categories. Let $F,G:\cat{C}\to\cat{D}$ and $H,I:\cat{D}\to\cat{E}$ be functors, and let $\alpha:F\Rightarrow G$ and $\beta:H\Rightarrow I$ be natural transformations. Then 
 $$
 (\beta G)\circ(H\alpha) \; = \; (I\alpha)\circ(\beta F) .
 $$
\end{prop}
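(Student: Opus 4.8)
The plan is to prove the identity componentwise, since two natural transformations with the same domain and codomain are equal exactly when all of their components agree (this is how equality of morphisms works in the functor category; cf.\ \Cref{vertcomp}). Both $(\beta G)\circ(H\alpha)$ and $(I\alpha)\circ(\beta F)$ are natural transformations $HF\Rightarrow IG$, so I would fix an arbitrary object $C$ of $\cat{C}$ and compute the component of each side at $C$.

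First I would unwind the left-hand side. The whiskering $H\alpha$ has component $H(\alpha_C)\colon HFC\to HGC$ at $C$, while the whiskering $\beta G$ has component $\beta_{GC}\colon HGC\to IGC$ at $C$ (it is $\beta$ evaluated at the object $GC$). Hence, by the formula for vertical composition of natural transformations from \Cref{vertcomp}, the component of $(\beta G)\circ(H\alpha)$ at $C$ is $\beta_{GC}\circ H(\alpha_C)$. Symmetrically, $\beta F$ has component $\beta_{FC}\colon HFC\to IFC$ at $C$, and $I\alpha$ has component $I(\alpha_C)\colon IFC\to IGC$, so the component of $(I\alpha)\circ(\beta F)$ at $C$ is $I(\alpha_C)\circ\beta_{FC}$.

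The key step is then to observe that the desired equality $\beta_{GC}\circ H(\alpha_C)=I(\alpha_C)\circ\beta_{FC}$ is precisely the naturality square of $\beta\colon H\Rightarrow I$ applied to the morphism $\alpha_C\colon FC\to GC$ of $\cat{D}$, namely
$$
\begin{tikzcd}
 HFC \ar{r}{H\alpha_C} \ar{d}[swap]{\beta_{FC}} & HGC \ar{d}{\beta_{GC}} \\
 IFC \ar{r}{I\alpha_C} & IGC .
\end{tikzcd}
$$
Since $\beta$ is a natural transformation, this square commutes, so the two components coincide. As $C$ was arbitrary, the two natural transformations are equal, which is the claim.

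There is essentially no hard part here: the entire content is the bookkeeping of matching ``the component at $C$ of a whiskered transformation'' with ``the original transformation evaluated at the appropriate image object,'' together with the single observation that naturality of $\beta$ holds in particular at each arrow $\alpha_C$. If desired, I would remark afterwards that this common value at $C$ is what will be taken as the component of the \emph{horizontal composite} of $\alpha$ and $\beta$, which is the point of the proposition.
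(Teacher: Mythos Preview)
Your proof is correct and follows essentially the same approach as the paper: both fix an arbitrary object $C$, compute the component of each side at $C$ by unwinding the definitions of whiskering and vertical composition, and then observe that the resulting equality $\beta_{GC}\circ H\alpha_C = I\alpha_C\circ\beta_{FC}$ is precisely the naturality square of $\beta$ at the morphism $\alpha_C$. The paper's proof is slightly terser, but the argument is identical.
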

\begin{proof}
 It suffices to check that these two composite natural transformations have the same components. So let $C$ be an object of $\cat{C}$. The component of $(\beta G)\circ(H\alpha)$ at $C$ is, plugging in the definition of whiskering, the arrow of $\cat{E}$ given by the composition
 $$
 \begin{tikzcd}
  HFC \ar{r}{H\alpha_C} & HGC \ar{r}{\beta_{GC}} & IGC .
 \end{tikzcd}
 $$
 Analogously, the component of $(I\alpha)\circ(\beta F)$ at $C$ is the arrow of $\cat{E}$ given by the composition
 $$
 \begin{tikzcd}
  HFC \ar{r}{\beta_{FC}} & IFC \ar{r}{I\alpha_C} & IGC .
 \end{tikzcd}
 $$
 These two compositions are equal if and only if the following diagram commutes,
 $$
 \begin{tikzcd}
  HFC \ar{r}{H\alpha_C} \ar{d}{\beta_{FC}} & HGC \ar{d}{\beta_{GC}} \\
  IFC \ar{r}{I\alpha_C} & IGC
 \end{tikzcd}
 $$
 but this diagram does commute: this is granted by naturality of $\beta$.
\end{proof}

Naturality, in other words, means that writing these diagrams does not lead to ambiguity. 

\begin{deph}
 Consider the hypothesis of \Cref{horwhisker}, fitting in the diagram 
 $$
 \begin{tikzcd}[column sep=large]
  \cat{C} \ar[bend left,""{below,name=F}]{r}{F} \ar[bend right,""{above,name=G}]{r}[swap]{G} & \cat{D}
  \ar[bend left,""{below,name=H}]{r}{H} \ar[bend right,""{above,name=I}]{r}[swap]{I} & \cat{E}
    \ar[Rightarrow,from=F,to=G,"\alpha"] 
  \ar[Rightarrow,from=H,to=I,"\beta"]
 \end{tikzcd}
$$
We call the resulting natural transformation $HF\Rightarrow IG$, which can be written equivalently (by \Cref{horwhisker}) as  $(\beta G)\circ(H\alpha)$ or as $(I\alpha)\circ(\beta F)$, the \emph{horizontal composition of $\alpha$ and $\beta$}. We denote it simply by juxtaposition, $\beta\alpha$. 
\end{deph}

\begin{caveat}
 The horizontal composition $\beta\alpha$ is \emph{different} from the ordinary composition $\beta\circ\alpha$, which we can call \emph{vertical composition}. In this case, the vertical composition $\beta\circ\alpha$ is not even defined, since the target of $\alpha$ (the functor $G$) is not equal to the source of $\beta$ (the functor $H$). 
\end{caveat}

However, vertical and horizontal composition are related by the so-called \emph{interchange law}:

\begin{prop}[interchange law] 
Consider the situation depicted in the following diagram. 
 $$
 \begin{tikzcd}[column sep=large]
  \cat{C} \ar[bend left=60,""{below,name=F}]{r}{F} \ar[""{above,name=G},""{below,name=GG}]{r}[near start]{G} \ar[bend right=60,""{above,name=P}]{r}[swap]{P} & \cat{D}
  \ar[bend left=60,""{below,name=H}]{r}{H} \ar[""{above,name=I},""{below,name=II}]{r}[near start]{I} \ar[bend right=60,""{above,name=Q}]{r}[swap]{Q} & \cat{E}
    \ar[Rightarrow,from=F,to=G,"\alpha"] 
  \ar[Rightarrow,from=H,to=I,"\beta"]
  \ar[Rightarrow,from=GG,to=P,"\gamma"]
  \ar[Rightarrow,from=II,to=Q,"\delta"]
 \end{tikzcd}
$$
Then the diagram is not ambiguous, in the sense that the vertical composition of the horizontal compositions
$$
(\delta\gamma)\circ(\beta\alpha)
$$
is equal to the horizontal composition of the vertical compositions
$$
(\delta\circ\beta)(\gamma\circ\alpha) .
$$
\end{prop}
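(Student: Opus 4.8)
The plan is to verify the identity componentwise: two natural transformations with the same source and the same target functor are equal precisely when all of their components agree (this is immediate from the definition of the functor category), so it suffices to fix an object $C$ of $\cat{C}$ and check that the component at $C$ of $(\delta\gamma)\circ(\beta\alpha)$ equals the component at $C$ of $(\delta\circ\beta)(\gamma\circ\alpha)$.

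First I would expand the left-hand side. By \Cref{horwhisker} the component of the horizontal composite $\beta\alpha$ at $C$ may be written as $\beta_{GC}\circ H\alpha_C$, and likewise the component of the horizontal composite $\delta\gamma$ at $C$ may be written as $\delta_{PC}\circ I\gamma_C$. Composing these vertically, the component of $(\delta\gamma)\circ(\beta\alpha)$ at $C$ is the composite $\delta_{PC}\circ I\gamma_C\circ\beta_{GC}\circ H\alpha_C$ of arrows of $\cat{E}$.

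Next I would expand the right-hand side. The vertical composite $\gamma\circ\alpha$ has component $\gamma_C\circ\alpha_C$ at $C$, and the vertical composite $\delta\circ\beta$ has component $\delta_D\circ\beta_D$ at each object $D$ of $\cat{D}$. Feeding these into the definition of horizontal composition, and using functoriality of $H$ to split $H(\gamma_C\circ\alpha_C)=H\gamma_C\circ H\alpha_C$ (cf.\ \Cref{exfunwhisk}), the component of $(\delta\circ\beta)(\gamma\circ\alpha)$ at $C$ is the composite $\delta_{PC}\circ\beta_{PC}\circ H\gamma_C\circ H\alpha_C$.

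Both composites begin (read right to left) with $H\alpha_C$ and end with $\delta_{PC}$, so the desired equality reduces to the identity $I\gamma_C\circ\beta_{GC}=\beta_{PC}\circ H\gamma_C$ of arrows $HGC\to IPC$. But this is exactly the naturality square of $\beta\colon H\Rightarrow I$ evaluated at the morphism $\gamma_C\colon GC\to PC$ of $\cat{D}$, which commutes by hypothesis; hence the two sides agree and the proof is complete. There is no genuine obstacle here beyond bookkeeping: the only point requiring a little care is to choose, among the two equivalent presentations supplied by \Cref{horwhisker} for each horizontal composite, the ones for which the outer factors $H\alpha_C$ and $\delta_{PC}$ match on the two sides, so that what remains is literally an instance of naturality.
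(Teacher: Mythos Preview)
Your componentwise argument is correct. Both sides expand exactly as you write, and the remaining identity $I\gamma_C\circ\beta_{GC}=\beta_{PC}\circ H\gamma_C$ is indeed the naturality square of $\beta$ at the arrow $\gamma_C$.

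The paper's proof follows the same underlying idea but stays at the level of natural transformations rather than passing to components: it writes $\beta\alpha=(I\alpha)\circ(\beta F)$ and $\delta\gamma=(\delta P)\circ(I\gamma)$, then uses \Cref{exfunwhisk} to merge $(I\gamma)\circ(I\alpha)$ into $I(\gamma\circ\alpha)$, and uses \Cref{horwhisker} itself as a rewriting rule to swap $I(\gamma\circ\alpha)\circ(\beta F)$ for $(\beta P)\circ H(\gamma\circ\alpha)$. The key step in both arguments is the same naturality of $\beta$, only packaged differently: you invoke it directly at a single object, the paper invokes it through \Cref{horwhisker} (whose proof is precisely that naturality square). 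Your version is perhaps more transparent for a first reading; the paper's version illustrates that once the whiskering calculus of \Cref{horwhisker} and \Cref{exfunwhisk} is in place, one never needs to mention components again.
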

\begin{proof}
 By definition, $\beta\alpha$ can be written as $(I\alpha)\circ(\beta F)$. Just as well, $\delta\gamma$ can be written as $(\delta P)\circ(I\gamma)$. Now, using \Cref{exfunwhisk} and \Cref{horwhisker},
 \begin{align*}
 (\beta\alpha)\circ(\delta\gamma) \;&=\; (\delta P)\circ(I\gamma)\circ (I\alpha)\circ(\beta F) \\
 \;&=\; (\delta P)\circ I(\gamma\circ\alpha)\circ(\beta F) \\
 \;&=\; (\delta P)\circ (\beta P) \circ H(\gamma\circ\alpha) \\
 \;&=\; (\delta \circ\beta) P \circ H(\gamma\circ\alpha) ,
 \end{align*}
 which is a way to write $(\delta\circ\beta)(\gamma\circ\alpha)$. 
\end{proof}

\begin{ex}
 Try to understand this proof graphically, by drawing the whiskerings explicitly.
\end{ex}

\section{Studying categories by means of functors}

The main philosophy of category theory is studying objects not by looking ``inside'' of them, but rather, in terms of the morphisms between them. This can be done for categories too, they can be studied in terms of functors.

\subsection{The category of categories}

Categories form themselves a category, with functors as the morphisms. In order to avoid size issues (such as talking about the category of \emph{all} categories), we have to restrict to small categories.

\begin{deph}
 The category $\cat{Cat}$ has
 \begin{itemize}
  \item As objects, small categories;
  \item As morphisms, the functors between them.
 \end{itemize}
 The identities are given by the identity functors, and the composition by the composition of functors. 
\end{deph}

Given small categories $\cat{C}$ and $\cat{D}$, the functors between them are the elements of the set $\Hom_\cat{Cat}(\cat{C},\cat{D})$. However, as we saw in \Cref{functordiagrams}, they also are \emph{objects of a category}, namely the functor category $[\cat{C},\cat{D}]$. 
As we said at the end of \Cref{iso}, in a set elements can be equal or not, while in a category objects can be rather \emph{isomorphic} or not -- equality is not really a helpful concept inside a category. In the same line, it is much more helpful to talk about \emph{isomorphisms} between functors than equality.
It is much more convenient, in mathematical practice, to think of functors $\cat{C}\to\cat{D}$ as the objects of a category than as the elements of a set. In other words, the category $[\cat{C},\cat{D}]$ is mathematically a more useful construction than the \emph{set} $\Hom_\cat{Cat}(\cat{C},\cat{D})$. 

Therefore the category $\cat{Cat}$, as defined above, is not very much used in category theory.
There is a more refined notion of $\cat{Cat}$ in which the hom-spaces $\Hom_\cat{Cat}(\cat{C},\cat{D})$ are themselves \emph{categories} instead of just sets. This is called a \emph{2-category} and it is one of the core concepts of higher category theory, which is beyond the scope of this basic course (for the interested reader, the \href{http://ncatlab.org}{nLab} has a lot of material on this). 

In particular, the notions of mono and epi of $\cat{Cat}$ are not a convenient way to study categories, since these concepts are based on the concept of equality of arrows and not isomorphism of functors. Much more helpful notions are given in \Cref{ffeso}.

\subsection{Subcategories}

\begin{deph}
 Let $\cat{C}$ be a category. A \emph{subcategory} $\cat{S}$ of $\cat{C}$ consists of a subcollection of the objects of $\cat{C}$ and a subcollection of the morphisms of $\cat{C}$ such that 
 \begin{itemize}
  \item For each object $S$ of $\cat{S}$, the identity of $S$ is also in $\cat{S}$;
  \item For each pair of composable arrows $f:X\to Y$ and $g:Y\to Z$ of $\cat{S}$, their composite $g\circ f$ is also in $\cat{S}$.
 \end{itemize}
\end{deph}

\begin{deph}
 A subcategory $\cat{S}$ of $\cat{C}$ is called \emph{wide} if all objects of $\cat{C}$ are also objects of $\cat{S}$ (but $\cat{S}$ could have less morphisms).
  
 A subcategory $\cat{S}$ of $\cat{C}$ is called \emph{full} if given any two objects $X$ and $Y$ of $\cat{S}$ (and $\cat{C}$), all their morphisms in $\cat{C}$ are also morphisms in $\cat{S}$. In other words, 
 $$
 \Hom_\cat{S}(X,Y) \; \cong \;\Hom_\cat{C}(X,Y) .
 $$
 (But not all objects of $\cat{C}$ are necessarily objects of $\cat{S}$ too.)
\end{deph}

\begin{eg}[several fields]
 The following are wide subcategories.
 \begin{itemize}
  \item The category $\cat{Lip}$ of metric spaces and \emph{Lipschitz} maps is a wide subcategory of the category $\cat{Met}$ of metric spaces and continuous maps.
  \item The category of sets and \emph{injective} maps is a wide subcategory of $\Set$.
  \item For every category, its core (\Cref{core}) is a wide subcategory.
  \item For $S$ a subgroup of $G$, the category $\cat{B}S$ is a wide subcategory of $\cat{B}G$.
 \end{itemize}
 The following are full subcategories.
 \begin{itemize}
  \item The category $\cat{AbGrp}$ of abelian groups and group homomorphisms is a full subcategory of $\Grp$.
  \item The category $\cat{FVect}$ of finite-dimensional vector spaces and linear maps is a full subcategory of $\Vect$.
  \item The category $\cat{CHaus}$ of compact Hausdorff spaces and continuous maps is a full subcategory of $\cat{Top}$.
  \item A subposet $(S,\le)$ of $(X,\le)$ with the order induced from $(X,\le)$ is a full subcategory of $(X,\le)$ seen as a category.
 \end{itemize}
\end{eg}

\subsection{Full, faithful, essentially surjective}\label{ffeso}

A function $f:X\to Y$ between sets can be
\begin{itemize}
 \item Injective: for $x,x'\in X$ we have $f(x)=f(x')$ in $Y$, then $x=x'$ in $X$;
 \item Surjective: for each $y\in Y$ there exists $x\in X$ such that $f(x)=y$.
\end{itemize}

For functors we have instead three possible properties. These roughly translate to ``injective on arrows'', ``surjective on arrows'', and ``surjective on objects up to isomorphism''.

\begin{deph}
 A functor $F:\cat{C}\to\cat{D}$ can be
 \begin{itemize}
  \item \emph{faithful}: for every objects $C,C'$ of $\cat{C}$ and every pair of arrows $f,f':C\to C'$ of $\cat{C}$, we have that if $Ff=Ff'$ in $\cat{D}$, then $f=f'$ in $\cat{C}$.
  \item \emph{full}: for every objects $C,C'$ of $\cat{C}$ and every arrow $g:FC\to FC'$ of $\cat{D}$, there exists an arrow $f:C\to C'$ of $\cat{C}$ such that $Ff=g$;
  \item \emph{fully faithful} if it is full and faithful;
  \item \emph{essentially surjective}: for every object $D$ of $\cat{D}$ there exist an object $C$ of $\cat{C}$ and an isomorphism $FC\to D$.
 \end{itemize}
\end{deph}

\begin{remark}\label{onarrows}
 Given objects $C,C'$ of $\cat{C}$, a functor $F:\cat{C}\to\cat{D}$ induces a function 
$$
\Hom_\cat{C}(C,C') \;\xrightarrow\;\Hom_\cat{D}(FC,FC') .
$$
This function is injective for all $C,C'$ precisely if $F$ is faithful, surjective for all $C,C'$ precisely if $F$ is full, and bijective for all $C,C'$ precisely if $F$ is fully faithful.
\end{remark}

Compare the definition of essential surjectivity with the definition of surjectivity of functions. An essentially surjective functor is ``surjective up to isomorphism''. This is the only version of surjectivity that we need between categories, since we look at \emph{isomorphic objects} rather than equal.
Fully faithful functors are ``injective up to isomorphism'', as the following proposition shows. 

\begin{prop}\label{ffinj}
 Let $F:\cat{C}\to\cat{D}$ be a fully faithful functor. Let $C,C'$ be objects in $\cat{C}$, and let $\phi:FC\to FC'$ be an isomorphism of $\cat{D}$. Then there is a unique isomorphism $\tilde{\phi}:C\to C'$ such that $F\tilde{\phi}=\phi$. 
 
 In particular, if $F(C)$ and $F(C')$ are isomorphic (in $\cat{D}$), then $C$ and $C'$ are isomorphic (in $\cat{C}$).
\end{prop}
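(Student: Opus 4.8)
The plan is to construct $\tilde\phi$ as the image of $\phi$ under the inverse of the bijection on hom-sets induced by $F$, and then verify that this $\tilde\phi$ is itself an isomorphism. First I would invoke fullness: since $\phi:FC\to FC'$ is a morphism of $\cat{D}$, there is some morphism $\tilde\phi:C\to C'$ in $\cat{C}$ with $F\tilde\phi=\phi$. Similarly, applying fullness to the inverse $\phi^{-1}:FC'\to FC$, there is a morphism $\psi:C'\to C$ with $F\psi=\phi^{-1}$. The candidate inverse of $\tilde\phi$ is $\psi$.

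Next I would check that $\psi$ and $\tilde\phi$ are mutually inverse, using faithfulness. Consider $\psi\circ\tilde\phi:C\to C$. Applying $F$ and using functoriality, $F(\psi\circ\tilde\phi)=F\psi\circ F\tilde\phi=\phi^{-1}\circ\phi=\id_{FC}=F(\id_C)$. Since $F$ is faithful, this forces $\psi\circ\tilde\phi=\id_C$. The symmetric argument with $\tilde\phi\circ\psi:C'\to C'$ gives $F(\tilde\phi\circ\psi)=\phi\circ\phi^{-1}=\id_{FC'}=F(\id_{C'})$, hence $\tilde\phi\circ\psi=\id_{C'}$ by faithfulness. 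So $\tilde\phi$ is an isomorphism with inverse $\psi$, establishing existence.

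For uniqueness, suppose $\tilde\phi'$ is another morphism $C\to C'$ with $F\tilde\phi'=\phi$. Then $F\tilde\phi'=F\tilde\phi$, and faithfulness immediately gives $\tilde\phi'=\tilde\phi$. (Note this uniqueness already holds at the level of morphisms, without needing $\tilde\phi$ to be an isomorphism.) The final clause of the proposition is then just the existence part: if $FC$ and $FC'$ are isomorphic via some $\phi$, the construction above produces an isomorphism $\tilde\phi:C\to C'$.

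I do not expect any serious obstacle here — the argument is a direct interplay of fullness (to lift $\phi$ and $\phi^{-1}$) and faithfulness (to transport the two identity equations back to $\cat{C}$), exactly as recorded in \Cref{onarrows}: $F$ being fully faithful means the induced map $\Hom_\cat{C}(C,C')\to\Hom_\cat{D}(FC,FC')$ is a bijection, and an isomorphism in $\cat{D}$ pulls back along this bijection to an isomorphism in $\cat{C}$. The only mild subtlety worth stating explicitly is that one must apply fullness \emph{twice} (once for $\phi$, once for $\phi^{-1}$) rather than hoping the inverse comes for free; everything else is routine.
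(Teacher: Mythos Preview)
Your proof is correct and follows essentially the same route as the paper's: lift $\phi$ and $\phi^{-1}$ separately via fullness, then use faithfulness to pull back the two identity equations and conclude the lifts are mutually inverse. The only cosmetic difference is that the paper names the inverse $\psi$ in $\cat{D}$ first and lifts it to $\tilde\psi$, whereas you lift $\phi^{-1}$ directly and call the lift $\psi$; the logic is identical.
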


\begin{proof}
 Let $\phi:F(C)\to F(C')$ be an isomorphism, with inverse $\psi$. Since $F$ is fully faithful, necessarily $\phi=F\tilde{\phi}$ for a unique $\tilde{\phi}:C\to C'$, and $\psi=F\tilde{\psi}$ for a unique $\tilde{\psi}:C'\to C$. It now suffices to prove that $\tilde{\phi}$ and $\tilde{\psi}$ are inverse to each other. Now as $\phi$ and $\psi$ are inverse to each other, $\psi\circ\phi=\id_{FC}$, or in other words, $F\tilde{\psi}\circ F\tilde{\phi}=\id_{FC}$. By functoriality, the last equation is equivalent to $F(\tilde{\psi}\circ \tilde{\phi})=F(\id_C)$. Since $F$ is faithful, this implies $\tilde{\psi}\circ \tilde{\phi}=\id_C$. In the same way, from $\phi\circ\psi=\id_{FC'}$ one can conclude $\tilde{\phi}\circ \tilde{\psi}=\id_{C'}$. Therefore $\tilde{\phi}$ and $\tilde{\psi}$ are inverse to each other.
\end{proof}

\begin{eg}[algebra, analysis]
 \begin{itemize}
  \item The forgetful functor $\cat{AbGrp}\to\Grp$ is fully faithful. It is not essentially surjective: not every group is isomorphic to an abelian group.
  More generally, a full subcategory always defines a fully faithful ``inclusion'' functor, and vice versa. 
  \item The forgetful functor $\cat{Lip}\to\cat{Met}$ is faithful and essentially surjective, but not full: not every continuous map is Lipschitz. 
 \end{itemize}
\end{eg}

Let's see what happens for posets, equivalence relations and preorders.

\begin{eg}[sets and relations]
 Let $(X,\le)$ and $(Y,\le)$ be posets, and let $f:(X,\le)\to (Y,\le)$ be monotone. 
 \begin{itemize}
  \item $f$ is always trivially faithful, since for any two objects $x,x'\in X$ there can be up to one unique arrow between them.
  \item $f$ is full precisely if it is \emph{order-reflecting}: that is, $f(x)\le f(x')$ implies $x\le x'$. Note the direction of the implication: this is not just monotonicity. 
  \item Since two elements in a poset are isomorphic if and only if they are equal, essential surjectivity is just surjectivity.
 \end{itemize}

 For equivalence relations and preorders, again every functor is automatically faithful, and full precisely if it \emph{reflects} (and not just preserves) the relation. Essential surjectivity is more subtle: given $f:(X,\sim)\to (Y,\sim)$, the function $f$ is essentially surjective if for every $y\in Y$ there exists $x\in X$ such that $f(x)\sim y$. In other words, the image of $f$ is not necessarily the whole of $Y$, but \emph{it hits every equivalence class of $Y$}. 
\end{eg}

Let's now see what happens for groups (and analogously, for monoids).

\begin{eg}[group theory]
 Let $G$ and $H$ be groups. A group homomorphism $f:G\to H$ corresponds to a functor $F:\cat{B}G\to\cat{B}H$, and vice versa. We have that
 \begin{itemize}
  \item $F$ is always trivially essentially surjective, since $\cat{B}H$ has a single object, which is the image of the single object of $\cat{B}G$.
  \item $F$ is faithful exactly if $f$ is injective by \Cref{onarrows}.
  \item $F$ is full exactly if $f$ is surjective by \Cref{onarrows}.
  \item Therefore, $F$ is fully faithful if and only if $f$ is a group isomorphism.
 \end{itemize}
\end{eg}

The term \emph{faithful} originally comes from representation theory, and here is why.

\begin{eg}[group theory]
 Let $G$ be a group. A linear representation of $G$ is a functor $R:\cat{B}G\to\Vect$. Let $V$ be the unique object in the image of $R$, that is, the space on which $G$ acts. The functor $R$ is faithful if and only if different elements $g\neq h\in G$ give different linear maps $V\to V$. In other words, for every $g\neq h\in G$ there exists $v\in V$ such that $g\cdot v \neq h\cdot v$.
 In representation theory, a representation with this property is called a \emph{faithful representation of $G$}. The intuitive idea is that this representation respects all the structure of $G$ without forgetting anything. As the terminology suggests, it ``represents $G$ faithfully''.
\end{eg}

\begin{remark}
One may ask, \emph{why do we have three properties for functors}? Why not two, as for functions, and why not four, two for objects and two for morphisms? One possible answer will be given by \Cref{thmeqcat}. Another one is the following. A set can be seen as a category in which the only morphisms are the identities. In particular, given $x,x'\in X$, there is an arrow $x\to x'$ if and only if $x=x'$. A function $f:X\to Y$ is therefore injective if and only if \emph{it is surjective on arrows}: if there is an arrow $f(x)\to f(x')$ (that is, $f(x)=f(x')$) then there is an arrow $x\to x'$ (that is, $x=x'$). In other words, surjectivity and injectivity can be seen as surjectivity on objects and on arrows, respectively. Injectivity on arrows is guaranteed by the fact that between $x$ and $x'$, in a set, there cannot be more than one arrow. In a category, however, there can be many arrows, and so injectivity on arrows becomes an issue too. Again, injectivity on arrows can be seen as surjectivity between ``arrows between arrows'', and so on.
This idea becomes precise, and more general, in the context of higher category theory.
\end{remark}

\subsection{Equivalences of categories}\label{equcat}

\begin{deph}
 Let $\cat{C}$ and $\cat{D}$ be categories. An \emph{equivalence of categories} between $\cat{C}$ and $\cat{D}$ consists of a pair of functors $F:\cat{C}\to\cat{D}$ and $G:\cat{D}\to\cat{C}$, and natural isomorphisms $\eta:G\circ F \Rightarrow \id_\cat{C}$ and $\epsilon: F\circ G \Rightarrow \id_\cat{D}$. 
 
 In that case we call $G$ the \emph{pseudoinverse} of $F$ (and vice versa). 
\end{deph}

Compare this with the definition of isomorphism in a category (\Cref{defiso}). In some sense, here we require $G$ to be the the inverse of $F$ ``only up to isomorphism''. Readers familiar with topology may find this analogous to a \emph{homotopy equivalence} between two spaces (rather than a homeomorphism).

\begin{eg}[sets and relations, group theory]
 The following are examples of equivalences of categories.
 \begin{itemize}
  \item Two posets are equivalent as categories if and only if they are isomorphic as posets.
  \item Two sets equipped with equivalence relations are equivalent as categories if and only if they have isomorphic quotients.
  \item Two groups $G$ and $H$ are isomorphic if and only if $\cat{B}G$ and $\cat{B}H$ are equivalent categories. The same is true for monoids.
 \end{itemize}
\end{eg}

\begin{ex}
 If any of the statements above does not convince you, try to prove it explicitly by plugging in the definition. 
\end{ex}

In the category of sets, a function between sets is a bijection (i.e.~it is invertible) if and only if it is both injective and surjective. We have a similar statement for functors, where instead of injectivity and surjectivity we have the properties of \Cref{ffeso}.

\begin{thm}\label{thmeqcat}
 A functor $F:\cat{C}\to\cat{D}$ defines an equivalence of categories if and only if it is fully faithful and essentially surjective. 
\end{thm}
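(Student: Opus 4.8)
The plan is to prove the two implications separately, in each case constructing the required data by hand and using faithfulness as the main tool for verifying equations of morphisms. For the forward direction, suppose $F$ is one half of an equivalence, with pseudoinverse $G:\cat{D}\to\cat{C}$ and natural isomorphisms $\eta:G\circ F\Rightarrow\id_\cat{C}$ and $\epsilon:F\circ G\Rightarrow\id_\cat{D}$. Essential surjectivity is immediate: for an object $D$ of $\cat{D}$ the component $\epsilon_D:FGD\to D$ is an isomorphism, so $D$ is isomorphic to $F(GD)$. For faithfulness, if $f,f':C\to C'$ satisfy $Ff=Ff'$, then $GFf=GFf'$, and the naturality squares of $\eta$ at $f$ and at $f'$ give $f\circ\eta_C=\eta_{C'}\circ GFf=\eta_{C'}\circ GFf'=f'\circ\eta_C$; since $\eta_C$ is an isomorphism, hence epi, we get $f=f'$. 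The same argument applied to $\epsilon$ shows $G$ is faithful. For fullness, given $g:FC\to FC'$ I would set $f:=\eta_{C'}\circ Gg\circ\eta_C^{-1}$ and, using naturality of $\eta$ at $f$, compute $G(Ff)=\eta_{C'}^{-1}\circ f\circ\eta_C=Gg$; faithfulness of $G$ then yields $Ff=g$.

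For the converse, assume $F$ is fully faithful and essentially surjective. Using essential surjectivity (and the axiom of choice) I would select, for every object $D$ of $\cat{D}$, an object $GD$ of $\cat{C}$ together with an isomorphism $\epsilon_D:F(GD)\to D$. For a morphism $g:D\to D'$, the composite $\epsilon_{D'}^{-1}\circ g\circ\epsilon_D:FGD\to FGD'$ has, by full faithfulness, a unique preimage under $F$, which I declare to be $Gg:GD\to GD'$; thus $F(Gg)=\epsilon_{D'}^{-1}\circ g\circ\epsilon_D$ holds by definition. That $G$ preserves identities and composites, hence is a functor, and that the maps $\epsilon_D$ assemble into a natural transformation $\epsilon:F\circ G\Rightarrow\id_\cat{D}$, are then checked by writing down the relevant equation of morphisms in $\cat{C}$ and noting that after applying the faithful functor $F$ it reduces to an obvious identity in $\cat{D}$; $\epsilon$ is a natural isomorphism since each component is invertible by construction.

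It remains to build $\eta:G\circ F\Rightarrow\id_\cat{C}$. For each object $C$ the morphism $\epsilon_{FC}:FGFC\to FC$ is an isomorphism of $\cat{D}$, so by \Cref{ffinj} there is a unique isomorphism $\eta_C:GFC\to C$ with $F\eta_C=\epsilon_{FC}$. Naturality at a morphism $f:C\to C'$ amounts to $\eta_{C'}\circ GFf=f\circ\eta_C$; applying $F$ turns the left-hand side into $\epsilon_{FC'}\circ FGFf$ and the right-hand side into $Ff\circ\epsilon_{FC}$, and these are equal by the naturality square of $\epsilon$ at the morphism $Ff:FC\to FC'$, so faithfulness of $F$ gives the identity. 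Then $(F,G,\eta,\epsilon)$ is an equivalence. The only genuinely delicate points are keeping track of the direction of $\eta$ and ensuring that its components are really isomorphisms — for which \Cref{ffinj} (a fully faithful functor reflects isomorphisms) is exactly what is needed — together with the use of the axiom of choice to define $G$ on objects; everything else is routine diagram chasing powered by faithfulness.
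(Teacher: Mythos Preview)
Your proof is correct and follows essentially the same approach as the paper: both directions use the same constructions (in particular $f:=\eta_{C'}\circ Gg\circ\eta_C^{-1}$ for fullness, and in the converse defining $G$ via chosen isomorphisms $\epsilon_D$ and full faithfulness, with $\eta$ obtained from $\epsilon_{FC}$ via \Cref{ffinj}), and both rely on faithfulness to transfer equations along $F$ or $G$. The only cosmetic differences are that the paper packages the cancellation-by-isomorphism arguments through \Cref{uniquef} rather than invoking ``iso $\Rightarrow$ epi'' directly, and that you make the use of the axiom of choice explicit where the paper leaves it implicit.
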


This theorem is quite convenient, since it is often much easier to check the specified properties for $F$ than to find a specific pseudoinverse. 
In order to prove the theorem, we make use of the following lemma, which is itself interesting.

\begin{lemma}[{\cite[Lemma~1.5.10]{ctcontext}}]\label{uniquef}
 Let $f:X\to Y$ be a morphism in a category $\cat{C}$. Consider isomorphisms $\phi:X\to X'$ and $\psi:Y\to Y'$. Then there is a unique morphism $X'\to Y'$ such that the following diagram commutes
 $$
 \begin{tikzcd}
 X \ar{r}{f} \ar{d}{\cong}[swap]{\phi} & Y \ar{d}{\cong}[swap]{\psi} \\
 X' \ar{r} & Y'
 \end{tikzcd}
 $$
 and this morphism is given by $\psi\circ f\circ \phi^{-1}$.
\end{lemma}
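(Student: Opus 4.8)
The plan is to prove existence and uniqueness separately, both by direct computation with the isomorphism $\phi$ and its inverse. For existence, I would simply propose the candidate $g \coloneqq \psi \circ f \circ \phi^{-1} : X' \to Y'$ and verify that the square commutes. Reading the square, commutativity means $g \circ \phi = \psi \circ f$, and indeed
$$
g \circ \phi \;=\; \psi \circ f \circ \phi^{-1} \circ \phi \;=\; \psi \circ f \circ \id_X \;=\; \psi \circ f,
$$
using associativity, the defining property $\phi^{-1}\circ\phi = \id_X$ of the inverse (\Cref{defiso}), and unitality. So the displayed $\psi\circ f\circ\phi^{-1}$ does the job.

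For uniqueness, I would take any morphism $g' : X' \to Y'$ making the square commute, i.e.\ satisfying $g' \circ \phi = \psi \circ f$, and postcompose (on the right) with $\phi^{-1}$:
$$
g' \;=\; g' \circ \id_{X'} \;=\; g' \circ \phi \circ \phi^{-1} \;=\; \psi \circ f \circ \phi^{-1} \;=\; g.
$$
Here I use $\phi \circ \phi^{-1} = \id_{X'}$, associativity, and the assumed commutativity. Hence $g'$ is forced to equal $g$, which establishes both that $g$ is the unique such morphism and that it has the stated form.

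There is essentially no obstacle here: the entire argument is a two-line manipulation of composites, and the only ``inputs'' are the category axioms (associativity and unitality) together with the definition of an inverse. If anything, the only thing to be careful about is bookkeeping the side on which one composes with $\phi^{-1}$ — on the right for existence (turning $\phi$ into $\id_X$), and on the right again for uniqueness (turning $\phi$ into $\id_{X'}$) — but this is routine. I would present it compactly as above, perhaps noting in passing that the same statement holds with the roles of source and target exchanged, which is its dual.
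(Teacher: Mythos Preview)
Your proof is correct and follows exactly the same approach as the paper's: verify that $\psi\circ f\circ\phi^{-1}$ makes the square commute, then show any other $g'$ with $g'\circ\phi=\psi\circ f$ must equal it by composing on the right with $\phi^{-1}$. The only difference is that you spell out the uses of associativity and unitality a bit more explicitly than the paper does.
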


\begin{proof}[Proof of \Cref{uniquef}]
 First of all, the morphism $\psi\circ f\circ \phi^{-1}$ makes the diagram commute, since $\psi\circ f\circ \phi^{-1}\circ \phi$ = $\psi\circ f$. Now suppose that there is any other morphism $g:X'\to Y'$ making the diagram commute. That is, $g\circ\phi = \psi\circ f$. By applying $\phi^{-1}$ on the right on both sides we get $g = \psi\circ f \circ \phi^{-1}$.
\end{proof}

We are now ready to prove the theorem. 

\begin{proof}[Proof of \Cref{thmeqcat}]
 Let $F:\cat{C}\to\cat{D}$ be a functor with pseudoinverse $G:\cat{D}\to\cat{C}$, with natural isomorphisms $\eta:G\circ F \Rightarrow \id_\cat{C}$ and $\epsilon: F\circ G \Rightarrow \id_\cat{D}$.
 Let's prove that $F$ is faithful.
 Let $C$ and $C'$ be objects of $\cat{C}$.
 Suppose that $f,f':C\to C'$ are morphisms of $\cat{C}$ such that $Ff=Ff'$ in $\cat{D}$. By naturality of $\eta$, these two diagrams commute. 
 $$
 \begin{tikzcd}
  GFC \ar{d}{\cong}[swap]{\eta_C} \ar{r}{GFf} & GFC' \ar{d}{\cong}[swap]{\eta_{C'}} \\
  C \ar{r}{f} & C'
 \end{tikzcd}
 \qquad\begin{tikzcd}
  GFC \ar{d}{\cong}[swap]{\eta_C} \ar{r}{GFf'} & GFC' \ar{d}{\cong}[swap]{\eta_{C'}} \\
  C \ar{r}{f'} & C'
 \end{tikzcd}
 $$
 Now if $Ff=Ff'$ then $GFf=GFf'$, which are the arrows at the top of the two diagrams, and by uniqueness (\Cref{uniquef}) it must be that $f=f'$. So $F$ is faithful (and by the same argument, $G$ is faithful too).
 Let's now prove that $F$ is full.  
 Let $g:FC\to FC'$ be a morphism of $\cat{D}$. We have to prove that there is a morphism $f:C\to C'$ of $\cat{C}$ such that $Ff=g$. So consider the morphism $Gg$ of $\cat{C}$, fitting into the following diagram. 
 $$
 \begin{tikzcd}
  GFC \ar{d}{\cong}[swap]{\eta_C} \ar{r}{Gg} & GFC' \ar{d}{\cong}[swap]{\eta_{C'}} \\
  C & C'
 \end{tikzcd}
 $$
 We can construct the morphism $f\coloneqq \eta_{C'}\circ Gg\circ \eta_C^{-1}:C\to C'$ as the unique arrow that fills the diagram above to a commutative diagram. By naturality of $\eta$, the following diagram commutes,
 $$
 \begin{tikzcd}
  GFC \ar{d}{\cong}[swap]{\eta_C} \ar{r}{GFf} & GFC' \ar{d}{\cong}[swap]{\eta_{C'}} \\
  C \ar{r}{f} & C'
 \end{tikzcd}
 $$
 Again by uniqueness (\Cref{uniquef}), we then must conclude that $GFf=Gg$. But since $G$ is faithful, this implies $Ff=g$. Therefore $F$ is full.
 Let's now turn to essential surjectivity. Let $D$ be an object of $\cat{D}$. We have to find an object $C$ of $\cat{C}$ such that $FC\cong D$. So define $C\coloneqq GD$. We have that $FC=FGD\cong D$ via the map $\epsilon_D:FGD\to D$. Therefore $F$ is essentially surjective. We have proven one direction of the theorem.
 
 Now the converse. Let $F$ be fully faithful and essentially surjective. We need to construct $G:\cat{D}\to\cat{C}$ with $G\circ F\cong\id_\cat{C}$ and $F \circ G\cong \id_\cat{D}$.
 Since $F$ is essentially surjective, for every object $D$ of $\cat{D}$ we can choose an object $C$ of $\cat{C}$ and an isomorphism $\phi_D:FC\to D$. We define then, on objects, $GD\coloneqq C$. For the morphisms we proceed as follows. Given $g:D\to D'$ in $\cat{D}$, we can apply \Cref{uniquef} and obtain the $\phi_{D'}\circ g\circ \phi_{D}^{-1}:FGD\to FGD'$. Since $F$ is full, the latter map can be written as $Ff$ for some morphism $f:C_D\to C_{D'}$ of $\cat{C}$. We then define, for each $g:D\to D'$, $Gg\coloneqq f$. This assignment is functorial: let $g:D\to D'$ and $g':D'\to D''$ in $\cat{D}$. Then 
 \begin{align*} 
 FG(g'\circ g) \;=\; \phi_{C''}\circ g'\circ g\circ \phi_{C}^{-1} \;&=\; (\phi_{C''}\circ g'\circ \phi_{C'})\circ (\phi_{C'}^{-1}\circ g\circ \phi_{C}^{-1}) \\
 \;&=\; FGg'\circ FGg \;=\; F(Gg'\circ Gg),
 \end{align*}
 and since $F$ is faithful, this implies $G(g'\circ g)=Gg'\circ Gg$. 
 Now let $C$ be an object of $\cat{C}$. By construction of $G$, we have that $G\circ FC$ is an object of $\cat{C}$ admitting an isomorphism $\phi_{FC}:FGFC\to FC$. 
 By \Cref{ffinj}, since $F$ is fully faithful, we have an isomorphism $\tilde{\phi}_{FC}:GFC\to C$. This isomorphism is natural: let $f:C\to C'$ be a morphism of $\cat{C}$ and consider the following diagrams.
 $$
 \begin{tikzcd}
 GFC \ar{r}{GFf} \ar{d}{\tilde{\phi}_{FC}} & GFC' \ar{d}{\tilde{\phi}_{FC'}} \\
 C \ar{r}{f} & C'
 \end{tikzcd}
 \qquad
 \begin{tikzcd}
 FGFC \ar{r}{FGFf} \ar{d}{\phi_{FC}} & FGFC' \ar{d}{\phi_{FC'}} \\
 FC \ar{r}{Ff} & FC'
 \end{tikzcd}
 $$
 The diagram on the left commutes exactly if the isomorphism $\tilde{\phi}_{FC}:GFC\to C$ is natural. Since $F$ is fully faithful, the diagram commutes if and only if it does after applying $F$, which gives the diagram on the right. But the diagram on the right commutes, since by construction of $G$, $FGFf = \phi_{FC'}\circ Ff\circ \phi_{FC}^{-1}$. Therefore we have a natural isomorphism $G\circ F \cong \id_\cat{C}$. 
 Just as well, let $D$ be an object of $\cat{D}$. We have y construction of $G$ that $GD$ is an object of $\cat{C}$ such that $FGD$ is isomorphic to $D$ via the map $\phi_D$. Again, this isomorphism is natural: let $g:D\to D'$ be a morphism. By construction of $G$, the map $FGg$ is equal to $\phi_{D'}\circ g\circ \phi_{D}^{-1}$, so the following diagram commutes.
 $$
 \begin{tikzcd}
  FGD \ar{r}{FGg} \ar{d}{\phi_D} & FGD'\ar{d}{\phi_{D'}} \\
  D \ar{r}{g} & D'
 \end{tikzcd}
 $$
 Therefore, we have a natural isomorphism $F\circ G \cong \id_\cat{D}$, and hence, an equivalence of categories between $\cat{C}$ and $\cat{D}$. 
\end{proof}

Here is a first example, written as a corollary.
\begin{cor}
 A monotone function $f:(X,\le)\to(Y,\le)$ is an isomorphism of partial orders if and only if it is order-reflecting and surjective.
\end{cor}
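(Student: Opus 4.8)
The plan is to derive this from \Cref{thmeqcat} by checking that, for posets, an equivalence of categories coincides with an isomorphism of partial orders, and that the three conditions of that theorem collapse to the two stated here. First I would recall the dictionary already set up in the excerpt: a poset $(X,\le)$ is the category with a unique arrow $x\to x'$ precisely when $x\le x'$; monotone maps are exactly the functors between such categories; and a monotone map is invertible as a functor (equivalently, a monotone bijection with monotone inverse) exactly when it is an isomorphism in $\cat{Cat}$.

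Next I would invoke the facts about functors between posets noted earlier: any such functor is automatically faithful (there is at most one arrow between any two objects); it is full if and only if it is \emph{order-reflecting}, i.e.\ $f(x)\le f(x')\Rightarrow x\le x'$; and it is essentially surjective if and only if it is surjective, since two elements of a poset are isomorphic if and only if they are equal (by antisymmetry). Combining these with \Cref{thmeqcat}, the statement ``$f$ defines an equivalence of categories'' is equivalent to ``$f$ is order-reflecting and surjective''.

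It then remains to upgrade ``equivalence of categories'' to ``isomorphism of partial orders''; this is the step where antisymmetry does the real work, and I expect it to be the main point to get right (it is exactly what fails for mere preorders). Suppose $f$ is an equivalence with pseudoinverse $g:(Y,\le)\to(X,\le)$ and natural isomorphisms $\eta:g\circ f\Rightarrow\id_X$ and $\e:f\circ g\Rightarrow\id_Y$. For each $x\in X$, the component $\eta_x$ is an isomorphism $g(f(x))\to x$ in the poset $X$; but isomorphic elements of a poset are equal, so $g(f(x))=x$, i.e.\ $g\circ f=\id_X$ on the nose, and symmetrically $f\circ g=\id_Y$. Hence $f$ is a genuine isomorphism of categories, equivalently a monotone bijection with monotone inverse, which is precisely an isomorphism of partial orders. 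Conversely, an isomorphism of posets is in particular an equivalence of categories, so the chain of equivalences above closes the loop.

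As a sanity check worth mentioning, one can also bypass the theorem entirely: order-reflection forces injectivity, since $f(x)=f(x')$ gives $f(x)\le f(x')$ and $f(x')\le f(x)$, hence $x\le x'$ and $x'\le x$, hence $x=x'$ by antisymmetry; surjectivity is assumed; and order-reflection says exactly that the inverse map is monotone. Either route gives the corollary, but the intended presentation is the one-line deduction from \Cref{thmeqcat} together with the poset translations above.
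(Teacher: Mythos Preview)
Your proposal is correct and follows exactly the route the paper intends: the corollary is stated immediately after \Cref{thmeqcat} as ``a first example, written as a corollary,'' with no separate proof given, relying on the earlier dictionary that for posets faithful is automatic, full means order-reflecting, essential surjectivity is just surjectivity, and equivalence of categories between posets coincides with isomorphism. You have spelled out the last step (via antisymmetry) more carefully than the paper does, which is fine, and your direct sanity-check argument is a nice bonus.
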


A more suggestive example comes from linear algebra. Some people consider a vector in finite dimensions ``just an array of numbers'', and a linear map ``just a matrix''. Of course they are not \emph{quite} the same, but one thing is true: everything that can be done with abstract finite-dimensional vector spaces can also be done with their representation as arrays of numbers, and vice versa, if one is careful.
This can be made precise in the following way: \emph{there is an equivalence of categories between $\cat{FVect}$ and the category of matrices}. In the example below we show it for vector spaces over the reals, but the same can be done for complex numbers, or any other field.

\begin{eg}[linear algebra]\label{matcat}
 First of all, we want to define a category $\cat{Mat}$ whose morphisms are matrices with their usual matrix composition. As you know from linear algebra, a $m\times n$-matrix $A$ (that is, a matrix with $n$ rows and $m$ columns) can only be composed on the left with a matrix $B$ that has $n$ columns (the same $n$ as the rows of $A$), and an arbitrary number of rows. Therefore, as objects we take natural numbers, which keep track of the numbers of rows and columns in such a way that composable pairs of arrows correspond to composable pairs of matrices. That is:
 \begin{itemize}
  \item The objects of $\cat{Mat}$ are natural numbers $n\in \N$, including zero.
  \item For objects (natural numbers) $m,n\ne 0$, the morphisms $M:m\to n$ are the $m\times n$ matrices $M$ with real entries.
  \item If $m$ or $n$ is zero, we just assign a unique morphism $m\to n$, which we can see as a ``zero-dimensional matrix''.
  \item The identity of $n$ is just the $n\times n$ identity matrix, and the composition is given by the usual composition of matrices.
 \end{itemize}
 
 Let's now construct a functor $F:\cat{Mat}\to\cat{FVect}$ as follows. 
 \begin{itemize}
  \item $F$ maps each object (natural number) $n$ of $\cat{Mat}$ to the vector space $\R^n$, and $0$ to the zero vector space;
  \item $F$ maps each $m\times n$ matrix $M:m\to n$ to the linear map $\R^m\to\R^n$ represented by the matrix $M$.
 \end{itemize}
 This preserves identities and composition, and so it is a functor. Now,
 \begin{itemize}
  \item $F$ is faithful: different $m\times n$ matrices give different linear maps $\R^m\to\R^n$;
  \item $F$ is full: all linear maps $\R^m\to\R^n$ arise in this way, i.e.~can be represented by a matrix;
  \item $F$ is essentially surjective: every finite-dimensional vector space is isomorphic to $\R^n$ for some $n$. 
 \end{itemize}
 Therefore, by \Cref{thmeqcat}, $F$ defines an equivalence of categories.

 In particular, as in the proof of \Cref{thmeqcat}, \Cref{uniquef} implies the following: let $V$ and $W$ be vector spaces of dimension $m$ and $n$, respectively. Then there are isomorphisms $\phi_V:\R^m\to V$ and $\phi_W:\R^n\to V$. These can be seen as \emph{choices} of bases for $V$ and $W$.
 There are actually many more possible isomorphisms than these two, which correspond to different choices of bases, but let's keep these fixed for the moment. 
 Now given a linear map $f:V\to W$, by \Cref{uniquef} there exists a \emph{unique} linear map $\R^m\to \R^n$ making this diagram commute,
 $$
 \begin{tikzcd}
  \R^m \ar{r} \ar{d}{\cong}[swap]{\phi_V} & \R^n \ar{d}{\cong}[swap]{\phi_W} \\
  V \ar{r}{f} & W
 \end{tikzcd}
 $$
 namely, the map $\phi_W^{-1}\circ f\circ \phi_V$. This map can be seen as the \emph{representation of $f$ as a matrix given the chosen bases of $V$ and $W$}. If we keep $\phi_V$ and $\phi_W$ fixed, this representation is unique, and it uniquely specifies $f$. 
 But of course, if we let $\phi_V$ and $\phi_W$ vary, then the representation of $f$ will be different.
\end{eg}

As the example above shows, an equivalence of categories between $\cat{C}$ and $\cat{D}$ can be interpreted, intuitively, as the fact that \emph{whatever can be done in $\cat{C}$ can be equivalently done in $\cat{D}$ too, and vice versa}. At a first look, the two categories may look differently, but they really encode the same mathematical methods, they have the same expressive power.

\begin{ex}\label{1.5.20}
 A category is called \emph{connected} if between any two objects $X$ and $Y$ there is at least an arrow (in at least one direction). Prove that a connected groupoid is equivalent to a category in the form $\cat{B}G$ for some group $G$. 
 
 Is an analogous statements true for connected categories and monoids? (Hint: no. Can you give a counterexample?)
\end{ex}

\newpage
\chapter{Universal properties and the Yoneda lemma}

\section{Representable functors and the Yoneda embedding theorem}

\subsection{Extracting sets from objects}

We have said in \Cref{presheaf} that the category $\Set$ plays a special role in (ordinary) category theory, because the arrows between two given objects (in a locally small category) form a set. 
Given an object $X$ in a category $\cat{C}$, we can usually extract many sets out of $X$, which can be thought of as ``capturing part of the structure'' whenever they are functorial. Let's see some examples.

\begin{eg}[topology]\label{toptoset}
 Let $\cat{C}=\cat{Top}$. Given a topological space $X$, we can consider for example the following sets.
 \begin{enumerate}
  \item The underlying set $U(X)$, which is the set of \emph{points} of $X$;
  \item The set $\mathit{Curve}(X)$ of continuous curves in $X$, with or without endpoints;
  \item The set $\mathit{Loop}(X)$ of continuous loops, or closed curves in $X$;
  \item The set $\pi_0(X)$ of path-connected components of $X$;
  \item The set $O(X)$ of open sets of $X$, i.e.~the topology of $X$.
 \end{enumerate}
 All these constructions are functorial in $X$ in the following ways. Let $f:X\to Y$ be a continuous function.  
 \begin{enumerate}
  \item There is an ``underlying function'' $Uf$ between the underlying sets of points $U(X)$ and $U(Y)$. So we have a functor $U:\cat{Top}\to\Set$ (see also \Cref{forgetfulfunctor}).
  \item If $c$ is a continuous curve in $X$, then the image $f_*c$ of $c$ under $f$ is a continuous curve in $Y$. Therefore we get an induced map $f_*:\mathit{Curve}(X)\to \mathit{Curve}(Y)$ (we can set $\mathit{Curve}(f)\coloneqq f_*$). So we have a functor $\mathit{Curve}:\cat{Top}\to\Set$.
  \item Just as well, we get an induced map $f_*:\mathit{Loop}(X)\to \mathit{Loop}(Y)$. Again, we have a functor $\mathit{Loop}:\cat{Top}\to\Set$.
  \item If $x,y\in X$ are in the same path component of $X$, then $f$, by continuity, has to map them to the same path component of $Y$. This induces a well-defined map $\pi_0(X)\to\pi_0(Y)$ (compare with the graph case \Cref{graphcomponents}.) Therefore we have a functor $\pi_0:\cat{Top}\to\Set$. 
  \item Let $U$ be an open set of $Y$. Then its preimage $f^{-1}(U)$ is an open set of $X$. Therefore there is a well-defined function $f^{-1}:O(Y)\to O(X)$ (note the direction of the arrow!). So we have a \emph{presheaf} $O:\cat{Top}^\op\to\Set$. 
  (In general, the \emph{image} of an open set under a continuous map is not open -- we really need to take preimages here.)
 \end{enumerate}
\end{eg}

The case of graphs and multigraphs is analogous.
\begin{eg}[graph theory]\label{graphtoset}
 Let $\cat{C}$ be the category $\cat{MGraph}$ of directed multigraphs and their morphisms (\Cref{egmorphgraph}). Given a multigraph $G$, the following are functorial assignments into $\Set$. (How are they functorial?)
 \begin{enumerate}
  \item The set $\mathit{Vert}(G)$ of its vertices;
  \item The set $\mathit{Edge}(G)$ of its edges;
  \item The set $\mathit{Chain}_2(G)$ of 2-chains in $G$, i.e.~pairs of edges which are head-to-tail (a bit like composable morphisms in a category). More generally, the set $\mathit{Chain}_n(G)$ of $n$-chains, or $n$-walks in $G$, i.e.~series of $n$ consecutive edges.
  \item The set $\pi_0(G)$ of connected components (see also \Cref{graphcomponents}).
 \end{enumerate}
 (Note that the word ``path'', in graph theory, is reserved for those chains which do not cross the same vertices more than once.)
\end{eg}

Here are some examples for the category of groups.
\begin{eg}[group theory]\label{grptoset}
 The following are functorial assignments $\Grp\to\Set$. Given a group $G$,
 \begin{enumerate}
  \item The underlying set $U(G)$;
  \item The set of elements of $G$ of order $2$, or more generally, of order $n$ for fixed $n$. This means, the elements $g\in G$ such that $g^n=e$. For example, $-1$ has order $2$ in $(\R_{\ne 0},\cdot)$.
 \end{enumerate}
\end{eg}

\begin{ex}[group theory]
 Can you give more examples of functors $\Grp\to\Set$? What about presheaves $\Grp^\op\to\Set$?
\end{ex}

\begin{ex}\label{yourcatset}
 Take a category $\cat{C}$ of your choice, useful to your field. Can you give examples of functors $\cat{C}\to\Set$ or presheaves $\cat{C}^\op\to\Set$ which extract useful information from the objects?
\end{ex}

\subsection{Representable functors}

In many of the examples in the section above, the given functors are actually of a very simple form: they are in the form $\Hom_{C}(S,-):\cat{C}\to\Set$ for some object $S$. 

\begin{deph}
 Let $\cat{C}$ be a category. A functor $F:\cat{C}\to\Set$ is called \emph{representable} if it is naturally isomorphic to the functor $\Hom_{C}(S,-):\cat{C}\to\Set$ for some object $S$ of $\cat{C}$. In that case we call $S$ the \emph{representing object}. 
 
 A presheaf $F:\cat{C}^\op\to\Set$ is called \emph{representable} if it is naturally isomorphic to the functor $\Hom_{C}(-,S):\cat{C}^\op\to\Set$ for some object $S$ of $\cat{C}$. Again, in that case we call $S$ the \emph{representing object}. 
\end{deph}

Here is a way to interpret representable functors and representable presheaves. 
A representable functor $\Hom_{C}(S,-):\cat{C}\to\Set$ takes an objects $X$ of $\cat{C}$ and gives the set $\Hom_{C}(S,X)$ of arrows from $S$ to $X$. Intuitively, the features that it extracts from $X$ are exactly all the possible ways of mapping $S$ into $X$, as if $S$ were a sort of \emph{probe} that we use to explore the structure of $X$.

Dually, a representable presheaf $\Hom_{C}(-,S):\cat{C}^\op\to\Set$ takes an objects $X$ of $\cat{C}$ and gives the set $\Hom_{C}(X,S)$ of arrows from $X$ to $S$. Intuitively, the features that it extracts from $X$ are all the possible ways of mapping $X$ to $S$, or all the possible \emph{observations} of $X$ with values in $S$ (think of $S$ as a ``screen'' onto which $X$ can be projected in many possible ways). 

\begin{eg}[topology]
 Let's see which functors from \Cref{toptoset} are representable. Let $X$ be a topological space.
 \begin{enumerate}
  \item Let $x$ be a point of $X$. If we denote by $1$ the one-point space, then there exists a (continuous) map $1\to X$ which picks out exactly $x$, that is, which maps the unique point of $1$ to $x$. This can be done for every point $x$ of $X$, and different maps $1\to X$ pick out necessarily different points. Therefore there is a bijection between points of $X$ and maps $1\to X$, that is, $U(X)\cong \Hom_\cat{Top}(1,X)$. This bijection is natural in $X$ (why?), and so the functor $U$ is representable, and the representing object is the one-point space $1$. 
  \item Analogously, curves in $X$ with endpoints correspond to continuous maps from $[0,1]$ to $X$ and curves without endpoints correspond to continuous maps from $(0,1)$ (or equivalently, $\R$) to $X$. Therefore the functor $\mathit{Curve}$ is representable.
  \item In the same way, the functor $\mathit{Loop}$ is representable, and the representing object is the circle $S^1$. That is, loops in $X$ correspond to continuous maps $S^1\to X$. 
  \item The functor $\pi_0$ is not representable. (It is however a sort of homotopy-version of the forgetful functor $U$.) 
  \item Consider now the topology presheaf $O:\cat{Top}^\op\to\Set$, which assigns to each set $X$ its topology. Denote by $S$ the Sierpinski space, which is the space with two elements $0$ and $1$, with the topology given by $\{\varnothing,\{1\},\{0,1\}\}$, but not $\{0\}$. Given a topological space $X$, consider a function $f:X\to S$. Necessarily, $f^{-1}(\varnothing)=\varnothing$ and $f^{-1}(S)=X$. Therefore, $f$ is continuous if and only if $f^{-1}(1)$ is open in $X$. Vice versa, given any open set $U\subseteq X$, the function $X\to S$ mapping $U$ to $1$ and everything else to $0$ is continuous. Therefore there is a bijection between open sets of $X$ and continuous maps $X\to S$, that is, $O(X) \cong \Hom_\cat{Top}(X,S)$. This is natural in $X$, and so the presheaf $O$ is representable by the space $S$.
 \end{enumerate}
\end{eg}

In the interpretation given above, for example, the functor $\mathit{Loop}$ extracts from a space $X$ the features that can be obtained by ``probing'' $X$ with the space $S^1$, i.e.~by looking at all the possible ways of mapping $S^1$ into $X$.
Just as well, the topology presheaf $O$ extracts from a space $X$ all the possible ways in which $X$ can be mapped continuously to the Sierpinski space $S$, i.e.~all possible ways to observe $X$ with ``resolution'' $S$. 

\begin{eg}[graph theory]\label{g0g1}
 Let's see which functors from \Cref{graphtoset} are representable. Let $G$ be a directed multigraph.
 \begin{enumerate}
  \item Denote by $G_0$ the graph with one single vertex and no edges. For each vertex $v$ of $G$ there is a unique morphism $G_0\to G$ picking out exactly $v$. Therefore $\mathit{Vert}(G)\cong \Hom_\cat{MGraph}(G_0,G)$. This is natural in $G$, so we have that $\mathit{Vert}$ is representable by $G_0$. 
  \item Denote by $G_1$ the graph with $2$ vertices and a unique edge between them. For each edge $e$ of $G$ there is a unique morphism $G_1\to G$ picking out exactly the edge $e$. Again, this assignment is natural. Therefore $\mathit{Edge}$ is representable by the graph $G_1$.
  \item More generally, for each $n\in \N$, denote by $G_n$ be the graph of $n+1$ vertices and $n$ edges forming a chain. The functor $\mathit{Chain}_n$ is representable by the graph $G_{n}$.
  \item The set $\pi_0$ of connected components is not representable, just as for topological spaces.
 \end{enumerate}
\end{eg}

\begin{eg}[group theory]\label{2.1.9}
 The functors given in \Cref{grptoset} are representable. Let $G$ be a group.
 \begin{enumerate}
  \item The elements of $G$ are in bijection with group homomorphisms $(\Z,+)\to G$. Every element $g\in G$ defines a group homomorphism $f:\Z\to G$ defined by $f(1)=g$ (necessarily $f(0)=e$, $f(-1)=g^{-1}$, $f(2)=g^2$, and so on). Conversely, every morphism $f:\Z\to G$ gives an element of $G$ by looking at where it maps $1\in \Z$. Therefore $U:\Grp\to\Set$ is representable by $\Z$.
  Note that we cannot, as we did for topological spaces, use the one-point group $\{e\}$ as representing object: any group homomorphism from $\{e\}$ to $G$ can only have $e\in G$ as image.
  \item The elements of $G$ of order $n$ are in bijection with group homomorphisms $\Z/n\to G$.
 \end{enumerate}
\end{eg}

\begin{eg}[group theory]\label{BGrepr}
 Let $G$ be a group. We have seen (\Cref{perm}) that a functor $F:\cat{B}G\to\Set$ is a permutation representation of $G$, that is, a set $X$ (which the image $F\bullet$ of the unique object $\bullet$ of $\cat{B}G$) equipped with an action of $G$ on $X$. When is this functor representable?
 
 Plugging in the definition: $F$ is representable if for some object of $\cat{B}G$ (there is only one possible object, namely $\bullet$), $F$ is naturally isomorphic to $\Hom_{\cat{B}G}(\bullet, -)$. This in turns means that for every object of $\cat{B}G$ (again, there is only one, $\bullet$), we have that $F\bullet \cong \Hom_{\cat{B}G}(\bullet, \bullet)$, and that this bijection is natural. Now, $F\bullet$ is exactly the set $X$ on which $G$ is acting. And $\Hom_{\cat{B}G}(\bullet, \bullet)$ is the set of arrows of $\cat{B}G$ from $\bullet$ to $\bullet$, which by definition is the set of elements of $G$. Therefore, in order for $F$ to be representable, we need $X$ to be isomorphic to the underlying set $U(G)$ of $G$. That is, the group has to act on itself ($g\in G$ acts on the element $h\in G$ by mapping it to $gh\in G$). Moreover, the naturality condition says that the isomorphism $X\to U(G)$ has to be $G$-equivariant, that is, $X$ and $U(G)$ must also be isomorphic as $G$-sets. 
\end{eg}

\begin{ex}[graph theory]
 We have seen in \Cref{egmultigraph} that multigraphs are exactly presheaves on the category $\cat{Par}$ (defined there). 
 Prove that the representable presheaves on $\cat{Par}$ are, up to natural isomorphism, exactly the multigraphs $G_0$ and $G_1$, with a single vertex and a single edge respectively, appearing in \Cref{g0g1}. Which object of $\cat{Par}$ represents which presheaf?
\end{ex}

\begin{ex}[sets and relations]
 Show that the identity $\Set\to\Set$ is representable. What is the representing object?
\end{ex}

\begin{ex}[measure theory]
 Let $\cat{Meas}$ be the category of measurable spaces and measurable maps. To each measurable space $X$ assign the set $\Sigma(X)$ of its measurable subsets. Show that this is part of a presheaf $\cat{Meas}^\op\to\Set$. Is this representable?
\end{ex}

\begin{ex}[topology]\label{repronopen}
 Let $X$ be a topological space. According to \Cref{presheavesonopen}, presheaves on $X$ are presheaves on the poset $O(X)$ seen as a category. What do representable presheaves look like?
\end{ex}

\begin{ex}
 If you have done \Cref{yourcatset}, are the functors or presheaves (that you defined) representable?
\end{ex}

\subsection{The Yoneda embedding theorem}

Two natural questions may arise from the examples of the last section. 
\begin{itemize}
 \item Suppose that a functor (or presheaf) is representable. Then is the representing object necessarily unique (up to isomorphism)?
 \item Suppose that given two objects $X$ and $Y$ of $\cat{C}$, for each object $S$ we have that $\Hom_\cat{C}(S,X)$ is naturally isomorphic to $\Hom_\cat{C}(S,Y)$. That is, suppose that for each $S$, ``what $S$ sees in $X$ and $Y$ is the same''. Can we conclude that $X$ and $Y$ are isomorphic?
 (The same question can be asked with morphisms into $S$ instead of out of $S$.)
\end{itemize}

The answer to both question is \emph{yes}, and follows from one of the most important results in category theory.

\begin{thm}[Yoneda embedding]\label{yonedathm}
 Let $\cat{C}$ be a category, and let $X$ and $Y$ be objects of $\cat{C}$. There is a natural bijection of sets 
 $$
 \Hom_\cat{C} (X,Y) \; \cong \; \Hom_{[\cat{C}^\op,\Set]} \big( \Hom_\cat{C} (-,X) , \Hom_\cat{C} (-,Y) \big)
 $$
 between the morphisms of $\cat{C}$ from $X$ to $Y$ and the natural transformations from the presheaf $\Hom_\cat{C} (-,X):\cat{C}^\op\to\Set$ represented by $X$ to the presheaf $\Hom_\cat{C} (-,Y):\cat{C}^\op\to\Set$ represented by $Y$.
\end{thm}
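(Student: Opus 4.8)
The plan is to prove the statement by exhibiting explicit functions in both directions and checking that they are mutually inverse. The whole argument is a diagram chase powered only by associativity and unitality of composition in $\cat{C}$, together with the naturality condition for natural transformations; there is no genuinely hard step, and the single ``idea'' involved is to evaluate a natural transformation on an identity morphism.

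First I would define the forward map. Given a morphism $f : X \to Y$ of $\cat{C}$, let $f_*$ denote the natural transformation $\Hom_\cat{C}(-,X) \Rightarrow \Hom_\cat{C}(-,Y)$ whose component at an object $A$ is post-composition with $f$:
$$
(f_*)_A \colon \Hom_\cat{C}(A,X) \longrightarrow \Hom_\cat{C}(A,Y), \qquad g \longmapsto f \circ g .
$$
To check naturality, take a morphism $h : A' \to A$ of $\cat{C}$ (that is, a morphism $A \to A'$ of $\cat{C}^\op$); the presheaf $\Hom_\cat{C}(-,X)$ sends it to the precomposition map $(-)\circ h$, and the naturality square to be verified amounts to the identity $(f \circ g)\circ h = f \circ (g\circ h)$ for every $g : A \to X$, which is just associativity. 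So $f \mapsto f_*$ is a well-defined function from $\Hom_\cat{C}(X,Y)$ to the set of natural transformations.

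Next I would define the backward map. Given a natural transformation $\alpha : \Hom_\cat{C}(-,X) \Rightarrow \Hom_\cat{C}(-,Y)$, set
$$
\phi(\alpha) \coloneqq \alpha_X(\id_X) \in \Hom_\cat{C}(X,Y),
$$
i.e.\ evaluate the component of $\alpha$ at the object $X$ on the identity morphism of $X$. The crucial point — and the place where naturality is actually used — is that this one value recovers all of $\alpha$. For any object $A$ and any $g : A \to X$, apply the naturality square of $\alpha$ at the morphism $g$ (viewed in $\cat{C}^\op$) to the element $\id_X \in \Hom_\cat{C}(X,X)$: going one way around gives $\alpha_X(\id_X)\circ g$, and going the other way gives $\alpha_A(\id_X \circ g) = \alpha_A(g)$. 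Hence $\alpha_A(g) = \phi(\alpha)\circ g$ for all $A$ and $g$, which says precisely that $\alpha = \big(\phi(\alpha)\big)_*$.

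It then remains to check that the two composites are identities. Starting from $f : X \to Y$ we obtain $f_*$, and $\phi(f_*) = (f_*)_X(\id_X) = f\circ \id_X = f$ by unitality; starting from $\alpha$ we obtain $\phi(\alpha)$, and the computation in the previous paragraph gives $\big(\phi(\alpha)\big)_* = \alpha$. This proves the bijection. Finally I would note that it is natural in $X$ and $Y$: for $u : X' \to X$ and $v : Y \to Y'$, post-composing with $v$ and pre-composing with $u$ on the hom-set side corresponds under the bijection to the obvious whiskering/composition of natural transformations, which is once more a routine associativity check. The only thing requiring care in the whole proof is bookkeeping the reversal of arrows when passing between $\cat{C}$ and $\cat{C}^\op$; I expect no real obstacle beyond that.
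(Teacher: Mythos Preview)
Your proof is correct, and the computation you carry out is precisely the engine behind the paper's argument as well. The difference is purely one of organisation: the paper does not prove the embedding theorem directly but instead first states and proves the \emph{Yoneda lemma} in full generality --- for an arbitrary presheaf $F:\cat{C}^\op\to\Set$, the map $\alpha\mapsto\alpha_X(\id_X)$ is a natural bijection between natural transformations $\Hom_\cat{C}(-,X)\Rightarrow F$ and elements of $FX$ --- and then obtains the embedding theorem in one line by setting $F=\Hom_\cat{C}(-,Y)$. Your direct argument is exactly the specialisation of that proof to this particular $F$; the paper's route costs nothing extra (the general proof is no harder than the special case you wrote) and yields the stronger lemma, which the paper then reuses to discuss universal properties and representability for non-representable presheaves.
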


The dual statement, replacing $\cat{C}$ by $\cat{C}^\op$ reads as follows. 
\begin{cor}
 Let $\cat{C}$ be a category, and let $X$ and $Y$ be objects of $\cat{C}$. There is a natural bijection of sets 
 $$
 \Hom_\cat{C} (X,Y) \; \cong \; \Hom_{[\cat{C},\Set]} \big( \Hom_\cat{C} (Y,-) , \Hom_\cat{C} (X,-) \big)
 $$
 between the morphisms of $\cat{C}$ from $X$ to $Y$ and the natural transformations from the functor $\Hom_\cat{C} (Y,-):\cat{C}\to\Set$ represented by $Y$ to the functor $\Hom_\cat{C} (X,-):\cat{C}\to\Set$ represented by $X$.
\end{cor}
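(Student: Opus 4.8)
The plan is to read this off from the Yoneda embedding theorem (\Cref{yonedathm}), which we may assume, by applying it to the opposite category $\cat{C}^\op$. Concretely, I would invoke \Cref{yonedathm} for the category $\cat{C}^\op$ with the two chosen objects taken to be $Y$ and $X$ (in that order, regarded as objects of $\cat{C}^\op$). This produces a natural bijection
$$
\Hom_{\cat{C}^\op}(Y,X) \;\cong\; \Hom_{[(\cat{C}^\op)^\op,\Set]}\big(\Hom_{\cat{C}^\op}(-,Y),\;\Hom_{\cat{C}^\op}(-,X)\big),
$$
and the remaining task is purely one of rewriting both sides in terms of $\cat{C}$ instead of $\cat{C}^\op$.

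For that rewriting I would record three identifications, all immediate from the definition of the opposite category. First, $(\cat{C}^\op)^\op=\cat{C}$, so the functor category on the right is $[\cat{C},\Set]$. Second, $\Hom_{\cat{C}^\op}(A,B)=\Hom_\cat{C}(B,A)$ for all objects $A,B$, so the left-hand side is $\Hom_\cat{C}(X,Y)$. Third --- the only place a little care is needed --- the representable presheaf $\Hom_{\cat{C}^\op}(-,Y)$, which by definition is a functor $(\cat{C}^\op)^\op=\cat{C}\to\Set$, coincides with the representable functor $\Hom_\cat{C}(Y,-)$: on objects this is again the Hom-set identity above, and on a morphism $h$ of $\cat{C}$ one checks that $\Hom_{\cat{C}^\op}(-,Y)$ acts by precomposition with $h^\op$ in $\cat{C}^\op$, which is exactly postcomposition with $h$ in $\cat{C}$, i.e.\ the action of $\Hom_\cat{C}(Y,-)$ on $h$; similarly $\Hom_{\cat{C}^\op}(-,X)=\Hom_\cat{C}(X,-)$. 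Substituting all of this into the displayed bijection turns it into
$$
\Hom_\cat{C}(X,Y) \;\cong\; \Hom_{[\cat{C},\Set]}\big(\Hom_\cat{C}(Y,-),\;\Hom_\cat{C}(X,-)\big),
$$
which is exactly the statement to be proven.

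The main (and really only) obstacle is bookkeeping: keeping the variances straight so that the representable \emph{functor} $\Hom_\cat{C}(Y,-)$ is correctly matched with the representable \emph{presheaf on $\cat{C}^\op$} $\Hom_{\cat{C}^\op}(-,Y)$, and confirming that the word ``natural'' survives the translation --- since \Cref{yonedathm} gives a bijection natural in its two object arguments and the substitution above is functorial in $X$ and $Y$, naturality is automatically preserved. If one wanted a self-contained argument rather than an appeal to duality, the proof of \Cref{yonedathm} transposes verbatim: send $f:X\to Y$ to the natural transformation with components $\Hom_\cat{C}(Y,A)\to\Hom_\cat{C}(X,A)$, $g\mapsto g\circ f$, and send a natural transformation $\alpha$ back to $\alpha_Y(\id_Y)\in\Hom_\cat{C}(X,Y)$; the crux is then the ``Yoneda trick'' of evaluating the naturality square of $\alpha$ at the identity $\id_Y$ to show these two constructions are mutually inverse.
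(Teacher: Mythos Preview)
Your proposal is correct and matches the paper's approach exactly: the paper simply states that this corollary is the dual of \Cref{yonedathm} obtained by replacing $\cat{C}$ with $\cat{C}^\op$, and you have carefully carried out precisely that substitution with the bookkeeping made explicit. Your additional remark sketching the direct argument (via $f\mapsto(-\circ f)$ and $\alpha\mapsto\alpha_Y(\id_Y)$) is a nice bonus but not needed.
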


Here are some consequences of this theorem, which give its intuitive interpretation.

\begin{cor}
 Let $\cat{C}$ be a category, and let $X$ and $Y$ be objects of $\cat{C}$.
 \begin{itemize}
 \item $X$ and $Y$ are isomorphic if and only if the functors (resp.~presheaves) that they represent are naturally isomorphic. In particular, if $X$ and $Y$ represent the same functor (resp.~presheaf), then they must be isomorphic. 
 \item $X$ and $Y$ are isomorphic if and only if for every object $S$ of $\cat{C}$, the sets $\Hom_\cat{C}(S,X)$ and $\Hom_\cat{C}(S,Y)$ (resp.~$\Hom_\cat{C}(X,S)$ and $\Hom_\cat{C}(Y,S)$) are naturally isomorphic. In particular, if $X$ and $Y$ are indistinguishable by $S$ for every $S$ in $\cat{C}$, then $X$ and $Y$ are necessarily isomorphic. 
 \end{itemize}
\end{cor}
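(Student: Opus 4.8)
The plan is to deduce both statements from the Yoneda embedding theorem (\Cref{yonedathm}) by repackaging it as a statement about one functor. First I would note that the assignment $X\mapsto\Hom_\cat{C}(-,X)$ is the object part of a functor $y\colon\cat{C}\to[\cat{C}^\op,\Set]$, the \emph{Yoneda embedding}: a morphism $f\colon X\to Y$ is sent to the natural transformation $yf\colon\Hom_\cat{C}(-,X)\Rightarrow\Hom_\cat{C}(-,Y)$ whose component at $S$ is postcomposition $h\mapsto f\circ h$. Checking that $yf$ is natural and that $y(\id_X)=\id$ and $y(g\circ f)=yg\circ yf$ is routine and uses only associativity and unitality in $\cat{C}$. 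With this notation, the bijection furnished by \Cref{yonedathm} is precisely $f\mapsto yf$, from $\Hom_\cat{C}(X,Y)$ to $\Hom_{[\cat{C}^\op,\Set]}(yX,yY)$, for all $X,Y$; that is, \emph{$y$ is fully faithful}.

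Given this, the first bullet is immediate. For the forward implication, $X\cong Y$ gives $yX\cong yY$ because functors preserve isomorphisms (shown for arbitrary functors in \Cref{funepimono}). For the converse, $yX\cong yY$ gives $X\cong Y$ because a fully faithful functor reflects isomorphisms on objects — this is exactly \Cref{ffinj} applied to $y$. Unravelling the meaning of ``$yX\cong yY$ in $[\cat{C}^\op,\Set]$'' is exactly the phrasing ``the presheaves represented by $X$ and $Y$ are naturally isomorphic'', and the ``in particular'' clause is the trivial special case: if $X$ and $Y$ represent the \emph{same} presheaf, then a fortiori the represented presheaves are isomorphic, so $X\cong Y$. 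The versions with covariant hom-functors $\Hom_\cat{C}(X,-)$, i.e.\ the ``resp.'' alternatives, follow by running the same argument in $\cat{C}^\op$, exactly as the corollary after \Cref{yonedathm} is obtained from it.

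For the second bullet I would argue that it is just the first bullet written out componentwise. By definition, a natural isomorphism $yX\Rightarrow yY$ is nothing but a family of bijections $\Hom_\cat{C}(S,X)\to\Hom_\cat{C}(S,Y)$, one for each object $S$, natural in $S$; so ``$yX$ and $yY$ are naturally isomorphic'' and ``for every $S$ the sets $\Hom_\cat{C}(S,X)$ and $\Hom_\cat{C}(S,Y)$ are isomorphic, naturally in $S$'' are literally the same statement. Hence the second bullet is obtained from the first by this translation, its ``indistinguishable by every $S$'' clause being the backward implication; the dual (morphisms into $S$) is again the $\cat{C}^\op$ version.

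The proof carries no real obstacle; the one point that needs care is the word \emph{naturally}. The conclusion genuinely requires the isomorphisms $\Hom_\cat{C}(S,X)\cong\Hom_\cat{C}(S,Y)$ to be natural in $S$ — equivalently, to assemble into a morphism of presheaves — since merely knowing that these sets are abstractly isomorphic for each $S$ in isolation is \emph{not} enough to conclude $X\cong Y$. So the main thing to be explicit about is that ``indistinguishable by $S$'' must be read in the natural sense, ideally flagging with a small example why the pointwise-only statement would fail.
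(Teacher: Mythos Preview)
Your proof is correct and well organized. The paper itself does not give a separate argument for this corollary: it simply states it as an immediate consequence of the Yoneda embedding theorem and moves on to prove that theorem via the Yoneda lemma. Your write-up supplies the details the paper leaves implicit, and does so using exactly the tools the paper has already set up: you recognize that \Cref{yonedathm} says precisely that the Yoneda embedding $y\colon\cat{C}\to[\cat{C}^\op,\Set]$ is fully faithful, and then invoke \Cref{ffinj} (fully faithful functors reflect isomorphisms) together with the fact from \Cref{funepimono} that functors preserve isomorphisms. That is the standard and intended route. Your observation that the second bullet is the first one read componentwise, and your emphasis that the word \emph{naturally} is essential, are both on point.
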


In other words, \emph{each object of $\cat{C}$ is uniquely specified by the arrows into it (resp.~out if it)}, up to isomorphism. The objects of a category can be uniquely defined in terms of the \emph{role they play in the category}, in terms of their ``interaction with the whole''. 

\begin{caveat}
This statement can be thought of as rather ``philosophical'', and it is similar to axioms in philosophy that one can assume true or not (such as the \href{https://ncatlab.org/nlab/show/identity+of+indiscernibles}{\emph{identity of indiscernibles}}). In category theory, however, this is a \emph{theorem}, with a proof. It is true in every category.
\end{caveat}

\begin{ex}[relations, topology][difficult!]
 Prove \Cref{yonedathm} for the easier case of $\cat{C}$ being a partial order (for example, the topology of some space). What does the statement of the theorem look like? (Hint: \Cref{repronopen} can help you.)
\end{ex}

The proof of the theorem is given in the following section. 

\section{The Yoneda lemma}\label{secyoneda}

In order to prove \Cref{yonedathm} we will make use of the following statement, the \emph{Yoneda lemma}, which is at least as important as the theorem itself.

\begin{lemma}[Yoneda]\label{yonedalemma}
 Let $\cat{C}$ be a category, let $X$ be an object of $\cat{C}$, and let $F:\cat{C}^\op\to\Set$ be a presheaf on $\cat{C}$. 
 Consider the map 
 $$
 \Hom_{[\cat{C}^\op,\Set]} \big( \Hom_\cat{C} (-,X) , F \big) \longrightarrow FX
 $$
 assigning to a natural transformation $\alpha:\Hom_\cat{C} (-,X)\Rightarrow F$ the element $\alpha_X(\id_X)\in FX$, which is the value of the component $\alpha_X$ of $\alpha$ on the identity at $X$. 
 
 This assignment is a bijection, and it is natural both in $X$ and in $F$.
\end{lemma}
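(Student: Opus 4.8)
The plan is to establish the bijection by exhibiting an explicit two-sided inverse, and then to read off the two naturality statements by routine diagram chases. Denote by $\Phi$ the map in the statement, $\alpha \mapsto \alpha_X(\id_X)$. Going the other way, given $u \in FX$ I would define a transformation $\Psi(u) \colon \Hom_\cat{C}(-,X) \Rightarrow F$ by declaring its component at an object $Y$ of $\cat{C}$ to be the function sending a morphism $f \colon Y \to X$ to $(Ff)(u) \in FY$; this is well-typed precisely because $F$ is a presheaf, so $Ff \colon FX \to FY$. The first thing to check is that $\Psi(u)$ really is a natural transformation: for $g \colon Y' \to Y$ in $\cat{C}$, chasing an element $f \colon Y \to X$ around the naturality square produces $(Fg)\big((Ff)(u)\big)$ on one side and $\big(F(f \circ g)\big)(u)$ on the other, and these agree by the (contravariant) compositionality axiom for $F$.

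Next come the two triangle identities. That $\Phi \circ \Psi = \id_{FX}$ is immediate: $\Phi(\Psi(u)) = \Psi(u)_X(\id_X) = (F\id_X)(u) = u$ by the unitality axiom for $F$. For $\Psi \circ \Phi = \id$, take a natural transformation $\alpha \colon \Hom_\cat{C}(-,X) \Rightarrow F$, put $u \coloneqq \alpha_X(\id_X)$, and compare $\Psi(u)_Y(f) = (Ff)\big(\alpha_X(\id_X)\big)$ with $\alpha_Y(f)$ for an arbitrary $f \colon Y \to X$. This equality is exactly the naturality square of $\alpha$ for the morphism $f$, evaluated on the element $\id_X \in \Hom_\cat{C}(X,X)$: one way around the square gives $\alpha_Y(\id_X \circ f) = \alpha_Y(f)$, the other gives $(Ff)(\alpha_X(\id_X))$. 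I expect this to be the only delicate step in the whole argument — it is the heart of the Yoneda lemma, the place where one sees that naturality of $\alpha$ is precisely what forces $\alpha$ to be determined by the single element $\alpha_X(\id_X)$.

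It remains to verify naturality in $F$ and in $X$. For naturality in $F$, a morphism of presheaves $\beta \colon F \Rightarrow G$ induces post-composition on the left-hand hom-set and the component $\beta_X \colon FX \to GX$ on the right-hand set; chasing $\alpha$ both ways yields $\beta_X(\alpha_X(\id_X))$ with no computation, so the relevant square commutes. For naturality in $X$ one should first observe that both sides are \emph{contravariant} in $X$: the left-hand side is the composite of the Yoneda embedding $X \mapsto \Hom_\cat{C}(-,X)$ with $\Hom_{[\cat{C}^\op,\Set]}(-,F)$, and the right-hand side is $F$ itself. Then, for $h \colon X \to X'$ in $\cat{C}$, the square to check pairs precomposition with $\Hom_\cat{C}(-,h)$ against $Fh \colon FX' \to FX$; chasing a transformation $\alpha$ shows both composites send it to $\alpha_X(h)$, once again by invoking naturality of $\alpha$, this time on $\id_{X'}$ along the morphism $h$. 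Assembling the triangle identities with these two compatibilities completes the proof of \Cref{yonedalemma}; afterwards, \Cref{yonedathm} falls out by specialising $F$ to the representable presheaf $\Hom_\cat{C}(-,Y)$ and checking that under the bijection a morphism $f \colon X \to Y$ corresponds to post-composition with $f$.
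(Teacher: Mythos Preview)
Your proposal is correct and follows essentially the same route as the paper: the paper's surjectivity construction is exactly your $\Psi$, its injectivity argument is exactly your verification that $\Psi\circ\Phi=\id$ via the naturality square of $\alpha$ at $f$ applied to $\id_X$, and the two naturality checks in $X$ and $F$ are carried out with the same diagram chases. The only cosmetic difference is packaging: you present the bijection via an explicit two-sided inverse, whereas the paper verifies injectivity and surjectivity separately, but the computations are identical.
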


This is enough to prove the Yoneda embedding theorem already.

\begin{proof}[Proof of \Cref{yonedathm}]
 In the hypotheses of the Yoneda lemma, set $F$ to be the representable presheaf $\Hom_\cat{C} (-,Y):\cat{C}^\op\to\Set$.
\end{proof}

A possible interpretation of the Yoneda lemma is the following, which is rather ``trivial''. We have seen that a consequence of the Yoneda embedding theorem is that if all ways of observing two objects $X$ and $Y$ coincide, then $X$ and $Y$ must be isomorphic. The Yoneda lemma says \emph{why}: we can observe $X$ in such a way that does not lose any information, namely, mapping it to itself via the identity, $\id_X:X\to X$. This is an observation that trivially sees the whole of $X$ faithfully. Therefore, this one observation is sufficient to determine $X$ uniquely. And conversely, every other observation, which possibly loses information, is obtainable from this one, i.e.~it is a sort of ``corruption'' of this trivial observation. 
Of course, the actual statement is more complicated than this interpretation. The actual proof comes now.

\subsection{Proof of the Yoneda lemma}

First, let's see what the statement of \Cref{yonedalemma} really says. We are given an object $X$ of $\cat{C}$, and a presheaf $F:\cat{C}^\op\to\Set$. From the object $X$ we can form the representable presheaf $\Hom_\cat{C}(-,X):\cat{C}^\op\to\Set$. We can now look at the natural transformations $\Hom_\cat{C} (-,X)\Rightarrow F$. Let $\alpha$ be one such natural transformation. For each object $Y$ of $\cat{C}$, the component of $\alpha$ at $Y$ is a map between the sets $\alpha_Y:\Hom_\cat{C} (Y,X)\to FY$. Moreover, this is natural in $Y$, meaning that for every $f:Y\to Z$ in $\cat{C}$, the following diagram has to commute (note that both functors reverse the arrows).
$$
\begin{tikzcd}
 \Hom_\cat{C} (Z,X) \ar{r}{-\circ f} \ar{d}{\alpha_Z} & \Hom_\cat{C} (Y,X) \ar{d}{\alpha_Y} \\
 FZ \ar{r}{Ff} & FY
\end{tikzcd}
$$
Now, the component of $\alpha$ at $X$ is a map $\alpha_X:\Hom_\cat{C}(X,X)\to FX$. The identity at $X$ is an element of the set $\Hom_\cat{C}(X,X)$, and its image under $\alpha_X$ is an element $\alpha_X(\id_X)$ of $FX$. We can assign to $\alpha$ the element $\alpha_X(\id_X)$ of $FX$, and this gives a mapping from natural transformations $\alpha: \Hom_\cat{C} (-,X)\Rightarrow F$ to elements of $FX$. 

The lemma says that this mapping is a bijection, and it is natural. This is what we have to prove.

\begin{proof}[Proof of bijectivity]
 First of all, let $\alpha:\Hom_\cat{C} (-,X)\Rightarrow F$.
 Let $f:Y\to X$ in $\cat{C}$. By naturality of $\alpha$, the following diagram commutes.
 $$
 \begin{tikzcd}
  \Hom_\cat{C} (X,X) \ar{r}{-\circ f} \ar{d}{\alpha_X} & \Hom_\cat{C} (Y,X) \ar{d}{\alpha_Y} \\
 FX \ar{r}{Ff} & FY
 \end{tikzcd}
 $$
 If we start with the identity $\id_X\in \Hom_\cat{C} (X,X)$, the commmutativity of the diagram above says that
 \begin{equation}\label{uniqid}
  Ff(\alpha_X(\id_X)) \;=\; \alpha_Y(\id_X\circ f) \;=\; \alpha_Y(f) .
 \end{equation}
 
 Let's first show that the assignment $\alpha\mapsto \alpha_X(\id_X)$ is injective. Let $\alpha$ and $\beta$ be natural transformations $\Hom_\cat{C} (-,X)\Rightarrow F$, and suppose that $\alpha_X(\id_X)=\beta_X(\id_X)$. 
 We have to show that $\alpha=\beta$, that is, that at every object $Y$ of $\cat{C}$, the components $\alpha_Y$ and $\beta_Y:\Hom_\cat{C} (Y,X)\to FY$ are equal. So let $Y$ be such and object. For every $f\in\Hom_\cat{C} (Y,X)$, by \cref{uniqid}, we have
 $$
 \alpha_Y(f) \;=\; Ff(\alpha_X(\id_X)) \;=\; Ff(\beta_X(\id_X)) \;=\; \beta_Y(f).
 $$
 This happens for every $f\in\Hom_\cat{C}$ and for every object $Y$ of $\cat{C}$, so $\alpha=\beta$. 
 
 Let's now prove that the assignment is surjective. Let $p\in FX$. We have to show that there exists a natural transformation $\alpha:\Hom_\cat{C} (-,X)\Rightarrow F$ such that $\alpha_X(\id_X)=p$. Define now, for each object $Y$ of $\cat{C}$, the function $\alpha_Y:\Hom_\cat{C} (Y,X)\to FY$ mapping $f\in \Hom_\cat{C} (Y,X)$ to $Ff(p)\in FY$. In order to show that these functions are the components of a natural transformation $\alpha:\Hom_\cat{C} (-,X)\Rightarrow F$, we have to show that for each $g:Y\to Z$ of $\cat{C}$, the following diagram commutes,
 $$
 \begin{tikzcd}
 \Hom_\cat{C} (Z,X) \ar{r}{-\circ g} \ar{d}{\alpha_Z} & \Hom_\cat{C} (Y,X) \ar{d}{\alpha_Y} \\
 FZ \ar{r}{Fg} & FY
 \end{tikzcd}
 $$
 which means, for each $h\in\Hom_\cat{C} (Z,X)$, we should have $ Fg(\alpha_Z(h))=\alpha_Y(h\circ g)$. Now, since $F$ is functorial (reversing the direction of composition),
 $$
 Fg(\alpha_Z(h))\;=\; Fg(Fh(p)) \;=\; F(h\circ g)(p) \;=\; \alpha_Y(h\circ g).
 $$
 Therefore $\alpha$ is a natural transformation. Its component at $X$ maps the identity $\id_X$ to
 $$
 \alpha_X(\id_X) \;=\; F(\id_X) (p) \;= \; \id_{FX} (p) \;= \; p ,
 $$
 which is the desired element of $FX$.
\end{proof}

\begin{proof}[Proof of naturality]
 First we prove naturality in $X$. Let $h:X\to Y$ be a morphism of $\cat{C}$. 
 This induces a natural transformation $h_*:\Hom_\cat{C} (-,X) \Rightarrow \Hom_\cat{C} (-,Y)$ given by postcomposition with $h$. This means, for each object $A$, we get a component $(h_*)_A:\Hom_\cat{C} (A,X) \to \Hom_\cat{C} (A,Y)$ mapping $g\in \Hom_\cat{C} (A,X)$ to $h\circ g\in \Hom_\cat{C} (A,Y)$.
 Now the naturality in $X$ of the bijection of \Cref{yonedalemma} means that this diagram has to commute.
 $$
 \begin{tikzcd}
  \Hom_{[\cat{C}^\op,\Set]} \big( \Hom_\cat{C} (-,Y) , F \big) \ar{d}{\cong} \ar{r}{-\circ h_*} & \Hom_{[\cat{C}^\op,\Set]} \big( \Hom_\cat{C} (-,X) , F \big) \ar{d}{\cong} \\
  FY \ar{r}{Fh} & FX 
 \end{tikzcd}
 $$
 This means equivalently that for every natural transformation $\alpha: \Hom_\cat{C} (-,Y)\Rightarrow F$, we have that $Fh(\alpha_Y(\id_Y))$ is equal to $\alpha_X\circ h_*(\id_X)$. The latter is equal to 
 $$
 \alpha_X(h_*(\id_X)) \;=\; \alpha_X(\id_X\circ h) \;=\; \alpha_X(h),
 $$
 and this is equal to $Fh(\alpha_Y(\id_Y))$ by \Cref{uniqid}. (Note that in \Cref{uniqid}, $f$ is from $Y$ to $X$, while here, $h:X\to Y$.)
 
 Let's now turn to naturality in $F$. This means that for every presheaf $G:\cat{C}^\op\to\Set$ and every natural transformation $\beta:F\to G$, the following diagram has to commute:
 $$
 \begin{tikzcd}
 \Hom_{[\cat{C}^\op,\Set]} \big( \Hom_\cat{C} (-,X) , F \big) \ar{d}{\cong} \ar{r}{\beta\circ -} & \Hom_{[\cat{C}^\op,\Set]} \big( \Hom_\cat{C} (-,X) , G \big) \ar{d}{\cong} \\
  FX \ar{r}{\beta_X} & GX 
 \end{tikzcd}
 $$
 This means equivalently that that for every natural transformation $\alpha: \Hom_\cat{C} (-,X)\Rightarrow F$, we have that $\beta_X(\alpha_X(\id_X))$ has to be equal to $(\beta\circ\alpha)_X(\id_X)$. But this is guaranteed by definition of (vertical) composition of natural transformations, as given for example in \Cref{vertcomp}.
\end{proof}

\subsection{Particular cases}

\begin{eg}[graph theory]
 Consider the category $\cat{Par}$ from \Cref{egmultigraph}, given by the diagram
 $$
 \begin{tikzcd}
  V \ar[shift left]{r}{s} \ar[shift right]{r}[swap]{t} & E .
 \end{tikzcd}
 $$
 As we have seen, presheaves on $\cat{Par}$ encode directed multigraphs.
 The representable presheaves are, up to isomorphism, $\Hom_\cat{Par}(-,V)$ and $\Hom_\cat{Par}(-,E):\cat{Par}\to\Set$. Which multigraphs do these encode? 
 \begin{itemize}
  \item $\Hom_\cat{Par}(-,V)$ maps the object $V$ to the set arrows $V\to V$ of $\cat{Par}$, which contains just the identity of $V$, and maps $E$ to the set of arrows $E\to V$, which is empty (there are no such arrows in $\cat{Par}$). Therefore, $\Hom_\cat{Par}(V,V)$ is a singleton set, and $\Hom_\cat{Par}(E,V)$ is the empty set. The graph corresponding to $\Hom_\cat{Par}(-,V)$ is therefore a graph with a single node and no edges. In \Cref{g0g1} we called this graph $G_0$.
  \item $\Hom_\cat{Par}(-,E)$ maps $V$ to the set arrows $V\to E$ of $\cat{Par}$, which contains the two arrows $s$ and $t$, and maps $E$ to the set of arrows $E\to E$, which contains only the identity of $E$. Therefore, $\Hom_\cat{Par}(V,E)$ is a two-element set, $\Hom_\cat{Par}(E,E)$ is a singleton, and the morphisms $s$ and $t$ are mapped to functions $\Hom_\cat{Par}(E,E)\to \Hom_\cat{Par}(V,E)$ picking out the elements $s$ and $t$, respectively. The graph corresponding to $\Hom_\cat{Par}(-,E)$ is therefore a graph two distinct nodes and a single edge connecting them in one direction. In \Cref{g0g1} we called this graph $G_1$.
 \end{itemize}
 Therefore $G_0$ and $G_1$ are the graphs corresponding to representable presheaves on $\cat{Par}$. Mind that these are not the same as representable functors \emph{on} the category of multigraphs, as we had done in \Cref{g0g1}. But these constructions are related, as we show in a moment.
 
 Let's instance the Yoneda lemma, setting $\cat{C}$ equal to $\cat{Par}$, so that the category of presheaves $[\cat{C}^\op, \Set]$ becomes the category $\cat{MGraphs}$ of multigraphs and their morphisms (\Cref{egmorphgraph}). The resulting statement is that for each multigraph $G$ there are natural bijective correspondences
 $$
 \Hom_{\cat{MGraph}} \big( \Hom_\cat{Par} (-,V) , G \big) \longrightarrow GV 
 $$
 and
 $$
 \Hom_{\cat{MGraph}} \big( \Hom_\cat{Par} (-,E) , G \big) \longrightarrow GE .
 $$
 Since, as we have seen above, the representable presheaves $\Hom_\cat{Par} (-,V)$ and $\Hom_\cat{Par} (-,E)$ correspond to the graphs $G_0$ and $G_1$ respectively, and since $GV$ and $GE$ are the sets of vertices and edges of $G$, we can equivalently write the bijections as
 $$
 \Hom_{\cat{MGraph}} \big( G_0 , G \big) \;\cong\; \mathit{Vert}(G) 
 $$
 and
 $$
 \Hom_{\cat{MGraph}} \big( G_1 , G \big) \;\cong\; \mathit{Edge}(G) ,
 $$
 which is exactly what we had found in \Cref{g0g1}.
 
 Moreover, the Yoneda lemma says that this bijection is given by looking at where the identities of $V$ and $E$ are mapped. In particular,
 \begin{itemize}
  \item A multigraph morphism $f:G_0\to G$ corresponds to a vertex of $G$, and the correspondence is obtained by looking at where $f$ maps the unique vertex of $G_0$, since this vertex of $G_0$ is the unique element of $\Hom_\cat{Par}(V,V)$, i.e.~the identity of the object $V$ of $\cat{Par}$. The map $f$ can map the unique vertex of $G_0$ to any vertex of $G$, and different maps will pick out different vertices, so we have a bijection between vertices of $G$ and multigraph morphisms $G_0\to G$.
  \item A multigraph morphism $f:G_1\to G$ corresponds to an edge of $G$, and the correspondence is obtained by looking at where $f$ maps the unique edge of $G_1$, since this vertex of $G_1$ is the unique element of $\Hom_\cat{Par}(E,E)$, i.e.~the identity of the object $E$ of $\cat{Par}$. The map $f$ can map the unique edge of $G_1$ to any edge of $G$, and different maps will pick out different edges, so we have a bijection between edges of $G$ and multigraph morphisms $G_1\to G$.
 \end{itemize}
 This again follows the intuition of \Cref{g0g1}. 
\end{eg}

\begin{eg}[group theory]\label{egcayley}
 We know that for a group $G$, functors $\cat{B}G\to\Set$ are sets equipped with a $G$-action, or $G$-sets, and that the only representable functor $\cat{B}G\to\Set$ is given by $G$ acting on its underlying set $U(G)$. If we look at presheaves instead of functors, $\cat{B}G^\op\to\Set$, the situation is analogous, except that $G$ will act on the right instead of on the left. That is, presheaves $\cat{B}G^\op\to\Set$ correspond to \emph{right} $G$-sets, and the unique representable presheaf is given by the action on $G$ on its underlying set by \emph{right} multiplication, i.e.~$g$ acts on $G$ by mapping $h$ to $hg$ instead of $gh$. 
 
 The Yoneda embedding \Cref{yonedathm} says now the following, noting that $\cat{B}G$ has a single object $\bullet$:
 $$
 \Hom_{\cat{B}G} (\bullet,\bullet) \; \cong \; \Hom_{[{\cat{B}G}^\op,\Set]} \big( \Hom_{\cat{B}G} (-,\bullet) , \Hom_{\cat{B}G} (-,\bullet) \big). 
 $$
 Since the set $\Hom_{\cat{B}G} (\bullet,\bullet)$ consists by definition of the elements of $G$, and since the representable functors are given by the underlying set of $G$, the statement becomes
 $$
 G \; \cong \; \Hom_{\cat{GSet}} \big( U(G) , U(G) \big). 
 $$
 The group on the right is a subgroup of the full permutation group of the set $U(G)$. Therefore $G$ is isomorphic to a subgroup of a permutation group of some set. In other words, we have proven Cayley's \Cref{thmcayley}. 
 
 The Yoneda lemma says more: given any (right) $G$-set $X$, which is expressed by a presheaf $F:\cat{B}G\to\Set$, there is a natural bijection between elements of $X$ and $G$-equivariant maps $f:U(G)\to X$. This bijection is given by looking at where the identity element $e$ of $G$ will be mapped by $f$: every element of $X$ can be chosen to be the image of $e$, as long as the other elements of $G$ are ``rigidly'' mapped along the orbit. Moreover, different choices of $f$ will necessarily map the identity to different points. To see the latter statement, notice that, just as in the proof of \Cref{yonedalemma}, by equivariance (i.e.~naturality), for every $g\in G$,
 $$
 f(g) \;= \; f(e g) \;=\; f(e)\cdot g,
 $$
 so that if $f,f':U(G)\to X$ agree on $e$, they must agree on all of $U(G)$.
\end{eg}

\begin{eg}[linear algebra; {\cite[Corollary~2.2.9]{ctcontext}}] 
 Consider the matrix category $\cat{Mat}$ of \Cref{matcat}. Consider \emph{linear operation on rows} of matrices, such as ``multiply the second row by 2 and add it to the first one'', i.e.~mapping for example
 $$
 \begin{pmatrix}
a & b & c \\
d & e & f
\end{pmatrix} 
\;\longmapsto \;
 \begin{pmatrix}
a + 2d & b + 2e & c + 2f \\
d & e & f
\end{pmatrix} .
 $$
 Given any linear operation on rows, in order to apply it to a given matrix $M$ one can equivalently first apply the operation to the identity matrix, and then multiply $M$ by the resulting matrix. For example, we can first apply our operation above to the $2\times 2$ identity matrix,
$$
 \begin{pmatrix}
1 & 0 \\
0 & 1
\end{pmatrix} 
\;\longmapsto \;
 \begin{pmatrix}
1 + 2\cdot 0 & 0 + 2\cdot 1  \\
0 & 1
\end{pmatrix}  =
\begin{pmatrix}
1  & 2  \\
0 & 1
\end{pmatrix},
 $$
 and then multiply our given matrix by the image of the identity:
 $$
 \begin{pmatrix}
1  & 2  \\
0 & 1
\end{pmatrix}
\begin{pmatrix}
a & b & c \\
d & e & f
\end{pmatrix} 
\;= \;
 \begin{pmatrix}
a + 2d & b + 2e & c + 2f \\
d & e & f
\end{pmatrix} .
 $$
This is an instance of the Yoneda embedding theorem. Linear operations on rows (for example, of matrices of $n$ rows) correspond to natural transformations between presheaves $\phi:\Hom_{\cat{Mat}}(-,n)\Rightarrow\Hom_{\cat{Mat}}(-,n)$, since the naturality condition says precisely that the operation $\phi$ commutes with (other) matrices, i.e.~linear combinations. The Yoneda embedding theorem for $X=Y=n\in \N$ now says that there is a natural bijection 
$$
 \Hom_{\cat{Mat}} (n,n) \; \cong \; \Hom_{[{\cat{Mat}}^\op,\Set]} \big( \Hom_{\cat{Mat}} (-,n) , \Hom_{\cat{B}G} (-,n) \big). 
 $$
 that is, linear operations on $n$ rows are in bijection with elements of $\Hom_{\cat{Mat}} (n,n)$, which are by definition the $n\times n$ matrices. Moreover, this correspondence is given by looking at where the identity of $n$ is mapped, that is, by applying the operation to the $n\times n$ identity matrix, and then letting naturality (i.e.~linearity) do the rest.
\end{eg}

\section{Universal properties}\label{uniprop}

We have seen that thanks to the Yoneda lemma objects are specified, uniquely up to isomorphism,
by the functor or presheaf that they represent. We interpreted this as the fact that objects are uniquely specified by the way they interact with the rest of the category.  When an object is specified in this way, it is said to satisfy a universal property. 

\begin{deph}
 Let $\cat{C}$ be a category, and $X$ an object of $\cat{C}$. A \emph{universal property} of $X$ consists of either a functor $F:\cat{C}\to\Set$ or a presheaf $P:\cat{C}^\op\to\Set$ together with either a natural isomorphism $\Hom_\cat{C}(X,-)\Rightarrow F$ or $\Hom_\cat{C}(-,X)\Rightarrow P$.
\end{deph}

\begin{remark}
 By definition, the functor or the presheaf given as above is necessarily representable.
\end{remark}

\begin{remark}
 A natural isomorphism $\Hom_\cat{C}(-,X)\Rightarrow P$ is in particular a natural transformation. By the Yoneda lemma, such natural transformations are in bijection with elements of the set $PX$. Note that not all natural transformations are isomorphisms in general, so only \emph{some} of the elements of $PX$ give us a natural isomorphism. 
 
 Just as well, replacing $\cat{C}$ by $\cat{C}^\op$, we have that natural isomorphisms $\Hom_\cat{C}(X,-)\Rightarrow F$ correspond to some of the elements of the set $FX$. 
\end{remark}

A universal property for the object $X$ can be thought of as a condition of \emph{existence and uniqueness} of a specified map into $X$ (for presheaves) or out of $X$ (for functors), in the following way. Let $P:\cat{C}^\op\to\Set$ be a representable presheaf, and let $\alpha:\Hom_\cat{C}(-,X)\Rightarrow P$ be a chosen natural isomorphism, so that we have a universal property for $X$. Consider now a map $f:Y\to X$. We can form the usual naturality diagram
$$
\begin{tikzcd}
 \Hom_\cat{C} (X,X) \ar{r}{-\circ f} \ar{d}{\alpha_X}[swap]{\cong} & \Hom_\cat{C} (Y,X) \ar{d}{\alpha_Y}[swap]{\cong} \\
 PX \ar{r}{Pf} & PY
\end{tikzcd}
$$
where now the components of $\alpha$ are bijections.
Starting as usual with the identity of $X$ in the top left corner and denote $\alpha_X(\id_X)$ by $p$, we get that $Pf(p)=\alpha_Y(f)$. The fact that $\alpha_Y$ is a bijection means the following: for each element $x$ of $PY$ there exists a unique $f:Y\to X$ such that $\alpha_Y(f)=x$, i.e.~such that $\alpha_Y(\id\circ f)=x$. Since the diagram above commutes, equivalently, $\alpha_Y$ is a bijection if and only if \emph{for each element $x$ of $PY$ there exists a unique $f:Y\to X$ such that $Pf(p)=x$}. 
That is, there is \emph{exactly one} arrow $f$ into $X$ satisfying the condition that $Pf(p)=x$. 
Moreover, the element $p\in PX$ specifies a natural \emph{isomorphism} rather than just a natural transformation $\Hom_\cat{C}(-,X)\Rightarrow P$ if and only if for every object $Y$, $\alpha_Y$ is a bijection. That is, if for every object $Y$ and for every element $x$ of $PY$, there exists a unique $f:Y\to X$ such that $Pf(p)=x$.

Dually, a universal property given by a functor $\cat{C}\to\Set$ instead of a presheaf will give a condition for existence and uniqueness of arrows out of $X$. 

In order to have an interpretation for why we want $f$ to satisfy $Pf(p)=x$, let's see a couple of examples.
First, let's fix some notation. In diagrams, we will write a dashed arrow, such as
$$
\begin{tikzcd}
 X \ar[dashed]{r}{f} & Y
\end{tikzcd}
$$
whenever $f$ it satisfies a condition of existence and uniqueness coming from a universal property. 

Let's now see two examples of universal properties: the cartesian product of topological spaces, and the tensor product of vector spaces.

\subsection{The universal property of the cartesian product}\label{uniprod}

\begin{eg}[topology] 
 Let $\cat{C}$ be the category $\cat{Top}$. Let $X$ and $Y$ be objects of $\cat{Top}$, i.e.~topological spaces. Consider the presheaf $P$ given by 
 $$
 \Hom_\cat{Top}(-,X) \times \Hom_\cat{Top}(-,Y) \;:\; \cat{Top}^\op \to \Set.
 $$
 This presheaf maps a space $S$ to the set $\Hom_\cat{Top}(S,X) \times \Hom_\cat{Top}(S,Y)$, whose elements are pairs of arrows
 $$
 \begin{tikzcd}[sep=small]
  & S \ar{dr} \ar{dl} \\
  X && Y .
 \end{tikzcd}
 $$
 On morphisms, $P$ maps a continuous map $f:S\to T$ to the function
 $$
 \begin{tikzcd}[sep=huge]
 \Hom_\cat{Top}(T,X) \times \Hom_\cat{Top}(T,Y) \ar{r}{(-\circ f)\times (-\circ f)} & \Hom_\cat{Top}(S,X) \times \Hom_\cat{Top}(S,Y)
 \end{tikzcd}
 $$
 mapping the pair 
 $$
 \begin{tikzcd}[sep=small]
  & T \ar{dr}{b} \ar{dl}[swap]{a} \\
  X && Y
 \end{tikzcd}
 $$
 to the pair
 $$
 \begin{tikzcd}[sep=small]
  & S \ar{dr}{b\circ f} \ar{dl}[swap]{a\circ f} \\
  X && Y 
 \end{tikzcd}
 $$
 by precomposition with $f$ in both components. 
 Intuitively, one may think of this presheaf as of being a ``combined'' observation onto $X$ and $Y$, using two instruments, or two eyes.
 
 Now, is this presheaf representable? 
 The question can be unpacked as follows: is there an object $Z$ of $\cat{Top}$, i.e.~a topological space, such that maps into $Z$ have a natural bijection with pairs of maps into $X$ and $Y$? Or more intuitively, is there a space $Z$ such that observations with instrument $Z$ are the same as combined observations with the instruments $X$ and $Y$? 
 
 Following the guidelines above, a natural isomorphism 
 $$
 \Hom_\cat{Top}(-,Z) \;\Rightarrow\; \Hom_\cat{Top}(-,X) \times \Hom_\cat{Top}(-,Y) 
 $$
 is equivalently given by an element $p$ of $PZ$ such that for each object $S$ and for each element $x\in PS$ there exists a unique map 
 $$
\begin{tikzcd}
 S \ar[dashed]{r}{f} & Z
\end{tikzcd}
$$
such that $Pf(p)=x$. Let's now unpack this condition. 
 Since $PZ=\Hom_\cat{Top}(Z,X) \times \Hom_\cat{Top}(Z,Y)$, an element $p\in PZ$ is given by a pair of maps
 $$
 \begin{tikzcd}[sep=small]
  & Z \ar{dr}{p_2} \ar{dl}[swap]{p_1} \\
  X && Y .
 \end{tikzcd}
 $$
 Just as well, an element $x\in PS=\Hom_\cat{Top}(S,X) \times \Hom_\cat{Top}(S,Y)$ is given by a pair of maps
 $$
 \begin{tikzcd}[sep=small]
  & S \ar{dr}{f_2} \ar{dl}[swap]{f_1} \\
  X && Y .
 \end{tikzcd}
 $$
 Moreover, the condition $Pf(p)=x$ says that 
 $$
 \begin{tikzcd}[sep=small]
  & S \ar{dr}{p_2\circ f} \ar{dl}[swap]{p_1\circ f} \\
  X && Y .
 \end{tikzcd}
 \quad = \quad 
 \begin{tikzcd}[sep=small]
  & S \ar{dr}{f_2} \ar{dl}[swap]{f_1} \\
  X && Y 
 \end{tikzcd}
 $$
 which means equivalently that $f_1=p_1\circ f$ and $f_2=p_2\circ f$. 
 In other words, the condition, which is the universal property for $Z$, reads as follows: fix the two maps $p_1:Z\to X$ and $p_2:Z\to Y$. Then for every object $S$ and every pair of maps $f_1:S\to X$ and $f_2:S\to Y$, there exists a unique map $f:S\to Z$ such that this diagram commutes:
 $$
 \begin{tikzcd}
  & S \ar{dr}{f_2} \ar{dl}[swap]{f_1} \uni{d}{f} \\
  X & Z \ar{l}[swap]{p_1} \ar{r}{p_2} & Y 
 \end{tikzcd}
 $$
 In other words, maps $S\to Z$ are ``the same'' as pairs of maps $S\to X$ and $S\to Y$. 
 Now, can we find such object $Z$ and maps $p_1:Z\to X$ and $p_2:Z\to Y$? 
 
 As you probably know, we can. The object $Z$ is given by the cartesian product $X\times Y$, equipped with the product topology, and the maps $p_1:X\times Y\to X$ and $p_2:X\times Y\to Y$ are the two product projections. 
 The correspondence is given as follows. Given the maps $f_1$ and $f_2$, one can construct the map $f$ as the one mapping $s\in S$ to $(f_1(s),f_2(s))\in X\times Y$. Different choices of $f_1$ and $f_2$ give different maps $f:S\to X\times Y$, and all the maps $S\to X\times Y$ are in this form for some $f_1, f_2$, since the elements of $X\times Y$ are uniquely specified by their components. It remains to check that the resulting map $f$ is continuous, in order for it to be a morphism of $\cat{Top}$ -- for this see the next exercise. 
\end{eg}

\begin{ex}[topology]
 Show that for every topological space $S$, the map $f:S\to X\times Y$ is continuous (for the product topology on $X\times Y$) whenever $f_1:S\to X$ and $f_2:S\to Y$ are continuous. (Hint: pick convenient generating open sets of the product topology.)
\end{ex}

Therefore the product of two topological spaces is uniquely specified, up to homeomorphism, by its universal property. In some sense, it is the ``inevitable'' way of combining two spaces $X$ and $Y$ in terms of the maps into them.

\begin{remark}
 The maps $p_1:X\times Y\to X$ and $p_2:X\times Y\to Y$ are part of the universal property of the product. Indeed, in order to have a universal property, it is not enough to say that ``there is'' a natural isomorphism, one needs to specify which one. In this case, the choice of the natural isomorphism corresponds to the choice of the maps $p_1:X\times Y\to X$ and $p_2:X\times Y\to Y$. Intuitively, without a choice of these maps, it may be unclear how to relate $X\times Y$ with $X$ and $Y$, especially in presence of symmetries.
\end{remark}

We can construct a similar object in categories other than $\cat{Top}$.
\begin{ex}[linear algebra]
  Show that the cartesian product of vector spaces satisfies a similar universal property in $\Vect$.
\end{ex}
\begin{ex}[group theory]
  Show that the direct product of groups satisfies a similar universal property in $\Grp$.
\end{ex}
\begin{ex}\label{2.3.9}
 In a category pertaining to your field of application, can one give a similar construction?
\end{ex}

This construction, as we will see, is the special case of a very important universal construction, called a \emph{limit}. More on this will come in the next lectures.

\begin{ex}[topology]\label{prodfunctor}
 Given topological spaces $X$ and $Y$, the space $X\times Y$ is again a topological space. Now fix $X$. Show that the assignment $Y\mapsto X\times Y$ is part of a functor $X\times -:\cat{Top}\to\cat{Top}$. (Hint: what could it do on morphisms?)
\end{ex}

\subsection{The universal property of the tensor product}\label{sec_tens}

Let $V$, $W$, and $U$ be vector spaces. A map $f:V\times W\to U$ is called \emph{bilinear} if it is linear in $V$ and in $W$ separately. Note that this is not the same as being linear as a map between the vector spaces $V\times W$ and $U$. For example, the map $\R\times \R\to \R$ given by $(x,y)\mapsto xy$ is bilinear but not linear. 
A geometric example of a bilinear map is the (signed) area enclosed by the parallelogram of two vectors. A scalar product on a vector space is another such example. 

\begin{ex}[linear algebra]\label{bilinearlinear}
 Let $V$, $W$, $U$, and $U'$ be vector spaces. Let $f:V\times W\to U$ be a bilinear map and $g:U\to U'$ be a linear map. Show that $g\circ f: V\times W\to U'$ is bilinear.
\end{ex}

Let's now look at a new universal property, encoded by bilinearity.

\begin{eg}[linear algebra]
 Let $V$, $W$, and $U$ be vector spaces.
 Denote by $\mathrm{Bilin}(V,W; U)$ the set of bilinear maps $V\times W\to U$. If we keep $V$ and $W$ fixed, this gives a functor $\mathrm{Bilin}(V,W; -):\Vect\to\Set$ in the following way. Given a linear map $g:U\to U'$, we get a function $g\circ -:\mathrm{Bilin}(V,W; U)\to \mathrm{Bilin}(V,W; U')$ mapping a bilinear function $f: V\times W\to U$ to the bilinear function $g\circ f: V\times W\to U'$, which is again bilinear by \Cref{bilinearlinear}. 
 
 Now, is this functor representable? We look for a space $Z$ and a natural isomorphism
 $$
 \Hom_\Vect(Z,-) \; \Rightarrow \; \mathrm{Bilin}(V,W; -) .
 $$
 Following the guidelines above, this amounts to an element of the set $q\in \mathrm{Bilin}(V,W; Z)$ such that for every vector space $S$ and for each element $b\in \mathrm{Bilin}(V,W; S)$ there exists a unique linear map 
 $$
 \begin{tikzcd}
 Z \ar[dashed]{r}{f} & S
\end{tikzcd}
 $$
 such that $\mathrm{Bilin}(V,W; f)(q)=b$. Let's unpack this definition. The element $q\in \mathrm{Bilin}(V,W; Z)$ is a bilinear map $q:V\times W\to Z$, and this map has to satisfy the property that for each vector space $S$ and for each bilinear map $b:V\times W\to S$, there exists a unique linear map $f$ such that $f\circ q=b$, i.e.~such that the following diagrams commutes.
 $$
 \begin{tikzcd}
 V\times W \ar{d}[swap]{q} \ar{dr}{b} \\
 Z \ar[dashed,swap]{r}{f} & S
 \end{tikzcd}
 $$
 (Note that the solid maps are bilinear, not linear, so this is technically not a diagram of $\Vect$. It's just here for convenience.)
 
 Again, as you probably know, such an object $Z$ exists, and it is given by the tensor product of vector spaces $V\otimes W$. The map $q:V\times W\to V\otimes W$ is the tensor product of vectors $(v,w)\mapsto v\otimes w$, which is bilinear. (Note that we are using the same symbol $\otimes$ for two different purposes, as in $V\otimes W$ and as in $v\otimes w$. Can you tell the difference?)
 Here is how the bijection works. The elements of $V\otimes W$ are linear combinations of vectors in the form $v\otimes w$ for some $v\in V$ and $w\in W$. Therefore a linear map $f:V\otimes W\to S$ is uniquely specified by what it does on vectors in the form $v\otimes w$, i.e.~in the image of $q$. Given a multilinear map $b:V\times W\to S$, define $f:V\otimes W \to S$ to be the linear map specified by $f(v\otimes w)=b(v,w)$. This is linear since $b$ is bilinear. A different choice of $b$ gives necessarily a different $f$, since the $v\otimes w$ span the whole space $V\otimes W$, and every linear map $V\otimes W\to S$ arises this way.
\end{eg}

This gives the universal property of the tensor product. Note, again, that the map $q$ is part of the universal property, it is part of the tensor product structure. 
 Intuitively, the tensor product is the ``unique way to combine $V$ and $W$ in terms of bilinear maps on them''.

\begin{ex}[linear algebra; difficult!]
 Fix a vector space $V$. Show that the assignment $W\mapsto V\otimes W$ is part of a functor $V\otimes -:\Vect\to\Vect$. 
\end{ex}

\newpage
\chapter{Limits and colimits}

\section{General definitions}

\begin{deph}
 Let $\cat{C}$ be a category. Let $\cat{J}$ be a small category, and let $X$ be an object of $\cat{C}$. The \emph{constant diagram at $X$} (indexed by $\cat{J}$) is the functor $X:\cat{J}\to \cat{C}$ assigning:
 \begin{itemize}
  \item To each object of $\cat{J}$, always the same object $X$ of $\cat{C}$;
  \item To each morphism of $\cat{J}$, the identity morphism $\id_X$. 
 \end{itemize}
\end{deph}

This is analogous to a constant function.

\begin{deph}
 Let $\cat{J}$ be a small category, let $F:\cat{J}\to\cat{C}$ be a diagram (i.e.~a functor), and let $X$ be an object of $\cat{C}$. A \emph{cone over $F$} with tip $X$ is a natural transformation from the constant diagram at $X$ to the functor $F$. A \emph{cone under $F$}, or \emph{cocone}, with bottom $X$ is a natural transformation from the functor $F$ to the constant diagram at $X$.
\end{deph}

Explicitly, a cone over $F$ is the following assignment: for each object $J$ in $\cat{J}$ we have a morphism $\alpha_J:X\to FJ$ of $\cat{C}$, in such a way that for every morphism $m:J\to J'$ of $\cat{J}$, this triangle commutes
$$
\begin{tikzcd}
 & X \ar{dl}[swap]{\alpha_J} \ar{dr}{\alpha_{J'}} \\
 FJ \ar{rr}{Fm} && FJ' .
\end{tikzcd}
$$

\begin{eg}
 Consider the diagram
 $$
 \begin{tikzcd}[row sep=small]
  & B \ar{dr} \\
  A \ar{ur} \ar{dr} && D \\
  & C \ar{ur}
 \end{tikzcd}
 $$
 where the objects $A$, $B$, $C$ and $D$ are in the form $FJ$ for some objects $J$ of $\cat{J}$, and the arrows of the diagram are in the form $Fm:FJ\to FJ'$ for some arrows $m:J\to J'$ of $\cat{J}$.
 A cone and a cocone over this diagram look as follows,
 $$
 \begin{tikzcd}[row sep=small, column sep=large]
  & X \ar{dddl} \ar{dddr} \ar[bend right=15]{dd} \ar[bend left]{dddd} \\ \\
  & B \ar{dr} \\
  A \ar{ur} \ar{dr} && D \\
  & C \ar{ur}
 \end{tikzcd}
 \qquad
 \begin{tikzcd}[row sep=small, column sep=large]
  & B \ar{dr} \ar[bend left=23]{dddd} \\
  A \ar{dddr} \ar{ur} \ar{dr} && D \ar{dddl} \\
  & C \ar[crossing over]{ur} \ar[bend right=18]{dd}
  \\ \\   & Y 
 \end{tikzcd}
 $$
 hence the name.
 All the triangles involving $X$ and $Y$ are commutative (but the original diagram may not necessarily commute).
\end{eg}

The cone construction is functorial, as follows. Given a cone of tip $X$ and a morphism $f:X\to Y$, we get a cone of tip $Y$ via the following composition,
$$
 \begin{tikzcd}[row sep=small, column sep=large]
  & Y \ar{dddl} \ar{dddr} \ar[bend right=15]{dd} \ar[bend left]{dddd} \\ \\
  & B \ar{dr} \\
  A \ar{ur} \ar{dr} && D \\
  & C \ar{ur}
 \end{tikzcd}
 \quad\longmapsto \quad
 \begin{tikzcd}[row sep=tiny, column sep=large]
  & X \ar{dd}{f} \\ \\
  & Y \ar{dddl} \ar{dddr} \ar[bend right=15]{dd} \ar[bend left]{dddd} \\ \\
  & B \ar{dr} \\
  A \ar{ur} \ar{dr} && D \\
  & C \ar{ur}
 \end{tikzcd}
 $$
 (Note the direction of the arrows.)
 Here is the more rigorous definition. You can read it keeping the picture above in mind.
 
\begin{deph}
 Let $F:\cat{J}\to\cat{C}$ be a diagram. We construct the presheaf $\Cone(-,F):\cat{C}^\op\to\Set$ as follows.
 \begin{itemize}
  \item It maps an object $X$ of $\cat{C}$ to the set $\Cone(X,F)$ of cones over $F$ with tip $X$;
  \item It maps a morphism $f:X\to Y$ to the function $\Cone(Y,F) \to \Cone(X,F)$ given in the following way. It assigns to a cone $\alpha:Y \Rightarrow F$ of components $\alpha_J:Y\to FJ$ for each object $J$ of $\cat{J}$ the cone $\alpha\circ f:X\Rightarrow F$ of components $(\alpha\circ f)_J:=\alpha_j\circ f$.
 \end{itemize}
 
 Just as well, the functor $\Cone(F,-):\cat{C}\to\Set$ is defined as follows.
 \begin{itemize}
  \item It maps an object $X$ of $\cat{C}$ to the set $\Cone(F,X)$ of cocones under $F$ with bottom $X$;
  \item It maps a morphism $f:X\to Y$ to the function $\Cone(F,X) \to \Cone(F,Y)$ given in the following way. It assigns to a cocone $\alpha:F \Rightarrow X$ of components $\alpha_J:FJ\to X$ for each object $J$ of $\cat{J}$ the cocone $f\circ\alpha:F\Rightarrow Y$ of components $(f\circ\alpha)_J:=f\circ\alpha_j$. 
 \end{itemize}
\end{deph}

Limits and colimits are \emph{universal} cones and cocones. 

\begin{deph}
 Let $F:\cat{J}\to\cat{C}$ be a diagram. A \emph{limit} of $F$, if it exists, is an object $\lim F$ of $\cat{C}$ representing the presheaf $\Cone(-,F):\cat{C}^\op\to\Set$, together with its universal property.
 
 A \emph{colimit} of $F$, if it exists, is an object $\colim F$ of $\cat{C}$ representing the functor $\Cone(F,-):\cat{C}\to\Set$, together with its universal property.
\end{deph}

Let's see what this means concretely. The objects $\lim F$ and $\colim F$, whenever they exist, are equipped by definition with natural isomorphisms
$$
\Hom_\cat{C}(-, \lim F) \Rightarrow \Cone(-, F) \quad \mbox{and} \quad \Hom_\cat{C}(\colim F, -) \Rightarrow \Cone(F,-) .
$$
As we saw in \Cref{uniprop}, by the Yoneda lemma these natural transformations are uniquely specified by ``universal'' elements of the sets
$$
\Cone(\lim F, F) \quad \mbox{and} \quad  \Cone(F,\colim F) ,
$$
that is, cones in the following form.
$$
 \begin{tikzcd}[row sep=small, column sep=large]
  & \lim F \ar{dddl} \ar{dddr} \ar[bend right=15]{dd} \ar[bend left]{dddd} \\ \\
  & B \ar{dr} \\
  A \ar{ur} \ar{dr} && D \\
  & C \ar{ur}
 \end{tikzcd}
 \qquad
 \begin{tikzcd}[row sep=small, column sep=large]
  & B \ar{dr} \ar[bend left=23]{dddd} \\
  A \ar{dddr} \ar{ur} \ar{dr} && D \ar{dddl} \\
  & C \ar[crossing over]{ur} \ar[bend right=18]{dd}
  \\ \\   & \colim F
 \end{tikzcd}
 $$
Moreover, these cones are universal in the following sense. For the limit cone (the diagram on the left), given any (other) cone with any tip $X$, such as
$$
 \begin{tikzcd}[row sep=small, column sep=large]
  & X \ar{dddl} \ar{dddr} \ar[bend right=15]{dd} \ar[bend left]{dddd} \\ \\
  & B \ar{dr} \\
  A \ar{ur} \ar{dr} && D \\
  & C \ar{ur}
 \end{tikzcd}
 $$
 there is a unique map $u:X\to \lim F$ such that for each $J\in \cat{J}$, each component $\alpha_J:X\to FJ$ of the cone of tip $X$ factors uniquely through $u$ and through the component $\phi_J:\lim F\to FJ$ of the limit cone. That is, for each $J$ this triangle has to commute.
 $$
 \begin{tikzcd}
  X \uni{d}[swap]{u} \ar{dr}{\alpha_J} \\
  \lim F \ar[swap]{r}{\phi_J} & FJ .
 \end{tikzcd}
 $$
 In our example,
 $$
 \begin{tikzcd}[row sep=small, column sep=large]
  & X \ar{ddddl} \ar{ddddr} \ar[bend right=15]{ddd} \ar[bend left]{ddddd}  \\ \\ \\
  & B \ar{dr} \\
  A \ar{ur} \ar{dr} && D \\
  & C \ar{ur}
 \end{tikzcd}
 \quad= \quad
 \begin{tikzcd}[row sep=tiny]
  & X \uni{dd}{u} \\ \\
  & \lim F \ar{dddl} \ar{dddr} \ar[bend right=15]{dd} \ar[bend left]{dddd} \\ \\
  & B \ar{dr} \\
  A \ar{ur} \ar{dr} && D \\
  & C \ar{ur}
 \end{tikzcd}
 $$
 That is, 
 $$
 \begin{tikzcd}[row sep=small]
  &&& X \uni{dd}{u} \ar[bend right=15]{dddddll} \ar[bend left=15]{dddddrr}  \\ \\
  &&& \lim F \ar{dddll} \ar{dddrr} \ar[bend right=15]{dd} \ar[bend left]{dddd} \\ 
  \mbox{these triangles commute.}\ar[-]{rr} && \, \\
  &&& B \ar{drr} \\
  &A \ar{urr} \ar{drr} &&&& D \\
  &&& C \ar{urr}
 \end{tikzcd}
 $$
 
 For the colimit cone, given any cocone under $F$ with bottom $Y$, such as 
 $$
 \begin{tikzcd}[row sep=small, column sep=large]
  & B \ar{dr} \ar[bend left=23]{dddd} \\
  A \ar{dddr} \ar{ur} \ar{dr} && D \ar{dddl} \\
  & C \ar[crossing over]{ur} \ar[bend right=18]{dd}
  \\ \\   & Y 
 \end{tikzcd}
 $$
 there is a unique map $u:\colim F\to Y$ such that 
 for each $J\in \cat{J}$, each component $\beta_J:FJ\to Y$ of the cone of bottom $Y$ factors uniquely through $u$ and through the component $\psi_J:FJ\to\colim F$ of the colimit cone. That is, for each $J$ this triangle has to commute.
 $$
 \begin{tikzcd}
  FJ \ar{d}[swap]{\psi_J} \ar{dr}{\beta_J} \\
  \colim F \uni{r}[swap]{u} & Y .
 \end{tikzcd}
 $$
 In our example,
 $$
 \begin{tikzcd}[row sep=small, column sep=large]
  & B \ar{dr} \ar[bend left=23]{ddddd} \\
  A \ar{ddddr} \ar{ur} \ar{dr} && D \ar{ddddl} \\
  & C \ar[crossing over]{ur} \ar[bend right=18]{ddd}
  \\ \\ \\   & Y 
 \end{tikzcd}
 \quad= \quad
 \begin{tikzcd}[row sep=tiny]
  & B \ar{dr} \ar[bend left=28]{dddd} \\
  A \ar{dddr} \ar{ur} \ar{dr} && D \ar{dddl} \\
  & C \ar[crossing over]{ur} \ar[bend right=18]{dd}
  \\ \\   & \colim F \uni{dd}{u} \\ \\ 
  & Y
 \end{tikzcd}
 $$
 That is, 
 $$
 \begin{tikzcd}[row sep=small]
  &&& B \ar{drr} \ar[bend left=28]{dddd} \\
  &A \ar[bend right=15]{dddddrr} \ar{dddrr} \ar{urr} \ar{drr} &&&& D \ar{dddll} \ar[bend left=15]{dddddll} \\
  &&& C \ar[crossing over]{urr} \ar[bend right=18]{dd}  \\ 
  \mbox{these triangles commute.} \ar[-]{rr} && \,  \\
  &&& \colim F \uni{dd}{u} \\ \\ 
  &&& Y
 \end{tikzcd}
 $$
 
As we know by the Yoneda lemma, moreover, if $\lim F$ and $\colim F$ exist, they are unique up to isomorphism.

\begin{deph}
 A category $\cat{C}$ is called \emph{complete} if every diagram in $\cat{C}$ has a limit. $\cat{C}$ is called \emph{cocomplete} if every diagram in $\cat{C}$ has a colimit.
\end{deph}

Let's now see some practical examples of limits and colimits.

\section{Particular limits and colimits}

\subsection{The poset case: constrained optimization}

Let $(X,\le)$ be a poset. If we view it as a category, then a diagram in $X$ has the same information as a subset of $X$, because given any two elements, there is up to one arrow between them. 
So let $S\subseteq X$. Plugging in the definition of cone and cocone, we get the following.
A cone over $S$ with tip $x$ is an element $x$ together with an arrow $x\to s$ for each element $x\in S$. That is, it is an element $x\in X$ such that for all $s\in S$, $x\le s$. This is the same as a \emph{lower bound} for $S$. 
A cocone over $S$ with bottom $y$ is an element $y$ together with arrows $s\to y$, that is, $s\le y$, for each $s\in S$. Therefore it is the same as an \emph{upper bound} for $S$. Mind the possibly confusing direction of the arrow: if $x$ is the tip of a cone \emph{over} $S$ in the sense of category theory, it is actually \emph{below} $S$ in the partial order.

A cone of tip $x$ is a limit cone if the following universal property holds: given any (other) lower bound $z$ of $S$, we have a (necessarily unique) arrow $z\to x$, that is, $z\le x$. Equivalently, for every $z\in X$, we have that $z\le s$ for every $s\in S$ if and only if $z\le x$.

\begin{ex}[sets and relations]
 Prove that these two statements are indeed equivalent. 
\end{ex}

In other words, we are saying that $x$ is the \emph{greatest lower bound}, or \emph{infimum}, or \emph{meet} of $S$. This is usually denoted by $\inf S$ or $\meet S$.
Just as well, $y$ defines a colimit cone if and only if it is the \emph{least upper bound}, or \emph{supremum}, or \emph{join} of $S$. This is usually denoted by $\sup S$ or $\join S$. 

As you probably know from analysis, infima and suprema do not always exist, but when they do, they are unique.

In a way, the universal properties of limits and colimits in a poset correspond to solutions of a constrained optimization problem: the limit (meet) of $S$ is the largest element which is still below $S$, while the colimit (join) is the smallest element which is still above $S$. Because of this, one may view limits and colimits as a generalization of constrained optimization. This idea that limits and colimits are the largest or smallest objects satisfying a certain property can help the intuition also in most of the next examples.

\subsection{The group case: invariants and orbits}\label{invorbit}

Let $G$ be a group. We know that a functor (or a diagram) $F:\B G\to \Set$ is the same as a $G$-set, a set $X$ equipped with an action of $G$. Let's see what cones, cocones, limits, and colimits are. 

Plugging in the definition, a cone over $F$ is a set $S$ together with a map $f:S\to X$ such that for each $g\in G$, the following triangle commutes.
$$
\begin{tikzcd}[column sep=small]
 & S \ar{dl}[swap]{f} \ar{dr}{f} \\
 X \ar{rr}{g\cdot} && X
\end{tikzcd}
$$
In other words, for every $s\in S$ and $g\in G$, we must have $g\cdot f(s) = f(s)$. This means exactly that every element in the image of $f$ is an invariant element for the action. In particular, if $f$ is injective, $S$ is a subset of invariant elements. 

\begin{caveat}
 A \emph{set of invariant elements} is not the same thing as an \emph{invariant set}. If $S$ is a set of invariant elements, then applying $g$ to any $s\in S$ leaves $s$ where it is. Instead, if $S$ is an invariant set, applying $g$ to $s\in S$ may move it, but it will stay inside $S$. For example, for rotations in the plane, a circle centered at the origin is an invariant set (its points move). The origin is an invariant element.
\end{caveat}

Let us now wee what the limit is. Plugging in the definition, $\lim F$ is a cone such that for any (other) cone, say with tip $S$, there is a unique map $S\to \lim F$ making the following diagram commute for each $g\in G$.
$$
\begin{tikzcd}[column sep=small, row sep=small]
 & S \ar{ddl} \ar{ddr} \uni{d} \\
 & \lim F \ar{dl} \ar{dr} \\
 X \ar{rr}{g\cdot} && X
\end{tikzcd}
$$
So, first of all, the map $\lim F\to X$ must be injective. Here is why. Let $I\subseteq X$ be the image of this map. Then the inclusion $I\to X$ (which is injective) would also give a cone over $F$ (why?), and every map of a cone $S\to X$ would factor through $I$, that is, would be a composition $S\to I\to X$. But this is precisely the universal property of the limit, and so $I$ would be the limit. Since limits are unique up to isomorphism, we conclude that $\lim F \cong I$, and so the map $\lim F\to X$ must be injective too.
Therefore, $\lim F$ is a subset of invariant elements. Now which subset? Since every subset $S$ of invariant elements gives a cone over $F$, and by the universal property of the limit, the inclusion $S\to X$ has to factor as $S\to \lim F \to X$, it must necessarily be that $\lim F$ contains all the invariant elements.
Therefore, $\lim F\to X$ is exactly the \emph{set of all invariant elements for the action}. 
In terms of the natural isomorphism, the universal property reads: a function into $X$ picking out only $G$-invariant elements is the same as a function into $\lim F$.
In terms of constrained optimization, it is the largest subset of $X$ whose elements are all invariant for the $G$-action. Note that this set may be empty, but as a set, it always exists.

Let us now turn to cocones. A cocone under $F$, plugging in the definition, is a set $Y$ together with a map $p:X\to Y$ such that for each $g\in G$, the following diagram commutes.
$$
\begin{tikzcd}[column sep=small]
 X \ar{rr}{g\cdot} \ar{dr}[swap]{p} && X  \ar{dl}{p}  \\
 & Y
\end{tikzcd}
$$
In other words, for each $x\in X$ and $g\in G$, we must have $p(g\cdot x)=p(x)$. This is the same thing as an \emph{invariant function}: a function whose value does not change after applying $g$ to its argument.

We can view the function $p$ also in a different way, as follows. 
Any function $p:X\to Y$ can be seen as a \emph{partition} of $X$. That is, we can divide $X$ into different regions, according to the values that $p$ assumes. For the value $y\in Y$, we have the region $p^{-1}(y)\subseteq X$ of the elements $x\in X$ such that $p(x)=y$. For a different value $y'\in Y$, we have the region $p^{-1}(y')\subseteq X$ of the elements $x\in X$ such that $p(x)=y'$, and so on. We have that $X$ is the disjoint union
$$
X \;\cong\; \coprod_{y\in Y} p^{-1}(y) .
$$
Note that some of the regions may be empty (precisely when $p$ is not surjective).
Now, if and only if $p$ is an invariant function, we have that for all $y\in Y$, $p^{-1}(y)$ is an invariant set: if $x\in f^{-1}(y)$, then $f(x)=y$, but then also for each $g\in G$, $f(g\cdot x)=y$, that is, $g\cdot x\in f^{-1}(y)$.
(Mind, this is an invariant set, not necessarily a set of invariant elements, see the remark above.)
We are saying that $f$ is an invariant function if and only if, if we see it as a partition, all the regions of the partition are invariant sets, that is, $G$ acts on those regions separately, never moving elements from one region to another.
Therefore, a cocone under $F$ is the same thing as a \emph{partition of invariant sets}, also called 
\emph{invariant partition}.

What about the colimit of $F$? Plugging in the definition, $\colim F$ is a cocone such that for any (other) cone, say with bottom $Y$, there is a unique map $\colim F\to Y$ making the following diagram commute for each $g\in G$.
$$
\begin{tikzcd}[column sep=small, row sep=small]
 X \ar{ddr} \ar{dr} \ar{rr}{g\cdot} && X \ar{dl} \ar{ddl} \\
  & \colim F  \uni{d}  \\
   & Y   
\end{tikzcd}
$$
Dually to the case above, the map $X\to\colim F$ must be first of all surjective. Here is why. We can consider its image $I\subseteq \colim F$, and the map $X\to I$ is then surjective by construction. Again, any (other) map $X\to Y$ defining a cocone must factor through $\colim F$  by the universal property of the colimit, and therefore also through $I$ (why?). We can again conclude that $I\cong \colim F$, and so the map $X\to\colim F$ must be surjective as well. 
Therefore, in terms of partitions, the map $X\to\colim F$ corresponds to a partition where no region is empty. Now, let $O$ be an orbit of $G$. Consider the function $p$ from $X$ to the 2-element set $\{0,1\}$ mapping $O$ to $0$ and everything else to $1$. This function is invariant (why?). If we want this function to factor through $\colim F$, then the partition induced by $\colim F$ must be as fine as the one induced by $p$, that is, it needs to have $O$ as one of its regions. But this can be done for all orbits $O$ of $X$, and therefore the partition induced by $\colim F$ must precisely be the partition of $X$ into the orbits of the $G$-action.
In terms of the natural isomorphism, the universal property reads: an invariant function on $F$ is the same thing as a function on $\lim F$, that is, a function on the set of orbits.
In terms of constrained optimization, this is the finest $G$-invariant partition. Again, notice that such a partition always exists.

\begin{ex}[group theory, linear algebra]
 Let $F:\B G\to \Vect$ be a linear representation of $G$. What are its limit and colimit? 
\end{ex}

\begin{ex}[group theory, topology]
 Let $F:\B G\to \cat{Top}$ be an action of $G$ on a topological space. What are its limit and colimit? 
\end{ex}

\subsection{Products and coproducts}

Consider now the category $\Set$, and take two sets $A$ and $B$. They form the discrete diagram $F$ which simply looks as follows:
$$
\begin{tikzcd}
 A && B
\end{tikzcd}
$$
Discrete simply means that the only arrows present in the diagram are the identities (not drawn).
Now, a cone over $F$ consists of a set $X$ together with maps $f:X\to A$ and $g:X\to B$, i.e.
$$
\begin{tikzcd}
  & X \ar{dl}[swap]{f} \ar{dr}{g} \\
 A && B
\end{tikzcd}
$$
A limit cone over $F$ consists of a cone with the following universal property. Given any (other) cone over $F$, say with tip $X$, there is a unique map $X\to\lim F$ making the following diagram commute.
$$
\begin{tikzcd}
  & X \ar{ddl} \ar{ddr} \uni{d} \\
  & \lim F \ar{dl} \ar{dr} \\
 A && B
\end{tikzcd}
$$

You may recognize that this is exactly the universal property of the cartesian product $A\times B$, as we saw in \Cref{uniprod}. A way to see it is to look at the case of $X=1$, the one-element set. Maps $1\to A$ and $1\to B$ correspond to elements of $A$ and $B$, respectively. The universal property says in this case that an element of $A\times B$ is equivalently a pair of elements $(a,b)$ of $A$ and $B$ respectively. More generally, a function into $A\times B$ is the same as a pair of functions into $A$ and $B$.
Therefore, $A\times B$ is (up to isomorphism) the limit of the diagram $F$, and the maps forming the universal cone are the product projections.

\begin{deph}
 The limit of a discrete diagram is called the \emph{product}, or \emph{cartesian product}, or \emph{categorical product}. If the diagram contains two objects $A,B$ their product is usually denoted by $A\times B$, if the diagram contains many objects $\{A_i\}_{i\in I}$, their product is usually denoted by $\prod_i A_i$. 
\end{deph}

Intuitively, products are used to combine objects to form more complex ``composite'' objects, in such a way that the complex object can always be projected back to its simpler components via the projection maps $A\times B\to A$ and $A\times B\to B$ forming the universal cone. 
We have seen products of topological spaces already, in \Cref{uniprod}. The exercises in that section involve similar constructions for vector spaces and for groups.

\begin{ex}[graph theory]
 What is the product of two graphs in the category $\cat{MGraph}$ of multigraphs and their morphisms (\Cref{egmorphgraph})? 
\end{ex}

\begin{ex}\label{productfunctorial}
 Can we generalize \Cref{prodfunctor} to arbitrary categories (assuming all the necessary products exist)?
\end{ex}

What is the colimit of $F$? Reversing all the arrows, this is a cocone such that for each (other) cocone, say with bottom $Y$, there exists a unique map $\colim F\to Y$ such that the following diagram commutes:
$$
\begin{tikzcd}
 A \ar{dr} \ar{ddr} && B \ar{dl} \ar{ddl} \\
 & \colim F \uni{d} \\
 & Y
\end{tikzcd}
$$
This means that $\colim F$ is the set such that having a function on $\colim F$ is the same as having a function on $A$ and a function on $B$. In other words, $\colim F$ is the \emph{disjoint union} $A\sqcup B$: a function on the disjoint union of $A$ and $B$ is uniquely specified by its values on $A$ (which give a function on $A$) and its values on $B$ (which give a function on $B$), and vice versa. The universal maps forming the cocone are the canonical inclusions $A\to A \sqcup B$ and $B\to A\sqcup B$.

\begin{deph}
 The colimit of a discrete diagram is called the \emph{coproduct} or sometimes \emph{sum}. If the diagram contains two objects $A,B$, their coproduct is usually denoted by either $A+B$ or $A\sqcup B$, if the diagram contains many objects $\{A_i\}_{i\in I}$, their coproduct is usually denoted by $\coprod_i A_i$.
\end{deph}

Intuitively, coproducts are used to combine objects to form more complex objects by ``adding them together'', in such a way that the simple objects are contained in the complex object via the inclusion maps $A\to A \sqcup B$ and $B\to A\sqcup B$ forming the universal cocone. 

The symbol $\coprod$ is used because it is the upside-down version of the symbol $\prod$. 

\begin{ex}[linear algebra]\label{biproduct}
 Show that in $\cat{Vect}$, the coproduct of two vector spaces is their direct sum. Conclude that, for two (or finitely many) vector spaces, the product and the coproduct are isomorphic. 
\end{ex}

\begin{ex}[topology]
 Show that the coproduct in $\cat{Top}$ is given by the disjoint union, with the disjoint union topology.
\end{ex}

\begin{ex}[group theory]
 Show that in $\cat{Grp}$, the coproduct of two groups is given by their free product.
\end{ex}

\begin{ex}[graph theory]
 What is the coproduct of two graphs in $\cat{MGraph}$?
\end{ex}

\subsection{Equalizers and coequalizers}\label{eq}

Consider the following diagram in $\cat{Set}$, let's call it $F$ again, given by a parallel pair of arrows.
$$
\begin{tikzcd}
 A \ar[shift left]{r}{f} \ar[shift right,swap]{r}{g} & B
\end{tikzcd}
$$
Note that this in general is not a commutative diagram (unless $f=g$).

A cone over $F$ of tip $X$ consists of maps $p:X\to A$ and $q:X\to B$ such that the following triangles commute.
$$
\begin{tikzcd}
 & X \ar{dl}[swap]{p} \ar{dr}{q} \\
 A \ar{rr}{f} && B
\end{tikzcd}
\qquad
\begin{tikzcd}
 & X \ar{dl}[swap]{p} \ar{dr}{q} \\
 A \ar{rr}{g} && B
\end{tikzcd}
$$
This means, in particular, that $f\circ p = q = g\circ p$. So, since the map $q$ is just a composition, we can omit it from the diagram, and rewrite the condition saying that a cone of tip $X$ over $F$ is a map $p:X\to A$ as in the following diagram,
$$
\begin{tikzcd}
 X \ar{r}{p} & A \ar[shift left]{r}{f} \ar[shift right,swap]{r}{g} & B
\end{tikzcd}
$$
such that $f\circ p = g\circ p$. (Again, note that this does not imply $f=g$ unless $p$ is epi. Therefore the diagram above is not commutative in general.)
This means, in other words, that for every $x\in X$, $f(p(x))=g(p(x))$, or equivalently, that $f$ and $g$ agree on the image of $p$. In particular, if $p$ is injective, $X$ can be seen as a subset of $A$ on which $f$ and $g$ agree. Note that this does not imply that $f$ and $g$ necessarily disagree outside of $X$: $X$ is not necessarily the \emph{largest} subset on which $f$ and $g$ agree.  

What is now a limit of the diagram $F$?
First of all, just like it happened in \Cref{invorbit}, the universal map $\lim F \to A$ must be injective (see \Cref{eqmono}). Therefore, the limit of $F$ must be a subset of $A$ on which $f$ and $g$ agree.
Moreover, also just as it happened in \Cref{invorbit}, since any subset of $A$ on which $f$ and $g$ agree gives a cone over $F$, the limit of $F$ must contain all such subsets. In other words, $\lim F$ is precisely the largest subset of $A$ where $f$ and $g$ agree, and the universal map $\lim F\to A$ is the inclusion.

\begin{deph}
 The limit of a pair of parallel arrows $f,g$ is called the \emph{equalizer} of $f,g$. 
\end{deph}

Intuitively, equalizers are used to \emph{define subspaces by means of equations}. This is done systematically in algebraic geometry, but it appears also in many other fields of math.

\begin{eg}[basic geometry]
 In the category $\cat{Top}$, consider the diagram 
 $$
\begin{tikzcd}
 \R^2 \ar[shift left]{r}{f} \ar[shift right,swap]{r}{g} & \R
\end{tikzcd}
$$
where $f$ is the function given by $f(x,y)\coloneqq x^2 + y^2$, and $g$ is the constant function with value $1$. 
The equalizer of $f$ and $g$ is the largest subset $S$ of $\R^2$ such that for all of its points $(x,y)\in S$, $x^2+y^2=1$. This is precisely the unit circle $S^1$. 
\end{eg}

\begin{eg}[topology]
 Show that, generalizing the example above, the equalizer in $\cat{Top}$ of two maps $f,g:A\to B$ is given by the subset of $A$ on which $f$ and $g$ agree, equipped with the subspace topology inherited by $A$.
\end{eg}

\begin{eg}[linear algebra, group theory]
 Show that in the categories $\cat{Vect}$ and $\cat{Grp}$, given a map $f:A\to B$ and the constant zero map $0:A\to B$, the kernel of $f$ is the equalizer of $f$ and $0$. 
\end{eg}

\begin{ex}\label{eqmono}
 Let $m:E\to A$ be the equalizer of $f,g:A\to B$ in a category $\cat{C}$. Prove that $m$ is necessarily mono. (Hint: the idea is similar to what happens in \Cref{invorbit}.)
\end{ex}

\begin{ex}
 In the notation of the exercise above, prove that if and only if $m$ is epi, it is an isomorphism, and $f=g$. 
\end{ex}

What is a cocone under $F$? Similarly as above, a cocone under $F$ with bottom $Y$ is given by a map $q:B\to Y$ as in the following diagram,
$$
\begin{tikzcd}
A \ar[shift left]{r}{f} \ar[shift right,swap]{r}{g} & B \ar{r}{q} & Y
\end{tikzcd}
$$
such that $q\circ f=q\circ g$ (why exactly?). Note that again this does not imply that $f=g$ (unless $q$ is mono).
In other words, the function $q$ is such that for every $a\in A$, $q(f(a))=q(g(a))$. Intuitively, $q$ cannot tell whether the points come through $f$ or through $g$, it cannot tell the two maps apart. Let's make this precise. Consider the following relation on $B$: $b\sim b'$ if and only if there exists $a\in A$ such that $b=f(a)$ and $b'=g(a)$. Then our condition on $q$ says precisely that $q$ respects the relation, i.e.~$b\sim b'$ implies that $q(b)=q(b')$. If we view $q$ as a partition, we are saying that this partition cannot be finer than the relation $\sim$, that is, no two related elements $b\sim b'$ can come from different regions of the partition.

\begin{ex}[sets and relations]
 The relation $\sim$ defined above is in general not an equivalence relation. Show that, in any case, $q$ also respects the equivalence relation generated by $\sim$. Such an equivalence relation can be characterized as the smallest (finest) equivalence relation containing $\sim$, or as the one obtained from $\sim$ by enforcing reflexivity, symmetry, and transitivity.
\end{ex}

As you may probably expect, then, a colimit cone is given by the \emph{finest} function which respects this relation. In other words, analogously to what happens in \Cref{invorbit}, $\colim F$ is the quotient space of $B$ under the equivalence relation generated by $\sim$, and the universal map $Y\to\colim F$ is the quotient map.
In terms of partitions, $q$ corresponds precisely to the partition of $B$ into the equivalence classes generated by $\sim$.
This is because, first of all, the universal map is always surjective (\Cref{coeqepi}). Moreover, let $C\subseteq B$ be an equivalence class generated by $\sim$, and consider the map $q:B\to\{0,1\}$  mapping $C$ to $0$ and everything else to $1$. This map $q$ forms a cocone (why?). If we want $q$ to factor through the colimit, as the universal property prescribes, the only possibility is that the universal map $B\to \colim F$, seen as a partition, has $C$ as one of its regions. Again as in \Cref{invorbit}, this must be true for all such equivalence classes.

\begin{deph}
 The colimit of a pair of parallel arrows $f,g$ is called the \emph{coequalizer} of $f,g$. 
\end{deph}

Coequalizers are used to create quotient spaces canonically, by \emph{identification}. 

\begin{ex}[topology]
 Consider the following diagram in $\cat{Top}$,
 $$
 \begin{tikzcd}
 \bullet \ar[shift left]{r}{1} \ar[shift right,swap]{r}{0} & \left[0,1\right]
 \end{tikzcd}
 $$
 where $\bullet$ is the space of a single point, $[0,1]$ is the real unit interval, and the maps $0$ and $1$ map the unique point of $\bullet$ respectively to $0$ and $1\in[0,1]$. 
 Show that the coequalizer of these two maps is given by the circle $S^1$, as the space obtained by identifying $0$ and $1$ in the unit interval $[0,1]$.
\end{ex}

\begin{ex}\label{coeqepi}
 Prove the dual statement to \Cref{eqmono}, that is, that the universal map of the coequalizer of $f$ and $g$ is always epi.
 
 Conclude also that if and only if such map is also mono, then it is an isomorphism, and $f=g$. 
\end{ex}

\subsection{Pullbacks and pushouts}

Consider the following diagram, called a \emph{cospan}. Let's call the diagram $F$, as usual.
$$
\begin{tikzcd}
 & A \ar{d}{f} \\
 B \ar{r}{g} & C
\end{tikzcd}
$$
A cone over $F$ of tip $X$ corresponds to an object $X$ together with maps $p:X\to A$ and $q:X\to B$ such that this diagram commutes. 
$$
\begin{tikzcd}
 X\ar{d}{q} \ar{r}{p} & A \ar{d}{f} \\
 B \ar{r}{g} & C
\end{tikzcd}
$$
As before, the map $X\to C$ is implicit, since it is just a composition (of which maps?).

A limit cone is a cone such that for every (other) cone, say with tip $X$, there is a unique arrow $X\to \lim F$ such that the following diagram commutes.
$$
\begin{tikzcd}
 X \ar{dddr} \ar{drrr} \uni{dr} \\
 & \lim F \ar{rr} \ar{dd} && A \ar{dd}{f} \\ \\
 & B \ar{rr}{g} && C
\end{tikzcd}
$$

\begin{deph}
 The limit of a cospan $f:A\to C$, $g:B\to C$ is called \emph{pullback} or \emph{fibered product}. 
 As an object, it is usually denoted by $A\times_C B$. The maps giving the universal cone are denoted by $f^*g:A\times_C B\to A$ and $g^*f:A\times_C B \to B$.
 
 The universal cocone is sometimes called a \emph{pullback square} or \emph{cartesian} square, and it is denoted by putting either the symbol $\ulcorner$ or $\lrcorner$ in the corner of $A\times_C B$, as in one of these diagrams.
 $$
 \begin{tikzcd}
  A\times_C B \ar{d}[swap]{g^*f} \ar{r}{f^*g} \pullback & A \ar{d}{f} \\
  B \ar{r}{g} & C
 \end{tikzcd}
 \qquad
 \begin{tikzcd}
  A\times_C B \ar{d}[swap]{g^*f} \ar{r}{f^*g} \arrow[dr, phantom, "\lrcorner", very near start] & A \ar{d}{f} \\
  B \ar{r}{g} & C
 \end{tikzcd}
 $$
 (We will use the convention on the left.)
\end{deph}

Here are two exercises for people with a background in differential geometry. These also explain the names ``pullback'' and ``fibered product'', and the notation $f^*$.
\begin{ex}[differential geometry, topology]
 Let $p:E\to X$ be a fiber bundle. Let $f:Y\to X$ be a smooth map. Show that the map $f^*p:E\times_X Y\to Y$ forming the pullback
 $$
\begin{tikzcd}
 E\times_X Y\ar{d}[swap]{f^*p} \ar{r}{p^*f} \pullback & E \ar{d}{p} \\
 Y \ar{r}{f} & X
\end{tikzcd}
 $$
 is the pullback bundle of $p$ along $f$.
\end{ex}

\begin{ex}[differential geometry, topology]\label{productoffibers}
 Let $p:E\to X$ and $q:F\to X$ be fiber bundles on the same base space $X$. Show that the map $E\times_X F\to X$ obtained from the pullback
 $$
\begin{tikzcd}
 E\times_X F\ar{d}[swap]{q^*p} \ar{r}{p^*q} \pullback & E \ar{d}{p} \\
 F \ar{r}{q} & X
\end{tikzcd}
 $$
 is a fiber bundle whose fiber is the product of the fibers of $p$ and $q$.
\end{ex}

\begin{caveat}
 The notion of pullback in category theory does not always coincide with the notion of pullback in other fields of mathematics. In the case of bundles the two notion coincide, but keep in mind that this is not always the case. 
 This is even more evident in the dual case, where in category theory we have a construction called \emph{pushout} (see below), while in other fields of mathematics the dual to pullback is \emph{pushforward}. These are in general very different operations.  
\end{caveat}

To see how the pullback works, consider these two exercises in the category of sets.

\begin{ex}[sets and relations]
 In the category $\Set$, let $S$ and $T$ be subsets of $X$ together with the inclusion maps
 $$
 \begin{tikzcd}
  & S \mono{d} \\
  T \mono{r} & X
 \end{tikzcd}
 $$
 Show that the pullback of these diagrams is given by the intersection of $S$ and $T$. What are the maps of the universal cone?
\end{ex}

\begin{ex}[sets and relations]\label{pullbackone}
 In the category $\Set$, let $1$ be the one-element set. Show that $A\times_1 B \cong A \times B$. 
\end{ex}

Now, in more general categories.

\begin{ex}[topology, graph theory]
 In the categories $\cat{Top}$ and $\cat{Mgraph}$, do we have similar results as the previous two exercises?
\end{ex}

\begin{ex}\label{kernelpair}
 Consider a morphism $f:X\to Y$ in a category $\cat{C}$. Let's form the pullback of $f$ with itself (if it exists). 
 $$
 \begin{tikzcd}
  X \times_Y X \ar{r} \ar{d} \pullback & X \ar{d}{f} \\
  X \ar{r}{f} & Y
 \end{tikzcd}
 $$
 The two universal maps $X \times_Y X \to X$, which in this case are parallel maps, are called the \emph{kernel pair} of $f$. 
 Here is now the exercise: prove that $f$ is mono if and only if those maps coincide and are isomorphisms $X\times_Y X \cong X$. 
 
 This is why where the name ``kernel'' comes from: the kernel pair of $f$ is trivial if and only if $f$ is mono. 
\end{ex}

The next exercise can help with the intuition of ``what the kernel pair really does''.

\begin{ex}[sets and relations]
 In the category $\Set$, let $m:X\to Y$ be a function, and consider its kernel pair, denoting the resulting two maps $f,g:X \times_Y X \to X$. Consider the relation $\sim$ defined by $f$ and $g$ constructed as in \Cref{eq}, that is, for each $x,x'\in X$, we have $x\sim x'$ if and only if there exists $p\in X \times_Y X$ such that $f(p)=x$ and $g(p)=x'$. Show that: 
 \begin{itemize}
  \item In this case, $\sim$ is automatically an equivalence relation;
  \item $x\sim x'$ if and only if $f(x)=f(y)$, so that as a partition, $\sim$ is exactly the partition induced by $f$ whenever $f$ is surjective;
  \item if there exists $p\in X \times_Y X$ such that $f(p)=x$ and $g(p)=x'$, then this $p$ is unique. 
 \end{itemize}
\end{ex}

\begin{ex}[difficult!]\label{kercoeq}
 More generally, suppose that a morphism $f:X\to Y$ in a category $\cat{C}$ has a kernel pair (i.e.~it exists), and suppose moreover that $f$ is the coequalizer of \emph{something}. Show that $f$ is then (also) the coequalizer of its kernel pair.
 Why does this exercise generalize the previous exercise?
\end{ex}

Let us now consider the dual case. Consider this diagram, called a \emph{span}. Let's call this diagram $G$. 
$$
\begin{tikzcd}
 A \ar{r}{f} \ar{d}{g} & B \\
 C
\end{tikzcd}
$$
A cocone under $G$ of bottom $Y$ consists of an object $Y$ together with maps $B\to Y$ and $C\to Y$ such that the following diagram commutes. Again, there is no need to give the map $A\to Y$ (why?).
$$
\begin{tikzcd}
 A \ar{r}{f} \ar{d}{g} & B \ar{d} \\
 C \ar{r} & Y
\end{tikzcd}
$$

A colimit cocone is now a cocone such that for any (other) cocone, say with bottom $Y$, there is a unique map $\colim G\to Y$ such that the following diagram commutes:
$$
\begin{tikzcd}
 A \ar{rr}{f} \ar{dd}{g} && B \ar{dd} \ar{dddr} \\ \\
 C \ar{rr} \ar{drrr} && \colim G \uni{dr} \\
 && & Y
\end{tikzcd}
$$

\begin{deph}
 The colimit of a span is called \emph{pushout}. As an object, it is usually denoted by $B \sqcup_A C$. The maps giving the universal cone are denoted by $f_*g:A\to B \sqcup_A C$ and $g_*f:B\to B \sqcup_A C$.
 
 The universal cocone is sometimes called a \emph{pushout square} or \emph{cocartesian} square, and it is denoted by putting either the symbol $\ulcorner$ or $\lrcorner$ in the corner of $B \sqcup_A C$, as in one of these diagrams.
 $$
 \begin{tikzcd}
  A \ar{r}{f} \ar{d}{g} & B \ar{d}{f_*g}  \\
  C \ar{r}{g_*f} & B \sqcup_A C \pushout
 \end{tikzcd}
 \qquad
 \begin{tikzcd}
  A \ar{r}{f} \ar{d}{g} & B \ar{d}{f_*g}  \\
  C \ar{r}{g_*f} & B \sqcup_A C \arrow[ul, phantom, "\ulcorner", very near start]
 \end{tikzcd}
 $$
 (We will use the convention on the left.)
\end{deph}

Here are some exercises in the category of sets.

\begin{ex}[sets and relations]
 In the category $\Set$, let $\varnothing$ be the empty set. Show that $A\coprod_\varnothing B \cong A \coprod B$. (Hint: this is somewhat dual to \Cref{pullbackone}).
\end{ex}

\begin{ex}[sets and relations]
 More generally, let $S$ be a subset both of $A$ and of $B$, with the inclusion maps forming the following span.
 $$
  \begin{tikzcd}
   S \mono{d} \mono{r} & A \\
   B
  \end{tikzcd}
 $$
 Show that $A\coprod_S B$ is given by the \emph{non-disjoint} union of $A$ and $B$, i.e.~where the elements of $S$ are counted only once. What are the maps of the universal cocone? Also, why does this generalize the previous exercise?
 Can you also say why this is somewhat dual to \Cref{productoffibers}?
\end{ex}

In a generic category, we have the dual statement to \Cref{kernelpair}.

\begin{ex}
 Consider a morphism $f:X\to Y$ in a category $\cat{C}$. Let's form the pushout of $f$ with itself (if it exists). 
 The two universal maps $Y\to Y\coprod_X Y$ are called the \emph{cokernel pair} of $f$. 
 Prove that $f$ is epi if and only if those maps coincide and are isomorphisms. 
\end{ex}

\begin{ex}[difficult!]
 State and prove (independently) the dual statement to \Cref{kercoeq}.
\end{ex}

\subsection{Initial and terminal objects, trivial cases}

Let $\cat{O}$ be the empty category, i.e.~the one with no objects (and no morphisms). This category is small (why?), and there is a unique diagram $E:\cat{O}\to\cat{C}$ for every category $\cat{C}$, which we call the \emph{empty diagram} in $\cat{C}$.
The empty diagram looks like this:

\vspace{3cm}

A cone over the empty diagram $E$ of tip $X$ consists simply of an object $X$, with no other arrows except the identity of $X$ (why?). Just as well, a cocone of bottom $Y$ under $E$ is just $Y$ with its identity. So far everything is trivial, however, the \emph{limit} and \emph{colimit} of $E$ are more interesting. 

The \emph{limit} of the empty diagram, plugging in the universal property, is an object $\lim E$ such that for every (other) object $X$, there is a unique morphism
$$
\begin{tikzcd}
 X \uni{r} & \lim E .
\end{tikzcd}
$$

\begin{deph}
 The limit of the empty diagram in $\cat{C}$, if it exists, is called the \emph{terminal object} of $\cat{C}$. It is usually denoted by $1$. 
\end{deph}

The terminology comes from the fact that, since every object has a unique morphism to $1$, one may view $1$ as the ``endpoint'' of the category. This is particularly visible in the poset case:

\begin{eg}[sets and relations]
 Let $(X,\le)$ be a poset. A terminal object of $X$, if it exists, is an object $1$ such that for each $x\in X$, $x\le 1$. In other words, the terminal object is the top element of $X$, if it exists. The top element is also denoted by $\top$.
\end{eg}

The notation ``$1$'' comes instead from the following fact.

\begin{eg}[sets and relations]
 In $\Set$, the terminal object is the singleton set $1$, i.e.~a set with exactly one element. This is because for any other set $X$, there exists only one function $X\to 1$, namely the one that assigns to each $x\in X$ the unique element of $1$.
\end{eg}

\begin{ex}[topology]
 Show that the terminal object of $\cat{Top}$ is the one-point space.
\end{ex}

\begin{ex}[graph theory]
 Show that the terminal object of $\cat{MGraph}$ is a graph with a single vertex and\dots How many edges?
\end{ex}

\begin{caveat}
 The word ``terminal'' does \emph{not} mean that, in general, there are no arrows out of $1$ except the identity. In a poset this is true, but not in general, for example, not in $\cat{Set}$.
\end{caveat}

The colimit of the empty diagram $E$ is an object $\colim E$ such that for each (other) object $Y$, there is a unique arrow 
$$
\begin{tikzcd}
 \colim E \uni{r} & Y. 
\end{tikzcd}
$$

\begin{deph}
 The colimit of the empty diagram in $\cat{C}$, if it exists, is called the \emph{initial} or \emph{coterminal object} of $\cat{C}$. It is usually denoted by $0$. 
\end{deph}

\begin{eg}[sets and relations]
 Let $(X,\le)$ be a poset. An initial object of $X$, if it exists, is an object $0$ such that for each $x\in X$, $0 \le x$. In other words, the terminal object is the bottom element of $X$, if it exists. The bottom element is also denoted by $\bot$.
\end{eg}

The notation ``$0$'' comes instead from the following fact.

\begin{eg}[sets and relations]
 In $\Set$, the initial object is the empty set $\varnothing$. This is because for any other set $X$, there exists only one function $\varnothing \to X$, namely the one that doesn't assign anything. Note that this function still exists, even if it's a trivial case. (Just as zero is a trivial number, but still a number.)
 In particular there is a unique function $\varnothing\to\varnothing$, the identity.
\end{eg}

\begin{ex}[topology]
 Show that the initial object of $\cat{Top}$ is the empty space.
\end{ex}

\begin{ex}[graph theory]
 What is the initial object of $\cat{MGraph}$?
\end{ex}

\begin{ex}[linear algebra, group theory]\label{zeroobject}
 Show that the initial objects of $\Vect$ and $\Grp$ coincide with the terminal objects. An object which is both initial and terminal is sometimes called a \emph{zero object}.\footnote{This may be confusing, since both zero objects and initial objects (which are not necessarily terminal) are usually denoted as $0$.}
\end{ex}

\begin{caveat}
 Just as for terminal objects, the word ``initial'' does \emph{not} mean that, in general, there are no arrows into $0$ except the identity. In a poset this is true, and also in $\cat{Set}$ (why?), but not in general, for example, not in $\cat{Vect}$.
\end{caveat}

It turns out that \emph{every limit can be seen as a terminal object in some category}. Just as well, every colimit can be seen as an initial object in some category. 

\begin{deph}\label{slice}
 Let $D:\cat{J}\to\cat{C}$ be a diagram. The \emph{slice category over $D$}, denoted by $\cat{C}/D$, is the category whose:
 \begin{itemize}
  \item Objects are pairs $(X,\alpha)$ where $X$ is an object of $X$ and $\alpha$ is a cone over $D$ with tip $X$;
  \item A morphism $(X,\alpha)\to (X,\beta)$ is given by a morphism $f:X\to Y$ of $\cat{C}$ such that for every object $I\in\cat{J}$, the following triangle commutes.
  $$
  \begin{tikzcd}[column sep=small]
   X \ar{rr}{f} \ar{dr}[swap]{\alpha_X} && Y \ar{dl}{\beta_Y} \\
   & DI
  \end{tikzcd}
  $$
 \end{itemize}
\end{deph}

One can see the slice category as the category of cones over $D$ with all possible tips, and morphisms between them making the triangles of the cones commute.

\begin{ex}
 Show that the limit cone of $D$, if it exists, is the terminal object of $\cat{C}/D$.
\end{ex}

Therefore, complicated limits can be seen as simple limits in a complicated category.

\begin{ex}\label{coslice}
 Define the \emph{coslice category under $D$}, denoted by $D/\cat{C}$, in such a way that the colimit cone of $D$, if it exists, is the initial object of $D/\cat{C}$.
\end{ex}

Let's conclude this part by giving some trivial cases.

\begin{ex}
 Let $X$ be an object of $\cat{C}$, seen as a diagram (with only its identity). Show that its limit and colimit are both (isomorphic to) $X$.
\end{ex}

\begin{ex}
 Let now $f:X\to Y$ be a morphism of $\cat{C}$, seen as a diagram. Show that its limit is given by $X$ and its colimit is given by $Y$. Why does this generalize the previous exercise?
\end{ex}

\begin{ex}
 Consider the following diagrams.
 $$
 \begin{tikzcd}[row sep=tiny]
  & B \\
  A \ar{dr} \ar{ur}\\ 
  & C
 \end{tikzcd}
 \qquad \qquad
 \begin{tikzcd}[row sep=tiny]
  D \ar{dr} \\
  & F \\
  E \ar{ur}
 \end{tikzcd}
 $$
 Show that the limit of the diagram on the left is $A$, and the colimit of the diagram on the right is $F$. 
\end{ex}

\begin{ex}[important!]
 More generally, let $D:\cat{J}\to\cat{C}$ be a diagram, and suppose that $\cat{J}$ has an initial object $0$. Prove that $\lim D$ exists and is given by $D(0)$. What is the dual statement?
 Also, why does this generalize the previous two exercises?
\end{ex}

Therefore, limits and colimits can be seen as ways to ``complete the diagram'' by adding to it an initial or terminal object whenever such object does not exist already.

\section{Functors, limits and colimits}\label{sec_fun_lim}

We have seen that in general functors preserve commutative diagrams, but not the property of being mono or epi. 
In general, functors do not preserve limits and colimits either, as well as any other universal property. 

\begin{deph}
 A functor $F:\cat{C}\to\cat{C'}$ is called \emph{continuous} if it preserves all the limits which exist in $\cat{C}$. In other words, for every diagram $D$ in $\cat{C}$ such that $\lim D$ exists, then $\lim F(D)$ exists in $\cat{C'}$, and $\lim F(D) \cong F(\lim D)$ through an isomorphism compatible with the limit cones.
 
 A functor $F:\cat{C}\to\cat{C'}$ is called \emph{cocontinuous} if it preserves all the colimits which exist in $\cat{C}$. In other words, for every diagram $D$ in $\cat{C}$ such that $\colim D$ exists, then $\colim F(D)$ exists in $\cat{C'}$, and $\colim F(D) \cong F(\colim D)$ through an isomorphism compatible with the colimit cocones.
\end{deph}

Here are some examples and nonexamples of functors preserving particular limits.

\begin{eg}[topology]\label{topsetlim}
 Consider the forgetful functor $U:\cat{Top}\to\Set$. We have that (why?),
 \begin{itemize}
  \item The underlying set of the one-point space is the one-point set. Therefore $U$ preserves terminal objects (which are limits).
  \item The underlying set of the product of spaces is the cartesian product of sets. Therefore $U$ preserves products (which are limits).
  \item The underlying set of the empty space is the empty set. Therefore $U$ preserves initial objects (which are colimits).
  \item The underlying set of the disjoint union of spaces is the disjoint union of sets. Therefore $U$ preserves coproducts (which are colimits).
 \end{itemize}
\end{eg}

\begin{ex}[graph theory]
 Prove that the same is true for the forgetful functor $\cat{MGraph}\to\Set$. 
\end{ex}

One may be tempted to ask: are these functors continuous? What about cocontinuous? A partial answer will be given later on.
Not every forgetful functor behaves this nicely.

\begin{eg}[linear algebra]\label{vecsetlim}
 Consider the forgetful functor $U:\Vect\to\Set$. We know that binary products and coproducts in $\Vect$ coincide (\Cref{biproduct}), but not in $\Set$. Therefore, $U$ cannot preserve both. Just as well, initial and terminal objects in $\Vect$ coincide (\Cref{zeroobject}), but not in $\Set$. Therefore, again, $U$ cannot preserve both.
 In particular, this forgetful functors preserves products and terminal objects (which are limits), but not coproducts and initial objects (which are colimits). 
\end{eg}

\begin{ex}[group theory]
 Prove that the forgetful functor $U:\Grp\to\Set$ preserves products and terminal objects, but not coproducts and initial objects.
\end{ex}

\begin{ex}
 Recall that, by \Cref{kernelpair}, the property of being mono can be expressed in terms of a universal property, a limit. Give an example of a functor which does not preserve this limit. (Hint: in \Cref{funepimono} you can find functors which do not preserve monomorphisms.)
\end{ex}

\begin{ex}
 Can you give an example of the dual case to the previous exercise?
\end{ex}

\subsection{The power set and probability functors, and complexity}\label{complexity}

Here are two examples of functors which do not preserve products and coproducts for a deep, structural reason. 

\begin{eg}[sets and relations]\label{complexpowerset}
 Consider the power set functor $P:\Set\to\Set$ (\Cref{egpowerset}). Consider two sets $X$ and $Y$ with two or more elements each. 
 
 We can form the coproduct $X\sqcup Y$, i.e.~their disjoint union. The set $P(X\sqcup Y)$ has, as elements, subsets $S\subseteq X\sqcup Y$. Subsets of $X\sqcup Y$ may be subsets of $X$ only, of $Y$ only, or mixed, i.e.~may contain elements from both. On the other hand, the set $PX\sqcup PY$ is the disjoint union of the set of subsets of $X$ and the set of subsets of $Y$. The elements of $PX\sqcup PY$, in other words, are either subsets of $X$ or subsets of $Y$, but not mixed. For example, if $x\in X$ and $y\in Y$, the set $\{x,y\}$ is in $P(X\sqcup Y)$ but not in $PX\sqcup PY$. Therefore, $P(X\sqcup Y)\ncong PX\sqcup PY$. More specifically, we have $PX\sqcup PY \subsetneq P(X\sqcup Y)$.
 
 Consider now the product $X\times Y$, i.e.~their cartesian product. The set $P(X\times Y)$ has, as elements, subsets of the cartesian product $X\times Y$. On the other hand, the set $PX\times PY$ has, as elements, pairs consisting of a subset of $X$ and a subset of $Y$. Every subset of $X\times Y$ can be projected onto a subset of $X$ and onto a subset of $Y$, giving an element of $PX\times PY$. Therefore, we have a canonical map $P(X\times Y)\to PX\times PY$, which is surjective. However, this map is in general far from being injective: many subsets of $X\times Y$ have the same projections. For example, for $X=Y=\R$, the subsets of $\R^2$ in the following pictures all have the same projections onto the axes.
 
\begin{center}
\begin{tikzpicture}
      \draw[->] (-0.5,0) -- (4.5,0) ;
      \draw[->] (0,-0.5) -- (0,3.5) ;
      \draw[darkblue, very thick, fill=darkblue!10] (2.25,1.75) circle [x radius=1.25, y radius=0.75] ;
      \draw[dotted] (2.25,1) -- (0,1) ;
      \draw[dotted] (1,1.75) -- (1,0) ;
      \draw[dotted] (2.25,2.5) -- (0,2.5) ;
      \draw[dotted] (3.5,1.75) -- (3.5,0) ;
      \draw[darkblue, very thick] (0,1) -- (0,2.5) ;
      \draw[darkblue, very thick] (1,0) -- (3.5,0) ;
\end{tikzpicture}
$\;$
\begin{tikzpicture}
      \draw[->] (-0.5,0) -- (4.5,0) ;
      \draw[->] (0,-0.5) -- (0,3.5) ;
      \draw[darkblue, very thick, fill=darkblue!10] (1,1) rectangle (3.5,2.5) ;
      \draw[dotted] (1,1) -- (0,1) ;
      \draw[dotted] (1,1) -- (1,0) ;
      \draw[dotted] (3.5,2.5) -- (0,2.5) ;
      \draw[dotted] (3.5,2.5) -- (3.5,0) ;
      \draw[darkblue, very thick] (0,1) -- (0,2.5) ;
      \draw[darkblue, very thick] (1,0) -- (3.5,0) ;
\end{tikzpicture}
$\;$
\begin{tikzpicture}
      \draw[->] (-0.5,0) -- (4.5,0) ;
      \draw[->] (0,-0.5) -- (0,3.5) ;
      \draw[darkblue, very thick] (1,1) -- (3.5,2.5) ;
      \draw[dotted] (1,1) -- (0,1) ;
      \draw[dotted] (1,1) -- (1,0) ;
      \draw[dotted] (3.5,2.5) -- (0,2.5) ;
      \draw[dotted] (3.5,2.5) -- (3.5,0) ;
      \draw[darkblue, very thick] (0,1) -- (0,2.5) ;
      \draw[darkblue, very thick] (1,0) -- (3.5,0) ;
\end{tikzpicture}
\end{center}
\end{eg}

\begin{ex}[sets and relations]\label{forgetinteraction}
 Show that the canonical inclusion $PX\sqcup PY\to P(X\sqcup Y)$ can be given in terms of the universal property of the coproduct. Conclude that any category $\cat{C}$ with coproducts and any functor $\cat{C}\to\cat{C}$ admit canonically an analogous morphism. 
 
 Show that the canonical projection $P(X\times Y)\to PX\times PY$ satisfies a dual property. 
\end{ex}

\begin{eg}[probability]
 Consider the probability functor $\mathcal{P}:\Set\to\Set$ (\Cref{egprob}), or any of its more refined variants in the categories $\cat{Meas}, \cat{CHaus}$, et cetera. Consider again two sets $X$ and $Y$ (or spaces) with two or more elements each. 
 The set $\mathcal{P}(X\times Y)$ is the set of \emph{joint} probability distributions on $X$ and $Y$. On the other hand, the set $\mathcal{P}(X)\times \mathcal{P}(Y)$ is the set of \emph{pairs of marginals}, probability distributions separately on $X$ and on $Y$. Similarly to the case of the power set, many possible joint distributions correspond to the same marginals, and the different possibilities correspond to the different types of \emph{statistical interactions}, such as \emph{correlation}. 
 For example, for $X=Y$ and given a probability distribution $p$ on $X$, there are at least two probability distributions on $X\times X$ such that their marginals are both equal to $p$: the independent case $p\otimes p$, also called \emph{product probability}, and the perfectly correlated case, supported on the diagonal in $X\times X$. 
 
 By \Cref{forgetinteraction}, the map $\mathcal{P}(X\times Y)\to \mathcal{P}(X)\times \mathcal{P}(Y)$ is canonically given by the universal property of the product. One can interpret this map as the one ``forgetting the statistical interaction'': it maps a joint distribution simply to the pair of its marginals.
\end{eg}

A possible interpretation for this behavior, for both of these functors, is that \emph{the features extracted by these functors do not respect the composition of objects, either via products or coproducts}. In other words, we are free to compose our objects to obtain more complex ones, however, observing the more complex objects is \emph{different} from observing the parts separately. Observing the parts separately forgets ``something'', either elements (mixed subsets) or information (statistical interaction). We can think of functors such as $P$ and $\mathcal{P}$ as functors \emph{detecting} or \emph{exhibiting complexity}.\footnote{See \cite{sevensketches} for more on this. }

\begin{ex}[probability]
 Does $\mathcal{P}:\Set\to\Set$ preserve coproducts? (Hint: no, and this is again because some ``mixed'' elements are missing, similarly to the power set case.)
\end{ex}

\begin{ex}[sets and relations, probability]
 Does the power set functor preserve initial and terminal objects? What about the probability functor?
\end{ex}

\subsection{Continuous functors and equivalence}

The following exercises show how limits and colimits, and their preservation, behave under equivalences.

\begin{ex}[important!]\label{contiso}
 Let $F:\cat{C}\to\cat{D}$ be a functor preserving the limit of a diagram $D:\cat{J}\to\cat{C}$. Let $G$ be a functor naturally isomorphic to $F$. Show that $G$ also preserves the limit of $D$. 
 
 Conclude that if $F$ is continuous, then $G$ is continuous too. What is the dual statement?
\end{ex}

\begin{ex}
 Show that continuous and cocontinuous functors are stable under composition. 
\end{ex}

\begin{ex}[important!]
 Let $F:\cat{C}\to\cat{D}$ be a functor inducing an equivalence of categories. Show that $F$ is continuous (and hence cocontinuous\dots why?).
 
 Conclude that if a category $\cat{C}$ is complete (or cocomplete), then every category equivalent to $\cat{C}$ is complete (or cocomplete) too. 
\end{ex}

\subsection{The case of representable functors and presheaves}

Here is one of the most important properties of representable functors.

\begin{thm}\label{repfuncontinuous}
 Representable functors are continuous. 
\end{thm}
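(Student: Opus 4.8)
The plan is to show that a representable functor $\Hom_\cat{C}(S,-):\cat{C}\to\Set$ sends any limit cone in $\cat{C}$ to a limit cone in $\Set$, and then invoke \Cref{contiso} to conclude that anything naturally isomorphic to it is continuous too. So fix a small category $\cat{J}$, a diagram $D:\cat{J}\to\cat{C}$ admitting a limit $\lim D$ with universal cone $(\phi_J:\lim D\to DJ)_{J\in\cat{J}}$, and write $F=\Hom_\cat{C}(S,-)$. Applying $F$ to the limit cone yields a cone in $\Set$ over $F\circ D$ with tip $F(\lim D)=\Hom_\cat{C}(S,\lim D)$ and legs $(\phi_J)_*$ given by postcomposition with $\phi_J$. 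The goal is to prove this is a limit cone, i.e.\ that $\Hom_\cat{C}(S,\lim D)$ together with these legs satisfies the universal property of $\lim(F\circ D)$ in $\Set$.

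First I would describe concretely what a cone in $\Set$ over $F\circ D$ with some tip $Z$ is: a family of functions $\alpha_J:Z\to\Hom_\cat{C}(S,DJ)$ such that for each $m:J\to J'$ in $\cat{J}$ we have $(Dm)_*\circ\alpha_J=\alpha_{J'}$. Unravelling, each $z\in Z$ produces a family $\bigl(\alpha_J(z):S\to DJ\bigr)_{J}$ with $Dm\circ\alpha_J(z)=\alpha_{J'}(z)$ --- that is, a cone over $D$ in $\cat{C}$ with tip $S$. Now the universal property of $\lim D$ in $\cat{C}$ says precisely that cones over $D$ with tip $S$ are in natural bijection with morphisms $S\to\lim D$, i.e.\ with elements of $\Hom_\cat{C}(S,\lim D)=F(\lim D)$; under this bijection the cone $\bigl(\alpha_J(z)\bigr)_J$ corresponds to the unique $u_z:S\to\lim D$ with $\phi_J\circ u_z=\alpha_J(z)$ for all $J$. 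So I would define the comparison map $Z\to F(\lim D)$ by $z\mapsto u_z$, check that it is a cone morphism (which is exactly $\phi_J\circ u_z=\alpha_J(z)$, unwound back through $(\phi_J)_*$), and check it is the unique such map (uniqueness of $u_z$ in $\cat{C}$ gives uniqueness here). That establishes that $F(\lim D)$ with legs $(\phi_J)_*$ is a limit of $F\circ D$, so $F(\lim D)\cong\lim(F\circ D)$.

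Honestly there is no serious obstacle here: the entire content is the observation that a cone in $\Set$ over $\Hom_\cat{C}(S,D-)$ is ``the same data'' as a cone in $\cat{C}$ over $D$ with tip $S$, so the universal property of $\lim D$ transfers verbatim. The one point to be a little careful about is bookkeeping with variance --- making sure the cone condition $(Dm)_*\circ\alpha_J=\alpha_{J'}$ is stated in the right direction (covariant, since $F$ and $D$ are both covariant) and that the correspondence with cones in $\cat{C}$ respects the morphisms of $\cat{J}$. After the special case is done, I would close by saying: if $G$ is any functor naturally isomorphic to some $\Hom_\cat{C}(S,-)$, then by \Cref{contiso} $G$ preserves every limit that exists in $\cat{C}$, hence $G$ is continuous; and a representable functor is by definition of this form, which completes the proof.
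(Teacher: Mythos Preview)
Your proof is correct, and it takes a somewhat different route from the paper's. The paper first establishes an explicit formula for limits in $\Set$ (\Cref{explicitlimit}: a limit is a certain subset $S$ of the product $\prod_I \Hom_\cat{C}(R,DI)$), uses that to compute $\lim_I \Hom_\cat{C}(R,DI)$ concretely, and then recognizes that the resulting set is exactly $\Cone(R,D)$, which by definition of the limit is $\Hom_\cat{C}(R,\lim D)$. You instead verify the universal property of the limit in $\Set$ directly, by observing that a cone over $\Hom_\cat{C}(S,D-)$ with tip $Z$ is element-by-element a cone over $D$ with tip $S$, and then invoking the universal property of $\lim D$ in $\cat{C}$.

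Both arguments pivot on the same identification $\Cone(S,D)\cong\Hom_\cat{C}(S,\lim D)$; the difference is packaging. Your approach is more self-contained --- it does not rely on the prior result that $\Set$ is complete or on the explicit product-and-equalizer formula --- and it also makes explicit that the \emph{image of the limit cone} is itself the limit cone, not merely that the objects are isomorphic. The paper's approach, by contrast, gets to reuse the machinery built in the preceding section, which is pedagogically natural there. Your closing appeal to \Cref{contiso} to pass from $\Hom_\cat{C}(S,-)$ to an arbitrary representable functor is a nice touch that the paper leaves implicit.
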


Let's see what this theorem means in detail. Let $\cat{C}$ be a category, let $J$ be a small category, let $D:\cat{J}\to\cat{C}$ be a diagram, and suppose that $\lim D$ exists in $\cat{C}$. Let now $R$ be an object of $\cat{C}$. Then we have that the limit in $\Set$ of the diagram
$$
\Hom_\cat{C}(R,D-):\cat{J}\to\Set
$$
exists, and is equal to the set $\Hom_\cat{C}(R,\lim D)$. 

The full proof of this theorem will be given in \Cref{proofrepfulcomplete}. 
For now, let's try to understand the result and its significance.

First of all, replacing $\cat{C}$ by $\cat{C}^\op$, we get the dual statement:
\begin{cor}\label{reprpresheaveslim}
 Let $P:\cat{C}^\op\to\Set$ be a representable presheaf. Then $P$ turns colimits into limits. That is, if $D:\cat{J}\to \cat{C}$ is a diagram admitting a colimit, then the limit of $P\circ D$ exists in $\Set$, and 
 $\lim P\circ D \cong P(\colim D)$.
\end{cor}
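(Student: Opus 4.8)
The plan is to deduce \Cref{reprpresheaveslim} from \Cref{repfuncontinuous} by the duality principle of \Cref{iso}: once the variances are set up correctly, no new argument is required. First I would observe that a presheaf $P\colon\cat{C}^\op\to\Set$ is literally the same data as a functor \emph{out of} the category $\cat{C}^\op$, and that if $P$ is representable, say $P\cong\Hom_\cat{C}(-,S)$ for some object $S$ of $\cat{C}$, then under the identification $\Hom_\cat{C}(-,S)=\Hom_{\cat{C}^\op}(S,-)$ this functor $P\colon\cat{C}^\op\to\Set$ is a \emph{representable functor} on $\cat{C}^\op$, with representing object $S$. Hence \Cref{repfuncontinuous} applies to $P$ verbatim: $P$ preserves every limit that exists in $\cat{C}^\op$.

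Next I would translate colimits in $\cat{C}$ into limits in $\cat{C}^\op$. Given a diagram $D\colon\cat{J}\to\cat{C}$, with $\cat{J}$ small, reversing all arrows produces a diagram $D^\op\colon\cat{J}^\op\to\cat{C}^\op$ (and $\cat{J}^\op$ is again small). By inspection a cocone under $D$ with bottom $X$ in $\cat{C}$ is exactly a cone over $D^\op$ with tip $X$ in $\cat{C}^\op$, and a universal cocone corresponds to a universal cone; therefore $\colim D$ exists in $\cat{C}$ if and only if $\lim D^\op$ exists in $\cat{C}^\op$, in which case they are the same object and the universal cocone is the universal cone. Now combine the two steps: since $P$ is continuous as a functor on $\cat{C}^\op$, it preserves this limit, so $\lim(P\circ D^\op)$ exists in $\Set$ and is isomorphic to $P(\lim D^\op)=P(\colim D)$. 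Finally I would point out that the diagram $P\circ D^\op\colon\cat{J}^\op\to\Set$ sends an object $j$ to $\Hom_\cat{C}(Dj,S)$ and a morphism of $\cat{J}^\op$ (i.e.\ a morphism of $\cat{J}$ with reversed direction) to the corresponding precomposition map; this is precisely the composite that the statement abbreviates as ``$P\circ D$''. So $\lim P\circ D\cong P(\colim D)$, as claimed.

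The only genuine obstacle is bookkeeping: one must be careful that ``presheaf on $\cat{C}$'' and ``functor on $\cat{C}^\op$'' are used interchangeably, that $\Hom_\cat{C}(-,S)$ is genuinely corepresentable on $\cat{C}^\op$, and that the \emph{index} category is also opposed when passing from a colimit of $D$ to a limit of $D^\op$. Once these points are pinned down, \Cref{reprpresheaveslim} is immediate.

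Alternatively, one can give a self-contained proof mirroring the proof of \Cref{repfuncontinuous}: by the universal property of the colimit, $P(\colim D)=\Hom_\cat{C}(\colim D,S)\cong\Cone(D,S)$, the set of cocones under $D$ with bottom $S$; and a cone with one-point tip over the diagram $j\mapsto\Hom_\cat{C}(Dj,S)$ is exactly a compatible family of maps $Dj\to S$, i.e.\ an element of $\Cone(D,S)$, while for a general tip one gets the full limit $\lim(P\circ D^\op)$. Checking that these two bijections are compatible and natural identifies $\lim(P\circ D^\op)$ with $P(\colim D)$. I would present the short duality argument as the proof and mention this direct route only as a remark.
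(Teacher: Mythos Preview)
Your proposal is correct and follows exactly the approach the paper intends: the corollary is stated immediately after \Cref{repfuncontinuous} as ``the dual statement, replacing $\cat{C}$ by $\cat{C}^\op$,'' and the verification of this duality is left as an exercise, whose details you have carefully supplied. Your bookkeeping about opposing the index category and identifying $\Hom_\cat{C}(-,S)$ with $\Hom_{\cat{C}^\op}(S,-)$ is precisely what that exercise asks for.
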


\begin{ex}[important!]
 Show that this corollary is indeed given by \Cref{repfuncontinuous} if one replaces $\cat{C}$ by $\cat{C}^\op$.
\end{ex}

For the case of products, the proof of the theorem is relatively straightforward. Let's consider binary products for simplicity. Take objects $X$ and $Y$ of $\cat{C}$ and suppose that their product $X\times Y$ exists. The universal property of the product says given any (other) object $R$ of $\cat{C}$ and maps $R\to X$ and $R\to Y$, there exists a unique map $R\to X\times Y$ making the following diagram commute.
$$
\begin{tikzcd}
 & R\uni{d} \ar{dl}\ar{dr} \\
 X & X\times Y \ar{l}{p_1} \ar{r}[swap]{p_2} & Y.
\end{tikzcd}
$$
In particular, there is a bijection between pairs of maps $R\to X$ and $R\to Y$, and maps $R\to X\times Y$. Now, the set of pairs of maps $R\to X$ and $R\to Y$ is exactly the cartesian product of the sets $\Hom_C(R,X)$ and $\Hom_C(R,Y)$, so there is a bijection
$$
\begin{tikzcd}
\Hom_C(R,X)\times \Hom_C(R,Y) \ar{r}{\cong} & \Hom_C(R,X\times Y) .
\end{tikzcd}
$$
This is an isomorphism in the category $\Set$, therefore, the representable functor $\Hom_C(R,-):\cat{C}\to\cat{Set}$ preserves binary products.

\begin{ex}
 Can you work out the case for presheaves and coproducts (independently)?
\end{ex}

Here is a possible intuitive interpretation of this statement. We have seen that representable functors can be thought of as experiments consisting of probing the objects of $\cat{C}$ with a given object $R$, by mapping $R$ into the objects of $\cat{C}$ in all possible ways. Now the statement of \Cref{repfuncontinuous}, for products, says that \emph{probing a composite system $X\times Y$ is the same as probing $X$ and $Y$ separately}. 

Dually, the statement with coproducts and presheaves can be interpreted as follows. We can interpret representable presheaves as experiments consisting of observing the objects of $\cat{C}$ by mapping them to a given object $R$. Now we have that \emph{observing the object $X\sqcup Y$ is the same as observing $X$ and $Y$ separately}.

On the other hand, we cannot in general conclude that representable presheaves behave well with products, or that representable functors behave well with coproducts. Probing a the coproduct of $X$ and $Y$ via objects $R$ is not the same as probing them separately - for example, a given set $R$ can be embedded into $X\sqcup Y$ in a way which hits partly both $X$ and $Y$, and this cannot be obtained by mapping $R$ separately into $X$ and $Y$. Just as well, observing a the product $X\times Y$ is not the same as observing $X$ and $Y$ separately: a function on $X\times Y$ in general does not depend only on the values of $X$ and $Y$ separately. For example, at the end of \Cref{complexpowerset}, observing only the projections we can't distinguish all the possible original subsets of $\R^2$.

\begin{ex}
 Can you give other concrete examples of the latter statements?
\end{ex}

By the way, by \Cref{repfuncontinuous}, the forgetful functors given in \Cref{topsetlim} and \Cref{vecsetlim} are indeed continuous, since we have seen that these functors are representable.

\section{Limits and colimits of sets}

\subsection{Completeness of the category of sets}

Consider a diagram of sets $D:\cat{J}\to\Set$. As $\cat{J}$ is assumed to be small, the objects of $\cat{J}$ form a set (or a small set). Therefore we can form the cartesian product of all the sets appearing in the diagram $D$, which is possibly infinite, but still well-defined:
$$
P \;\coloneqq\; \prod_{I\in\cat{J}_0} DI .
$$
For example, for the diagram
$$
 \begin{tikzcd}[row sep=small]
  & B \ar{dr} \\
  A \ar{ur} \ar{dr} && D \\
  & C \ar{ur}
 \end{tikzcd}
$$
the object $P$ is given by $A\times B\times C\times D$.
This cartesian product has projection maps $\pi_I: P \to DI$ for each object $I$ of $\cat{J}$. For the diagram above, this amounts to the projection maps 
$$
\begin{tikzcd}[row sep=small,column sep=small]
  & A\times B\times C\times D \ar{dddl}[swap]{p_1} \ar{dddr}{p_4} \ar[bend right=15,pos=0.65]{dd}[swap]{p_2} \ar[bend left]{dddd}[near end]{p_3} \\ \\
  & B \ar{dr} \\
  A \ar{ur} \ar{dr} && D \\
  & C \ar{ur}
 \end{tikzcd}
$$
In general, there is no reason to assume that the triangles of the diagram above commute. Just as well, in the general case, it is in general not true that given a morphism $m:I\to I'$ in $\cat{J}$, this diagram commutes,
$$
\begin{tikzcd}[column sep=small]
 &  P \ar{dl}[swap]{p_I} \ar{dr}{p_{I'}} \\
 DI \ar{rr}{Dm} && DI' 
\end{tikzcd}
$$
i.e.~it may not be true that for all $y\in P$, 
\begin{equation}\label{trianglecomm}
 Dm(p_I(y)) \;= \;p_{I'}(y) .
\end{equation}
However there is always a \emph{subset} $S_m$ of $P$ whose elements satisfy \Cref{trianglecomm} -- if no element of $P$ satisfies \Cref{trianglecomm}, then $S_m$ is the empty set, which is still a well-defined subset of $P$. 
Now, as the category $\cat{J}$ is small, also the morphisms of $\cat{J}$ form a set (or a small set). Therefore we can form the following intersection, indexed by all the morphisms appearing in the diagram $D$, which is possibly empty, but is still a well-defined subset of $P$:
$$
S \;\coloneqq\; \bigintersection_{m\in\cat{J}_1} S_m .
$$
More explicitly, $S$ is the subset of $P$ defined by the condition that for each element $y$ of $S$ and each morphism $m$ of $\cat{J}$, \Cref{trianglecomm} has to hold (and it is the largest such subset).

\begin{lemma}\label{explicitlimit}
 The object $S$ defined above, together with the maps $S\to DI$ given by
 $$
 \begin{tikzcd}
  S \mono{r}{i} & P \ar{r}{p_I} & DI 
 \end{tikzcd}
 $$
 where $i:S\to P$ is the inclusion, is a limit cone over the diagram $D$.
\end{lemma}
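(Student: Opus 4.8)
The plan is to verify directly the two defining features of a limit cone: that $(S,\{p_I\circ i\}_{I})$ is a cone over $D$, and that it is the universal such cone, i.e.\ that it represents the presheaf $\Cone(-,D)$.

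First I would check the cone condition. For a morphism $m:I\to I'$ of $\cat{J}$, the component of our candidate cone at $I$ is $p_I\circ i:S\to DI$ and at $I'$ is $p_{I'}\circ i:S\to DI'$, so the triangle to be checked is $Dm\circ p_I\circ i = p_{I'}\circ i$. Evaluating at an arbitrary $y\in S$, this is exactly the statement that $y$ satisfies \eqref{trianglecomm} for $m$; but $S$ is, by construction, contained in each $S_m$, so this holds for every $m$. Hence $S$ with these projections is indeed a cone over $D$.

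Next comes the universal property. Let $X$ be a set carrying a cone $\alpha$ over $D$, with components $\alpha_I:X\to DI$. By the universal property of the (possibly infinite, but legitimate since $\cat{J}$ is small) product $P=\prod_{I}DI$, there is a unique function $\tilde u:X\to P$ with $p_I\circ\tilde u=\alpha_I$ for all $I$. The key step is that $\tilde u$ lands inside $S$: given $x\in X$ and a morphism $m:I\to I'$ of $\cat{J}$, the cone condition on $\alpha$ gives $Dm\circ\alpha_I=\alpha_{I'}$, whence $Dm(p_I(\tilde u(x)))=Dm(\alpha_I(x))=\alpha_{I'}(x)=p_{I'}(\tilde u(x))$, so $\tilde u(x)$ satisfies \eqref{trianglecomm} for $m$; since this holds for every morphism $m$ of $\cat{J}$, we get $\tilde u(x)\in\bigintersection_{m}S_m=S$. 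Thus $\tilde u$ corestricts to a function $u:X\to S$ with $i\circ u=\tilde u$, and then $p_I\circ i\circ u=p_I\circ\tilde u=\alpha_I$, so $u$ is a map of cones. For uniqueness, if $u':X\to S$ also satisfies $p_I\circ i\circ u'=\alpha_I$ for all $I$, then $i\circ u'$ satisfies the property defining $\tilde u$, so $i\circ u'=\tilde u=i\circ u$ by uniqueness in $P$; since $i$ is injective, hence a monomorphism in $\Set$, we conclude $u'=u$.

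I do not expect a serious obstacle: the argument is essentially bookkeeping, reducing the universal property of $S$ to that of the product $P$ together with the observation that the cone conditions are precisely the membership conditions cutting $S$ out of $P$. The only points requiring care are the ones already flagged in the discussion before the lemma — that smallness of $\cat{J}$ makes the product $P$ and the intersection defining $S$ well-defined (small) sets, and that the degenerate cases (empty product, empty $S_m$, empty intersection) are harmless, since the empty set is still a perfectly good object of $\Set$.
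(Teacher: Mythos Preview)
Your proof is correct and follows essentially the same approach as the paper's: verify the cone condition from the definition of $S$, then for an arbitrary cone use the universal property of the product $P$ to get a unique map into $P$, and check via the cone equations that this map factors through $S$. Your treatment of uniqueness is in fact slightly more explicit than the paper's, which leaves the ``$i$ is mono'' step implicit.
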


This gives an explicit way to write the limit of any diagram of sets. In particular, such limit always exists.
Therefore,
\begin{thm}
 The category $\Set$ is complete. 
\end{thm}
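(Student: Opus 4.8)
The plan is to obtain the theorem immediately from \Cref{explicitlimit}: since a category is complete exactly when every diagram indexed by a small category has a limit, and \Cref{explicitlimit} exhibits such a limit $S$ (together with its cone) for an arbitrary small $\cat{J}$ and arbitrary $D\colon\cat{J}\to\Set$, there is nothing left to do once that lemma is established. So the substance is the proof of \Cref{explicitlimit}, and that is what I would actually carry out. A preliminary point worth stressing is that the constructions involved are legitimate \emph{because} $\cat{J}$ is small: the collections $\cat{J}_0$ and $\cat{J}_1$ are sets, so the product $P=\prod_{I\in\cat{J}_0}DI$ and the intersection $S=\bigintersection_{m\in\cat{J}_1}S_m$ are honest sets.

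First I would check that $S$, with the maps $\pi_I\circ i\colon S\to DI$, is a cone over $D$. For a morphism $m\colon I\to I'$ of $\cat{J}$ one must show $Dm\circ\pi_I\circ i=\pi_{I'}\circ i$, i.e.\ that \eqref{trianglecomm} holds on every element of $S$; but this is immediate, since $S_m$ is by definition the subset of $P$ cut out by exactly that equation for $m$, and $S\subseteq S_m$. Next I would verify the universal property. Let $\alpha$ be any cone over $D$ with tip $X$, with components $\alpha_I\colon X\to DI$. Regarding the family $(\alpha_I)$ as a cone over the discrete diagram $(DI)_{I\in\cat{J}_0}$, the universal property of the cartesian product gives a unique $u\colon X\to P$ with $\pi_I\circ u=\alpha_I$ for all $I$. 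This $u$ factors through the inclusion $i\colon S\hookrightarrow P$: for $x\in X$ and $m\colon I\to I'$,
$$
Dm(\pi_I(u(x)))\;=\;Dm(\alpha_I(x))\;=\;\alpha_{I'}(x)\;=\;\pi_{I'}(u(x)),
$$
the middle equality holding because $\alpha$ is a cone; hence $u(x)\in S_m$ for every $m$, so $u(x)\in S$. Thus $u$ corestricts to $\bar u\colon X\to S$ with $i\circ\bar u=u$, and then $(\pi_I\circ i)\circ\bar u=\pi_I\circ u=\alpha_I$, so $\bar u$ is a morphism of cones.

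For uniqueness, suppose $u'\colon X\to S$ is another morphism of cones, so $(\pi_I\circ i)\circ u'=\alpha_I$ for all $I$. Then $i\circ u'$ and $i\circ u=u$ have the same composite with every projection $\pi_I$, so by the uniqueness clause in the universal property of $P$ we get $i\circ u'=i\circ u$, and since $i$ is a monomorphism (subset inclusions are injective), $u'=\bar u$. This finishes \Cref{explicitlimit}, and the theorem follows. I do not expect a genuine obstacle here — the argument is essentially bookkeeping built on the universal property of products in $\Set$ and the monicity of inclusions — but the one point that genuinely matters, and that I would be careful to highlight, is the use of smallness of $\cat{J}$: it is precisely this hypothesis that makes the (possibly infinite) product and the (possibly infinite) intersection defining $S$ meaningful, and without it neither the construction nor the completeness of $\Set$ would go through.
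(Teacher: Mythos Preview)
Your proposal is correct and follows essentially the same route as the paper: reduce the theorem to \Cref{explicitlimit}, then prove the lemma by checking that $S$ is a cone, invoking the universal property of the product $P$ to get $u\colon X\to P$, and verifying that $u$ lands in $S$. You are slightly more explicit than the paper in handling uniqueness (you invoke monicity of the inclusion $i$, whereas the paper leaves this implicit in the uniqueness clause for $u\colon X\to P$), and your emphasis on the smallness of $\cat{J}$ matches the paper's own remark following the theorem.
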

Keep in mind that, in the construction of $S$, we have crucially used the fact that $\cat{J}$ is small.

\begin{proof}[Proof of \Cref{explicitlimit}]
 First of all, by construction, the maps $p_I\circ i:S\to DI$ form a cone over $D$, since by construction, for each morphism $m:I\to I'$ of $\cat{J}$ the following diagram commutes,
 $$
 \begin{tikzcd}[column sep=small]
 &  S \ar{dl}[swap]{p_I\circ i} \ar{dr}{p_{I'}\circ i} \\
 DI \ar{rr}{Dm} && DI' .
 \end{tikzcd}
 $$ 
 Take now any (other) cone $\alpha$ over $D$, say of tip $X$. So for every object $I$ of $\cat{J}$ we have maps $\alpha_X:X\to DI$, and for each morphism $m:I\to I'$ of $\cat{J}$ this diagram commutes. 
 \begin{equation}\label{dmcomm}
 \begin{tikzcd}[column sep=small]
 &  X \ar{dl}[swap]{\alpha_I} \ar{dr}{\alpha_{I'}} \\
 DI \ar{rr}{Dm} && DI' .
 \end{tikzcd}
 \end{equation}
 Now by the universal property of the product, a tuple of maps $\alpha_X:X\to DI$ for each object $I$ of $\cat{J}$ gives a unique map $u:X\to P$, such that for each object $I$ of $\cat{J}$, this triangle commutes,
 \begin{equation}\label{picomm}
 \begin{tikzcd}
  X \uni{d}[swap]{u} \ar{dr}{\alpha_I} \\
  P \ar{r}[swap]{p_I} & DI
 \end{tikzcd}
 \end{equation}
 Moreover, the map $u$ actually factors through $S$, i.e.~for every $x\in X$, we have that $u(x)\in S$, since for all $m:I\to I'$ of $\cat{J}$, diagrams \eqref{dmcomm} and \eqref{picomm} say that 
 $$
 Dm(p_I(u(x))) \;=\; Dm(\alpha_I(x)) \;=\; \alpha_{I'}(x)\;=\; p_{I'}(u(x)) ,
 $$
 which means that $u(x)$ satisfies \Cref{trianglecomm}. 
 In conclusion, we have proven that for each limit cone of tip $X$ over $D$ there is a unique map $X\to S$ such that this diagram commutes,
 $$
 \begin{tikzcd}
  X \uni{d} \ar{dr}{\alpha_I} \\
  S \ar{r}[swap]{p_I\circ i} & DI
 \end{tikzcd}
 $$
 i.e.~that $S$ is the limit of $D$. 
\end{proof}

\begin{ex}[sets and relations]\label{SPeq}
 Write the subset $S\subseteq P$ as an equalizer of a pair of parallel maps $P\to P$. 
\end{ex}

\begin{ex}[sets and relations]
 If you have solved \Cref{SPeq}, rewrite the proof of \Cref{explicitlimit} without referring to the elements of the sets involved, use instead the universal properties of the products and equalizers. 
 In other words, you will show that every limit can be written as an equalizer of products. Conclude that every category where all products and equalizers exist is necessarily complete. 
\end{ex}

\begin{ex}[sets and relations; difficult!]\label{SPcoeq}
 In a somewhat dual way to \Cref{explicitlimit}, give an explicit formula of the colimit of a diagram of sets, as a quotient of the disjoint union $U$ of all the sets appearing in the diagram.
\end{ex}

\begin{ex}[sets and relations]
 If you have solved \Cref{SPcoeq}, express the quotient of the disjoint union $U$ as the coequalizer of a pair of parallel maps $U\to U$.
\end{ex}

\begin{ex}
 If you have solved the previous two exercises, repeat \Cref{SPcoeq} without referring to the elements of the sets involved, use instead the universal properties of the coproducts and coequalizers. 
 In other words, you will show that every colimit can be written as a coequalizer of coproducts. Conclude that every category where all coproducts and coequalizers exist is necessarily cocomplete. 
\end{ex}

\subsection{General proof of \Cref{repfuncontinuous}}\label{proofrepfulcomplete}

The statement of \Cref{repfuncontinuous} says the following. Suppose that the diagram $D:\cat{J}\to\cat{C}$  has a limit. Let $R$ be an object of $\cat{C}$. Then the diagram $\Hom_C(R,D-):\cat{J}\to\Set$ given by
$$
\begin{tikzcd}[column sep=huge]
 \cat{J} \ar{r}{D} & \cat{C} \ar{r}{\Hom_C(R,-)} & \Set
\end{tikzcd}
$$
has a limit, and 
$$
\lim_{I\in \cat{J}}\, \Hom_C(R,DI) \;\cong\; \Hom_C\big( R,\lim_{I\in \cat{J}} DI \big) .
$$

\begin{proof}[Proof of \Cref{repfuncontinuous}]
First of all, note that the action of the functor $\Hom_C(R,D-)$ on morphisms of $\cat{J}$ maps $m:I\to I'$ to the function 
$$
\begin{tikzcd}[row sep=0, column sep=large]
 \Hom_C(R,DI) \ar{r}{(Dm)\circ -} & \Hom_C(R,DI') \\
 f \ar[mapsto]{r} & (Dm)\circ f .
\end{tikzcd}
$$

Now, let's turn to prove the theorem. By \Cref{explicitlimit} we know that the limit of $\Hom_C(R,D-)$ exists, since every diagram of sets has a limit. Moreover, \Cref{explicitlimit} gives us a way to write down the limit explicitly. Namely, if we form the product
$$
P\;\coloneqq\; \prod_{I\in\cat{J}_0}  \Hom_C(R,DI) ,
$$
then the limit of $\Hom_C(R,D-):\cat{J}\to\Set$ is given by the subset $S\subseteq P$ of all $y\in P$ which satisfy $(Dm)\circ p_I(y)=p_{I'}(y)$ for all $m:I\to I'$ of $\cat{J}$, so that the following diagram commutes,
\begin{equation}\label{condformorph}
\begin{tikzcd}[column sep=small]
 & S \ar{dl}[swap]{p_I\circ i} \ar{dr}{p_{I'}\circ i} \\
 \Hom_C(R,DI) \ar{rr}{(Dm)\circ -} && \Hom_C(R,DI')
\end{tikzcd}
\end{equation}
where again $i:S\to P$ is the inclusion.
But now, $P$ is a product of sets in the form $\Hom_C(R,DI)$. The elements of $\Hom_C(R,DI)$ are exactly the functions $R\to DI$, so that the elements of $P$ are exactly tuples of arrows $R\to DI$ indexed by $I$ (varying over the objects of $\cat{J}$). In symbols, a generic element of $P$ is in the form 
$$
(f_I:R\to DI)_{I\in\cat{J}_0} ,
$$
which is a tuple of arrows in $\Set$ from $R$ to all the objects in the diagram $D$:
$$
\begin{tikzcd}
 & R \ar{dr} \ar{d}{f_{I'}} \ar{dl}[swap]{f_{I}} \\
 DI & DI' & ...
\end{tikzcd}
$$
Does this give a cone over $D$? Let's see. Condition~\eqref{condformorph} says that a tuple $(f_I:R\to DI)_{I\in\cat{J}_0}$ is in $S$ if and only if for every $m:I\to I'$ of $\cat{J}$,
$$
(Dm)\circ f_I \;=\; f_J ,
$$
i.e.~if for every $m:I\to I'$ of $\cat{J}$, this diagram commutes:
$$
\begin{tikzcd}[column sep=small]
 & R \ar{dl}[swap]{f_I} \ar{dr}{f_{I'}} \\
 DI \ar{rr}{Dm} && DI'
\end{tikzcd}
$$
which means indeed that $(f_I:R\to DI)_{I\in\cat{J}_0}$ is a cone over $D$ (and all cones over $D$ are in this form). So the elements of $S$ are precisely the cones over $D$ of tip $R$, that is,
$$
\lim_{I\in \cat{J}}\, \Hom_C(R,DI) \;\cong\; S \;\cong\; \Cone(R,D) .
$$
But now, by definition, the limit of $D$ is the object representing the functor $\Cone(-,D):\cat{C}\to\Set$, so that
$$
\Cone(R,D) \; \cong \; \Hom_C \big( R, \lim D \big) ,
$$
that is, 
$$
\lim_{I\in \cat{J}}\, \Hom_C(R,DI) \;\cong\; \Hom_C\big( R,\lim_{I\in \cat{J}} DI \big) . 
$$ 
\end{proof}

\begin{ex}\label{natisolimit}
 Is the isomorphism $\lim_{I\in \cat{J}} \Hom_C(R,DI) \to \Hom_C\big( R,\lim_{I\in \cat{J}} DI \big)$ natural? (By the way, in which argument?)
\end{ex}

\newpage
\chapter{Adjunctions}\label{adjunctions}

\section{General definitions}

\begin{deph}
 Let $\cat{C}$ and $\cat{D}$ be categories, and consider functors $F:\cat{C}\to\cat{D}$ and $G:\cat{D}\to\cat{C}$. 
 An \emph{adjunction} between $F$ and $G$ is a bijection
 $$
 \Hom_\cat{D}(FC, D) \; \xrightarrow{\cong} \;\Hom_\cat{C}(C, GD)
 $$
 for each object $C$ of $\cat{C}$ and $D$ of $\cat{D}$, natural both in $C$ and in $D$.
 
 Note that the notion is not symmetric in $F$ and $G$, or in $\cat{C}$ and $\cat{D}$. We call $F$ the \emph{left-adjoint} and $G$ the \emph{right-adjoint}, and denote the adjunction by $F \ladj G$. 
 
 Two maps $FC\to D$ and $C\to GD$ related by the bijection above are called \emph{transpose} or \emph{adjunct} to each other.
\end{deph}

It is useful to use the symbols $\sharp$ (sharp) and $\flat$ (flat) to denote the transposes. We denote by $f^\sharp$ a map $FC\to D$, and by $f^\flat$ its transpose $C\to GD$. Just as well, we can write the bijections as
$$
\begin{tikzcd}
 \Hom_\cat{D}(FC, D) \ar[shift left]{r}{\flat} & \Hom_\cat{C}(C, GD) . \ar[shift left]{l}{\sharp}
\end{tikzcd}
$$

Let's now see some more concrete examples of adjunctions.

\subsection{Free-forgetful adjunctions}

Many adjunctions appearing in mathematics are the so-called \emph{free-forgetful adjunctions}, where the right-adjoint is a forgetful functor, and the left-adjoint is a functor giving a ``free construction'', in a way that will be made precise shortly.

\begin{eg}[topology]\label{adjtopset}
 Consider the forgetful functor $U:\cat{Top}\to\Set$. This takes a topological space and returns the underlying set ``forgetting some structure''. In general, we cannot go back -- given a set, we cannot assign a topological space uniquely, many topological spaces have the same underlying set up to isomorphism (for example, $\R$ and $\R^2$). However, given a set $X$, there is always a ``trivial'' topology that we can put on $X$, the discrete topology, for which each subset of $X$ is declared open. Let's denote this space by $FX$. The discrete topology has the following property: given any topological space $S$, every function $f:FX\to S$ is continuous. Indeed, let $O\subseteq S$ be open. Then $f^{-1}(O)\subseteq FX$ is open, because every subset of $FX$ is. 
 Now given a function between sets $g:X\to Y$, the induced function $FX\to FY$ is continuous (because every function on $FX$ is). Let's denote this continuous map by $Ff:FX\to FY$. This way, $F$ is a functor $\Set\to\cat{Top}$. 
 
 Let us now prove that there is an adjunction $F \ladj U$. This amounts to natural bijections 
 $$
 \Hom_\cat{Top}(FX, S) \; \xrightarrow{\cong} \;\Hom_\cat{Set}(X, US) ,
 $$
 for each set $X$ and each topological space $S$.
 Here we construct the bijection, naturality of this construction will be the next exercise.
 Now consider a continuous function $FX\to S$. As simply a function, this is a map from the set $X$ to the underlying set of $S$, in other words, it is a function $X\to US$. Conversely, given a function $X\to US$, this gives a function $FX\to S$ which is necessarily continuous, since $FX$ is equipped with the discrete topology. Therefore \emph{continuous functions $FX\to S$ correspond bijectively to functions between the sets $X\to US$}. 
\end{eg}

\begin{ex}[topology]
 Show that the bijection above is natural in both arguments.
\end{ex}

A possible interpretation of this adjunction is the following: the functor $U$ ``forgets'' the structure of topological spaces, and the functor $F$ ``tries to recover it'' in some default way -- in this case, by equipping our sets with the discrete topology. The functors $F$ and $U$ are not inverse to each other, not even pseudoinverse -- an adjunction is a more nuanced relation between functors. 

\begin{ex}[topology]\label{radjtopset}
 Show that the forgetful functor $U:\cat{Top}\to\Set$ also has a \emph{right}-adjoint. (Hint: which topology is somewhat dual to the discrete topology?)
\end{ex}

\begin{eg}[linear algebra]\label{adjvectset}
 Consider now the forgetful functor $U:\cat{Vect}\to\Set$. This takes a vector space $V$ and forgets its vector space structure, returning its underlying set. In general, there is no way to go back, and not every set even admits a vector space structure. However, given a set $X$, we can always canonically form a vector space from $X$: the set of \emph{formal linear combinations} of the elements of $X$. Let's denote this latter set by $FX$, its elements are expressions in the form 
 $$
 a_1 x_1 + \dots + a_n x_n
 $$
 for all $a_i\in \R$ and $x_i\in X$, and $n$ finite (but arbitrarily large). Note that we are not \emph{actually computing} the expression above ($X$ is not a vector space), we are just \emph{writing it}, formally. The vector space structure on $FX$ is given by the usual rules of addition and scalar multiplication of expressions by \emph{expanding}, such as
 $$
 a_1 (b_1 x_1 + b_2 x_2) + a_2 (c_1 y_1 + c_2 y_2) \;=\; (a_1 b_1) x_1 + (a_1 b_2) x_2 + (a_2 c_1) y_1 + (a_2 c_2) y_2 , 
 $$
 and the set $X$ is a basis of $FX$ (why?). 
 Now let's make this construction functorial. Given a function between sets $f:X\to Y$, we can obtain the linear function $Ff:FX\to FY$ as
 $$
 Ff(a_1 x_1 + \dots + a_n x_n) \; \coloneqq\; a_1 f(x_1) + \dots + a_n f(x_n) .
 $$
 (Why is this function linear?)
 Therefore we have a functor $F:\Set\to\Vect$. 
 
 Let's now show that $F\ladj U$. We have to prove that for all sets $X$ and vector spaces $V$, there is a natural bijection
 $$
 \Hom_\cat{Vect}(FX, V) \; \xrightarrow{\cong} \;\Hom_\cat{Set}(X, UV) ,
 $$
 between linear maps $FX\to V$ and functions from $X$ to the set underlying the vector space $V$. Now, if we have a linear map $l:FX\to V$, we can restrict it to the basis $X$, and obtain a function from $X$ to $UV$ (that is, $V$, but considered as a set). Conversely, given a map $f:X\to UV$, there is a \emph{linear extension} $FX\to V$ that extends $f$ linearly on all linear combinations of elements of $X$, and this linear extension is unique, because $X$ is a basis of $FX$ (why exactly?).
 Therefore we have a bijection. Again, naturality is left as exercise below.
\end{eg}

\begin{ex}[linear algebra]
 Show, again, that the bijection above is natural in both arguments.
\end{ex}

\begin{ex}[linear algebra]
 This idea of \emph{linear extension} sounds like a universal property. Can you make this precise? If not, just wait until the next section.
\end{ex}

\begin{ex}[group theory]\label{adjgrpset}
 What is the left-adjoint to the forgetful functor $\Grp\to\Set$?
\end{ex}

\subsection{Galois connections}

If the categories in question are posets, like most category-theoretical concepts, adjunctions looks particularly simple, and they were introduced much earlier than adjunctions (the first example was given by Évariste Galois). 

\begin{deph}
 Let $(X,\le)$ and $(Y,\le)$ be posets, let $f:X\to Y$ and $g:Y\to X$ be monotone maps. An adjunction $f\ladj g$ is called a \emph{Galois connection} or \emph{Galois correspondence}. 
 The left-adjoint $f$ is also called the \emph{lower adjoint}, and the right-adjoint $g$ is also called the \emph{upper adjoint}.
\end{deph}

Concretely, an adjunction $f\ladj g$ amounts to the following condition (can you see why?): for every $x\in X$ and $y\in Y$,
$$
f(x) \le y \quad \mbox{if and only if} \quad x \le g(y) ,
$$
and the naturality condition is trivial.

We have seen that in many cases, adjunctions between categories can be interpreted in terms of ``forgetting structure'' and ``trying to recover the structure in a default way''. Analogously, for posets, Galois connections can be often interpreted in terms of ``forgetting properties'', and ``trying to recover them in a default way''.\footnote{In category theory, often structures and properties behave in similar way. If you are interested in this, see for example \href{https://ncatlab.org/nlab/show/stuff\%2C+structure\%2C+property}{the nLab page on ``stuff, structure, property'' (link)}.}

\begin{eg}[sets and relations, analysis]\label{convsubsets}
 Let $X$ be the poset of subsets of $\R^2$, ordered by inclusion. Let $Y$ be the poset of \emph{convex} subsets of $\R^2$ (circles, squares, triangles\dots). We have a canonical inclusion map $i:Y\to X$, which we can think of as ``forgetful'': it forgets the convexity. 
 Let's now show that $i$ has a lower adjoint. The \emph{convex hull} of a subset $S\subseteq \R^2$ is defined as either 
 \begin{itemize}
  \item The smallest convex subset of $\R^2$ containing $S$;
  \item The intersection of all convex subsets of $\R^2$ containing $S$;
  \item The set obtained by closing $S$ under all possible convex combinations. 
 \end{itemize}
 For why these construction are equivalent, see the next exercise. 
 Define now $c:X\to Y$ to be the map assigning to each $S\in X$, i.e.~to each subset of $\R^2$, its convex hull. This map is monotone (why?). 
 
 Let us now show that $c$ is left-adjoint (or lower adjoint) to $i$. We have to show that for each subset $S\subseteq \R^2$ and each \emph{convex} subset $C\subseteq \R^2$, 
 $$
 c(S) \subseteq C \quad\mbox{if and only if}\quad S\subseteq C 
 $$
 (note that we have omitted the inclusion map $i$).
 Now, suppose that the convex hull $c(S)$ is contained in the convex set $C$. Then, since $c(S)$ contains $S$, clearly $S$ is contained in $C$ as well. Conversely, suppose that $S$ is contained in $C$. Then since $C$ is convex and $c(S)$ is the \emph{smallest} convex set containing $S$, necessarily $c(S)$ is contained in $C$. Therefore $c$ is left-adjoint to $i$.
\end{eg}

\begin{ex}[sets and relations, analysis]
 Show that the constructions of the convex hull given above are all equivalent. (Hint: the first and the second are basically the same.)
\end{ex}

Here are other examples of Galois connection with the same interpretation of ``forgetting properties''.

\begin{ex}[topology]
 Show that there is a Galois connection between subsets of $\R^2$ and \emph{closed} subsets of $\R^2$.
\end{ex}

\begin{ex}[group theory]
 Show that there is a Galois connection between \emph{subsets} of a given group $G$ and \emph{subgroups} of $G$. 
\end{ex}

Here are more general exercises.

\begin{ex}[sets and relations]\label{round}
 Show that the inclusion $i:\Z\to\R$ (with the usual order of numbers) has both an upper and a lower adjoint. 
\end{ex}

\begin{ex}[sets and relations, analysis]
 Show that, for all examples above, the result of applying the lower adjoint is always the solution to some constrained optimization problem, namely, the \emph{smallest element satisfying a particular constraint}. Express this in terms of a universal property, and show that this universal property is true in general for any lower adjoint of a Galois connection.
 
 (Of course, the upper adjoint satisfies a dual property.)
\end{ex}

\begin{ex}[sets and relations; difficult!]
 If you have solved the exercise above, show that a similar universal property holds for all left-adjoint functors, not necessarily between posets. 
 (Hint: The ``smallest element'' in a poset corresponds to the ``initial object'' of some category\dots \emph{which category}?)
\end{ex}

\section{Unit and counit as universal arrows}\label{unitcounit}

Let's now see in detail the universal arrows defined by an adjunction. Let $\cat{C}$ and $\cat{D}$ be categories, let $F:\cat{C}\to\cat{D}$ and $G:\cat{D}\to\cat{C}$ be functors, and consider an adjunction $F\ladj G$, i.e.~a natural bijection
$$
\Hom_\cat{D}(FC, D) \; \xrightarrow{\cong} \;\Hom_\cat{C}(C, GD) 
$$
for all objects $C$ of $\cat{C}$ and $D$ of $\cat{D}$. Now fix the object $C$. The bijection above, which is natural in $D$, can be seen as a natural isomorphism between the functors
$$
\Hom_\cat{D}(FC, -) \quad \mbox{and} \quad \Hom_\cat{C}(C, G-)  : \cat{D} \to \Set .
$$
In other words, for all objects $C$ of $\cat{C}$, the functor $\Hom_\cat{C}(C, G-)  : \cat{D} \to \Set$ is representable, and it is represented by the object $FC$. 

How can we see this intuitively?
In terms of representable functor as ``probes'', we can interpret the situation as follows. The functor $\Hom_\cat{C}(C, G-)$ takes an object $D$ of $\cat{D}$ and extracts some of its features by first mapping it to $\cat{C}$ via the functor $G$, and then probes the resulting object $GD$ by looking at all possible ways of mapping $C$ into it. The natural isomorphism above says that the features extracted this way from $D$ are the same as those that can be extracted by directly probing $D$ with $FC$. 
So, for example, in the adjunction between sets and vector spaces of \Cref{adjvectset}, given a set $X$ and a vector space $V$, all the possible ways to map $X$ to $V$ (taken as a set) correspond exactly to all the possible ways of mapping $FX$ linearly to $V$ (taken as a vector space). 

Let's now see what is happening in terms of universal properties. By the Yoneda lemma, we know that the natural isomorphism above is specified uniquely by an element of the set 
$$
\Hom_\cat{C}(C, GFC) ,
$$
i.e.~a particular (``universal'') morphism $C\to GFC$ of $\cat{C}$. As usual (\Cref{uniprop}), this morphism has to be the image of the identity of $FC$ under the bijection. That is, set $D$ to be exactly $FC$, so that we have a bijection 
$$
\Hom_\cat{D}(FC, FC) \; \xrightarrow{\cong} \;\Hom_\cat{C}(C, GFC) 
$$
In the set on the left there is a distinguished element, namely $\id_{FC}$. The universal morphism $C\to GFC$ is the image of $\id_{FC}$ under the isomorphism above, that is, it is exactly $(\id_{FC})^\flat:C\to GFC$. 

\begin{deph}
 The universal map $C\to GFC$ induced by the adjunction, at the object $C$, is called the \emph{unit} of the adjunction (at $C$), and denoted by $\eta_C:C\to GFC$. 
\end{deph}

So $\eta_C=(\id_{FC})^\flat$ and $(\eta_C)^\sharp = \id_{FC}$.
Here is how the universal property of $\eta_C$ works in practice, following the ideas of \Cref{uniprop}. Let $D$ be an object of $\cat{D}$, and apply the functor $G$ to it, to get the object $GD$ of $\cat{C}$. Then the bijection $\Hom_\cat{D}(FC, D) \to \Hom_\cat{C}(C, GD)$ says that given any morphism $f^\flat:C\to GD$ of $\cat{C}$, there is a unique morphism $ f^\sharp:FC\to D$ of $\cat{D}$ such that the following triangle in $\cat{C}$ commutes:
\begin{equation}\label{unipropeta}
\begin{tikzcd}
 C \ar{r}{\eta_C} \ar{dr}[swap]{f^\flat} & GFC \ar[""{name=GFBAR,right}]{d}{G f^\sharp} && FC \ar[dashed,""{name=FBAR,left}]{d}{ f^\sharp} \\
  & GD && D
  \ar[mapsto,from=FBAR,to=GFBAR,"G",start anchor={[xshift=-1ex]},end anchor={[xshift=4ex]}]
\end{tikzcd}
\end{equation}

\begin{eg}[linear algebra]\label{unitvect}
 Consider again the adjunction between sets and vector spaces of \Cref{adjvectset}. Given a set $X$, the unit $\eta_X:X\to UFX$ is a function between the set $X$ and the underlying set of $FX$, i.e.~the \emph{set} of formal linear combinations of elements of $X$ (so, $FX$ seen as a set, not as a vector space). Which map is now $\eta_X$? We know that $(\eta_X)^\sharp = \id_{FX}$, in other words, $\eta_X$ is the map whose linear extension is the identity on $FX$. Now, the (unique) map whose linear extension is the identity is the map $X\to UFX$ which assigns to each $x\in X$ the ``trivial'' linear combination $1\cdot x\in UFX$. That is, $\eta_X:X\to UFX$ is the canonical \emph{embedding of $X$} into the set underlying $FX$. 
 
 The universal property now reads: for every vector space $V$, and for every function $f:X\to UV$, that is, function from $X$ to $U$ seen as a set, there is a unique linear extension $ f^\sharp:FX\to V$ of $f$ such that the following diagram commutes:
 $$
\begin{tikzcd}
 X \ar{r}{\eta_X} \ar{dr}[swap]{f} & UFX \ar[""{name=GFBAR,right}]{d}{U f^\sharp} && FX \ar[dashed,""{name=FBAR,left}]{d}{ f^\sharp} \\
  & UD && D
  \ar[mapsto,from=FBAR,to=GFBAR,"U",start anchor={[xshift=-1ex]},end anchor={[xshift=4ex]}]
 \end{tikzcd}
 $$
 The triangle commuting on the left says that if we compose $U f^\sharp$ with $\eta_X$, that is, if we restrict $ f^\sharp$ to the ``trivial'' linear combinations (the elements of $X$), then it agrees with the original $f$. In other words, $ f^\sharp$ is \emph{really extending} $f$.
\end{eg}

\begin{ex}[linear algebra]
 Why is the embedding $\eta_X:X\to UFX$ \emph{canonical}? Is anything natural here? (Spoiler: to know the answer, you can also read further down).
\end{ex}

\begin{ex}[group theory, topology]
 What are the units of the adjunctions between sets and groups (\Cref{adjgrpset}) and between sets and topological spaces (\Cref{adjtopset})?
\end{ex}

\begin{eg}[sets and relations, analysis]
 Consider now the Galois connection of \Cref{convsubsets} between subsets and convex subsets of $\R^2$. The unit of the adjunction $c\ladj i$ amounts to a relation of containment $S\subseteq c(S)$ for all subsets $S\subseteq \R^2$ (why?). This inequality indeed holds, since every set is contained in its convex hull.
\end{eg}

So far, we have kept the object $C$ fixed. If we now let $C$ vary, naturality in $C$ implies that the map $\eta_C$ is actually natural in $C$.

We first start with the following very useful technical lemma.

\begin{lemma}[{\cite[Lemma~4.1.3]{ctcontext}}]\label{usefuldiag}
 The naturality condition of the adjunction implies that for every for every $f^\sharp:FC\to D$, $g^\sharp:FC'\to D'$, $h:C\to C'$ of $\cat{C}$ and $k:D\to D'$ of $\cat{D}$, the diagram on the left commutes if and only if the diagram on the right commutes.
 $$
 \begin{tikzcd}
  FC \ar{d}{Fh} \ar{r}{f^\sharp} & D \ar{d}{k} \\
  FC' \ar{r}{g^\sharp} & D'
 \end{tikzcd}
 \qquad\qquad
 \begin{tikzcd}
  C \ar{d}{h} \ar{r}{f^\flat} & GD \ar{d}{Gk} \\
  C' \ar{r}{g^\flat} & GD'
 \end{tikzcd}
 $$
\end{lemma}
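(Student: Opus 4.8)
The idea is simply to translate the two commutativity conditions across the adjunction bijection and use injectivity. Write $\flat$ and $\sharp$ for the two directions of the natural bijection $\Hom_\cat{D}(FC,D)\cong\Hom_\cat{C}(C,GD)$, so that a morphism $f^\sharp\colon FC\to D$ corresponds to $f^\flat\colon C\to GD$ and these are mutually inverse assignments. The left square commutes precisely when $k\circ f^\sharp = g^\sharp\circ Fh$ as morphisms $FC\to D'$, and the right square commutes precisely when $Gk\circ f^\flat = g^\flat\circ h$ as morphisms $C\to GD'$. The strategy is to show that applying $\flat$ turns the first equation into the second, after which injectivity of $\flat$ finishes the job.

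First I would spell out what naturality of the adjunction means in each variable. Naturality in the second variable $D$ says that for $k\colon D\to D'$ the square of sets
$$
\begin{tikzcd}
\Hom_\cat{D}(FC,D) \ar{r}{\flat} \ar{d}{k\circ -} & \Hom_\cat{C}(C,GD) \ar{d}{Gk\circ -} \\
\Hom_\cat{D}(FC,D') \ar{r}{\flat} & \Hom_\cat{C}(C,GD')
\end{tikzcd}
$$
commutes, i.e.\ $(k\circ \varphi)^\flat = Gk\circ \varphi^\flat$ for all $\varphi\colon FC\to D$; in particular $(k\circ f^\sharp)^\flat = Gk\circ f^\flat$. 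Naturality in the first variable $C$ says that for $h\colon C\to C'$ the square
$$
\begin{tikzcd}
\Hom_\cat{D}(FC',D') \ar{r}{\flat} \ar{d}{-\circ Fh} & \Hom_\cat{C}(C',GD') \ar{d}{-\circ h} \\
\Hom_\cat{D}(FC,D') \ar{r}{\flat} & \Hom_\cat{C}(C,GD')
\end{tikzcd}
$$
commutes (note the contravariance: $F$ is applied to $h$ on the $\cat{D}$-side), i.e.\ $(\psi\circ Fh)^\flat = \psi^\flat\circ h$ for all $\psi\colon FC'\to D'$; in particular $(g^\sharp\circ Fh)^\flat = g^\flat\circ h$.

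Combining these two identities, I would argue: the left square commutes $\iff$ $k\circ f^\sharp = g^\sharp\circ Fh$ $\iff$ $(k\circ f^\sharp)^\flat = (g^\sharp\circ Fh)^\flat$ (using that $\flat$ is a bijection, hence injective, for the forward direction, and that it is a well-defined function for the reverse) $\iff$ $Gk\circ f^\flat = g^\flat\circ h$ $\iff$ the right square commutes. The symmetric statement, applying $\sharp$ instead, gives the converse direction and is identical. The only thing requiring care — and the one place where it is easy to slip — is getting the variance right in the two naturality squares, namely that moving in the first variable produces precomposition with $Fh$ on the left and with $h$ on the right; once that is pinned down the rest is formal.
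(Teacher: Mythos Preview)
Your proof is correct and essentially identical to the paper's: both unwind naturality in $D$ to get $(k\circ f^\sharp)^\flat = Gk\circ f^\flat$, naturality in $C$ to get $(g^\sharp\circ Fh)^\flat = g^\flat\circ h$, and then use that $\flat$ is a bijection to conclude the equivalence.
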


Note that the diagram on the left is in $\cat{D}$, while the diagram on the right is in $\cat{C}$. One way to interpret the lemma is that the ``sharp'' and ``flat'' isomorphisms permit to pass ``rigidly'' from $\Hom_\cat{D}(FC, D)$ to $\Hom_\cat{C}(C, GD)$, i.e.~in a way which plays well with the morphisms of $\cat{C}$ and $\cat{D}$. 

\begin{proof}[Proof of \Cref{usefuldiag}]
 Naturality in $D$ means that for every $k:D\to D'$ of $\cat{D}$, the following diagram must commute,
 $$
 \begin{tikzcd}
  \Hom_\cat{D}(FC, D) \ar{d}{k\circ -} \ar{r}{\flat}[swap]{\cong} & \Hom_\cat{C}(C, GD) \ar{d}{Gk\circ -} \\
  \Hom_\cat{D}(FC, D') \ar{r}{\flat}[swap]{\cong} & \Hom_\cat{C}(C, GD')
 \end{tikzcd}
 $$
 which means that for every $f^\sharp:FC\to D$, we must have 
 \begin{equation}\label{natD}
  (k\circ f^\sharp)^\flat \;=\; Gk\circ f^\flat   .
 \end{equation}
 Just as well, naturality in $C$ means that for every $h:C\to C'$ of $\cat{C}$, the following diagram must commute,
 $$
 \begin{tikzcd}
  \Hom_\cat{D}(FC', D') \ar{d}{-\circ Fh} \ar{r}{\flat}[swap]{\cong} & \Hom_\cat{C}(C', GD') \ar{d}{-\circ h} \\
  \Hom_\cat{D}(FC, D') \ar{r}{\flat}[swap]{\cong} & \Hom_\cat{C}(C, GD')
 \end{tikzcd}
 $$
 which means that for every $g^\sharp:FC'\to D'$ we must have
 \begin{equation}\label{natC}
    (g^\sharp\circ Fh)^\flat \;=\; g^\flat \circ h .
 \end{equation}
 Now by \Cref{natD,natC}, and since the maps $\flat$ and $\sharp$ are bijections, we have that 
 $$
 k\circ f^\sharp \;=\; g^\sharp\circ Fh
 $$
 if and only if
 $$
 Gk\circ f^\flat \;=\; g^\flat \circ h ,
 $$
 that is, the diagram on the left commutes if and only if the diagram on the right does.
\end{proof}

\begin{ex}
 Show that \Cref{usefuldiag} has a converse, namely, that the equivalence between the commutativity of the two diagrams (for all the arrows involved) is actually \emph{equivalent} to naturality of the bijections.
\end{ex}

Now let's see in detail why and how $\eta$ is natural.

\begin{lemma}\label{etanatural}
 The maps $\eta_C:C\to GFC$ assemble to a natural transformation $\eta:\id_\cat{C}\Rightarrow G\circ F$ of endofunctors $\cat{C}\to\cat{C}$.
\end{lemma}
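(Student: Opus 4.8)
The plan is to deduce naturality of $\eta$ directly from \Cref{usefuldiag}, using the identities $\eta_C = (\id_{FC})^\flat$ and $(\eta_C)^\sharp = \id_{FC}$ recorded just before the statement. Concretely, I need to show that for every morphism $h:C\to C'$ of $\cat{C}$ the square with sides $h$, $\eta_C$, $\eta_{C'}$, $GFh$ commutes, i.e.\ that $GFh\circ\eta_C = \eta_{C'}\circ h$, and then observe that the components $\eta_C$ are morphisms $C\to GFC$ with $GF$ an endofunctor of $\cat{C}$, so that the family indeed constitutes a natural transformation $\eta:\id_\cat{C}\Rightarrow G\circ F$.

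First I would instantiate \Cref{usefuldiag} with the choices $D \coloneqq FC$, $D' \coloneqq FC'$, $f^\sharp \coloneqq \id_{FC}$, $g^\sharp \coloneqq \id_{FC'}$, and $k \coloneqq Fh$ (keeping the same $h$). With these choices the left-hand diagram of the lemma reads $Fh\circ\id_{FC} = \id_{FC'}\circ Fh$, which is trivially commutative since both composites equal $Fh$. Hence the right-hand diagram of the lemma commutes as well. But by definition of the unit, the transposes appearing there are $f^\flat = (\id_{FC})^\flat = \eta_C$ and $g^\flat = (\id_{FC'})^\flat = \eta_{C'}$, while the right vertical arrow is $Gk = GFh:GFC\to GFC'$; so the right-hand diagram is precisely the naturality square $GFh\circ\eta_C = \eta_{C'}\circ h$. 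Since $h$ was arbitrary, this proves the lemma.

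Alternatively, one could bypass \Cref{usefuldiag} and argue straight from the two naturality equations \eqref{natD} and \eqref{natC} of the adjunction bijection: applying \eqref{natD} to $f^\sharp = \id_{FC}$ with $k = Fh$ gives $(Fh)^\flat = GFh\circ\eta_C$, while applying \eqref{natC} to $g^\sharp = \id_{FC'}$ with the same $h$ gives $(Fh)^\flat = \eta_{C'}\circ h$; comparing the two expressions for $(Fh)^\flat$ yields the naturality square. I would probably present the argument via \Cref{usefuldiag}, as it is slightly more conceptual and reuses machinery we have just set up.

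There is essentially no hard step here: the only thing to be careful about is the bookkeeping — matching the abstract data $f^\sharp,g^\sharp,h,k$ of \Cref{usefuldiag} with the concrete choices $\id_{FC},\id_{FC'},h,Fh$, and remembering that $(\id_{FC})^\flat$ is by definition the component $\eta_C$. Once that matching is in place, commutativity of the source square is immediate and the lemma does all the work, so the whole proof is a couple of lines.
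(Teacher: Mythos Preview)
Your proof is correct and follows essentially the same approach as the paper: both instantiate \Cref{usefuldiag} with $f^\sharp=\id_{FC}$, $g^\sharp=\id_{FC'}$, $k=Fh$, and observe that the resulting left-hand square commutes trivially, forcing the naturality square for $\eta$ to commute. Your bookkeeping is spot on, and the alternative you mention via \eqref{natD} and \eqref{natC} is just the same argument unpacked.
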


\begin{deph}
 The natural transformation $\eta:\id_\cat{C}\Rightarrow G\circ F$ of components $\eta_C:C\to GFC$ is called the \emph{unit} of the adjunction. 
\end{deph}

\begin{proof}[Proof of \Cref{etanatural}]
 We have to prove that for all $h:C\to C'$ in $\cat{C}$, this diagram in $\cat{C}$ commutes.
 $$
 \begin{tikzcd}
  C \ar{r}{\eta_C} \ar{d}{h} & GFC \ar{d}{GFh} \\
  C' \ar{r}{\eta_{C'}} & GFC'
 \end{tikzcd}
 $$
 Now we can use \Cref{usefuldiag} to ``rigidly replace $G$ on the right of the diagram by $F$ on the left of the diagram'', replacing also the horizontal morphisms with their ``sharp'' counterpart. We obtain therefore the equivalent diagram in $\cat{D}$
 $$
 \begin{tikzcd}
  FC \ar{r}{\id_{FC}} \ar{d}{Fh} & FC \ar{d}{Fh} \\
  FC' \ar{r}{\id_{FC'}} & FC'
 \end{tikzcd}
 $$
 which clearly commutes. 
\end{proof}

So, the adjunction $F\ladj G$ defines a natural transformation $\eta:\id_\cat{C}\Rightarrow G\circ F$ whose components satisfy a universal property. 
Dually, we can also obtain a natural transformation on the other side of the adjunction. Namely, let's now keep $D$ fixed in the natural bijection 
$$
\Hom_\cat{D}(FC, D) \; \xrightarrow{\cong} \;\Hom_\cat{C}(C, GD) .
$$
We have then a natural isomorphism between the presheaves
$$
\Hom_\cat{D}(F-, D) \quad\mbox{and}\quad\Hom_\cat{C}(-, GD) : \cat{C}^\op\to\Set ,
$$
in other words, we are saying that the presheaf $\Hom_\cat{D}(F-, D): \cat{C}^\op\to\Set$ is representable, and represented by the object $GD$ of $\cat{C}$. 

\begin{ex}
 What is the interpretation of this condition in terms of ``representable presheaves as observations''?
\end{ex}

Dually to the case of $\eta$, we get a universal arrow $FGD\to D$ of $\cat{D}$ for each object $D$, which is given by $(\id_{GD})^\sharp$. Again, this resulting universal map is natural in $D$, giving a natural transformation $F\circ G\Rightarrow\id_\cat{D}$ of endofunctors $\cat{D}\to\cat{D}$.

\begin{deph}
 The natural transformation $F\circ G\Rightarrow\id_\cat{D}$ induced by the adjunction $F\ladj G$ is called the \emph{counit} of the adjunction, and it denoted by $\e:F\circ G\Rightarrow\id_\cat{D}$.
\end{deph}

So, for each object $D$ of $\cat{D}$, the component of the counit at $D$ satisfies $\e_D=(\id_{GD})^\sharp$ and $(\e_D)^\flat=\id_{GD}$.

\begin{ex}[important!]
 Derive in detail the natural transformation $\e$, as we did above for $\eta$, and show that it is natural. (Hint: dualize what was done above.)
\end{ex}

Let's see how the universal property of $\e_D$ looks in practice. For every object $C$ of $\cat{C}$, apply $F$ to it, to get the object $FC$ of $\cat{D}$. Now the bijection $\Hom_\cat{D}(FC, D) \to \Hom_\cat{C}(C, GD)$ says that given any morphism $g^\sharp:FC\to D$ of $\cat{D}$, there is a unique morphism $g^\flat:C\to GD$ of $\cat{C}$ such that the following triangle of $\cat{D}$ commutes.
\begin{equation}\label{unipropepsilon}
\begin{tikzcd}
 C \ar[dashed,""{name=GFLAT,right}]{d}{ g^\flat} && FC \ar{d}[""{name=FGFLAT,left},swap]{Fg^\flat} \ar{dr}{g^\sharp} \\
 GD && FGD \ar{r}[swap]{\e_D} & D
 \ar[mapsto,from=GFLAT,to=FGFLAT,"F",start anchor={[xshift=3ex]}]
\end{tikzcd}
\end{equation}

\begin{eg}[linear algebra]\label{counitvect}
 Let's look at the counit of the adjunction between sets and vector spaces (\Cref{adjvectset}). Let $V$ be a vector space. 
 The counit $\e_V:FUV\to V$ is a map from the vector space of \emph{formal linear combinations of the vectors of $V$} to $V$. Which map in particular? We know that this map must be equal to $(\id_{UV})^\sharp$, that is, must be the linear extension of the identity of $V$ (seen as a map between sets). In other words, $\e_V:FUV\to V$ must be the unique linear map such that for each $v\in V$, $\e_V(1v)=v$, and for all nontrivial linear combinations, the value of $\e_V$ is uniquely specified by linearity. Therefore, applying $\e_D$ to generic expression 
 $$
 a_1 v_1 + \dots + a_n v_n
 $$
 of $FUV$, we get the \emph{result in $V$} of the expression above. Now mind the difference between a \emph{formal linear combination} and the \emph{actual result of the linear combination}: for example, if $V=\R$, we have
 $$
 \e_\R(2\cdot 3 + 2\cdot 1)  \;=\; 8 .
 $$
 Note that, \emph{as formal expressions}, $``2\cdot 3 + 2\cdot 1''$ and $``8''$ are not the same (but they have the same result). The map $\e_\R$ \emph{actually does} the operations prescribed by the formal expression.
 
 The universal property of $\e_V$ says now that for every set $X$, we can form the free vector space $FX$, and given any linear map $g:FX\to V$, there is a unique function $g^\flat$ from $X$ to $V$ (seen as a set) such that the following diagram commutes:
 $$
\begin{tikzcd}
 X \ar[dashed,""{name=GFLAT,right}]{d}{ g^\flat} && FX \ar{d}[""{name=FGFLAT,left},swap]{Fg^\flat} \ar{dr}{g} \\
 UV && FUV \ar{r}[swap]{\e_V} & V
 \ar[mapsto,from=GFLAT,to=FGFLAT,"F",start anchor={[xshift=3ex]}]
\end{tikzcd}
$$
This is precisely the universal property of a basis, it is saying that $X$ is a basis of $FX$.
\end{eg}

As we have seen above, the counit map $\e_V:FUV\to V$ turns a \emph{formal linear combination of element of $V$} into their \emph{actual} linear combination. In other words, the map $\e_V$ \emph{is} that operation (such as the addition is a map $+:\R\times\R\to\R$). In other words again, this map is exactly what makes $V$ a vector space, it is its vector space structure (a vector space is precisely a set equipped with an operation of linear combination). 
This fact, that the ``extra structure'' is captured by the counit, is a very common phenomenon, it will be made precise by the concept of \emph{Eilenberg-Moore} or \emph{monadic adjunction} (see Sections~\ref{emadj} and~\ref{adjmonads}). Not every adjunction has this property, for example, the one between sets and topological spaces (\Cref{adjtopset}) does not (why?).

\begin{ex}[group theory]
 Show that the counit of the adjunction between sets and groups (\Cref{adjgrpset}), analogously to the case of vector spaces, can be seen as the map giving the group structure.
\end{ex}

 Can you think of other forgetful functors giving rise to adjunctions that have a similar property?

\subsection{Alternative definition of adjunctions}

We have seen how to obtain the unit and counit natural transformations from an adjunction, and we know that by the Yoneda lemma these natural transformations (or better, their components) satisfy a universal property, and so determine the natural bijections $\Hom_\cat{D}(FC,D)\to \Hom_\cat{C}(C,GD)$ and $\Hom_\cat{C}(C,GD)\to \Hom_\cat{D}(FC,D)$ uniquely. We can now ask the converse question: given functors $F:\cat{C}\to\cat{D}$ and $G:\cat{D}\to\cat{C}$, does a pair of natural transformations $\eta:\id_\cat{C}\Rightarrow G\circ F$ and $\e:F\circ G\Rightarrow \id_\cat{D}$ induce an adjunction $F\ladj G$? The answer is, almost, but not quite. The reason is that, thanks to the Yoneda lemma, the universal maps $\eta_C$ and $\e_D$ specify natural \emph{transformations}, but not necessarily natural \emph{isomorphisms} $\Hom_\cat{D}(FC,D)\to \Hom_\cat{C}(C,GD)$. In order for $\eta_C$ and $\e_D$ to specify isomorphisms we need a couple of additional conditions to be satisfied, known as the \emph{triangle identities}. 

\begin{lemma}[Triangle identities]\label{triangleidlemma}
 Let $\eta:\id_\cat{C}\Rightarrow G\circ F$ and $\e:F\circ G\Rightarrow \id_\cat{D}$ be the unit and counit of an adjunction $F\ladj G$. Then the following diagrams of natural transformations commute.
 \begin{equation}\label{trianglegeneral}
  \begin{tikzcd}
   F \nat{r}{F \eta} \nat{dr}[swap]{\id_F} & FGF \nat{d}{\e F} \\
   & F
  \end{tikzcd}
  \qquad\qquad
  \begin{tikzcd}
   G \nat{r}{\eta G} \nat{dr}[swap]{\id_G} & GFG \nat{d}{G \e} \\
   & G
  \end{tikzcd}
 \end{equation}
\end{lemma}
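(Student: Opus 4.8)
The plan is to reduce the two displayed equations of natural transformations in \eqref{trianglegeneral} to pointwise identities and then to verify each one by a short transpose computation using the naturality of the adjunction bijection. Since a natural transformation is determined by its components, and since, by the definition of whiskering, the component of $F\eta$ at an object $C$ is $F(\eta_C)$, the component of $\e F$ at $C$ is $\e_{FC}$, the component of $\eta G$ at an object $D$ is $\eta_{GD}$, and the component of $G\e$ at $D$ is $G(\e_D)$, the left triangle in \eqref{trianglegeneral} commutes if and only if $\e_{FC}\circ F\eta_C=\id_{FC}$ for every object $C$ of $\cat{C}$, and the right triangle commutes if and only if $G\e_D\circ\eta_{GD}=\id_{GD}$ for every object $D$ of $\cat{D}$. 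So it suffices to establish these two equalities.

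For the first one I would invoke the naturality equation \eqref{natC} from the proof of \Cref{usefuldiag}, which reads $(g^\sharp\circ Fh)^\flat=g^\flat\circ h$. Taking $g^\sharp\coloneqq\e_{FC}:F(GFC)\to FC$ and $h\coloneqq\eta_C:C\to GFC$ gives
$$
(\e_{FC}\circ F\eta_C)^\flat \;=\; (\e_{FC})^\flat\circ\eta_C \;=\; \id_{GFC}\circ\eta_C \;=\; \eta_C ,
$$
where I used that $(\e_D)^\flat=\id_{GD}$ for all $D$, so in particular $(\e_{FC})^\flat=\id_{GFC}$. On the other hand, by the very definition of the unit, $\eta_C=(\id_{FC})^\flat$. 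Since $\flat$ is a bijection, comparing the two gives $\e_{FC}\circ F\eta_C=\id_{FC}$, which is the first triangle identity. (One could package this same manipulation as an application of \Cref{usefuldiag} to a suitable square, observing that the ``flat'' version of that square commutes trivially; the transpose computation above is just the unwound form of that.)

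For the second identity I would argue dually, using the other naturality equation \eqref{natD}, namely $(k\circ f^\sharp)^\flat=Gk\circ f^\flat$. Taking $k\coloneqq\e_D:FGD\to D$ and $f^\sharp\coloneqq\id_{FGD}:F(GD)\to FGD$, so that $f^\flat=(\id_{FGD})^\flat=\eta_{GD}$, this yields
$$
G\e_D\circ\eta_{GD} \;=\; Gk\circ f^\flat \;=\; (k\circ f^\sharp)^\flat \;=\; (\e_D)^\flat \;=\; \id_{GD} ,
$$
which is the second triangle identity.

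I do not expect a genuine obstacle here: the whole argument is a bookkeeping exercise in choosing which of the two naturality squares of the adjunction to invoke and in keeping the directions of the transposes $\sharp$ and $\flat$ straight. The one point that requires a moment's care is correctly matching the components of the whiskered natural transformations $F\eta$, $\e F$, $\eta G$, $G\e$ against the plain morphisms $F\eta_C$, $\e_{FC}$, $\eta_{GD}$, $G\e_D$ that occur in the transpose computations; once this identification is in place, both identities drop out of \eqref{natC} and \eqref{natD} together with the defining relations $(\eta_C)^\sharp=\id_{FC}$ and $(\e_D)^\flat=\id_{GD}$.
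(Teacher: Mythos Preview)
Your proof is correct and takes essentially the same approach as the paper: both arguments reduce the triangle identities to the naturality of the adjunction bijection, with the paper invoking \Cref{usefuldiag} to transpose each triangle (rewritten as a square) into a trivially commuting diagram, while you apply the underlying equations \eqref{natC} and \eqref{natD} directly to the same effect. You even note this equivalence yourself, so there is nothing to add.
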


In components, the triangle identities read as follows (why?), for every object $C$ of $\cat{C}$ and $D$ of $\cat{D}$.
 \begin{equation}\label{trianglecomp}
  \begin{tikzcd}
   FC \ar{r}{F \eta_C} \ar{dr}[swap]{\id_{FC}} & FGFC \ar{d}{\e_{FC}} \\
   & FC
  \end{tikzcd}
  \qquad\qquad
  \begin{tikzcd}
   GD \ar{r}{\eta_{GD}} \ar{dr}[swap]{\id_{GD}} & GFGD \ar{d}{G\e_D} \\
   & GD
  \end{tikzcd}
 \end{equation}
Note that the first diagram is in $\cat{D}$, and the second diagram is in $\cat{C}$.

\begin{proof}[Proof of \Cref{triangleidlemma}]
 We can rewrite the diagrams of \eqref{trianglecomp} as the following squares,
 $$
  \begin{tikzcd}
   FC \ar{r}{F \eta_C} \ar{d}[swap]{\id_{FC}} & FGFC \ar{d}{\e_{FC}} \\
   FC \ar{r}[swap]{\id_{FC}} & FC
  \end{tikzcd}
  \qquad\qquad
  \begin{tikzcd}
   GD \ar{r}{\eta_{GD}} \ar{d}[swap]{\id_{GD}} & GFGD \ar{d}{G\e_D} \\
   GD \ar{r}[swap]{\id_{GD}}& GD
  \end{tikzcd}
 $$
 and flip the first one (for convenience),
 $$
  \begin{tikzcd}
   FC \ar{d}[swap]{F \eta_C} \ar{r}{\id_{FC}} & FC \ar{d}{\id_{FC}} \\
   FGFC \ar{r}[swap]{\e_{FC}} & FC
  \end{tikzcd}
  \qquad\qquad
  \begin{tikzcd}
   GD \ar{r}{\eta_{GD}} \ar{d}[swap]{\id_{GD}} & GFGD \ar{d}{G\e_D} \\
   GD \ar{r}[swap]{\id_{GD}}& GD
  \end{tikzcd}
 $$
 Now using \Cref{usefuldiag}, we can apply ``flat'' to the first diagram, replacing $F$ on the left by $G$ on the right, and replacing the horizontal arrows with their ``flat'' versions. Similarly, we can apply ``sharp'' to the second diagram, replacing $G$ on the right of the diagram with $F$ on the left, and replacing the horizontal morphisms by their ``sharp'' version. We get the following two diagrams, equivalent to the previous ones,
 $$
  \begin{tikzcd}
   C \ar{d}[swap]{ \eta_C} \ar{r}{\eta_C} & GFC \ar{d}{\id_{GFC}} \\
   GFC \ar{r}[swap]{\id_{GFC}} & GFC
  \end{tikzcd}
  \qquad\qquad
  \begin{tikzcd}
   FGD \ar{r}{\id_{FGD}} \ar{d}[swap]{\id_{FGD}} & FGD \ar{d}{\e_D} \\
   FGD \ar{r}[swap]{\e_{D}}& D
  \end{tikzcd}
 $$
 and these clearly commute. 
\end{proof}

So the unit and the counit of an adjunction must always satisfy the triangle identities \eqref{trianglegeneral}. This condition is actually sufficient, if a pair of natural transformations $\eta:\id_\cat{C}\Rightarrow G\circ F$ and $\e:F\circ G\Rightarrow \id_\cat{D}$ satisfies the triangle identities, then the natural transformations $\Hom_\cat{D}(FC,D)\to \Hom_\cat{C}(C,GD)$ and $\Hom_\cat{C}(C,GD)\to \Hom_\cat{D}(FC,D)$ that they induce are bijections. Recall that by the Yoneda lemma, as we have seen in \Cref{uniprop}, the map $\eta_C:C\to GFC$ corresponds to the natural transformation of components (for all objects $D$ of $\cat{D}$)
\begin{equation}\label{isoeta}
\begin{tikzcd}[row sep=0]
 \Hom_\cat{D}(FC,D) \ar{r}{\flat} & \Hom_\cat{C}(C,GD) \\
 f^\sharp \ar[mapsto]{r} & f^\flat = Gf^\sharp \circ \eta_C  .
\end{tikzcd}
\end{equation}
(To see this more explicitly, look at the diagram \eqref{unipropeta}.)
Just as well, the map $\e_D:D\to GFD$ corresponds to the natural transformation of components (for all objects $C$ of $\cat{C}$)
\begin{equation}\label{isoepsilon}
\begin{tikzcd}[row sep=0]
 \Hom_\cat{C}(C,GD) \ar{r}{\sharp} & \Hom_\cat{D}(FC,D) \\
 g^\flat \ar[mapsto]{r} & g^\sharp = \e_D\circ Fg^\flat  .
\end{tikzcd}
\end{equation}
(To see this more explicitly, look at the diagram \eqref{unipropepsilon}.)

\begin{lemma}\label{triangleidsufficient}
 Let $F:\cat{C}\to\cat{D}$ and $G:\cat{D}\to\cat{C}$ be functors, and let $\eta:\id_\cat{C}\Rightarrow G\circ F$ and $\e:F\circ G\Rightarrow \id_\cat{D}$ satisfying the triangle identities \eqref{trianglegeneral}. Then the assignments \eqref{isoeta} and \eqref{isoepsilon} are mutually inverse, giving a bijection $\Hom_\cat{D}(FC,D)\to \Hom_\cat{C}(C,GD)$ (and so, an adjunction $F\ladj G$). 
\end{lemma}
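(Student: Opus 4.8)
The plan is to verify directly that the two assignments \eqref{isoeta} and \eqref{isoepsilon} compose to the identity in both orders. Naturality in $C$ and $D$ is then automatic: by the Yoneda lemma, as recalled in \Cref{uniprop}, \eqref{isoeta} and \eqref{isoepsilon} are precisely the natural transformations induced by the families $\eta_C$ and $\e_D$, and a natural transformation whose components are all bijections is a natural isomorphism. So once mutual invertibility is checked componentwise, the pair $(F,G)$ together with these bijections is an adjunction $F\ladj G$.

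First I would compute $\sharp\circ\flat$. Given $f^\sharp:FC\to D$, by definition $f^\flat = Gf^\sharp\circ\eta_C$, and applying $\sharp$ to it gives
$$
(f^\flat)^\sharp \;=\; \e_D\circ F(Gf^\sharp\circ\eta_C) \;=\; \e_D\circ FGf^\sharp\circ F\eta_C .
$$
Naturality of $\e$ applied to the morphism $f^\sharp:FC\to D$ gives $\e_D\circ FGf^\sharp = f^\sharp\circ \e_{FC}$, so that $(f^\flat)^\sharp = f^\sharp\circ \e_{FC}\circ F\eta_C$. By the first triangle identity \eqref{trianglecomp}, $\e_{FC}\circ F\eta_C = \id_{FC}$, whence $(f^\flat)^\sharp = f^\sharp$.

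Dually, I would compute $\flat\circ\sharp$. Given $g^\flat:C\to GD$, by definition $g^\sharp = \e_D\circ Fg^\flat$, and applying $\flat$ to it gives
$$
(g^\sharp)^\flat \;=\; G(\e_D\circ Fg^\flat)\circ\eta_C \;=\; G\e_D\circ GFg^\flat\circ\eta_C .
$$
Naturality of $\eta$ applied to $g^\flat:C\to GD$ gives $GFg^\flat\circ\eta_C = \eta_{GD}\circ g^\flat$, so $(g^\sharp)^\flat = G\e_D\circ\eta_{GD}\circ g^\flat$. By the second triangle identity \eqref{trianglecomp}, $G\e_D\circ\eta_{GD} = \id_{GD}$, whence $(g^\sharp)^\flat = g^\flat$. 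Together these two calculations show the assignments are mutually inverse, hence bijections.

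The only real subtlety is bookkeeping: matching each naturality square to the correct component of $\eta$ or $\e$ (note the component indices $\e_{FC}$ and $\eta_{GD}$, which is exactly where the triangle identities are stated in components), and invoking the correct one of the two triangle identities in each of the two computations. There is no genuine obstacle beyond this; the content is really just these two diagram chases.
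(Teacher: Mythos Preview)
Your proof is correct and follows essentially the same approach as the paper: both compute the two composites $\sharp\circ\flat$ and $\flat\circ\sharp$ by expanding via functoriality, then invoke naturality of $\e$ (resp.\ $\eta$) followed by the appropriate triangle identity to reduce to the identity. The paper presents these two steps as a commuting triangle pasted to a naturality square, while you write out the same chain equationally, but the content is identical.
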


\begin{proof}
 Let's start with $f^\sharp:FC\to D$. We apply \eqref{isoeta}, to get $Gf^\sharp \circ \eta_C:C\to GD$, and then we apply \eqref{isoepsilon} to it, to get $\e_D\circ F(Gf^\sharp \circ \eta_C):FC\to D$. We have to prove that 
 \begin{equation}\label{fsharpisfsharp}
 \e_D\circ F(Gf^\sharp \circ \eta_C) \;=\; f^\sharp.
 \end{equation}
 Now consider the following diagram,
 $$
 \begin{tikzcd}
  FC \ar{dr}[swap]{\id_{FC}} \ar{r}{F\eta_C} & FGFC \ar{d}{\e_{FC}} \ar{r}{FGf^\sharp} & FGD \ar{d}{\e_D} \\
  & FC \ar{r}{f^\sharp} & D
 \end{tikzcd}
 $$
 The triangle on the left commutes, as it's one of the triangle identities \eqref{trianglecomp}, and the square on the right commutes by naturality of $\e$. So \Cref{fsharpisfsharp} follows. 
 
 Dually, let's now take $g^\flat:C\to GD$. We apply \eqref{isoepsilon}, to get $\e_D\circ Fg^\flat:FC\to D$, and then \eqref{isoeta}, to get $G(\e_D\circ Fg^\flat) \circ \eta_C$. We have to prove that 
 \begin{equation}\label{gflatisgflat}
 G(\e_D\circ Fg^\flat) \circ \eta_C \;=\; g^\flat.
 \end{equation}
 So consider the diagram
 $$
 \begin{tikzcd}
  C \ar{d}[swap]{\eta_C} \ar{r}{g^\flat} & GD \ar{d}[swap]{\eta_{GD}} \ar{dr}{\id_{GD}} \\
  GFC \ar{r}[swap]{GFg^\flat} & GFGD \ar{r}[swap]{G\e} & GD
 \end{tikzcd}
 $$
 The square on the left commutes by naturality of $\eta$, and the triangle on the right is the other triangle identity of \eqref{trianglecomp}. Therefore \Cref{gflatisgflat} follows, which means that \eqref{isoeta} and \eqref{isoepsilon} are mutually inverse.
\end{proof}

We have therefore proven:
\begin{thm}[Alternative definition of adjunction]
 An adjunction $F\ladj G$ is equivalently given by a pair of natural transformations $\eta:\id_\cat{C}\Rightarrow G\circ F$ and $\e:F\circ G\Rightarrow \id_\cat{D}$ satisfying the triangle identities \eqref{trianglegeneral}.
\end{thm}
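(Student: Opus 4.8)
The plan is to read the theorem as the statement that two constructions — one sending an adjunction to a pair $(\eta,\e)$ of natural transformations, the other sending such a pair to an adjunction — are both well-defined and mutually inverse, and then to observe that essentially all the work has already been carried out in the preceding lemmas. First I would recall the forward direction: given an adjunction $F\ladj G$, \Cref{etanatural} (and the dual statement for $\e$, left as exercise above) produces the unit $\eta:\id_\cat{C}\Rightarrow G\circ F$ and counit $\e:F\circ G\Rightarrow\id_\cat{D}$ as genuine natural transformations, and \Cref{triangleidlemma} shows that they satisfy the triangle identities \eqref{trianglegeneral}. So an adjunction does yield a pair of the required kind.

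Conversely, suppose we are handed natural transformations $\eta:\id_\cat{C}\Rightarrow G\circ F$ and $\e:F\circ G\Rightarrow\id_\cat{D}$ satisfying \eqref{trianglegeneral}. I would define the candidate transposition maps by the formulas \eqref{isoeta} and \eqref{isoepsilon}, namely $f^\sharp\mapsto Gf^\sharp\circ\eta_C$ and $g^\flat\mapsto\e_D\circ Fg^\flat$. By \Cref{triangleidsufficient} these two assignments are mutually inverse, so each is a bijection $\Hom_\cat{D}(FC,D)\cong\Hom_\cat{C}(C,GD)$. It then remains to check that this bijection is natural in $C$ and in $D$; this is a direct computation from the naturality squares of $\eta$ and $\e$ together with functoriality of $F$ and $G$. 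Concretely, for $k:D\to D'$ one has $G(k\circ f^\sharp)\circ\eta_C=Gk\circ Gf^\sharp\circ\eta_C$, which is exactly the equation \eqref{natD}, and for $h:C\to C'$ the analogous identity \eqref{natC} follows from naturality of $\eta$. Hence the pair $(\eta,\e)$ determines an adjunction $F\ladj G$.

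Finally, to justify the word \emph{equivalently} I would verify that the two passages invert one another. Starting from an adjunction, extracting $\eta_C=(\id_{FC})^\flat$ and then rebuilding the bijection via \eqref{isoeta} gives $f^\sharp\mapsto Gf^\sharp\circ\eta_C$, and this equals the original $f^\flat$ by \eqref{natD} applied with $k=f^\sharp$ and $f^\sharp$ replaced by $\id_{FC}$; the counit is handled dually. In the other direction, starting from a pair $(\eta,\e)$, forming the associated adjunction and then reading off its unit amounts to evaluating \eqref{isoeta} at $f^\sharp=\id_{FC}$, which returns $G(\id_{FC})\circ\eta_C=\eta_C$, and similarly \eqref{isoepsilon} at $g^\flat=\id_{GD}$ returns $\e_D$. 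So the round-trips are the identity, and the correspondence is a genuine bijection.

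The main obstacle is not any single hard step but the bookkeeping in the converse direction: one must take care that the formulas \eqref{isoeta} and \eqref{isoepsilon} really assemble into transformations that are natural in \emph{both} variables, since the triangle identities by themselves only guarantee bijectivity object-by-object, and naturality has to be extracted separately from the naturality of $\eta$ and $\e$. A cleaner route, if one wishes to avoid re-deriving it, is to invoke the converse of \Cref{usefuldiag} (the exercise immediately after it), which says precisely that compatibility of the transposes with all morphisms of $\cat{C}$ and $\cat{D}$ is equivalent to naturality of the family of bijections; plugging in the formulas for $\flat$ and $\sharp$ then makes naturality immediate.
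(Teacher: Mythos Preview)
Your proposal is correct and follows essentially the same approach as the paper, which simply states ``We have therefore proven'' immediately after \Cref{triangleidsufficient} and treats the theorem as a direct corollary of the preceding lemmas. In fact you are more careful than the paper: you explicitly verify naturality of the bijection in both variables (which the paper asserts in the statement of \Cref{triangleidsufficient} but does not check in its proof) and you also verify that the two constructions are mutually inverse, which the paper leaves implicit.
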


This result is very convenient, since the unit and counit are more explicit, and usually easier to work with in mathematical practice. 

\begin{ex}[linear algebra]\label{trianglevect}
 Show that the triangle identities hold for the adjunction between sets and vector spaces of \Cref{adjvectset}. 
\end{ex}

\begin{ex}[group theory]\label{trianglegrp}
 Show that the triangle identities hold for the adjunction between  sets and groups of \Cref{adjgrpset}.
\end{ex}

\subsection{Example: the adjunction between categories and multigraphs}\label{adjcatgraph}

Here is an example of how to describe an adjunction in terms of its units and counits, in this case between categories and multigraphs. 

Let $\cat{C}$ be a small category. We can associate to $\cat{C}$ its \emph{underlying multigraph} $U(\cat{C})$, where the vertices of $U(\cat{C})$ are the objects of $\cat{C}$, and the edges of $U(\cat{C})$ are the morphisms of $\cat{C}$ (all of them, including the identities). 
Take now another small category $\cat{D}$ and a functor $F:\cat{C}\to\cat{D}$. There is an induced morphism of multigraphs $U(\cat{C})\to U(\cat{D})$, which we denote by $U(F)$, which maps the vertex of $U(\cat{C})$ corresponding to an object $X$ of $\cat{C}$ to the vertex of $U(\cat{D})$ corresponding to the object $FX$ of $\cat{D}$. On edges, $U(F)$ maps the edge of $U(\cat{C})$ corresponding to the morphism $f:X\to Y$ of $\cat{C}$ to the edge of $U(\cat{D})$ corresponding to the morphism $Ff:FX\to FY$ of $\cat{D}$.
The assignment $\cat{C}\mapsto U(\cat{C}), F\mapsto U(F)$ preserves identities and composition, and so we have a functor $U:\cat{Cat}\to\cat{MGraph}$. We can view $U$ as a forgetful functor, which takes a (small) category and forgets its unit and composition structure, giving only its underlying graph. 

We want now to construct a left-adjoint to the functor $U$. So consider a directed multigraph $G$, and let's try to obtain a (small) category from it in a canonical way, which we will call $\cat{P}(G)$. The objects of $\cat{P}(G)$ are the vertices of $G$. As morphisms, we would like to have all possible walks in $G$, including the trivial walk at each vertex (which will be the identity). Now, recall from \Cref{graphtoset} that an $n$-chain of $G$ is an $n$-tuple of edges 
$$
(e_1,\dots,e_n)
$$ 
of $G$ which are head-to-tail, that is, such that the target of $e_i$ is the source of $e_{i+1}$ for all $i$. Let's also, for convenience, call $1$-chains simply the edges of $G$, and $0$-chains the vertices, seen as ``trivial chains''. Denoting by $\mathit{Chain}_n(G)$ the set of $n$-chains of $G$, we now define the set of morphism of $\cat{P}(G)$ to be the disjoint union
$$
\coprod_{n\ge 0} \mathit{Chain}_n(G) ,
$$
i.e.~the morphisms of $\cat{P}(G)$ are all the chains of arbitrary (but finite) length in $G$. We define the source of $(e_1,\dots,e_n)$ to be the source of $e_1$, and the target to be the target of $e_n$. The identities will be the chains of length zero, and the composition is given by the composition of chains. 

\begin{ex}
 Show that the composition of chains, defined as above, is associative and unital, so that $\cat{P}(G)$ is indeed a category. (Hint: there is not that much to prove.)
\end{ex}

\begin{deph}
 The category $\cat{P}(G)$ associated to the multigraph $G$ goes under the names of \emph{free category over $G$}, or \emph{fundamental category of $G$}, or also \emph{path category of $G$}. 
\end{deph}

Now let's define the action of $\cat{P}$ on the morphisms. Given multigraphs $G,H$ and a morphism of multigraphs $f:G\to H$, we define the functor $\cat{P}f:\cat{P}(G)\to\cat{P}(G)$ to be:
\begin{itemize}
 \item On objects of $\cat{P}(G)$, it maps the object corresponding to the vertex $x$ of $G$ to the object corresponding to the vertex $f(x)$ of $H$;
 \item On morphisms of $\cat{P}(G)$, it maps the morphism corresponding to the chain $(e_1,\dots,e_n)$ of $G$ to the morphism corresponding to the chain $(f(e_1),\dots,f(e_n))$ of $H$. This is again a chain, since $f$ preserves adjacency (it is a morphism of multigraphs).
\end{itemize}

This makes the assignment $G\mapsto \cat{P}G, f\mapsto \cat{P}f$ a functor $\cat{P}:\cat{MGraph}\to\cat{Cat}$.

\begin{ex}
 Prove that this assignment preserves identities and composition, so that it is indeed a functor. (Hint: there is not that much to prove.) 
\end{ex}

Mind the different levels, which are possibly confusing: $\cat{P}$ is a functor $\cat{MGraph}\to\cat{Cat}$. But also, given a morphism $f:G\to H$ of $\cat{MGraph}$, its image $\cat{P}f:\cat{P}G\to\cat{P}H$ is a functor too, in the sense of a morphism of $\cat{Cat}$.

We will now show that $\cat{P}$ is left-adjoint to $U$, and describe the adjunction in terms of the unit and counit. Let's start with the unit. Consider a multigraph $G$. We can form the category $\cat{P}G$, and then look at the underlying multigraph $U\cat{P}G$. This is in general not the same as the multigraph we started with: there will be additional edges, corresponding to the identities and compositions that we have added. For example, if we start with the graph $G$ given by
$$
\begin{tikzcd}
 x \ar{r}{e_1} & y \ar{r}{e_2} & z ,
\end{tikzcd}
$$
the multigraph $U\cat{P}G$ will be given by
$$
\begin{tikzcd}
x \ar[out=60,in=120,loop,swap,distance=0.8cm] \ar[bend right,swap]{rr}{(e_1,e_2)} \ar{r}{e_1} & y \ar[out=60,in=120,loop,swap,distance=0.8cm] \ar{r}{e_2} & z \ar[out=60,in=120,loop,swap,distance=0.8cm]
\end{tikzcd}
$$
where the loops are coming from the identities in $\cat{P}G$.

\begin{remark}
This is related to the fact that the category \emph{generated} by this diagram:
 $$
\begin{tikzcd}
 X \ar{r}{f} & Y \ar{r}{g} & Z ,
\end{tikzcd}
$$
is implicitly the one that \emph{actually} looks like this:
$$
\begin{tikzcd}
X \ar[out=60,in=120,loop,"\id_X",swap,distance=0.8cm] \ar[bend right,swap]{rr}{g\circ f} \ar{r}{f} & Y \ar[out=60,in=120,loop,"\id_Y",swap,distance=0.8cm] \ar{r}{g} & Z \ar[out=60,in=120,loop,"\id_Z",swap,distance=0.8cm]
\end{tikzcd}
$$
The functor $\cat{P}$ is actually the mathematically rigorous way of generating a category by ``adding in identities and composites''. We have implicitly used it all the times we have used diagrams to represent pieces of a category. 
\end{remark}

So, in general, $G$ and $U\cat{P}G$ are different. However, there is a canonical inclusion of $G$ into $U\cat{P}G$ (see also the example above):
\begin{itemize}
 \item $G$ and $U\cat{P}G$ have the same vertices;
 \item All the edges of $G$ are also edges of $U\cat{P}G$ (but the latter may have more edges). This is because all the edges of $G$ appear in the morphisms of $\cat{P}G$ (they are the 1-chains), which in turns are the edges of $U\cat{P}G$.
\end{itemize}
We denote the inclusion of $G$ into $U\cat{P}G$ by $\eta_G:G\to U\cat{P}G$. It respects incidence, and so it is a morphism of $\cat{MGraph}$.

\begin{ex}
 Show that the map $\eta_G:G\to U\cat{P}G$ is natural in $G$. That is, show that for each morphism of multigraphs $f:G\to H$, the following diagram commutes.
 $$
 \begin{tikzcd}
  G \ar{d}{f} \ar{r}{\eta_G } & U\cat{P}G \ar{d}{U\cat{P}f} \\
  H \ar{r}{\eta_H } & U\cat{P}H
 \end{tikzcd}
 $$
 (Hint: this becomes almost trivial once you see how the morphism $U\cat{P}f$ acts on those edges of $U\cat{P}G$ which lie in the image of $\eta_G$.)
\end{ex}

By the exercise above, we have a natural transformation $\eta:\id_\cat{MGraph}\Rightarrow \cat{P}\circ U$. 
This will be the unit of our adjunction. Note the analogy with the unit of the adjunction of between sets and vector spaces (\Cref{unitvect}): here, just like there, the unit is the embedding of the ``trivial cases''. Here it selects the ``primitive morphisms'', the ones not obtained by composition of other morphisms, in \Cref{unitvect} it selected the ``primitive vectors'', the ones obtained by the trivial linear combinations in the form $1x$ for $x\in X$.

Let's now turn to the counit. Let $\cat{D}$ be a small category. We can look at its underlying multigraph $U\cat{D}$, and then at the category $\cat{P}U\cat{D}$ generated by it. Again, this will in general be different from the category we started with. In particular,
\begin{itemize}
 \item The objects of $\cat{D}$ and $\cat{P}U\cat{D}$ are the same;
 \item The morphisms of $\cat{P}U\cat{D}$ are \emph{tuples} of composable morphisms of $\cat{D}$, including empty tuples one for each object of $\cat{D}$. This is because each morphism of $\cat{D}$ gives rise to an edge of the multigraph $U\cat{D}$, and by applying $\cat{P}$, the morphisms of $\cat{P}U\cat{D}$ will be chains of edges of $U\cat{D}$ (including the 0-chains), i.e.~chains of composable morphisms.
\end{itemize}
Now, mind the difference between a \emph{chain of composable morphisms} $(f_1,\dots,f_n)$ and their \emph{actual composition} $f_n\circ\dots\circ f_1$. The morphisms of $\cat{P}U\cat{D}$ are in the form $(f_1,\dots,f_n)$, where all the $f_i$ are morphisms of $\cat{D}$. 
This should already give a feeling of what the counit map will be: it will be the \emph{map actually performing the composition}. In rigor, $\e_\cat{D}:\cat{P}U\cat{D}\to\cat{D}$ is the functor which
\begin{itemize}
 \item On objects, it maps an object of $\cat{P}U\cat{D}$ to the corresponding object in $\cat{D}$ (remember, the two categories have the same objects);
 \item On morphisms, it maps a tuple of composable morphisms of $\cat{D}$ $(f_1,\dots,f_n)$ (which make up a morphism of $\cat{P}U\cat{D}$) to their actual composition in $\cat{D}$, $f_n\circ\dots\circ f_1$:
 $$
\begin{tikzcd}[column sep=small]
 A \ar{r}{f_1} & B \ar{r}{f_2} & C \ar{r}{f_3} & D  & \, \ar[mapsto]{rrrr}{\e_\cat{D}} &&&& \, & A \ar{rrr}{f_3\circ f_2\circ f_1} &&& D
\end{tikzcd}
$$
 It also maps empty tuples (there's one for each object) to the identity of the corresponding object of $\cat{D}$. 
\end{itemize}

This gives a functor $\e_\cat{D}:\cat{P}U\cat{D}\to\cat{D}$ for each small category $\cat{D}$.

\begin{ex}
 Show that $\e_\cat{D}$ is indeed a functor, i.e.~it respects identity and composition. (Hint: we have just said what it does on the identities.)
\end{ex}

So $\e_\cat{D}$ is a morphism of the category $\cat{Cat}$.

\begin{ex}
 Show that $\e_\cat{D}$ is natural in $\cat{D}$. That is, for every small categories $\cat{C},\cat{D}$ and every functor $F:\cat{C}\to\cat{D}$, the following diagram commutes:
 $$
 \begin{tikzcd}
  \cat{P}U\cat{C} \ar{d}[swap]{\cat{P}U(F)} \ar{r}{\e_\cat{C}} & \cat{C} \ar{d}{F} \\
  \cat{P}U\cat{D} \ar{r}{\e_\cat{D}} & \cat{D}
 \end{tikzcd}
 $$
 (Hint: the assert becomes almost trivial once you see what the functor $\cat{P}U(F)$ does on the morphisms of $\cat{P}U\cat{D}$.)
\end{ex}

Therefore we get a natural transformation $\e:\cat{P}\circ U\Rightarrow\id_\cat{Cat}$. This will be the counit of our adjunction. Note the similarity with the counit of the adjunction between sets and vector spaces (\Cref{counitvect}): there, the counit was mapping formal linear expressions to their actual results. Here, the counit is mapping composable morphisms to their actual composition (and trivial chains to identities). Here too, the counit is exactly encoding the extra structure that categories have over graphs: identities and composition. (So, also this adjunction is monadic, as we will see in \Cref{catgraphmonadic}.)

In order to establish the adjunction, we now need to prove the triangle identities \eqref{trianglecomp}. 
The first identity says that for each multigraph $G$, the following diagram of $\cat{Cat}$ must commute,
$$
\begin{tikzcd}
 \cat{P}G \ar{dr}[swap]{\id_{\cat{P}G}} \ar{r}{\cat{P}\eta_G} & \cat{P}U\cat{P}G \ar{d}{\e_{\cat{P}G}} \\
 & \cat{P}G
 \end{tikzcd}
$$
Now, all the categories in the diagram above have the same objects, and all the functors are just the identities on objects. So let's see what happens on morphisms. A morphism of $\cat{P}G$ is a chain
$(e_1,\dots,e_n)$ of edges of $G$. The map $\cat{P}\eta_G$, by definition of $\cat{P}$ on morphisms, acts on $(e_1,\dots,e_n)$ by applying $\eta_G$ to each entry of the tuple, that is, it gives the tuple $(\eta_G(e_1),\dots,\eta_G(e_n))$. Each entry of the tuple is in the form $\eta_G(e_i)$. What $\eta_G$ does on an edge $e_i$ is that it includes it in the morphisms of $\cat{P}G$ as the 1-chain $(e_i)$. Therefore we get the chain 
$$
((e_1),\dots,(e_n))
$$
Mind that this has \emph{two} levels of brackets: this is a \emph{chain of 1-chains}.
Applying the counit $\e_{\cat{P}G}$ of $\cat{P}G$ gives the composition 
$$
(e_n)\circ \dots \circ (e_1) .
$$
But the composition in $\cat{P}G$ is just the concatenation of chains, in this case, of 1-chains, and so we get
$$
(e_1,\dots,e_n) ,
$$
which gives the tuple we started with, and so it gives the same result as applying the identity of $\cat{P}G$. Therefore the diagram commutes. 

The second triangle identity says that for every category $\cat{D}$, the following diagram of multigraphs must commute,
$$
\begin{tikzcd}
 U\cat{D} \ar{dr}[swap]{\id_{U\cat{D}}} \ar{r}{\eta_{U\cat{D}}} & U\cat{P}U\cat{D} \ar{d}{U\e_\cat{D}} \\
 & U\cat{D}
\end{tikzcd}
$$
Now all the multigraphs in the diagram have the same vertices, and all the morphisms in the diagram are the identity on vertices, so let's focus on edges. The edges of $U\cat{D}$ are the morphisms of $\cat{D}$, so let $f:X\to Y$ be a morphism of $\cat{D}$, seen as an edge of $U\cat{D}$. The unit map $\eta_{U\cat{D}}$ embeds this edge into the morphisms of $\cat{P}U\cat{D}$ (seen as edges of $U\cat{P}U\cat{D})$ as the 1-chain $(f)$. Now we apply the map $U\e_\cat{D}$, which returns the edge associated to the morphism $\e_\cat{D}(f)$. The latter is a ``trivial composition'' of only one morphism, so it just returns itself, i.e.~$f$ (it's a trivial chain of only one arrow). Therefore we just get $f$, which again corresponds to the edge we started with, so it is the same result as applying the identity of $U\cat{D}$. Therefore this diagram also commutes.

In conclusion, we have proven that the functor $\cat{P}:\cat{MGraph}\to\cat{Cat}$ is left-adjoint to the functor $U:\cat{Cat}\to\cat{MGraph}$. 
This is one of the most important connections between graph theory and category theory. 

\begin{ex}
 Write down explicitly the universal properties of the unit and counit associated to this adjunction.  
\end{ex}

\begin{ex}[linear algebra, group theory]
 If you have solved \Cref{trianglevect} or \Cref{trianglegrp}: can you see the resemblance between the meaning of the triangle identities here and in those contexts?
\end{ex}

\begin{ex}
 Construct an adjunction between sets and monoids similar to the one of \Cref{adjgrpset}. What is the relation between this construction and the adjunction $\cat{P}\ladj U$ that we have just seen?
\end{ex}

\begin{ex}[algebraic topology, graph theory]
 Why is the category $\cat{P}G$ sometimes called the ``fundamental category'' of $G$? What is its relation with the fundamental group (\Cref{fundamentalgroup})?
\end{ex}

\section{Adjunctions, limits and colimits}\label{sec_adj_lim}

Adjoint functors interact very well with limits and colimits.

\begin{thm}\label{radjcont}
 Right-adjoint functors are continuous. 
\end{thm}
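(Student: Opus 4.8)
The plan is to deduce this from the already-proved fact that representable functors are continuous (\Cref{repfuncontinuous}), using the Yoneda embedding theorem (\Cref{yonedathm}) to detect isomorphisms via representable presheaves. Suppose $G:\cat{D}\to\cat{C}$ is right-adjoint to some $F:\cat{C}\to\cat{D}$, and let $D:\cat{J}\to\cat{D}$ be a diagram whose limit $\lim D$ exists in $\cat{D}$. I want to show that $\lim (G\circ D)$ exists in $\cat{C}$ and is isomorphic to $G(\lim D)$. The natural strategy is to show that $G(\lim D)$ satisfies the universal property of the limit of $G\circ D$, i.e.\ that the presheaf $\Cone(-,G\circ D):\cat{C}^\op\to\Set$ is representable by $G(\lim D)$.

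First I would compute, for an arbitrary object $C$ of $\cat{C}$, the set $\Hom_\cat{C}(C,G(\lim D))$. By the adjunction $F\ladj G$ this is naturally isomorphic to $\Hom_\cat{D}(FC,\lim D)$. By the definition of limit, this is naturally isomorphic to $\Cone(FC,D)$, the set of cones over $D$ with tip $FC$; explicitly a cone is a family of arrows $FC\to DJ$ natural in $J$, i.e.\ an element of $\lim_{J}\Hom_\cat{D}(FC,DJ)$ (using that $\Set$ is complete and that limits of $\Set$-valued diagrams compute cones, cf.\ the discussion around \Cref{explicitlimit}). Applying the adjunction again in each component, $\Hom_\cat{D}(FC,DJ)\cong\Hom_\cat{C}(C,GDJ)$ naturally in $J$, so $\lim_J\Hom_\cat{D}(FC,DJ)\cong\lim_J\Hom_\cat{C}(C,GDJ)$, which is exactly $\Cone(C,G\circ D)$. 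Chaining these bijections gives a bijection
$$
\Hom_\cat{C}(C,G(\lim D))\;\cong\;\Cone(C,G\circ D),
$$
and I would check it is natural in $C$ (each step is natural: the adjunction bijection is natural in $C$ by hypothesis, and the limit-in-$\Set$ construction is functorial). This natural isomorphism says precisely that $\Cone(-,G\circ D)$ is representable, with representing object $G(\lim D)$, hence $\lim(G\circ D)$ exists and equals $G(\lim D)$ up to isomorphism; by Yoneda the representing object is unique up to isomorphism, so the identification is canonical.

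An alternative, slightly slicker route avoids recomputing cones: for any diagram $D:\cat{J}\to\cat{D}$ with a limit, and any representable presheaf, \Cref{reprpresheaveslim} already tells us representables turn limits into limits; combined with the adjunction one gets $\Hom_\cat{C}(C,G(\lim D))\cong\Hom_\cat{D}(FC,\lim D)\cong \lim_J\Hom_\cat{D}(FC,DJ)\cong\lim_J\Hom_\cat{C}(C,GDJ)$, and one recognizes the last expression as the cone set over $G\circ D$, concluding as before. The main obstacle, and the part requiring genuine care rather than formula-pushing, is the bookkeeping of \emph{naturality}: one must verify that the composite bijection is natural in $C$ (so that it assembles into a natural isomorphism of presheaves and not merely a pointwise bijection), and that the limit $\lim_J\Hom_\cat{C}(C,GDJ)$ really is the cone set $\Cone(C,G\circ D)$ with the correct functorial structure in $C$. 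Once the naturality is pinned down, representability — and hence the theorem, together with its dual that left-adjoints are cocontinuous — follows immediately from the Yoneda embedding.
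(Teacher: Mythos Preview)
Your proof is correct and follows essentially the same route as the paper: show that $G(\lim D)$ represents $\Cone(-,G\circ D)$ via the chain $\Hom_\cat{C}(C,G(\lim D))\cong\Hom_\cat{D}(FC,\lim D)\cong\Cone(FC,D)\cong\Cone(C,G\circ D)$, natural in $C$. The only organizational difference is that the paper isolates the step $\Cone(FC,D)\cong\Cone(C,G\circ D)$ into a separate lemma (\Cref{adjcone}), proving it by a direct diagram-chase on the cone triangles rather than by passing through limits in $\Set$ as you do.
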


Let's see what this statement means. Let $\cat{C}$ and $\cat{D}$ be categories, and let $L:\cat{C}\to\cat{D}$ and $R:\cat{D}\to\cat{C}$ be functors with an adjunction $L\ladj R$. Let also $E:\cat{J}\to\cat{D}$ be a diagram in $\cat{D}$ admitting a limit. Then the limit of the diagram 
$$
\begin{tikzcd}
\cat{J} \ar{r}{E} & \cat{D} \ar{r}{R} & \cat{C}
\end{tikzcd}
$$
exists in $C$, and
$$
\lim R\circ E \;\cong\; R\big( \lim E \big) .
$$

Reversing all the arrows of $\cat{C}$ and $\cat{D}$, we obtain immediately

\begin{cor}\label{ladjcocont}
 Left-adjoint functors are cocontinuous. 
\end{cor}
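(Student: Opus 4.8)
The statement to prove is \Cref{ladjcocont}: left-adjoint functors are cocontinuous. This is the formal dual of \Cref{radjcont} (right-adjoint functors are continuous), so the entire plan is to invoke the duality principle rather than redo any real work.

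The plan is as follows. First I would recall the setup: we have functors $L:\cat{C}\to\cat{D}$ and $R:\cat{D}\to\cat{C}$ with $L\ladj R$, and a diagram $E:\cat{J}\to\cat{C}$ that admits a colimit in $\cat{C}$; we must show $\colim L\circ E$ exists in $\cat{D}$ and equals $L(\colim E)$. The key observation is that an adjunction $L\ladj R$ in the pair $(\cat{C},\cat{D})$ is exactly an adjunction $R^\op\ladj L^\op$ in the pair $(\cat{D}^\op,\cat{C}^\op)$: the natural bijection $\Hom_\cat{D}(LC,D)\cong\Hom_\cat{C}(C,RD)$ rewrites verbatim as $\Hom_{\cat{C}^\op}(R^\op D, C)\cong\Hom_{\cat{D}^\op}(D, L^\op C)$, simply because $\Hom_{\cat{C}^\op}(X,Y)=\Hom_\cat{C}(Y,X)$ and likewise for $\cat{D}$. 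So the opposite functor $L^\op:\cat{C}^\op\to\cat{D}^\op$ is a \emph{right}-adjoint.

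Next I would apply \Cref{radjcont} to $L^\op$. The diagram $E:\cat{J}\to\cat{C}$ corresponds to the diagram $E^\op:\cat{J}^\op\to\cat{C}^\op$, and a colimit of $E$ in $\cat{C}$ is precisely a limit of $E^\op$ in $\cat{C}^\op$ (this is immediate from the definitions of limit and colimit as representing objects of $\Cone(-,F)$ and $\Cone(F,-)$ — reversing all arrows swaps cones with cocones). Since $E^\op$ admits a limit, \Cref{radjcont} applied to the right-adjoint $L^\op$ tells us that $\lim L^\op\circ E^\op$ exists in $\cat{D}^\op$ and is isomorphic to $L^\op(\lim E^\op)$. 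Translating back through the $(-)^\op$ correspondence: $L^\op\circ E^\op = (L\circ E)^\op$, so its limit in $\cat{D}^\op$ is the colimit of $L\circ E$ in $\cat{D}$, and the isomorphism becomes $\colim L\circ E\cong L(\colim E)$ in $\cat{D}$, as desired.

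There is essentially no obstacle here; the only thing requiring a word of care is spelling out that the passage to opposite categories genuinely turns the adjunction around (left becomes right) and genuinely turns colimits into limits, so that \Cref{radjcont} applies on the nose. Everything else is bookkeeping with $(-)^\op$. Thus the proof is just: ``Replace $\cat{C}$ and $\cat{D}$ by $\cat{C}^\op$ and $\cat{D}^\op$; then $L^\op$ is right-adjoint to $R^\op$, colimits in the original categories are limits in the opposite ones, and $L^\op$ is continuous by \Cref{radjcont}, which is exactly the assertion that $L$ is cocontinuous.''
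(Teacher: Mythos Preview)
Your proposal is correct and follows exactly the paper's approach: the paper simply says ``Reversing all the arrows of $\cat{C}$ and $\cat{D}$, we obtain immediately'' before stating the corollary, and even adds the same caveat you spell out---that because we reverse arrows in \emph{both} categories, left-adjoints send colimits to colimits rather than to limits. Your version just makes the duality bookkeeping (that $L^\op$ becomes a right-adjoint, that colimits become limits under $(-)^\op$) more explicit than the paper does.
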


As we did with \Cref{repfuncontinuous}, let's first try to understand the statement for the simpler case of binary products, and then prove it in general. Finally we will look at examples.

\begin{caveat}
Note that, differently from what we did when we dualized \Cref{repfuncontinuous} (obtaining \Cref{reprpresheaveslim}), we now are reversing arrows the arrows of \emph{both} $\cat{C}$ and $\cat{D}$. Therefore left-adjoint functors do \emph{not} map colimits into limits (as representable preshaves do), they map colimits into \emph{colimits}, i.e.~they preserve colimits.
\end{caveat}

\subsection{Right-adjoints and binary products}

Let $A$ and $B$ be objects of $\cat{D}$ admitting a product $A\times B$. We want to show that 
$$
R(A\times B) \;\cong\; RA \times RB .
$$
To do so, it suffices to check that $R(A\times B)$ satisfies the universal property of the product of $RA$ and $RB$ in $\cat{C}$ (why?). This means that we have to check that for every object $C$ of $\cat{C}$, to each pair of arrows $C\to RA$, $C\to RB$ there corresponds a unique arrow $C\to R(A\times B)$, filling in the following diagram.
$$
\begin{tikzcd}
 & C \ar{dr} \ar{dl} \uni{d} \\
 RA & R(A\times B) \ar{l}{Rp_1} \ar{r}[swap]{Rp_2} & RB
\end{tikzcd}
$$
In other words, we have to give a natural bijection 
$$
\Hom_\cat{C}(C,R(A\times B)) \;\cong\; \Hom_\cat{C}(C,RA) \times \Hom_\cat{C}(C,RB).
$$
Now we have the following chain of natural bijections,
\begin{align*}
\Hom_\cat{C}(C,R(A\times B)) \;&\cong\;\Hom_\cat{D}(LC,A\times B)  \\
\;&\cong\; \Hom_\cat{D}(LC,A)\times \Hom_\cat{D}(LC,B) \\
\;&\cong\; \Hom_\cat{C}(C,RA) \times \Hom_\cat{C}(C,RB) ,
\end{align*}
given respectively by the adjunction, by the universal property of $A\times B$ (or by \Cref{repfuncontinuous}, with naturality coming from \Cref{natisolimit}), and by the adjunction again (twice). 

This suffices to prove the statement for the case of binary products. But before moving on, let's try to interpret this fact intuitively. $R$ being right-adjoint to $L$ means the following. For each object $D$ of $\cat{D}$, the information that we can extract from $D$ by first applying $R$ and then probing the resulting object $RD$ in $\cat{C}$ with an object $C$ (by mapping $C$ to $RD$ in all possible ways), gives the same result as directly probing $D$ with the object $LC$ of $\cat{D}$. Now given the product $A\times B$, the features that we can extract by applying $R$ and then probing with $C$ are the same that we would obtain if we were to experiment with $A$ and $B$ separately. This is because applying $R$ and then probing with $C$ is equivalent to probing $A\times B$ directly with $LC$, and we know that, by the universal property of the product, probing $A\times B$ directly corresponds to probing $A$ and $B$ separately. (See also the remarks after \Cref{repfuncontinuous}.)
So in other words, right-adjoint functors \emph{do not exhibit complex behavior on products} in the sense of \Cref{complexity}.

\begin{ex}
 Can you interpret the behavior of left-adjoint functors dually?
\end{ex}

\subsection{General proof of \Cref{radjcont}}

In order to prove the theorem, we make use of the following lemma, which takes care of the diagram-chasing.

\begin{lemma}\label{adjcone}
 Suppose that $L\ladj R$, and let $E:\cat{J}\to\cat{D}$ be a diagram in $\cat{D}$. Then the adjunction induces a bijection
 $$
 \Cone(LC,E) \;\cong\; \Cone (C, R\circ E)
 $$
 for all objects $C$ of $\cat{C}$, natural in $C$.
\end{lemma}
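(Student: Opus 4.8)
The statement to prove is that an adjunction $L \ladj R$ induces a natural bijection $\Cone(LC, E) \cong \Cone(C, R \circ E)$ for every diagram $E : \cat{J} \to \cat{D}$ and every object $C$ of $\cat{C}$. The plan is to unpack what a cone is on each side and transport the data across the adjunction component by component, then check that the commutativity conditions are preserved, and finally verify naturality in $C$.

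First I would recall that a cone $\alpha \in \Cone(LC, E)$ with tip $LC$ consists of a family of morphisms $\alpha_J : LC \to EJ$ in $\cat{D}$, one for each object $J$ of $\cat{J}$, such that for every morphism $m : J \to J'$ of $\cat{J}$ the triangle with $\alpha_J$, $\alpha_{J'}$ and $Em$ commutes, i.e.\ $Em \circ \alpha_J = \alpha_{J'}$. Similarly a cone $\beta \in \Cone(C, R\circ E)$ with tip $C$ consists of morphisms $\beta_J : C \to REJ$ in $\cat{C}$ with $REm \circ \beta_J = \beta_{J'}$ for all $m : J \to J'$. The candidate bijection sends $\alpha$ to the family $(\alpha_J^\flat)_J$ of adjuncts, and conversely sends $\beta$ to the family $(\beta_J^\sharp)_J$. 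Since $\flat$ and $\sharp$ are mutually inverse bijections $\Hom_\cat{D}(LC, EJ) \to \Hom_\cat{C}(C, REJ)$ for each fixed $J$, the only thing left to check is that these assignments actually land in cones, i.e.\ that the triangle conditions are respected in both directions.

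This is exactly where \Cref{usefuldiag} does the work. Given the commuting triangle $Em \circ \alpha_J = \alpha_{J'}$ in $\cat{D}$, I want the triangle $REm \circ \alpha_J^\flat = \alpha_{J'}^\flat$ in $\cat{C}$. I would apply \Cref{usefuldiag} with the two parallel maps of the square taken appropriately: writing the triangle $Em \circ \alpha_J = \alpha_{J'}$ as a square (with one side an identity, or more cleanly taking $f^\sharp = \alpha_J$, $g^\sharp = \alpha_{J'}$, $h = \id_C$, $k = Em$), the lemma says the square in $\cat{D}$ commutes if and only if the corresponding square in $\cat{C}$, obtained by replacing $L$ on the left with $R$ on the right and the horizontal maps by their flats, commutes. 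The $\cat{C}$-side square is precisely $REm \circ \alpha_J^\flat = \alpha_{J'}^\flat$. The converse direction (a cone on $C$ gives a cone on $LC$) is the same argument run with $\sharp$ instead of $\flat$. So $\alpha \mapsto (\alpha_J^\flat)$ and $\beta \mapsto (\beta_J^\sharp)$ are well-defined and mutually inverse, giving the bijection.

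Finally I would verify naturality in $C$: given $h : C \to C'$ in $\cat{C}$, precomposition with $h$ on the $\Cone(C', R\circ E)$ side (which acts componentwise as $\beta_J \mapsto \beta_J \circ h$) must correspond under the bijection to precomposition with $Lh$ on the $\Cone(LC', E)$ side (componentwise $\alpha_J \mapsto \alpha_J \circ Lh$). Componentwise this is just the naturality in $C$ of the adjunction bijection $\Hom_\cat{D}(LC, EJ) \cong \Hom_\cat{C}(C, REJ)$, i.e.\ equation \eqref{natC} in the proof of \Cref{usefuldiag}, applied for each $J$. I don't expect any serious obstacle here; the main point to get right is simply matching up the indices and directions so that \Cref{usefuldiag} applies cleanly — the lemma was evidently set up precisely for this kind of "transport a commuting square across the adjunction" step, so the proof should be short once the cone data is written out explicitly.
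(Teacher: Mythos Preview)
Your proposal is correct and follows essentially the same approach as the paper: unpack cones as families of arrows, transport each component via $\flat$/$\sharp$, invoke \Cref{usefuldiag} (equivalently, naturality of the adjunction bijection in $D$) to show the triangle conditions correspond, and use naturality in $C$ for the last part. The paper's proof is slightly terser but the structure and the key lemma invoked are identical.
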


\begin{proof}[Proof of \Cref{adjcone}]
 A cone $\alpha$ over $E$ with tip $LC$ consists in particular of a tuple of arrows $\alpha_I:LC\to EI$ for each object $I$ of $\cat{J}$, in this form
 $$
 \begin{tikzcd}
 &  LC \ar{dr} \ar{d}{\alpha_{I'}} \ar{dl}[swap]{\alpha_I} \\
 EI & EI' & \cdots
 \end{tikzcd}
 $$
 Moreover, in order for this tuple to be a cone, we need that for every morphism $m:I\to I'$ of $\cat{J}$, $\alpha_{I'}=Em\circ\alpha_{I}$, i.e.~all these triangles must commute,
 \begin{equation}\label{concondL}
 \begin{tikzcd}[column sep=small]
  &  LC \ar{dr}{\alpha_{I'}} \ar{dl}[swap]{\alpha_I} \\
   EI \ar{rr}[swap]{Em} && EI' 
 \end{tikzcd}
 \end{equation}
 The adjunction $\Hom_\cat{D}(LC, D) \to \Hom_\cat{C}(C, RD)$ now maps all these arrows $\alpha_I:LC\to EI$ bijectively to arrows $\alpha_I^\flat:C\to REI$ in the form
 $$
 \begin{tikzcd}
 &  C \ar{dr} \ar{d}{\alpha_{I'}^\flat} \ar{dl}[swap]{\alpha_I^\flat} \\
 REI & REI' & \cdots
 \end{tikzcd}
 $$
 and arrows in this latter form belong to a cone over $R\circ E$ if and only if for every morphism $m:I\to I'$ of $\cat{J}$, we need $\alpha_{I'}^\flat=REm\circ\alpha_{I}^\flat$, i.e.~all these triangles must commute,
 \begin{equation}\label{concondR}
 \begin{tikzcd}[column sep=small]
  &  C \ar{dr}{\alpha_{I'}^\flat} \ar{dl}[swap]{\alpha_I^\flat} \\
   REI \ar{rr}[swap]{REm} && REI' 
 \end{tikzcd}
 \end{equation}
 and moreover every cone over $R\circ E$ of tip $C$ is in this form. But now, by naturality of the bijection $\flat:\Hom_\cat{D}(LC, D) \to \Hom_\cat{C}(C, RD)$ in the argument $D$ (or by \Cref{usefuldiag}), the diagram \eqref{concondL} commutes if and only if the diagram \eqref{concondR} does. Therefore the cones in $\Cone(LC,E)$ are mapped bijectively to the cones in $\Cone (C, R\circ E)$.
 
 The naturality of the bijection is guaranteed again by the naturality of the bijection $\flat:\Hom_\cat{D}(LC, D) \to \Hom_\cat{C}(C, RD)$ in the argument $C$.
\end{proof}

We are now ready to prove the theorem.

\begin{proof}[Proof of \Cref{radjcont}]
 We have to show that the object $R\big( \lim E \big)$ of $\cat{C}$ satisfies the universal property of the limit of $R\circ E$. That is, we have to give a bijection
 $$
 \Cone(C, R\circ E) \;\cong\; \Hom_\cat{C}(C, R(\lim E))
 $$
 for all objects $C$ of $\cat{C}$, natural in $C$. 
 
 Now, we have the following chain of natural bijections,
 \begin{align*}
  \Cone(C, R\circ E) \;\cong\; \Cone(LC,E) \;\cong\; \Hom_\cat{D}(LC,\lim E) \;\cong\; \Hom_\cat{C}(C, R(\lim E)) ,
 \end{align*}
 obtained, in order, by \Cref{adjcone}, by the universal property of $\lim E$, and by the adjunction.
\end{proof}

\subsection{Examples}

\begin{eg}[topology]
 The forgetful functor $\cat{Top}\to\cat{Set}$ has a left-adjoint (\Cref{adjtopset}) as well as a right-adjoint (\Cref{radjtopset}). Therefore it is continuous and cocontinuous. In particular, the underlying set of the coproduct of topological spaces is their disjoint union, and the underlying set of the product of topological spaces is their cartesian product. 
 Compare this also with the other results of \Cref{topsetlim}.
\end{eg}

\begin{eg}[linear algebra]
 The forgetful functor $\cat{Vect}\to\cat{Set}$ has a left-adjoint (\Cref{adjvectset}), therefore it preserves limits. In particular:
 \begin{itemize}
  \item The product of two vector spaces, isomorphic to their disjoint union, has as underlying set the cartesian product of the respective sets;
  \item The equalizer $E$ of a parallel pair of linear maps $f,g:V\to W$ is mapped to the equalizer of the underlying functions, which gives a subset $U(E)\subseteq U(V)$. In other words, $E$ is a vector space whose underlying set is a subset of the set underlying $V$, i.e.~a vector subspace of $V$.
 \end{itemize}
 Compare this also with the other results of \Cref{vecsetlim}. Note in particular that colimits are not preserved in general.

 Dually, the free functor $F:\cat{Set}\to\cat{Vect}$ preserves colimits. Therefore:
 \begin{itemize}
  \item The vector space freely generated by a disjoint union $A \coprod B$ is the direct sum of the spaces generated by $A$ and $B$ respectively;
  \item The quotient set of an equivalence relation gives rise to a quotient vector space (why is this a colimit?);
  \item The empty set generates the zero vector space.
 \end{itemize}
\end{eg}

\begin{ex}[group theory]
 Make statements similar to the example above for the adjunction between sets and groups (\Cref{adjgrpset}).
\end{ex}

\begin{ex}
 Define the product of two categories in $\cat{Cat}$. (Hint: by the adjunction of \Cref{adjcatgraph} between categories and multigraphs, the underlying multigraph to the product of two categories is necessarily the product of the two respective multigraphs.)
\end{ex}

\begin{ex}
 Define the coproduct of two categories in $\cat{Cat}$. Show that the fundamental category of a disjoint union of graphs is the coproduct of the respective fundamental categories.
\end{ex}

\section{The adjoint functor theorem for preorders}

It turns out that, under some conditions, \Cref{radjcont} admits a converse. Converse-like statements to \Cref{radjcont} are known under the name of \emph{adjoint functor theorems}, and say that under some conditions, if a functor $G:\cat{D}\to\cat{C}$ preserves all limits, then it has a left-adjoint. There are several adjoint functor theorems in the literature, see for example Section~4.6 of \cite{ctcontext}, or \href{https://ncatlab.org/nlab/show/adjoint+functor+theorem}{the nLab page on adjoint functor theorems (link)}.

An intuitive interpretation of the phenomenon underlying the adjoint functor theorems is the following. In some cases, we can \emph{reconstruct} what the left-adjoint to $G$ would do, thanks to the fact that $G$ preserves limits, i.e.~does not lose information on how more complex objects are created from simpler ones (for example by forming products). Of course, the condition that $G$ has to preserve all limits is necessary, by \Cref{radjcont}. The adjoint functor theorems say that, in some cases, this is also sufficient.

Here we will look at the simplest of the adjoint functor theorems, the \emph{adjoint functor theorem for preorders}, following the approach of \cite{sevensketches}. 

\begin{thm}[Adjoint functor theorem for preorders]\label{adjfuncthm}
 Let $(X,\le)$ and $(Y,\le)$ be partial orders (or preorders). Suppose that $Y$ has all infima (or meets), and let $g:Y\to X$ be a monotone function preserving all infima. Then $g$ has a lower adjoint $f:X\to Y$, given by 
 $$
 f(x) \;=\; \inf\{ y\in Y \,|\, x\le g(y) \} .
 $$
 
 In particular, a monotone map $g:Y\to X$ is the upper (or right-) adjoint of a Galois connection if and only if it preserves all infima.
\end{thm}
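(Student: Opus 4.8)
The plan is to verify directly that the proposed map $f(x) = \inf\{y \in Y \mid x \le g(y)\}$ is well-defined and forms a Galois connection with $g$, and then to note that the converse direction is just \Cref{radjcont} specialized to preorders. Since $Y$ has all infima, the set $\{y \in Y \mid x \le g(y)\}$ has an infimum in $Y$ for every $x \in X$, so $f$ is a well-defined function $X \to Y$; I would not even need monotonicity of $f$ as a separate hypothesis, since it will follow (or can be checked directly: if $x \le x'$ then $\{y \mid x' \le g(y)\} \subseteq \{y \mid x \le g(y)\}$, and the infimum of a larger set is smaller, giving $f(x) \le f(x')$).

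The core of the argument is to show that for all $x \in X$ and $y \in Y$,
$$
f(x) \le y \quad \text{if and only if} \quad x \le g(y),
$$
which is exactly the Galois connection condition $f \ladj g$ (the naturality condition being trivial in the preorder setting). First I would prove the easy direction: suppose $x \le g(y)$. Then $y$ itself belongs to the set $A_x := \{y' \in Y \mid x \le g(y')\}$, so $f(x) = \inf A_x \le y$ since the infimum is a lower bound. Conversely, suppose $f(x) \le y$. Here is where I need that $g$ preserves infima: applying $g$ to $f(x) = \inf A_x$ gives $g(f(x)) = g(\inf A_x) = \inf\{g(y') \mid y' \in A_x\}$. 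Now for every $y' \in A_x$ we have $x \le g(y')$ by definition of $A_x$, so $x$ is a lower bound for $\{g(y') \mid y' \in A_x\}$, hence $x \le \inf\{g(y') \mid y' \in A_x\} = g(f(x))$. Combining with $f(x) \le y$ and monotonicity of $g$, we get $x \le g(f(x)) \le g(y)$, as desired.

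This establishes the adjunction $f \ladj g$. The main obstacle — really the only nontrivial point — is recognizing that the infimum-preservation hypothesis is precisely what lets us pass from $f(x) \le y$ back to $x \le g(y)$: without it, $g(f(x))$ could be strictly above $x$ and the implication would fail. Everything else is bookkeeping with the definition of infimum as greatest lower bound. Finally, for the ``in particular'' clause: if $g$ is the upper adjoint of some Galois connection, then by \Cref{radjcont} (right adjoints are continuous, and in a preorder all limits are infima) $g$ preserves all infima; conversely, if $g$ preserves all infima then the construction above exhibits an explicit lower adjoint. Hence the two conditions are equivalent, completing the proof.
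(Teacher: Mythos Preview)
Your proof is correct and essentially the same as the paper's. The only organizational difference is that you verify the Galois connection bi-implication $f(x)\le y \iff x\le g(y)$ directly, whereas the paper verifies the unit $x\le g(f(x))$ and counit $f(g(z))\le z$ separately and then invokes the unit--counit characterization of adjunctions; the core computation (using that $g$ preserves infima to conclude $x\le g(f(x))$) is identical in both.
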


\begin{ex}[sets and relations]
 What is, explicitly, the corresponding statement for preorders?
\end{ex}

Immediately we also get the dual statement,
\begin{cor}
 Suppose that $X$ has all suprema (or joins).
 A monotone map $f:X\to Y$ is lower (or left-) adjoint if and only if it preserves all suprema.
\end{cor}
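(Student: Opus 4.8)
The plan is to prove the final corollary by dualizing the adjoint functor theorem for preorders (\Cref{adjfuncthm}). The key observation is that suprema in a poset $(X,\le)$ are exactly infima in the opposite poset $(X,\ge)$, and that a monotone map $f\colon X\to Y$ is left-adjoint to some $g$ precisely when the opposite map $f^\op\colon(X,\ge)\to(Y,\ge)$ is right-adjoint to $g^\op$. So I would set up the dictionary carefully and then simply quote the theorem.

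First I would make precise the passage to opposites. Recall from the section on the opposite category that $(X,\le)^\op\cong(X,\ge)$. A monotone map $f\colon(X,\le)\to(Y,\le)$ is the same data as a monotone map $f^\op\colon(X,\ge)\to(Y,\ge)$ (same underlying function, reversed order on both sides). Under this correspondence, an adjunction $f\ladj g$ in the sense of a Galois connection — i.e.\ $f(x)\le y \iff x\le g(y)$ for all $x\in X$, $y\in Y$ — becomes, reading the inequalities in the opposite orders, $y\ge f(x) \iff g(y)\ge x$, which says exactly $g^\op \ladj f^\op$ as a Galois connection between $(Y,\ge)$ and $(X,\ge)$. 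Hence: $f\colon X\to Y$ is a lower (left-) adjoint if and only if $f^\op\colon (X,\ge)\to(Y,\ge)$ is an upper (right-) adjoint.

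Next I would translate the hypotheses. Saying that $(X,\le)$ has all suprema is the same as saying that $(X,\ge)$ has all infima, since $\sup_{\le}S = \inf_{\ge}S$ for any subset $S\subseteq X$. Likewise, $f$ preserves all suprema (with respect to $\le$) if and only if $f^\op$ preserves all infima (with respect to $\ge$), because $f(\sup_{\le}S)=\sup_{\le}f(S)$ is literally the equation $f^\op(\inf_{\ge}S)=\inf_{\ge}f^\op(S)$. Now apply \Cref{adjfuncthm} with the partial order playing the role of ``$Y$'' taken to be $(X,\ge)$, the partial order playing the role of ``$X$'' taken to be $(Y,\ge)$, and the monotone map playing the role of ``$g$'' taken to be $f^\op$: the theorem gives that $f^\op$ is an upper adjoint iff it preserves all infima in $(X,\ge)$. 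Undoing the dictionary, this says $f$ is a lower adjoint iff it preserves all suprema in $(X,\le)$, which is the claim. If desired, one can also read off the explicit formula for the generated upper adjoint $g\colon Y\to X$ as $g(y)=\sup\{x\in X \mid f(x)\le y\}$, obtained by dualizing the formula in \Cref{adjfuncthm}.

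The only real subtlety — and the one point I would be careful to state cleanly rather than wave at — is the bookkeeping of which order gets reversed and the fact that both the left/right roles and the $\sup/\inf$ roles swap simultaneously, mirroring the \textbf{caveat} preceding \Cref{radjcont} about reversing arrows in both categories. There is no hard analytic content here; the entire proof is the verification that the opposite-order translation carries each hypothesis and conclusion of \Cref{adjfuncthm} to the corresponding statement of the corollary. I would present it as a short paragraph, explicitly invoking $(X,\le)^\op\cong(X,\ge)$ and the identity $\sup_{\le}=\inf_{\ge}$, and then a one-line appeal to \Cref{adjfuncthm}.
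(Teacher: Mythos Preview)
Your proposal is correct and matches the paper's approach: the paper simply states ``Immediately we also get the dual statement,'' and your argument spells out exactly that dualization, passing to the opposite posets so that suprema become infima and the left/right roles swap, then invoking \Cref{adjfuncthm}. Your explicit formula $g(y)=\sup\{x\in X \mid f(x)\le y\}$ for the upper adjoint is also the correct dualization, and is in fact what the paper asks the reader to supply in the exercise immediately following the corollary.
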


\begin{ex}[sets and relations]
 In this case, what is the explicit formula for the lower adjoint to $f$?
\end{ex}

Before proving the theorem, let's look at a practical example of how the proof of the theorem works.

\subsection{The case of convex subsets}

Consider again the case of \Cref{convsubsets}. The set $X$ is the set of all subsets of $\R^2$, the set $Y$ is the set of convex subsets of $\R^2$, both sets are ordered by inclusion, and the ``forgetful'' inclusion map $g:Y\to X$ is monotone. 

The infima in both sets are given by the intersection of sets (why?), and the map $g:Y\to X$ preserve those infima, since the intersection of convex sets is again convex (why?).
By \Cref{adjfuncthm}, this is enough to establish that $g$ has a left-adjoint $f$. Plugging in the thesis of the theorem, this map $f$ takes a subset $S\subseteq \R^2$ and gives the convex subset
$$
f(S) \;=\; \bigintersection \{ C \subseteq \R^2 \mbox{ convex} \,|\, S\subseteq C \} ,
$$
that is $f(S)$ must be the intersection of all the convex subsets of $\R^2$ which contain $S$. We know from \Cref{convsubsets} that is exactly the convex hull. 

The actual proof of the theorem follows this same line of reasoning, which, it turns out, is purely about order theory, and not really about convexity.

\subsection{Proof of \Cref{adjfuncthm}}

We will prove the statement for partial orders, the statement for more general preorders is analogous (why?).

Define for each $x$, 
$$
 f(x) \;\coloneqq\; \inf\{ y\in Y \,|\, x\le g(y) \} .
$$
First of all, we have to show that this assignment $x\mapsto f(x)$ is a monotone map (or a functor) $X\to Y$. So suppose that $x\le x'$. Then for all $y$ such that $x'\le g(y)$, we also have $x\le x'\le g(y)$, which means that 
$$
\{ y\in Y \,|\, x\le g(y) \}  \;\supseteq \;\{ y\in Y \,|\, x'\le g(y) \}.
$$
Taking an infimum over a \emph{larger or equal} subset gives us a \emph{smaller or equal} infimum, so 
$$
f(x) \;=\; \inf\{ y\in Y \,|\, x\le g(y) \} \;\le\; \inf\{ y\in Y \,|\, x'\le g(y) \} \;=\; f(x').
$$
So $f$ is monotone. 

Now we need to prove that $f$ is indeed lower adjoint to $g$. We will use the unit and counit characterization of \Cref{unitcounit}. Now since $g$ preserves infima, we have that for each $x\in X$,
$$
g(f(x)) \;=\; g\left( \inf\{ y\in Y \,|\, x\le g(y) \} \right) \;=\; \inf\{ g(y) \,|\, y\in Y , \,x\le g(y) \}
$$
and in particular the infimum on the right-hand side exists.
Since clearly $x$ is less or equal than every element of the set $\{ g(y) \,|\, y\in Y , \,x\le g(y) \}$, by the universal property of the infimum $x$ is also less or equal than the infimum of that set, which means 
$$
x \;\le\; \inf\{ g(y) \,|\, y\in Y , \,x\le g(y) \}\;=\; g(f(x)) .
$$
This inequality is the unit of the adjunction.
For the counit, let $z\in Y$. Then 
$$
f(g(z)) \;=\; \inf\{ y\in Y \,|\, g(z)\le g(y) \} .
$$
Now since $g(z)\le g(z)$, the element $z$ belongs to the set $\{ y\in Y \,|\, g(z)\le g(y) \}$, and so $z$ must be larger or equal than the infimum over that set, that is, 
$$
z \;\ge\; \inf\{ y\in Y \,|\, g(z)\le g(y) \} \;=\; f(g(z)) .
$$
This inequality is the counit of the adjunction. The naturality and triangle diagrams, since we are in a preorder, commute trivially. Therefore $f$ is left-adjoint to $g$.

\subsection{Further considerations and examples}\label{substructures}

\Cref{adjfuncthm} says that a monotone map is the upper adjoint of a Galois connection if and only if it preserves infima, but this says nothing about suprema. That is, an upper adjoint in general does \emph{not} preserve suprema. (\emph{Lower} adjoints preserve suprema, of course.)

This is why the intersection of convex subsets is convex, but the union of convex subsets, in general, is not. This is a very general phenomenon: many special properties of subsets are preserved by intersections but not by unions -- this is a sign that there is a Galois connection involved. 

\begin{eg}[linear algebra, group theory]
 Consider the following statements:
 \begin{itemize}
  \item The intersection of vector subspaces of $\R^3$ is always a vector subspace. The union is in general not (take the union of two lines for example).
  \item The intersection of subgroups of a group $G$ is again a subgroup. Their union is in general not (can you give a counterexample?).
 \end{itemize}
\end{eg}

\begin{ex}[sets and relations, linear algebra, group theory]
 Show that, for the two examples above, there is a Galois connection involved. (Hint: use \Cref{adjfuncthm}.)
\end{ex}

\begin{eg}[topology]\label{topclosure}
 Let $T$ be a topological space. Let $X$ be the poset of subsets of $T$, ordered by inclusion, and let $Y$ be the subposet of \emph{closed} subsets of $X$. There is a ``forgetful inclusion'' $g:Y\to X$, which maps closed subsets to just subsets. This map preserves infima, because the intersection of an arbitrary number of closed sets is closed. Therefore it has a lower adjoint $f:X\to Y$. Following the construction of \Cref{adjfuncthm}, this lower adjoint takes a set $S\subseteq T$, and gives the intersection of all the closed subsets of $T$ containing $S$. This is known as the \emph{closure of $S$} in topology.
\end{eg}

\begin{ex}[topology, tricky!]
 The \emph{union} of two closed sets is again closed. So does the map $g:Y\to X$ above also admit an \emph{upper} adjoint?
\end{ex}

\begin{eg}[sets and relations]
The inclusion $\Z\to\R$ clearly preserves both infima and suprema. Define now $\bar{\Z}$ and $\bar{\R}$ to be $\Z$ and $\R$, extended to have $+\infty$ and $-\infty$ too. The inclusion $\bar{\Z}\to\bar{\R}$ still preserves infima and suprema. Therefore it has an upper and a lower adjoint. This gives a slick solution to \Cref{round}.
\end{eg}

\newpage
\chapter{Monads and comonads}\label{secmonads}

\begin{deph}\label{defmonad}
	Let $\cat{C}$ be a category. A \emph{monad on $\cat{C}$} consists of:
	\begin{itemize}
		\item A functor $T:\cat{C}\to\cat{C}$;
		\item A natural transformation $\eta:\id_{\cat{C}}\Rightarrow T$ called \emph{unit};
		\item A natural transformation $\mu:TT \Rightarrow T$ called \emph{composition} or \emph{multiplication};
	\end{itemize}
	such that the following diagrams commute, called ``left and right unitality'' and ``associativity'', respectively.
	\begin{equation*}
		\begin{tikzcd}
			T \nat{r}{\eta T} \nat{dr}[swap]{\id} & TT \nat{d}{\mu}  \\
			& T
		\end{tikzcd}
		\qquad
		\begin{tikzcd}
			T \nat{r}{T \eta} \nat{dr}[swap]{\id} & TT \nat{d}{\mu} \\
			& T
		\end{tikzcd}
		\qquad
		\begin{tikzcd}
			TTT \nat{r}{T \mu} \nat{d}{\mu T} & TT \nat{d}{\mu} \\
			TT \nat{r}{\mu} & T
		\end{tikzcd}
	\end{equation*}
\end{deph}

We can also write down the natural transformations in terms of their components. For each object $X$ of $\cat{C}$, the unit is a morphism $\eta_X:X\to TX$, and the multiplication is a morphism $\mu_X:TTX\to TX$, such that the following diagrams commute.
\begin{equation}\label{monaddiagrams}
\begin{tikzcd}
TX \ar{r}{\eta_{TX}} \ar{dr}[swap]{\id} & TTX \ar{d}{\mu_X}  \\
& TX
\end{tikzcd}
\qquad
\begin{tikzcd}
TX \ar{r}{T \eta_X} \ar{dr}[swap]{\id} & TTX \ar{d}{\mu_X} \\
& TX
\end{tikzcd}
\qquad
\begin{tikzcd}
TTTX \ar{r}{T \mu_X} \ar{d}{\mu_{TX}} & TTX \ar{d}{\mu_X} \\
TTX \ar{r}{\mu_X} & TX
\end{tikzcd}
\end{equation}

\begin{deph}
	A \emph{comonad} on $\cat{C}$ is a monad on $\cat{C}^\op$.
\end{deph}

Explicitly, a comonad on $\cat{C}$ consists of:
	\begin{itemize}
		\item A functor $C:\cat{C}\to\cat{C}$;
		\item A natural transformation $\e:C\Rightarrow \id_{\cat{C}}$ called \emph{counit};
		\item A natural transformation $\nu:C \Rightarrow CC$ called \emph{comultiplication};
	\end{itemize}
	such that the following diagrams, in components, commute for each object $X$, called ``left and right counitality'' and ``coassociativity'', respectively.
\begin{equation}\label{comonaddiagrams}
\begin{tikzcd}
CX \ar{r}{\nu_X} \ar{dr}[swap]{\id} & CCX \ar{d}{\e_{CX}}  \\
& CX
\end{tikzcd}
\qquad
\begin{tikzcd}
CX \ar{r}{\nu_X} \ar{dr}[swap]{\id} & CCX \ar{d}{C \e_X} \\
& CX
\end{tikzcd}
\qquad
\begin{tikzcd}
CX \ar{r}{\nu_X} \ar{d}{\nu_X} & CCX \ar{d}{\nu_{CX}} \\
CCX \ar{r}{C\nu_X} & CCCX
\end{tikzcd}
\end{equation}

\section{Monads as extensions of spaces}

Here is the first intuitive idea of what a monad can look like. 
\begin{idea} 
	A monad is a consistent way of extending spaces to include generalized elements and generalized functions of a specific kind.
\end{idea}

Let's illustrate this aspect with some examples.

\begin{eg}[sets and relations]\label{powersetmonad}
	Consider the power set functor (\Cref{egpowerset}) on the category $\cat{Set}$ of sets and functions. Given a set $X$, its power set $PX$ can be considered an extension of $X$, where ``subsets generalize elements''. Given sets $X$ and $Y$ and a function $f:X\to Y$, we get the function $Pf:PX\to PY$ given by the image, it maps each $A\in PX$, which is a subset of $X$, to the subset of $Y$ given by the image of $A$ under $f$. Equivalently, it gives the subset of $Y$ obtained by applying $f$ to all elements of $A$.
	One can consider the subsets of $X$ as ``generalized elements'', and $Pf$ as the ``obvious'' extension of $f$ to the generalized elements of $X$ (mapping them to generalized elements of $Y$). 
	
	In which way can we see subsets as generalized elements?
	Strictly speaking, elements of $X$ are not particular subsets, an element is not technically a subset. However, each element $x\in X$ defines a subset canonically: the singleton $\{x\}$. As we say in \Cref{egsingletonmap}, the embedding $X\to PX$ given by singletons is part of a natural transformation $\sigma:\id_{\cat{C}}\Rightarrow P$. Explicitly:
	\begin{enumerate}
		\item For each $X$ we consider the map given by singletons $\sigma_X:X\to PX$, The interpretation is that $PX$, the extension, ``includes'' the old space $X$, or that $PX$ ``extends'' $X$.
		\item For each $f:X\to Y$, the extended function $Pf$ must agree with $f$ on the ``old elements'', i.e.~the elements coming from $X$ via $\sigma$. In other words, this diagram has to commute,
		\begin{equation*}
			\begin{tikzcd}
				X \ar{d}{\sigma_X} \ar{r}{f} & Y \ar{d}{\sigma_Y} \\
				PX \ar{r}{Pf} & PY
			\end{tikzcd}
		\end{equation*}
	\end{enumerate}
	
	Let's now turn to the multiplication map. The set $PPX$ contains \emph{subsets of subsets} of $X$. Given a subset of subsets of $X$, there is a canonical way of obtaining a subset of $X$: via the union. For example, if $x,y,z\in X$, a subset of subsets has the form
	\begin{equation*}
		\mathcal{A} \;=\; \{ \{x,y\}, \{y,z\}, \{\;\} \}  \;\in\; PPX.
	\end{equation*}
	From the element above, we can take the union of the subsets contained in it, which is
	\begin{equation*}
		\bigcup_{A\in \mathcal{A}} A \;=\; \{ x,y,z \} \;\in\; PX.
	\end{equation*}
	We can view $\union$ as the map which ``removes the inner brackets''.
	This gives an assignment $\cup:PPX\to PX$.
	Given a function $f:X\to Y$ the following diagram commutes,
	\begin{equation*}
		\begin{tikzcd}
			PPX \ar{d}{\cup_X} \ar{r}{PPf} & PPY \ar{d}{\cup_Y} \\
			PX \ar{r}{Pf} & PY
		\end{tikzcd}
	\end{equation*} 
	which means that that the union of the images is the image of the union. In symbols: 
	\begin{equation*}
		f \left( \bigcup_{A\in \mathcal{A}} A \right) \;=\;  \bigcup_{A\in \mathcal{A}} f(A) .
	\end{equation*}
	Therefore the map $\cup$ gives a natural transformation $PP\Rightarrow P$. 
	Additional motivation for these natural transformations will be given in \Cref{kleisli}.
	
	Let's now see how $(P,\sigma,\cup)$ is a monad. We have to show that the diagrams \eqref{monaddiagrams} commute. Now, the left unitality diagram
	$$
	\begin{tikzcd}
	PX \ar{r}{\sigma_{PX}} \ar{dr}[swap]{\id} & PPX \ar{d}{\cup_X}  \\
	& PX
	\end{tikzcd}
	$$
	says the following. Take a subset of $X$, say $\{x,y,z\}$ with $x,y,z\in X$, and we form the singleton subset of $PX$ containing just this set, i.e.~$\{\{x,y,z\}\}$ (mind the double brackets: now we have a set of subsets of $X$, containing just one subset). 
	Now take the union of all the sets in this set. There is just one set in there, so we get back $\{x,y,z\}$. In symbols,
	$$
	\begin{tikzcd}
	\{x,y,z\} \ar[mapsto]{r}{\sigma} \ar[mapsto]{dr}[swap]{\id} & \{\{x,y,z\}\} \ar[mapsto]{d}{\cup}  \\
	& \{x,y,z\}.
	\end{tikzcd}
	$$
	The right unitality diagram 
	$$
	\begin{tikzcd}
	PX \ar{r}{P \sigma_X} \ar{dr}[swap]{\id} & PPX \ar{d}{\cup_X} \\
	& PX
	\end{tikzcd}
	$$
	says something similar. Namely, take again a subset of $X$ such as $\{x,y,z\}$ as above. Now take the \emph{image} of the map $\sigma_X$, that is, apply $\sigma$ to each element of $\{x,y,z\}$. This replaces every element with its corresponding singleton set, giving the set of subsets $\{\{x\},\{y\},\{z\}\}$. Taking the union of the sets in this set gives again the original set $\{x,y,z\}$. in symbols,
	$$
	\begin{tikzcd}
	\{x,y,z\} \ar[mapsto]{r}{P\sigma} \ar[mapsto]{dr}[swap]{\id} & \{\{x\},\{y\},\{z\}\} \ar[mapsto]{d}{\cup}  \\
	& \{x,y,z\}.
	\end{tikzcd}
	$$
	The associativity diagram
	$$
	\begin{tikzcd}
	PPPX \ar{r}{P \cup_X} \ar{d}{\cup_{PX}} & PPX \ar{d}{\cup_X} \\
	PPX \ar{r}{\cup_X} & PX
	\end{tikzcd}
	$$
	says as follows. Take a subset of subsets of subsets of $X$ (three times), for example
	$$
	\{\{\{x,y\},\{x,z\}\},\{\{a,b\}\}\} .
	$$
	Then removing the innermost brackets and then the (remaining) innermost brackets has the same result as removing the mid-level brackets and then the (remaining) innermost brackets. In symbols,
	$$
	\begin{tikzcd}
	\{\{\{x,y\},\{x,z\}\},\{\{a,b\}\}\} \ar[mapsto]{r}{P \cup_X} \ar[mapsto]{d}{\cup_{PX}} & \{\{x,y,z\},\{a,b\}\} \ar[mapsto]{d}{\cup_X} \\
	\{\{x,y\},\{x,z\},\{a,b\}\} \ar[mapsto]{r}{\cup_X} & \{x,y,z,a,b\}
	\end{tikzcd}
	$$
	Since the diagrams \eqref{monaddiagrams} commute, the triplet $(P,\sigma,\cup)$ is a monad on $\cat{Set}$.
\end{eg}

\begin{eg}[basic probability]\label{probmonad}
	The probability functors given in \Cref{egprob}, \Cref{giryfunctor} and \Cref{radonfunctor} can be also given a monad structure, in a way that's similar to the power set case. Let's do it for the set case, the other cases are left as exercises (see below).
	
	Recall that the probability functor $\mathcal{P}$ of  \Cref{egprob} takes a set $X$ and returns the set $X$ of finitely supported probability measures on $X$. 
	Every element $x\in X$ gives a ``deterministic'' probability measure, namely $\delta_x$ (see \Cref{setdelta}). There is a natural embedding $\delta:X\to \mathcal{P}X$ which we can interpret as follows. Considering a probability measure $p\in \mathcal{P}X$ as a ``random point'' of $X$, then $\mathcal{P}X$ is the extension of $X$ to account for ``random points'' too, with the old deterministic points included via the unit map $\delta:X\to \mathcal{P}X$ (compare with the power set case).
	
	The multiplication map $E:\mathcal{PPX}\to\mathcal{PX}$ is given as follows. For $\pi\in PPX$, 
	$$
	E(\pi) (x) \;\coloneqq\; \sum_{p\in \mathcal{P}X} \pi(p)\, p(x) .  
	$$
	Here is a way to interpret this map. Suppose that you have two coins in your pocket. Suppose that one coin is fair, with ``heads'' on one face and ``tails'' on the other face; suppose the second coin has ``heads'' on both sides. Suppose now that you draw a coin randomly, and flip it.  We can sketch the probabilities in the following way:
 \begin{equation*}
  \begin{tikzcd}[column sep=tiny]
   &&& ? \ar{dll}[swap]{1/2} \ar{drr}{1/2} \\
   & \mbox{coin 1} \ar{dl}[swap]{1/2} \ar{dr}{1/2} &&&& \mbox{coin 2} \ar{dl}[swap]{1} \ar{dr}{0} \\
   \mbox{heads} && \mbox{tails} && \mbox{heads} && \mbox{tails}
  \end{tikzcd}
 \end{equation*}
 Let $X$ be the set $\{\mbox{``heads''},\mbox{``tails''}\}$. A coin gives a \emph{law} according to which we will obtain ``heads'' or ``tails'', so it determines an element of $\mathcal{P}X$. Since the choice of coin is also random (we also have a \emph{law on the coins}), the law on the coins determines an element of $\mathcal{P}\mathcal{P}X$.
 By averaging, the resulting overall probabilities are 
 \begin{equation*}
  \begin{tikzcd}[column sep=tiny]
   & ? \ar{dl}[swap]{3/4} \ar{dr}{1/4} \\
   \mbox{heads} && \mbox{tails} 
  \end{tikzcd}
 \end{equation*}
 In other words, the ``average'' or ``composition'' can be thought of as an assignment $E:\mathcal{P}\mathcal{P}X\to \mathcal{P}X$, from laws of ``random random variables'' to laws of ordinary random variables.
 
 To show that the map $E$ is natural, let $f:X\to Y$ be a function. Since the map $\mathcal{P}f:\mathcal{P}X\to\mathcal{P}Y$ is given by the pushforward of measures, which is usually written as $f_*$, denote for brevity $\mathcal{P}f$ by $f_*$. Then the following diagram commutes.
 $$
 \begin{tikzcd}
  \mathcal{PP}X \ar{d}{E} \ar{r}{(f_*)_*} & \mathcal{PP}Y \ar{d}{E} \\
  \mathcal{P}X \ar{r}{f_*} & \mathcal{P}Y .
 \end{tikzcd}
 $$
 Indeed, for each $\pi\in \mathcal{PP}X$ and $y\in Y$, 
 \begin{align*}
 E((f_*)_*(\pi))(y) \;&=\; \sum_{q\in \mathcal{P}Y}  (f_*)_*(\pi) (q)  \,q(y) \\
 &=\; \sum_{q\in \mathcal{P}Y}  \sum_{p\in (f_*)^{-1}(q)} \pi (p)  \,q(y) \\
 &=\; \sum_{p\in \mathcal{P}X} \pi (p)\, f_*(p)(y) \\
 &=\; \sum_{x\in f^{-1}(y)} \sum_{p\in \mathcal{P}X} \pi(p)\,p(x) \\
 &=\; \sum_{x\in f^{-1}(y)} E\pi (x) \\
 &=\; f_* (E\pi) (y) .
 \end{align*}
 The next exercise shows that $\delta$ and $E$ satisfy the monad axioms. Therefore $(\mathcal{P},\delta,E)$ is a monad on $\cat{Set}$. It is known in the literature under many names, such as \emph{distribution monad} or \emph{convex combination monad}. 
\end{eg}

\begin{ex}[basic probability]
 Show that $(\mathcal{P},\delta,E)$ is a monad, that is, that the following diagrams commute.
$$
\begin{tikzcd}
\mathcal{P}X \ar{r}{\delta_{\mathcal{P}X}} \ar{dr}[swap]{\id} & \mathcal{P}^2X \ar{d}{E_X}  \\
& \mathcal{P}X
\end{tikzcd}
\qquad
\begin{tikzcd}
\mathcal{P}X \ar{r}{\mathcal{P} \delta_X} \ar{dr}[swap]{\id} & \mathcal{P}^2X \ar{d}{E_X} \\
& \mathcal{P}X
\end{tikzcd}
\qquad
\begin{tikzcd}
\mathcal{P}^3X \ar{r}{\mathcal{P} E_X} \ar{d}{E_{\mathcal{P}X}} & \mathcal{P}^2X \ar{d}{E_X} \\
\mathcal{P}^2X \ar{r}{E_X} & \mathcal{P}X
\end{tikzcd}
$$
\end{ex}

The following exercises give the analogous construction for more general probability measures (using measure theory).

\begin{ex}[measure theory, probability]\label{girymonad}
	Consider the Giry functor defined on $\cat{Meas}$ in \Cref{giryfunctor}. 
	Define a monad structure analogous to the one in the exercise above, with unit $\delta:X\to \mathcal{P}X$ given by Dirac measures (see \Cref{measdelta}), and multiplication $E:\mathcal{PP}X\to \mathcal{P}X$ given by integration as follows,
	$$
	E(\pi) (A) \;\coloneqq\; \int_{\mathcal{P}X} p(A) \,d\pi(p) 
	$$
	for each measurable $A\subseteq X$.
	Note that you have to prove that $\delta$ and $E$ are measurable and natural, and that they satisfy the diagrams \eqref{monaddiagrams}. 
    (Hint: to prove that $E$ is measurable, first prove that for every measurable function $f:X\to[0,1]$ the ``integration'' map $i_f:\mathcal{P}X\to [0,1]$ given by
    $$
    p \;\longmapsto\; \int f\, dp
    $$
    is measurable as a function of $p$, for the $\sigma$-algebra defined on $\mathcal{P}X$ in \Cref{giryfunctor}.)
	
	The resulting monad is known as the the \emph{Giry monad}. 
\end{ex}

\begin{ex}[measure theory, probability]\label{radonmonad}
	Consider the Radon functor on $\cat{CHaus}$ defined in \Cref{radonfunctor}. Define a monad structure in a way analogous to the exercises above.
	
	The resulting monad is known as the \emph{Radon monad}.
\end{ex}

Monads are widely used in computer science. Here is a basic example, given by the list construction.

\begin{ex}[basic computer science, combinatorics]\label{listmonad}
Consider the list functor of \Cref{listfunctor}. Equip it with a monad structure, with the unit given by the map of \Cref{singlelist}, and the multiplication given by flattening a double list (or equivalently, concatenating lists). 

The ``extension'' interpretation is that the set of lists $LX$ extends the old set $X$, which we can view as having as elements only lists of length one.
\end{ex}

The following monad, in different forms, appears almost everywhere in mathematics. It is used in applied mathematics and computer science to model processes that have ``extra costs'', or produce ``additional output'', and in pure mathematics to talk about fiber bundles and group actions. 
 
\begin{eg}[several fields]\label{writermonad}
 Let $M$ be a monoid or a group, let's write it additively (denote its neutral element by $0$, and its binary operation by $+$).  
 Given any set $X$, forming the set $X\times M$ is a functorial operation, where a function $f:X\to Y$ is mapped to $f\times\id_M:X\times M\to Y\times M$. That is, the pair $(x,m)\in X\times M$ is mapped to $(f(x),m)\in Y\times M$ (see \Cref{productfunctorial}).
 
 This functor admits a canonical monad structure, inherited by the monoid structure of $M$. 
 The unit is given by the map 
 $$
 \begin{tikzcd}[row sep=0]
  X \ar{r}{\eta} & X\times M \\
  x \ar[mapsto]{r} & (x,0) ,
 \end{tikzcd}
 $$
 so it is defined using the neutral element of $M$.
 The multiplication is given by the map 
 $$
 \begin{tikzcd}[row sep=0]
  X\times M\times M \ar{r}{\mu} & X\times M \\
  (x,m,n) \ar[mapsto]{r} & (x,m + n) ,
 \end{tikzcd}
 $$
 so it is defined using the multiplication of $M$.
 (Why are these maps natural?)
 
 We can interpret the elements of $M$ as being ``costs'' or ``side effects'' of some kind. If we have two of them, we can multiply them (or sum them) using the multiplication map of the monoid. The neutral element $0$ correspond to zero cost. The ``extension interpretation'' is as follows: the elements $(x,m)$ can be seen as more general than the $x$, if we see the latter as $(x,0)$ (via the unit map). We are extending $X$ to account for elements which have a ``cost'' or ``impact'' or ``side effect''.
 
 Note that, since $M$ is a monoid (is associative and unital), the monad axioms \eqref{monaddiagrams} hold automatically, as they correspond to associativity and unitality of $M$ (this example is one of the reasons why they are called that way). 
 The left unitality diagram says that for each $(x,m)\in X\times M$, $\mu\circ\eta(x,m)=(x,m)$, which explicitly says $(x,m+0)=(x,m)$. This is true, since $m+0=m$ (since $M$ is a monoid, and so it's unital). Analogously, right unitality boils down to saying $0+m=m$, and associativity corresponds exactly to associativity of $M$. Therefore we have a monad on $\cat{Set}$. Let's denote this monad by $(T_M,\eta,\mu)$. 
 
 This monad is known in computer science as the \emph{writer monad}, for reasons that will be explained in \Cref{writerkleisli}. In pure mathematics it is either known as the \emph{trivial bundle monad} (since $X\times M$ is the trivial $M$-bundle over $X$) or as the $M$-action monad, for reasons that will be explained in \Cref{actionmonad}.
\end{eg}

\subsection{Kleisli morphisms}\label{kleisli}

Given a monad $T$, we can not only talk about generalized elements (of an extended space), but also, and mostly, of generalized \emph{functions}. Given spaces $X$ and $Y$, we can form functions which intuitively have as output generalized elements of $Y$. That is, functions $k:X\to TY$.

\begin{deph}
	Let $(T,\eta,\mu)$ be a monad on a category $\cat{C}$. A \emph{Kleisli morphism} of $T$ from $X$ to $Y$ is a morphism $k:X\to TY$ of $\cat{C}$.
\end{deph}

In mathematics it often happens that one would like to obtain a function from $X$ to $Y$ from some construction (for example, a limit), but the result is not always well-defined or unique, or not always existing. Maybe the output of a process is subject to fluctuations, so that a probabilistic output models the situation better. Or, there could be extra side effects...and so on. Allowing more general results or outputs is sometimes a better description of the system, that is, replacing $Y$ with the extension $TY$. Generalized functions include ordinary functions in the same way as generalized elements include ordinary elements: via the map $\eta$. A function $f:X\to Y$ uniquely defines a map $X\to TY$ given by $\eta\circ f$. Note that this is \emph{different} from extending an existing $f:X\to Y$ to $TX$: we are not extending an existing function to generalized elements, we are allowing more general functions on $X$ which \emph{take values} in elements of $TY$ which may not come from $Y$.
In particular, a generalized element can be seen as a constant generalized function.

\begin{eg}[sets and relations]
In the case of the power set, Kleisli morphisms, or generalized maps, are precisely \emph{relations}: given sets $X$ and $Y$, a map $k:X\to PY$ assigns to each element of $X$ a \emph{subset} of $Y$ (possibly empty), i.e.~it is a multi-valued (possibly no-valued) function.
\end{eg}

\begin{eg}[probability]
 A Kleisli morphism for the distribution monad $\mathcal{P}$ of \Cref{probmonad} is a function $X\to \mathcal{P}Y$, which we can view as a \emph{stochastic map} $X\to Y$.
 
 Analogously, a Kleisli morphism for the Giry monad of \Cref{girymonad} is a \emph{Markov kernel}: it consists precisely of a measurable assignment $X\to \mathcal{P}Y$. 
\end{eg}

Kleisli morphisms, according to the definition above, are just ordinary morphisms of a particular form. The reason why they are important enough to deserve their own name is because of how they \emph{compose}. As we will see in the examples, this gives a very fruitful construction.

\begin{deph}
	Let $(T,\eta,\mu)$ be a monad on a category $\cat{C}$. Let $k:X\to TY$ and $h: Y\to TZ$. We define the \emph{Kleisli composition} of $k$ and $h$ to be the morphism $(h\circ_{kl}k):X\to TZ$ given by:
	\begin{equation}\label{kleislicomp}
	\begin{tikzcd}
	X \ar{r}{k} & TY \ar{r}{Th} & TTZ \ar{r}{\mu} & TZ .
	\end{tikzcd}
	\end{equation}
\end{deph}
In other words, the Kleisli composition permits to compose generalized functions from $X$ to $Y$ with generalized functions from $Y$ to $Z$ to give generalized functions from $X$ to $Z$.

\begin{eg}[sets and relations]
	Let's see what happens for the power set monad. Relations can be composed as follows. Given relations $k:X\to PY$ and $h:Y\to PZ$, as in the following picture
	
	\begin{center}
		
		\begin{tikzpicture}[baseline=(current  bounding  box.center),>=stealth]
		\foreach \y in {1,2,3,4}
		\node[bullet,label=left:$x_{\y}$] (x\y) at (0,-\y) {};
		
		\foreach \y in {1,2,3,4}
		\node[bullet,label=below:$y_{\y}$] (y\y) at (4,-\y) {};
		
		\foreach \y in {1,2,3,4}
		\node[bullet,label=right:$z_{\y}$] (z\y) at (8,-\y) {};
		
		\node[draw,fit=(x1) (x4),minimum width=2.5cm,label=below:$X$] {} ;
		\node[draw,fit=(y1) (y4),minimum width=2.5cm,label=below:$Y$] {} ;
		\node[draw,fit=(z1) (z4),minimum width=2.5cm,label=below:$Z$] {} ;
		
		\draw[relation] (x2) -- (y1);
		\draw[relation] (x2) -- (y2);
		\draw[relation] (x3) -- (y3);
		\draw[relation] (x3) -- (y4);
		
		\draw[relation] (y1) -- (z1);
		\draw[relation] (y1) -- (z2);
		\draw[relation] (y2) -- (z2);
		\draw[relation] (y2) -- (z3);
		\draw[relation] (y4) -- (z4);
		\end{tikzpicture}
		
	\end{center}
	
	we can compose the two and forget about $Y$, obtaining a relation $X\to PZ$
	
	\begin{center}
		
		\begin{tikzpicture}[baseline=(current  bounding  box.center),>=stealth]
		\foreach \y in {1,2,3,4}
		\node[bullet,label=left:$x_{\y}$] (x\y) at (0,-\y) {};
		
		\foreach \y in {1,2,3,4}
		\node[bullet,label=right:$z_{\y}$] (z\y) at (8,-\y) {};
		
		\node[draw,fit=(x1) (x4),minimum width=2.5cm,label=below:$X$] {} ;
		\node[draw,fit=(z1) (z4),minimum width=2.5cm,label=below:$Z$] {} ;
		
		\draw[relation] (x2) -- (z1);
		\draw[relation] (x2) -- (z2);
		\draw[relation] (x2) -- (z3);
		\draw[relation] (x3) -- (z4);
		\end{tikzpicture}
		
	\end{center}
	The interpretation is that we can go from $x\in X$ to $z\in Z$ if and only there is a $y\in Y$ such that we can go from $x$ to $y$ and from $y$ to $z$.
	
	What happened formally, though, is that we have first applied $k:X\to PY$, which assigns to each $x\in X$ a subset of $Y$.
	
	\begin{center}
		
		\begin{tikzpicture}[baseline=(current  bounding  box.center),>=stealth]
		\foreach \y in {1,2,3,4}
		\node[bullet,label=left:$x_{\y}$] (x\y) at (0,-\y) {};
		
		\foreach \y in {1,2,3,4}
		\node[bullet,label=below:$y_{\y}$] (y\y) at (4,-\y) {};
		
		\foreach \y in {1,2,3,4}
		\node[bullet,label=right:$z_{\y}$] (z\y) at (8,-\y) {};
		
		\node[draw,fit=(x1) (x4),minimum width=2.5cm,label=below:$X$] {} ;
		\node[draw,fit=(y1) (y4),minimum width=2.5cm,label=below:$Y$] {} ;
		\node[draw,fit=(z1) (z4),minimum width=2.5cm,label=below:$Z$] {} ;
		
		\node[draw,fit=(y1) (y2),minimum width=1cm,dashed] (y12) {} ;
		\node[draw,fit=(y3) (y4),minimum width=1cm,dashed] (y34) {} ;

		\draw[function] (x2) -- (y12);
		\draw[function] (x3) -- (y34);
		
		\node[label=$k$] at (2,-3) {};
		\end{tikzpicture}
		
	\end{center}
	
	Then we have applied $h$ to \emph{elementwise to each subset in the image of $k$}.
	
	\begin{center}
		
		\begin{tikzpicture}[baseline=(current  bounding  box.center),>=stealth]
		\foreach \y in {1,2,3,4}
		\node[bullet,label=left:$x_{\y}$] (x\y) at (0,-\y) {};
		
		\foreach \y in {1,2,3,4}
		\node[bullet,label=below:$y_{\y}$] (y\y) at (4,-\y) {};
		
		\foreach \y in {1,2,3,4}
		\node[bullet,label=right:$z_{\y}$] (z\y) at (8,-\y) {};
		
		\node[draw,fit=(x1) (x4),minimum width=2.5cm,label=below:$X$] {} ;
		\node[draw,fit=(y1) (y4),minimum width=2.5cm,label=below:$Y$] {} ;
		\node[draw,fit=(z1) (z4),minimum width=2.5cm,label=below:$Z$] {} ;
		
		\node[draw,fit=(y1) (y2),minimum width=1cm,dashed] (y12) {} ;
		\node[draw,fit=(y3) (y4),minimum width=1cm,dashed] (y34) {} ;
		
		\node[draw,fit=(z1) (z2),minimum width=1cm,dashed] (z12) {} ;
		\node[draw,fit=(z2) (z3),minimum width=1cm,dashed] (z23) {} ;
		\node[draw,fit=(z4),minimum width=0.8cm,minimum height=0.8cm,dashed] (z44) {} ;
		
		\draw[function] (x2) -- (y12);
		\draw[function] (x3) -- (y34);
		
		\draw[function] (y1) -- (z12);
		\draw[function] (y2) -- (z23);
		\draw[function] (y4) -- (z44);
		
		\node[label=$k$] at (2,-3) {};
		\node[label=$h$] at (6,-3.5) {};
		\end{tikzpicture}
		
	\end{center}
	
	In other words, we have taken the \emph{image} of $h:Y\to PZ$, which we know is the map $Ph:PY\to PPZ$. 
	Technically, to each subset of $Y$ we have a \emph{subset of subsets} of $Z$, which contains the images of $h$.
	
	\begin{center}
		
		\begin{tikzpicture}[baseline=(current  bounding  box.center),>=stealth]
		\foreach \y in {1,2,3,4}
		\node[bullet,label=left:$x_{\y}$] (x\y) at (0,-\y) {};
		
		\foreach \y in {1,2,3,4}
		\node[bullet,label=below:$y_{\y}$] (y\y) at (4,-\y) {};
		
		\foreach \y in {1,2,3,4}
		\node[bullet,label=right:$z_{\y}$] (z\y) at (8,-\y) {};
		
		\node[draw,fit=(x1) (x4),minimum width=2.5cm,label=below:$X$] {} ;
		\node[draw,fit=(y1) (y4),minimum width=2.5cm,label=below:$Y$] {} ;
		\node[draw,fit=(z1) (z4),minimum width=2.5cm,label=below:$Z$] {} ;
		
		\node[draw,fit=(y1) (y2),minimum width=1cm,dashed] (y12) {} ;
		\node[draw,fit=(y3) (y4),minimum width=1cm,dashed] (y34) {} ;
		
		\node[draw,fit=(z1) (z2),minimum width=1cm,dashed] (z12) {} ;
		\node[draw,fit=(z2) (z3),minimum width=1cm,dashed] (z23) {} ;
		\node[draw,fit=(z4),minimum width=0.8cm,minimum height=0.8cm,dashed] (z44) {} ;
		
		\node[draw,fit=(z1) (z3),minimum width=1.5cm,dotted] (z123) {} ;
		\node[draw,fit=(z4),minimum width=1.1cm,minimum height=1.1cm,dotted] (z444) {} ;
		
		\draw[function] (x2) -- (y12);
		\draw[function] (x3) -- (y34);
		
		\draw[function] (y12) -- (z123);
		\draw[function] (y34) -- (z444);
		
		\node[label=$k$] at (2,-3) {};
		\node[label=$Ph$] at (6,-3.2) {};
		\end{tikzpicture}
		
	\end{center}
	
	Now for each subset of $Y$, we take the \emph{union} of the subsets in its image.
	
	\begin{center}
		
		\begin{tikzpicture}[baseline=(current  bounding  box.center),>=stealth]
		\foreach \y in {1,2,3,4}
		\node[bullet,label=left:$x_{\y}$] (x\y) at (0,-\y) {};
		
		\foreach \y in {1,2,3,4}
		\node[bullet,label=below:$y_{\y}$] (y\y) at (4,-\y) {};
		
		\foreach \y in {1,2,3,4}
		\node[bullet,label=right:$z_{\y}$] (z\y) at (8,-\y) {};
		
		\node[draw,fit=(x1) (x4),minimum width=2.5cm,label=below:$X$] {} ;
		\node[draw,fit=(y1) (y4),minimum width=2.5cm,label=below:$Y$] {} ;
		\node[draw,fit=(z1) (z4),minimum width=2.5cm,label=below:$Z$] {} ;
		
		\node[draw,fit=(y1) (y2),minimum width=1cm,dashed] (y12) {} ;
		\node[draw,fit=(y3) (y4),minimum width=1cm,dashed] (y34) {} ;
		
		\node[draw,fit=(z4),minimum width=0.8cm,minimum height=0.8cm,dashed] (z44) {} ;
		
		\node[draw,fit=(z1) (z3),minimum width=1cm,dashed] (z123) {} ;
		
		\draw[function] (x2) -- (y12);
		\draw[function] (x3) -- (y34);
		
		\draw[function] (y12) -- (z123);
		\draw[function] (y34) -- (z44);
		
		\node[label=$k$] at (2,-3) {};
		\node[label=$\cup\circ Ph$] at (6,-3.2) {};
		\end{tikzpicture}
		
	\end{center}
	
	thereby obtaining the composite relation $X\to PZ$:
	
	\begin{center}
		
		\begin{tikzpicture}[baseline=(current  bounding  box.center),>=stealth]
		\foreach \y in {1,2,3,4}
		\node[bullet,label=left:$x_{\y}$] (x\y) at (0,-\y) {};
		
		\foreach \y in {1,2,3,4}
		\node[bullet,label=right:$z_{\y}$] (z\y) at (8,-\y) {};
		
		\node[draw,fit=(x1) (x4),minimum width=2.5cm,label=below:$X$] {} ;
		\node[draw,fit=(z1) (z4),minimum width=2.5cm,label=below:$Z$] {} ;
		
		\node[draw,fit=(z4),minimum width=0.8cm,minimum height=0.8cm,dashed] (z44) {} ;
		
		\node[draw,fit=(z1) (z3),minimum width=1cm,dashed] (z123) {} ;
		
		\draw[function] (x2) -- (z123);
		\draw[function] (x3) -- (z44);
		
		\node[label=$\cup\circ (Ph) \circ k$] at (4,-3.2) {};
		\end{tikzpicture}
		
	\end{center}
\end{eg}

\begin{eg}[probability]
 Kleisli composition for the distribution monad works as follows. Given sets $X$, $Y$ and $Z$, and maps $k:X\to \mathcal{P}Y$ and $h:Y\to \mathcal{P}Z$,
 the composition~\eqref{kleislicomp} gives us
 \begin{equation*}
 h\circ_{kl}k\;=\; E \circ \mathcal{P}h \circ k
\end{equation*}
which for each $x\in X$ maps $z\in Z$ to
$$
 (h\circ_{kl}k)_x(z) = \sum_{q\in\mathcal{P}Z} q(z) \, h_* k(x)(q) = \sum_{y\in Y} h(y)(z) \, k(x)(y) .
$$
 In other words, interpreting the stochastic maps as conditionals (i.e.~writing $k(x)(y)$ as $p(y|x)$), we are composing conditionals by summing over all the intermediate states:
 \begin{equation}\label{chapman}
 p(z|x) \;=\; \sum_{y\in Y} p(z|y)\,p(y|x) .
 \end{equation}
 This is the famous \emph{Chapman-Kolmogorov formula}.

 Let's see now what happens more generally with the Giry monad. Given Markov kernels $k:X\to \mathcal{P}Y$ and $h:Y\to \mathcal{P}Z$, the composition formula gives us for each $x\in X$ and measurable set $A\subseteq Z$,
$$
 (h\circ_{kl}k)(x)(A) = \int_{\mathcal{P}Z} q(A) \, d(h_* k(x))(q) = \int_{Y} h(y)(A) \, dk(x)(y) .
$$
Again, in conditional notation,
$$
p(A|x) \;=\; \int_{Y} p(A|y)\;dp(y|x) ,
$$
which is analogous to \Cref{chapman} if one replaces sums by integrals.

The Kleisli composition for probability monads is exactly the usual composition of stochastic maps and of Markov kernels.
\end{eg}

As the previous two examples show, Kleisli morphisms capture known structures in mathematics (in this case, relations and stochastic maps) not just in how they look, but also in how they behave.

Let's now look at the Kleisli morphisms of the \emph{writer monad} of \Cref{writermonad}, and see why it is called that way.

\begin{eg}[several fields]\label{writerkleisli}
 Let $M$ be a monoid, which we again write additively. A Kleisli morphism of the ``writer monad'' $T_M$ is a map $k:X\to Y\times M$. We can interpret it as a process which, when given an input $x\in X$, does not just produce an output $y\in Y$, but also an element of $M$. 
 For example, it could be energy released by a chemical reaction, or waste, or a cost of the transaction. In computer science, this is the behaviour of a function that computes a certain value, but that also writes into a log file (or to the standard output) that something has happened (the monoid operation being the concatenation of strings). For example, when you compile a \LaTeX~document, a log file is produced alongside your output file. Hence the name ``writer monad''.
 
 Let's now look at the Kleisli composition. If we have processes $k:X\to Y\times M$ and $h:Y\to Z\times M$, then $h\circ_{kl} k:X\to Z\times M$ is given by
 $$
 \begin{tikzcd}[column sep=large, row sep=0]
  X \ar{r}{k} & Y\times M \ar{r}{h\times \id_M} & Z\times M \times M \ar{r}{\id_Z\times +} & Z\times M \\
  x \ar[mapsto]{r} & (y,m) \ar[mapsto]{r} & (z,n,m) \ar[mapsto]{r} & (z, n+m) .
 \end{tikzcd}
 $$
 What it does is as follows:
 \begin{enumerate}
  \item It executes the process $k$ with an input $x\in X$, giving as output an element of $y\in Y$ as well as a cost (or extra output) $m\in M$.
  \item It executes the process $h$ taking as input the $y\in Y$ produced by $k$, giving an element $z\in Z$ as well as an extra cost $n\in M$. (All of this while keeping track of the first cost $m$.)
  \item The two costs $m$ and $n$ are summed (or the extra outputs are concatenated). 
 \end{enumerate}

 So, for example, the cost of executing two processes one after another is the sum of the costs. The same is true about the release of energy in a chemical reaction, and about waste. 
 Just as well, executing two programs one after another will produce a concatenation of text in a log file (or two log files).
 
 When you compile a \LaTeX~file, after compilation your compiler tells you how many errors (and warnings, and badboxes) happened during the process. This can be modelled in terms of Kleisli morphisms. Each step in the compilation consists of a Kleisli morphism taking an input (for example a keyword) and giving an output (for example a mathematical symbol) together with a number, the number of errors encountered. The compilation consists of several steps that pass information to one another, and the final number of errors will be the sum of the errors encountered during all the steps (hopefully zero). 
\end{eg}

Again, Kleisli composition is the ``right'' notion of composition for these functions with generalized output. 

The example of the writer monad (\Cref{writermonad}) motivated the usage of the terms ``unit'', ``multiplication'', ``associativity'' and ``unitality'' for monads. Here is another, almost independent reason for those names. 

\begin{prop}
	Let $(T,\eta,\mu)$ be a monad on a category $\cat{C}$. The Kleisli morphisms of $T$ form themselves a category, where
	\begin{itemize}
		\item The objects are the objects of $\cat{C}$;
		\item The morphisms are the Kleisli morphisms of $T$;
		\item The identities are given by the units $\eta:X\to TX$ for each object $X$;
		\item The composition is given by the Kleisli composition. 
	\end{itemize}
\end{prop}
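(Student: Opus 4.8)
The plan is to verify the three category axioms — associativity of Kleisli composition, and left/right unitality — directly from the monad axioms in \eqref{monaddiagrams}, using naturality of $\mu$ and $\eta$ throughout. First I would set up notation: write $h\circ_{kl}k = \mu_Z \circ Th \circ k$ for $k:X\to TY$ and $h:Y\to TZ$, so composing three Kleisli morphisms $k:X\to TY$, $h:Y\to TZ$, $\ell:Z\to TW$ produces, on each side of the associativity equation, a string of arrows built from $k,h,\ell$, applications of $T$, and copies of $\mu$. The task is then to show these two strings are equal.

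For associativity, I would expand both $(\ell\circ_{kl}h)\circ_{kl}k$ and $\ell\circ_{kl}(h\circ_{kl}k)$ into their defining composites. One side gives $\mu_W \circ T(\mu_W \circ T\ell \circ h)\circ k = \mu_W \circ T\mu_W \circ TT\ell \circ Th \circ k$ (using functoriality of $T$); the other gives $\mu_W \circ T\ell \circ \mu_Z \circ Th \circ k$. To match them I would insert the naturality square for $\mu$ applied to the morphism $\ell:Z\to TW$, namely $\mu_{TW}\circ TT\ell = T\ell \circ \mu_Z$ — wait, more carefully, the relevant naturality square is $\mu_W \circ TT\ell = ?$; I would draw the square $\begin{tikzcd} TTZ \ar{r}{TT\ell}\ar{d}{\mu_Z} & TTTW \ar{d}{\mu_{TW}} \\ TZ \ar{r}{T\ell} & TTW\end{tikzcd}$ and then paste it with the associativity square $\mu_W \circ T\mu_W = \mu_W\circ \mu_{TW}$ from \eqref{monaddiagrams}. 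Chaining these two commuting squares (and precomposing everything with $Th\circ k$) yields the equality. I expect the only real subtlety here is bookkeeping the subscripts on $\mu$ correctly and being careful about which naturality square to invoke; there is no genuine conceptual obstacle.

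For the unit laws, I would check that $\eta_Y$ acts as a right identity: $k \circ_{kl} \eta_X = \mu_Y \circ Tk \circ \eta_X$, and by naturality of $\eta$ applied to $k:X\to TY$ we have $Tk\circ \eta_X = \eta_{TY}\circ k$, so this equals $\mu_Y \circ \eta_{TY}\circ k = k$ by the left unitality diagram of \eqref{monaddiagrams}. Similarly $\eta$ is a left identity: $\eta_Y \circ_{kl} k = \mu_Y \circ T\eta_Y \circ k = k$, directly by the right unitality diagram of \eqref{monaddiagrams} (no naturality needed here). Finally I would note that Kleisli composition is well-typed — $h\circ_{kl}k$ really is a morphism $X\to TZ$ of $\cat{C}$ — which is immediate from the definition \eqref{kleislicomp}. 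Assembling these observations gives that the data described form a category. The main "obstacle," such as it is, is purely organizational: presenting the associativity pasting cleanly so the reader sees exactly which monad axiom and which naturality square each step uses.
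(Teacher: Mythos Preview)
Your proposal is correct and follows essentially the same approach as the paper: associativity via naturality of $\mu$ pasted with the monad associativity square, one unit law via naturality of $\eta$ plus one unitality triangle, the other unit law directly from the remaining unitality triangle. The paper presents the same three verifications as commutative diagrams rather than equational chains, but the content is identical.
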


In other words, the Kleisli morphisms form themselves a category, which we can think of as ``having as morphisms the generalized maps''. 

\begin{deph} The category defined above is called the \emph{Kleisli category} of $T$, and it is denoted by $\cat{C}_T$.
\end{deph}

\begin{proof}
	In order for $\cat{C}_T$ to be a category we need the identities (i.e.~the unit maps) to behave indeed like identities, and the composition (i.e.~the Kleisli composition) to be associative. 
	\begin{itemize}
		\item The left unitality condition for $T$, together with the naturality of $\eta$, for each $k:X\to TY$, gives a commutative diagram
		\begin{equation*}
			\begin{tikzcd}
				& TX \ar{r}{Tk} & TTY \ar{d}{\mu} \\
				X \ar{r}{k} \ar{ur}{\eta} & TY \ar{r}{\id} \ar{ur}{\eta} & TY
			\end{tikzcd}
		\end{equation*}
		which means that $\eta \circ_{kl} k = k$, i.e.~$\eta$ behaves like an identity (on the left side) for the Kleisli composition. Hence the name ``left unitality''.
		\item The right unitality condition, for each $k:X\to TY$, gives a commutative diagram
		\begin{equation*}
			\begin{tikzcd}
				X \ar{r}{k} & TY \ar{r}{T\eta} \ar[swap]{dr}{\id} & TTY \ar{d}{\mu} \\
				& & TY
			\end{tikzcd}
		\end{equation*}
		which means that $k \circ_{kl} \eta = k$, i.e.~$\eta$ behaves like an identity (on the right) for the Kleisli composition (hence the name ``right unitality''). So the maps $\eta$ are indeed the identities of $\cat{C}_T$.
		\item The associativity square, together with naturality of $\mu$, gives for each $\ell:W\to TX$, $k:X\to TY$, and $h:Y\to TZ$ a commutative diagram
		\begin{equation*}
			\begin{tikzcd}
				W \ar{r}{\ell} & TX \ar{r}{Tk} & TTY \ar{r}{TTh} \ar{d}{\mu} & TTTZ \ar{r}{T\mu} \ar{d}{\mu} & TTZ \ar{d}{\mu} \\
				& & TY \ar{r}{Th} & TTZ \ar{r}{\mu} & TZ
			\end{tikzcd}
		\end{equation*}
		which means that $h \circ_{kl} (k \circ_{kl} l) = (h \circ_{kl} k) \circ_{kl} l$ (why?), i.e.~the Kleisli composition is associative. Hence the name ``associativity square''. \qedhere
	\end{itemize}
\end{proof}

\begin{eg}[sets and relations]
 The Kleisli category of the power set monad is the category of sets and \emph{relations}. The identity is given by the singleton map $\sigma:X\to PX$. As said before, the interpretation is that the power set ``forms spaces of generalized elements in a consistent way'', and that the associated ``consistent generalization'' of functions is relations. 
\end{eg}

\begin{eg}[probability]
 The Kleisli category of the distribution monad is the category of sets and \emph{stochastic maps}. The identity is given by the delta map $\delta:X\to \mathcal{P}X$. Once again, the interpretation is that the probability construction ``forms spaces of generalized elements in a consistent way'' (which we can see as ``random elements''), and that the associated ``consistent generalization'' of functions is stochastic maps.
 
 For the Giry monad, the Kleisli category is the category of measurable spaces and Markov kernels.
\end{eg}

\begin{ex}[basic computer science]
 What is the Kleisli category of the list monad?
\end{ex}

\begin{ex}[several fields]
 How does the Kleisli category of the writer monad look, explicitly?
\end{ex}

\subsection{The Kleisli adjunction}

Let $(T,\eta,\mu)$ be a monad on a category $\cat{C}$.
As we said already, every ordinary morphism $f:X\to Y$ defines ``trivially'' a Kleisli morphism via
$$
\begin{tikzcd}
X \ar{r}{f} & Y \ar{r}{\eta} & TY .
\end{tikzcd}
$$
For example, every function defines in particular a (very special) relation, given by ``$x$ is related to $y$ if and only if $y=f(x)$''. Analogously, every function defines a \emph{deterministic} stochastic map, given by ``$y=f(x)$ with probability one''.

This assignment is actually a functor $\cat{C}\to\cat{C}_T$, which we denote by $L_T$, as the next exercise shows.

\begin{ex}[important!]
	Prove that $L_T$ is indeed a functor, where on objects, $L_T(X)=X$, and on morphisms, $L_T(f)=\eta\circ f$. 
	
	It has to be proven that it preserves identities (i.e.~$L_T(\id_X)=\eta_X$ for each object $X$) and composition (i.e.~$L_T(f\circ g) = L_T(f)\circ_{kl}L_T(g)$). 
\end{ex}

In our power set example, this says in particular that if we consider functions as special relations, then composing them as functions or as relations gives the same result. The same is true for stochastic maps.

Conversely, suppose we have a Kleisli morphism $k:X\to TY$. In general, this does not come from a map $X\to Y$, and cannot be used to define such a map. However, we can obtain from $k$ a map between the \emph{extended} spaces, i.e.~$TX\to TY$, as follows.
$$
\begin{tikzcd}
 TX \ar{r}{Tk} & TTY \ar{r}{\mu} & TY .
\end{tikzcd}
$$
Denote this map by $R_T(k)$. This is again a functorial assignment. That is, the assignment $X\mapsto TX$, $k\mapsto \mu\circ Tk$ is a functor $\cat{C}_T\to\cat{C}$.

\begin{ex}[important!]
	Prove that $R_T$ is indeed a functor. This means that it preserves identities (i.e.~$R_T(\eta_X)=\id_{TX}$ for each object $X$) and composition (i.e.~$R_T(k\circ_{kl} h) = R_T(k)\circ R_T(h)$). 
\end{ex}

\begin{eg}[sets and relations]\label{fca}
	Let $k:X\to PY$ be a Kleisli morphism of the power set monad, i.e.~a relation between $X$ and $Y$. We can assign to a subset $S\subseteq X$ the subset of all the points of $Y$ which are related to at least one element of $S$. This gives the set 
	$$
	\bigcup_{x\in S} k(x) \;=\; \cup\circ Pk (S).
	$$
	If we do this for each $S\in PX$, we have then a map $PX\to PY$.
\end{eg}

\begin{eg}[probability]\label{stomat}
	Let $k:X\to \mathcal{P}Y$ be a Markov kernel (or a stochastic map, for the readers that are unfamiliar with measure theory). Given $p\in\mathcal{P}X$ we can form the measure on $Y$
	$$
	A \;\longmapsto\; \int_X k(x)(A) \, dp(x) .
	$$
	Doing this for each $p\in \mathcal{P}X$ gives the desired assignment $\mathcal{P}X\to \mathcal{P}Y$. This is sometimes called the \emph{pushforward of measures over a Markov kernel}, but for our formalism this name is slightly inaccurate: it corresponds precisely to the pushforward over the map $k:X\to \mathcal{P}Y$, \emph{followed by integration}. In symbols,
	$$
	\begin{tikzcd}
     \mathcal{P}X \ar{r}{\mathcal{P}k} & \mathcal{P}\mathcal{P}Y \ar{r}{\mu} & \mathcal{P}Y .
    \end{tikzcd}
	$$
\end{eg}

\begin{prop}\label{kleisliadj}
 Let $(T,\eta,\mu)$ be a monad on $\cat{C}$. 
 \begin{enumerate}
  \item The composite functor $R_T\circ L_T:\cat{C}\to\cat{C}$ is naturally isomorphic to $T$;
  \item The functor $L_T$ is left-adjoint to $R_T$;
  \item The unit of the adjunction is given by the unit of the monad $\eta$.
 \end{enumerate}
\end{prop}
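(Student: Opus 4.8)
The plan is to prove all three parts by directly unwinding the definitions of $L_T$ and $R_T$ together with the Kleisli composition, invoking the monad axioms exactly where needed; there is no clever idea required here, only careful bookkeeping. First I would dispatch (1). On objects $R_T(L_T X) = R_T(X) = TX$, and on a morphism $f : X \to Y$ of $\cat{C}$ we have $L_T(f) = \eta_Y \circ f$, hence $R_T(L_T f) = \mu_Y \circ T(\eta_Y \circ f) = \mu_Y \circ T\eta_Y \circ Tf$. The right unitality axiom of the monad gives $\mu_Y \circ T\eta_Y = \id_{TY}$, so $R_T(L_T f) = Tf$. Thus $R_T \circ L_T$ literally coincides with $T$ on objects and on morphisms, and in particular is naturally isomorphic to it.

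For (2), the key remark is that there is essentially nothing to construct: for an object $X$ of $\cat{C}$ and an object $Y$ of $\cat{C}_T$ (equivalently, an object of $\cat{C}$) one has $\Hom_{\cat{C}_T}(L_T X, Y) = \Hom_{\cat{C}_T}(X,Y) = \Hom_{\cat{C}}(X, TY) = \Hom_{\cat{C}}(X, R_T Y)$, so I would take the adjunction bijection to be the identity of $\Hom_{\cat{C}}(X,TY)$. The work is then entirely in naturality, which must be checked in both variables. Naturality in $X$: for $h : X' \to X$ in $\cat{C}$ one checks $k \circ_{kl} L_T(h) = k \circ h$ for all $k : X \to TY$; expanding, $k \circ_{kl} (\eta_X \circ h) = \mu_Y \circ Tk \circ \eta_X \circ h$, which by naturality of $\eta$ equals $\mu_Y \circ \eta_{TY} \circ k \circ h$, and then left unitality ($\mu_Y \circ \eta_{TY} = \id_{TY}$) finishes it. Naturality in $Y$: for a Kleisli morphism $j : Y \to TY'$ one checks $j \circ_{kl} k = R_T(j) \circ k$, and this is immediate since both sides unwind to $\mu_{Y'} \circ Tj \circ k$ directly from the definition of Kleisli composition.

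Finally, (3) is a one-line consequence once the bijection is in hand. The unit of the adjunction at $X$ is the transpose $(\id_{L_T X})^\flat$; but $\id_{L_T X}$ in the Kleisli category is by definition the component $\eta_X : X \to TX$, and since the transpose bijection is the identity map on $\Hom_{\cat{C}}(X,TX)$, we get $(\id_{L_T X})^\flat = \eta_X$, now read as an arrow $X \to R_T L_T X = TX$. I expect the only genuine pitfall in the whole argument to be notational: one must keep ordinary composition in $\cat{C}$ distinct from Kleisli composition, the identity of $X$ in $\cat{C}$ distinct from the identity of $L_T X$ in $\cat{C}_T$, and apply the naturality of $\eta$ and the unit axioms to the correct components. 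If a sanity check is wanted, one can also verify the triangle identities of \Cref{triangleidlemma} with counit $\e_Y = \id_{TY} : R_T Y \to Y$ read as a Kleisli morphism, but the direct route above is the shortest.
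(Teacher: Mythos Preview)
Your proof is correct and follows essentially the same route as the paper: both verify $R_T\circ L_T=T$ via the right unitality axiom, take the adjunction bijection to be the literal identity on $\Hom_{\cat{C}}(X,TY)$, check naturality in $X$ using naturality of $\eta$ plus left unitality and naturality in $Y$ directly from the definition of Kleisli composition, and read off the unit as the image of the Kleisli identity $\eta_X$ under this identity bijection.
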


We call the adjunction above the \emph{Kleisli adjunction}. 
This is one of the many ways in which monads and adjunctions are related, more will come in the next sections.

\begin{proof}
First of all, let $X$ and $Y$ be of $\cat{C}$, but consider $Y$ as an object of $\cat{C}_T$. Note that $L_T(X)=X$, and $R_T(Y)=TY$, from the way we constructed the functors. 

\begin{enumerate}
 \item Set now $Y=L_T(X)$. We get $R_T(L_T(X))=R_T(X)=TX$. 
Let now $f:X\to X'$ be a morphism of $\cat{C}$. We have that 
$$
R_T(L_T(f)) \;=\; R_T(\eta\circ f) \;=\; \mu\circ T(\eta\circ f) \;=\; \mu\circ T\eta\circ Tf.
$$
By the right unitality triangle of \eqref{monaddiagrams}, $\mu\circ T\eta$ is the identity, and so we are left with $Tf$. Therefore $R_T\circ L_T=T$.

\item Let's now turn to the adjunction. 
There is a natural bijection between functions $X\to TY$ and Kleisli morphisms from $X$ to $Y$, given by
$$
\begin{tikzcd}[row sep=0]
 \Hom_\cat{C} (X,TY) \ar{r}{\cong} & \Hom_{\cat{C}_T} (X,Y) \\
 k \ar[mapsto]{r} & k .
\end{tikzcd}
$$
The fact that this is a bijection follows directly from the definition of Kleisli morphism: Kleisli morphisms from $X$ to $Y$ are \emph{by definition} morphisms $X\to TY$ of $\cat{C}$. 

Naturality in $X$ says that for $g:W\to X$, the following diagram commutes. 
$$
\begin{tikzcd}
 \Hom_\cat{C} (X,TY) \ar{d}{-\circ g} \ar{r}{\cong} & \Hom_{\cat{C}_T} (X,Y) \ar{d}{-\circ_{kl}L_T(g)} \\
 \Hom_\cat{C} (W,TY) \ar{r}{\cong} & \Hom_{\cat{C}_T} (W,Y)
\end{tikzcd}
$$
In other words, we need to show that for all $k:X\to TY$, $k\circ g=k\circ_{kl}L_T(g)$, or more explicitly, that $k\circ g=\mu\circ Tk\circ \eta \circ g$. This can be shown by forming the following diagram,
$$
\begin{tikzcd}
 & TX \ar{r}{Tk} & TTY \ar{dr}{\mu} \\
 W \ar{r}{g} & X \ar{u}{\eta} \ar{r}{k} & TY \ar{u}{\eta} \ar{r}[swap]{\id} & TY 
\end{tikzcd}
$$
which commutes by naturality of $\eta$ and by the left unitality diagram of \eqref{monaddiagrams}.

Naturality in $Y$ says that for every $h:Y\to TZ$, the following diagram commutes. 
$$
\begin{tikzcd}
 \Hom_\cat{C} (X,TY) \ar{r}{\cong} \ar{d}{R_T(h)\circ -} & \Hom_{\cat{C}_T} (X,Y) \ar{d}{h\circ_{kl}-} \\
\Hom_\cat{C} (X,TZ) \ar{r}{\cong} & \Hom_{\cat{C}_T} (X,Z) 
\end{tikzcd}
$$
In other words, we need to show that for all $k:X\to TY$, $R_T(h)\circ k = h \circ_{kl} k$. But both sides of the equation translate explicitly to $\mu\circ Th \circ h$, and so they coincide.

\item To see what the unit of the adjunction is, set $X=Y$, and consider the Kleisli identity $Y\to Y$. This corresponds, under the bijection above, with the map $\eta:Y\to TY$. Therefore $\eta$ is the unit of the adjunction. \qedhere
\end{enumerate}

\end{proof}

\begin{ex}
 Write down explicitly the universal properties associated to this adjunction.
\end{ex}

\subsection{Closure operators and idempotent monads}

The interpretation of monads as extensions is helpful also for the case of monads on posets. These are simpler construction which go under the name of \emph{closure operators}. Let's see this in detail. Let $(X,\le)$ be a poset (or a preorder). Plugging in the definition, a monad on $X$ amounts to a monotone map $t:X\to X$ (the ``endofunctor''), and with pointwise inequalities (``natural transformations'') $\id_X\le t$ and $t^2\le t$, which mean that
for every $x\in X$, $x\le t(x)$ and $t(t(x))\le t(x)$. Since $t$ is monotone, the first inequality implies that $t(x)\le t(t(x))$ too, therefore equivalently we have the following.

\begin{deph}
 A \emph{closure operator} on a poset $(X,\le)$ is a map $t:X\to X$ satisfying the following properties.
 \begin{enumerate}
  \item Monotonicity: for each $x\le y\in X$, $t(x)\le t(y)$;
  \item Extensivity: for each $x\in X$, $x\le t(x)$ (i.e. ``applying the map lets us step up'');
  \item Idempotency: for each $x\in X$, $t(t(x))=t(x)$ (i.e.~once we apply the map, applying it again has no effect).
 \end{enumerate}
\end{deph}

\begin{caveat}
 In topology one calls ``closure operator'' a function satisfying, in addition to the properties above, the additional property of preserving finite joins. This is a distinct notion. In order to avoid confusion, the operators in the form we have given above are also called ``Moore closure operators'', while those in topology are called ``Kuratowski closure operators''.
\end{caveat}

The following exercises show why closure operators can be interpreted as ``extensions'' or ``completions'' of some kind.
Before reading the following exercises, make sure you understand the examples given in \Cref{substructures}.

\begin{ex}[sets and relations, analysis]
 Let $S$ be a subset of $\R^2$, and denote by $t(S)$ its convex hull, see \Cref{convsubsets}. (In the notation of \Cref{convsubsets}, $t=i\circ c$.) Prove that $t$ is a closure operator on $P(\R^2)$.
\end{ex}

\begin{ex}[topology]
 Prove that the topological closure (see \Cref{topclosure}) is indeed a closure operator on the subsets of a topological space. 
\end{ex}

\begin{ex}[linear algebra]
 Let $V$ be a vector space. Recall that the \emph{span} of a subset $S\subseteq V$ is the smallest vector subspace of $V$ containing $S$, or equivalently, the set of all vectors of $V$ which can be expressed as linear combinations of elements of $S$. 
 
 Show that the span gives a closure operator on the subsets of $V$.
\end{ex}

You may have noticed a pattern in these exercises. Here it is:
\begin{ex}[important!]\label{galoisclosure}
 Let $X$ and $Y$ be posets, and let $f:X\to Y$ and $g:Y\to X$ form a Galois connection $f\ladj g$.
 Prove that $g\circ f$ is a closure operator.
\end{ex}

This statement will be generalized in \Cref{adjmonads}.

Monads can be ``idempotent'' even when they are defined on a category which is not a poset -- in that case, one has to allow for ``idempotency up to isomorphism''.

\begin{deph}
 A monad $(T,\eta,\mu)$ on a category $\cat{C}$ is called \emph{idempotent} if the multiplication $\mu:TT\Rightarrow T$ is a natural isomorphism.
\end{deph}

Idempotent monads model for example situations in which, after ``extending'' our spaces, we have ``completed'' our space in a way that cannot be further extended, ``everything is there already''. 
Here is a standard example.

\begin{ex}[analysis]\label{cauchycompletion}
 Let $X$ be a metric space. The \emph{Cauchy completion} of $X$ is a space $CX$ which we can think of as ``containing all the limit points of $X$ too'', and is constructed as follows. Let $SX$ be the set of Cauchy sequences in $X$. Given sequences $\{x_n\}$ and $\{y_n\}$, define 
 $$
 d \big( \{x_n\}, \{y_n\} \big) \;\coloneqq\; \lim_{n\to\infty} d(x_n,y_n) ,
 $$
 where the distance on the right is the one of $X$. 
 Let now $CX$ be the quotient of $SX$ given by identifying all the elements of $SX$ which have distance zero (i.e.~sequences which ``would have the same limit'', even if the limit does not exist in $X$). 
 The space $CX$ is complete (why?). 
 Moreover, there is a canonical dense isometric embedding $\eta:X\to CX$ mapping the point $x$ to the constant sequence at $x$, which obviously has $x$ as limit. 
 Therefore, $CX$ extends $X$ by allowing for ``limits of points of $X$'' which may not have been in $X$ before.
 If $X$ is complete to begin with, then $CX$ is isometric to $X$ via the map $\eta$: indeed, an inverse to $\eta$ is given by the map 
 \begin{equation}\label{cauchylimit}
 \{x_n\}\;\longmapsto\; \lim_{n\to\infty} x_n
 \end{equation}
 which always exists in $X$ if (and only if) $X$ is complete. (Why is this an inverse?)
 
 The Cauchy completion is functorial in the following sense. Let $X$ and $Y$ be metric spaces, and $f:X\to Y$ be Lipschitz (or even just uniformly continuous). We can extend $f$ ``by continuity'' to a map $Cf:CX\to CY$ by setting 
 $$
 Cf(\{x_n\}) \;\coloneqq\;  \{f(x_n)\}.
 $$
 This is well-defined, as it does not depend on the choice of the representative sequence (why?), and so $C$ is a functor on the category $\cat{Lip}$ of metric spaces and Lipschitz maps. 
 The embedding $\eta:X\to CX$ is natural (why?). 
 
 We can make $C$ an idempotent monad as follows. First of all, since $CX$ is complete, we have that $\eta:CX\to CCX$ is an isometry. Set then $\mu:CCX\to CX$ to be the inverse of $\eta$, i.e.~the ``lim'' map of~\Cref{cauchylimit}. 
 
 Show that $(C,\eta,\mu)$ is a monad on $\cat{Lip}$. Note that by construction, the monad is idempotent.
\end{ex}

We conclude this section by showing that \emph{not all monads can be thought of as extensions}. The interpretation as extensions is often helpful, but not always accurate, and the reason is that the unit $\eta:X\to TX$ of the monad is not always a monomorphism. This is particularly true for some idempotent monads. Sometimes, for example, the map $\eta$ is even a quotient, so that instead of ``extending'' $X$ it is ``compressing'' it, by identifying different elements. The basic example is the following.

\begin{ex}[sets and relations]
 Let $\cat{Eq}$ be the category whose objects are sets equipped with an equivalence relation $(X,\sim)$, and morphisms are functions respecting the equivalence, i.e.~$x\sim x'$ implies $f(x)\sim f(x')$ (recall \Cref{equequ}).   
 Assign now to each $(X,\sim)$ the quotient set $X/\sim$ obtained by identified all elements of $X$ which are related by $\sim$ (recall \Cref{quotient}). Assume $X/\sim$ equipped with the identity relation. Denote by $q$ the quotient map $(X,\sim)\to X/\sim$.
 Show that given a map $f:(X,\sim)\to(Y,\sim)$ respecting the equivalence there exists a unique map $\tilde{f}:X/\sim\to Y/\sim$ making the following diagram commute.
 $$
 \begin{tikzcd}
  (X,\sim) \ar{d}{q} \ar{r}{f} & (Y,\sim) \ar{d}{q} \\
  X/\sim \uni{r}{\tilde{f}} & Y/\sim
 \end{tikzcd}
 $$
 Why is this map well-defined? (Reading \Cref{eq} again may help you.)
 The assignment $(X,\sim)\mapsto X/\sim$, $f\mapsto\tilde{f}$ is therefore an endofunctor on $\cat{Eq}$.
 
 Show that this functor induces an idempotent monad on $\cat{Eq}$, with unit given by the quotient map $q$.
\end{ex}

Here are similar examples from different fields.

\begin{ex}[topology]\label{kolmogorovquotient}
 Show that the Kolmogorov quotient of topological spaces gives an idempotent monad on $\cat{Top}$.
\end{ex}

\begin{ex}[group theory]\label{abelianization}
 Show that the abelianization of a group gives an idempotent monad on $\cat{Grp}$. 
\end{ex}

In the most general case, the unit of the monad is not monic nor epi. For example, the unit of the Giry monad is in general neither (to see why it is not always monic, take as $X$ a codiscrete topological space with its Borel $\sigma$-algebra).

\section{Monads as theories of operations}

Here is another way to look at monads, very useful in algebra.
\begin{idea} 
	A monad is a consistent choice of formal expressions of a specific kind, together with ways to evaluate them. 
\end{idea}

Before looking at the examples, recall the discussion about formal expressions as opposed to their result given in \Cref{counitvect}. A formal expression is something that ``looks like an operation'', but may have no result defined in any sense. For example, if $x,y\in X$, we can write the expression $x+y$, even if there is no sum defined on $X$. The expression $x+y$ does not \emph{actually} mean that we are summing them, it is only written there, formally. 
Whenever we work with variables instead of numbers we do this all the time: we know that $2+1=3$, but what's $a+b$? The expression $a+b$ cannot be further evaluated unless we know the values of $a$ and $b$ (or, in computer science: unless the variables have been assigned a value, and we have access to it). Even if $a+b$ has no result, we can still use it in mathematical manipulations, because there are some facts which will be true regardless of the values of $a$ and $b$. For example, that $a+b=b+a$.\footnote{By, ``addition'', here we mean an operation satisfying the axioms of a commutative monoid, like the addition of natural numbers. We will always denote such an operation, formal or not, by ``$+$''.}

\begin{eg}[algebra]\label{fcm}
 Let $X$ be a set. We can form the set $FX$ of \emph{formal sums of elements of $X$}. That is, formal expressions 
 $$
 x_1 + \dots + x_n ,
 $$
 of finite length (but arbitrarily large), including the empty expression, satisfying the usual commutativity law for addition. 
 
 We can make $F$ functorial as follows. Let $f:X\to Y$ be a function. We define the function $Ff:FX\to FY$ to ``apply $f$ elementwise in the expression''. That is, 
 $$
 Ff(x_1 + \dots + x_n) \;\coloneqq\; f(x_1) + \dots + f(x_n) ,
 $$
 and $Ff$ applied to the empty expression on $X$ gives the empty expression on $Y$. This way, $F$ is a functor $\cat{Set}\to\cat{Set}$. 
 
 Let's now equip $F$ with a monad structure.
 Given $x\in X$, we can form the trivial formal expression 
 $$
 x 
 $$
 in which $x$ is the only addendum, there is ``nothing else to add''. Doing this for every $x\in X$ we get then an assignment $\eta:X\to FX$. This is natural in $X$: the naturality diagram
 $$
 \begin{tikzcd}
  X \ar{d}{\eta} \ar{r}{f} & Y \ar{d}{\eta} \\
  FX \ar{r}{Ff} & FY
 \end{tikzcd}
 $$
 commutes, as for each $x\in X$ both paths in the diagram give the trivial formal expression $f(x)\in FY$.
 
 Consider now the space $FFX$, whose elements are \emph{formal expressions of formal expressions}. A way to represent its elements is to add brackets, as in 
 $$
 (x_1+x_2)+(x_1+x_3) ,
 $$
 and so on. The addenda in the sum above are themselves formal sums.
 Given such a nested formal expression we can always remove the brackets, obtaining a plain formal expression
 $$
 x_1 + x_2 + x_1 + x_3 .
 $$
 This gives an assignment $\mu:FFX\to FX$, which will be the multiplication of the monad. (Why is this natural?)
 
 The maps $\eta$ and $\mu$ satisfy the monad axioms \eqref{monaddiagrams}. Let's see what they mean.
 \begin{itemize}
  \item The left unitality diagram says that if we take a formal expression, such as 
  $$
  x_1+x_2+x_3 ,
  $$
  and we embed it into $FFX$ via $\eta$ forming the formal expression 
  $$
  (x_1 + x_2 + x_3)
  $$
  (which is a nested formal expression with only one addendum), then removing the brackets via the map $\mu$ gives the original expression
  $$
  x_1+x_2+x_3 .
  $$
  
  \item The right unitality diagram says that if we again take a formal expression, as above, and now embed it into $FFX$ via the map $F\eta$, that is, embedding each term as its own trivial formal expression,
  $$
  (x_1) + (x_2) + (x_3)
  $$
  (this is a nested formal expression whose addenda are themselves formal expressions of one addendum), then removing the the brackets via the map $\mu$ gives again the original expression.
  
  \item The associativity square says the following. Take a a formal expression of formal expressions of formal expressions (three times, so two levels of nesting), such as
  $$
  ((x_1+x_2)+(x_3))+((x_4)).
  $$
  We can then either remove the (outer) brackets, via the map $\mu$,
  $$
  (x_1+x_2)+(x_3)+(x_4)
  $$
  or instead apply the map $F\mu$, which removes the brackets in each term, i.e.~the inner brackets,
  $$
  (x_1+x_2+x_3)+(x_4).
  $$
  These two expressions differ, but applying $\mu$ one more time to either of them removes the remaining brackets, and makes the two expressions coincide,
  $$
  x_1 + x_2 + x_3 + x_4 .
  $$
  
  Therefore $(F,\eta,\mu)$ is a monad. It is called the \emph{(free) commutative monoid monad}. ``Commutative monoid'' because the formal operations that it encodes, additions, are exactly those giving the algebraic structure of a commutative monoid. Additional motivation will be given in \Cref{algmonoids}. The word ``free'' will be explained in \Cref{freealgebras}.
 \end{itemize}
\end{eg}

\begin{ex}[several fields]\label{freemonoidmonad}
 Consider the list monad of \Cref{listmonad}. Given a set $X$, we can view the elements of $LX$, instead of as ``lists on $X$'', as ``formal products of elements of $X$'', writing 
 $$
 [x_1,\dots,x_n]
 $$
 as
 $$
 x_1 \,\dots\, x_n .
 $$
 The unit and multiplication have a similar interpretation, as in \Cref{fcm}, in terms of ``one-term expression'' and ``removing brackets''.
 Note that this time, differently from \Cref{fcm}, the order of the terms matters. This encodes the structure of a monoid, but not necessarily commutative. Because of this, the list monad is also known under the name of \emph{free monoid monad}.
 
 (Another example of a monoid where the product is not commutative is given by the product of square matrices of a fixed dimension.)
\end{ex}

\begin{ex}[analysis, probability]\label{convexcombmonad}
 Just as the list monad can be reinterpreted in terms of formal expressions, the same can be done for the probability monad of \Cref{probmonad}. Given a set $X$, we can view an element $p\in \mathcal{P}X$ as a \emph{formal average} or \emph{formal convex combination} of elements of $x$. Here's an example. 
 
 Consider a coin flip, where ``heads'' and ``tails'' both have probability $1/2$. Then \emph{in some sense}, this is a formal convex combination of ``heads'' and ``tails''. The word ``\emph{formal}'' here is the key: the set $\{\mbox{``heads''},\mbox{``tails''}\}$ is not a convex space, so one can't really take actual averages of its elements. There is no ``intermediate state between heads and tails'', how would the coin land? It is a \emph{formal} convex combination, without a result. Every probability measure on $X$ can be interpreted in a similar way. The unit and multiplication, once again, can be interpreted in terms of ``one-term convex combination'' and ``removing the brackets'' (how exactly?). 
\end{ex}

\begin{ex}[algebra, group theory]\label{freegroupmonad}
 Given a set $X$, consider the set $GX$ of words whose letter are either elements of $X$ or formal inverses thereof, such as
 $$
 x_1 \, {x_2}^{-1}\, x_3 .
 $$
 Can you equip $GX$ with a monad structure?
\end{ex}

\subsection{Algebras of a monad}

Given a monad, we don't only have a way to form formal expressions, but also a setting in which those expressions have a result. 

\begin{deph}\label{defalgebra}
 Let $(T,\eta,\mu)$ be a monad on a category $\cat{C}$. An \emph{algebra of $T$}, or \emph{algebra over $T$}, or \emph{$T$-algebra}, consists of
 \begin{itemize}
  \item An object $A$ of $\cat{C}$;
  \item A morphism $e:TA\to A$ of $\cat{C}$,
 \end{itemize}
 such that the following diagrams commute, called ``unit'' and ``composition'', respectively.
 \begin{equation}\label{algebradiagrams}
  \begin{tikzcd}
   A \ar[swap]{dr}{\id} \ar{r}{\eta} & TA \ar{d}{e} \\
   & A
  \end{tikzcd}
  \qquad
  \begin{tikzcd}
   TTA \ar{d}{\mu} \ar{r}{Te} & TA \ar{d}{e} \\
   TA \ar{r}{e} & A
  \end{tikzcd}
 \end{equation}
\end{deph}

\begin{caveat}
 The word ``algebra'' has other meanings in mathematics. In order to avoid confusion, the algebras of a monad are sometimes also called ``Eilenberg-Moore algebras''. 
\end{caveat}

The interpretation of a $T$-algebra in terms of formal expressions is that an algebra is a place which is \emph{equipped} with the operations specified by the monad, i.e.~those expressions have an actual result. The map $e:TA\to A$ maps a formal expression to its actual result. 

\begin{eg}[algebra]\label{algmonoids}
 Let's show that the algebras of the ``free commutative monoid monad'' given in \Cref{fcm} are exactly commutative monoids.
 
 First of all, let $A$ be a commutative monoid (for example, natural numbers with addition). 
 Take as map $e:FA\to A$ the function that maps each formal sum in $A$ to its actual result (and the empty expression to zero). 
 For example, $2+2+1\mapsto 5$. 
 This satisfies the algebra axioms~\eqref{algebradiagrams}.
\begin{itemize}
 \item The unit diagram of the algebra says that if we evaluate a one-element sum, the result is just that element. For example, given $a\in A$ the evaluation of the trivial formal sum containing only $a$ gives as result again $a$;
 \item If we have a formal expression of formal expressions, we can either first remove the brackets and then evaluate the result, or first evaluate the content of the brackets, and then evaluate the resulting expression. The composition diagram says that the result will be the same. For example, the expression in the top left corner of the following diagram can be evaluated in these two equivalent ways:
 \begin{equation*}
  \begin{tikzcd}
   (2+3) + (1+2) \ar[mapsto]{d}{\mu} \ar[mapsto]{r}{Fe} & 5+3 \ar[mapsto]{d}{e}\\
   2+3+1+2 \ar[mapsto]{r}{e} & 8 
  \end{tikzcd}
 \end{equation*}
\end{itemize}
 Therefore, every monoid is an $F$-algebra. 
 
 Conversely, let $A$ be an $F$-algebra with structure map $e:FA\to A$. Then $A$ has a canonical monoid structure given as follows.
 \begin{itemize}
  \item The neutral elements is the element of $A$ obtained by applying $e$ to the empty expression;
  \item Given $a,b\in A$, their (actual) sum is defined as the result (via $e$) of the formal expression $a+b$, i.e.~$e(a+b)$. 
 \end{itemize}
 This operation is associative, unital, and commutative (why?), and so $A$ is canonically a commutative monoid.
\end{eg}

\begin{ex}[several fields]
 Prove that the algebras of the free monoid monad (or list monad) of \Cref{listmonad} and \Cref{freemonoidmonad} are exactly monoids. 
\end{ex}

\begin{ex}[algebra, group theory]
 If you have solved \Cref{freegroupmonad}, what are the algebras of the resulting monad?
\end{ex}

\begin{eg}[analysis, probability]\label{convexspaces}
 The algebras of the probability monad (see \Cref{probmonad} and \Cref{convexcombmonad}) are called \emph{convex spaces}. These are spaces $A$ where one can (actually) take weighted averages, as specified by the elements of $\mathcal{P}A$ (the nonzero values of $p\in \mathcal{P}A$ are the weights).
 
 For example, the unit interval $[0,1]$ is a $\mathcal{P}$-algebra with the usual convex structure: the average
 $$
 \dfrac{1}{2} \, 0 + \dfrac{1}{2} \, 1 
 $$
 gives as result $1/2$. Compare with the coin example of \Cref{convexcombmonad}, where the average of ``heads'' and ``tails'' had no result. 
 More generally, every convex subset of a vector space is a $\mathcal{P}$-algebra (the converse is not true: there $\mathcal{P}$-algebras which cannot be embedded into vector spaces). 
 
 In probability theory, the operation of \emph{expectation value} is one of the most important -- this can be encoded by the notion of an algebra of the probability monad. (For non-finitely supported measures, take for example the Giry monad). 
\end{eg}

In the next example we see that the writer monad is related to group and monoid actions on spaces.

\begin{eg}[several fields]\label{actionmonad}
 Let's study the algebras of the writer monad $T_M$, where $M$ is a monoid (\Cref{writermonad}).
 In order for this interpretation to be as suggestive as possible it is helpful to change the notation slightly. We write $M\times A$ instead of $A\times M$, we write the monoid $M$ multiplicatively instead of additively (with neutral element 1), and we denote the map $e:M\times A \to A$ simply by a dot, i.e.~we write $e(m,a)$ by $m\cdot a$. 
 
 Plugging in the definition, a $T_M$-algebra is then a set $A$ together with a map $M\times A\to A$, such that the following diagrams commute. 
 $$
 \begin{tikzcd}
 A \ar{dr}[swap]{\id} \ar{r}{\eta} & M\times A \ar{d}{e} \\
 & A
 \end{tikzcd}
 \qquad
 \begin{tikzcd}[column sep=large]
  M \times M \times A \ar{d}{\mu} \ar{r}{\id_M\times e} & M\times A \ar{d}{e} \\
  M \times A\ar{r}{e} & A .
 \end{tikzcd}
 $$
 Explicitly, the diagram say that for all $a\in A$ and $m,n\in M$,
 $$
 1\cdot a= a \qquad\mbox{and}\qquad (mn)\cdot a = m\cdot (n\cdot a) .
 $$
 In other words, a $T_M$-algebra is exactly a set equipped with an $M$-action (also called an $M$-set). 
 This is why the writer monad is also called the \emph{action monad} or \emph{$M$-action monad}.
 
 We have already seen a way to talk about monoids and groups acting on sets (and vector spaces, and so on): namely, a monoid $M$ acting on a set is the same as a functor $\cat{B}M\to\cat{Set}$ (see \Cref{egrepresentation}, \Cref{perm}, \Cref{egmorphsym}, and \Cref{egmorphdynsys}). This is a different (but related) way to talk categorically about the same structure.
 
 This also says that we can interpret the monad $T_M$ in terms of formal expressions: an element $(m,a)\in M\times A$ is a ``formal action'' or ``formal move'': $m$ is ``ready to act on $a$'', but the operation is not evaluated (yet). (As usual, this can be done also on sets which are not necessarily algebras, in that case formal actions are never evaluated.)
 
 From the point of view of computer science, remember that we were interpreting $M$, for example as a ``cost'' or ``side effect'' of some kind. Algebras are settings in which the side effect can be reincorporated, by possibly ``acting'' on $A$, i.e.~it has an effect on the output data. Using again the \LaTeX~ example, if a reference in the document is wrong or missing, the compiler not only reports an error, but also adds question marks in the compiled file, in place of the reference. The error has an effect on the main output data. 
\end{eg}

The algebras of a given monad form a category, whose morphisms we can think of as ``being compatible with the specified operations''.
Let's see the general definition. 

\begin{deph}\label{defemcat}
 Let $(A,e)$ and $(B,e)$ be algebras of a monad $T$ on $\cat{C}$. A \emph{morphism of $T$-algebras}, or \emph{$T$-morphism}, is a morphism $f:A\to B$ of $\cat{C}$ such that the following diagram commutes. 
 \begin{equation}\label{Tmorphism}
 \begin{tikzcd}
  TA \ar{d}{e} \ar{r}{Tf} & TB \ar{d}{e} \\
  A \ar{r}{f} & B
 \end{tikzcd}
\end{equation}

 The category of $T$-algebras and $T$-morphisms is called the \emph{Eilenberg-Moore category} or \emph{category of algebras of $T$} and it is denoted by $\cat{C}^T$. 
\end{deph}

\begin{eg}[algebra]
 Consider two algebras of the free commutative monad (\Cref{fcm}), i.e.~two commutative monoids, $A$ and $B$. Not every function between them respects addition. The function $f$ preserves the addition and neutral elements (a property called \emph{additivity}) if and only if evaluating expressions before or after applying $f$ does not change the result. For example, if $f(a+b)=f(a)+f(b)$.
In other words, $f$ is additive if an only if it makes the diagram~\eqref{Tmorphism} commute.

Therefore the Eilenberg-Moore category of the free commutative monoid monad is the category of commutative monoids and their morphisms.
\end{eg}

\begin{ex}[algebra]
 Prove that the Eilenberg-Moore category of the list monad (or free monoid monad, \Cref{listmonad}) is the category of monoids and their morphisms.
\end{ex}

\begin{ex}[algebra, group theory]
 If you have solved \Cref{freegroupmonad}, what is the Eilenberg-Moore category of the resulting monad?
\end{ex}

\begin{eg}[several fields]
 Consider the writer monad (or action monad) $T_M$, with $M$ a monoid (\Cref{actionmonad}). For $T_M$-algebras (i.e.~$M$-sets) $A$ and $B$, a function $f:A\to B$ is a morphism of algebras if and only if the following commutes.
 $$
 \begin{tikzcd}
  M\times A \ar{d}{e} \ar{r}{\id_M\times f} & M\times B \ar{d}{e} \\
  A \ar{r}{f} & B
 \end{tikzcd}
 $$
 Equivalently, if and only if for every $m\in M$ and $a\in A$, 
 $$
 f(m\cdot a) \;=\; m \cdot f(a). 
 $$
 In other words, the morphisms of algebras are precisely the equivariant maps, i.e.~those preserving ``symmetries'' or ``specified operations'' (see \Cref{equivariant}).
 
 Therefore the Eilenberg-Moore of $T_M$ is equivalent to the functor category $[\cat{B}M,\cat{Set}]$ (see \Cref{fcats}). 
\end{eg}

\begin{ex}[linear algebra]\label{vectorspacemonad}
 Construct a monad on $\cat{Set}$ whose Eilenberg-Moore category is the category $\cat{Vect}$ of vector spaces and linear maps. (Hint: adapt to the vector space case the construction of the free commutative monad. Reading \Cref{counitvect} again could help.) 
\end{ex}

\begin{ex}[probability, analysis]
 What are the morphisms of algebras of the probability monad of \Cref{probmonad}? 
\end{ex}

\begin{ex}[sets and relations, basic computer science]
 The \emph{maybe monad} is a monad on $\cat{Set}$ whose functor part adds an extra point to each set, i.e.~it maps $X$ to $X\sqcup 1$ (where $1$ is the singleton). What can its unit and multiplication be?
 
 Show that the Eilenberg-Moore category of the maybe monad is (equivalent to) the category of pointed sets and base point-preserving functions. 
\end{ex}

\begin{ex}[sets and relations; difficult!]
 Show that the algebras of the power set monad of \Cref{powersetmonad} are complete semilattices, with the algebra structure map given by the join (or the meet). 
 What are the morphisms of algebras?
\end{ex}

\begin{ex}[difficult!]
 Show that for every $T$-algebra $(A,e)$, the diagram
 $$
 \begin{tikzcd}
 TTA \ar[shift left]{r}{\mu} \ar[shift right]{r}[swap]{Te} & TA \ar{r}{e} & A
 \end{tikzcd}
 $$
 is a coequalizer diagram. (Hint: the unique maps giving the universal property of the coequalizer can be obtained using the unit $\eta$.)
\end{ex}

\begin{ex}\label{stmapiso}
 Show that a monad is idempotent if and only if all its algebra structure maps are isomorphisms. (Hint: use the exercise above, after proving that for every algebra $(A,e)$ of an idempotent monad $T$, $\mu$ and $Te$ must coincide.)
\end{ex}

Note that this implies that, for example,
\begin{itemize}
 \item The algebras of the Cauchy completion monad of \Cref{cauchycompletion} are the complete metric spaces;
 \item The algebras of the abelianization monad of \Cref{abelianization} are the abelian groups;
 \item The algebras of the Kolmogorov quotient monad of \Cref{kolmogorovquotient} are the $T_0$ (or \emph{Kolmogorov}) topological spaces.
\end{itemize}
What are the morphism of algebras in each case?
If you can't answer this question, see the next exercise.

\begin{ex}
 Using \Cref{stmapiso}, show that if $T$ is an idempotent monad on $\cat{C}$, given any two $T$-algebras $A$ and $B$, any morphism $f:A\to B$ of $\cat{C}$ is automatically a morphism of algebras. 
\end{ex}

\subsection{Free algebras}\label{freealgebras}

Let $(T,\eta,\mu)$ be a monad on $\cat{C}$. Given an object $X$ of $\cat{C}$, the object $TX$ is canonically a $T$-algebra, with structure morphism $\mu:TTX\to TX$. The diagrams \eqref{algebradiagrams} in this setting become 
$$
\begin{tikzcd}
 TX \ar{dr}[swap]{\id} \ar{r}{\eta} & TTX \ar{d}{\mu} \\
 & TX
\end{tikzcd}
\qquad
\begin{tikzcd}
 TTTX \ar{r}{T\mu} \ar{d}{\mu} & TTX \ar{d}{\mu} \\
 TTX \ar{r}{\mu} & TX
\end{tikzcd}
$$
which are just the left unitality and associativity diagram of the monad, as in \eqref{monaddiagrams}, and so they commute.

\begin{deph}
 We call a $T$-algebra in the form $(TX,\mu)$ for some $X$ of $\cat{C}$ a \emph{free $T$-algebra}.
\end{deph}

Moreover, let $f:X\to Y$ be a morphism of $\cat{C}$. The induced map $Tf:TX\to TY$ is a morphism of algebras, since the diagram
$$
\begin{tikzcd}
 TTX \ar{d}{\mu} \ar{r}{TTf} & TTY \ar{d}{\mu} \\
 TX \ar{r}{Tf} & TY
\end{tikzcd}
$$
commutes by naturality of $\mu$.

An interpretation of free algebras, in terms of formal expressions, is that they contain formal expressions, and that the structure map $\mu$ is just the simplification of formal expressions (or the juxtaposition of them). Let's see some examples. The word ``free'', as we said before, will be motivated in \Cref{emadj}.

\begin{eg}[several fields]
 Consider the list monad $L$ of \Cref{listmonad}. We know its algebras are monoids. Now given a set $X$, the set $LX$ of \emph{lists} or \emph{words} in $X$ is canonically a monoid, with as neutral element the empty list, and as multiplication the concatenation of strings,
 $$
 [x_1,\dots,x_n]\cdot[y_1,\dots,y_m] \;\coloneqq\;[x_1,\dots,x_n,y_1,\dots,y_m] .
 $$
\end{eg}

\begin{eg}[algebra]
 Consider the free commutative monoid monad $F$ of \Cref{fcm}. Given a set $X$, the free algebra $FX$ contains formal sums such as 
 $$
 x_1 + \dots + x_n ,
 $$
 and the structure map, which is the multiplication of the monad, sums formal expressions by just removing the brackets,
 $$
 (x_1 + \dots + x_n) + (y_1 + \dots + y_m) 
 \;\coloneqq\; (x_1 + \dots + x_n + y_1 + \dots + y_m).
 $$
\end{eg}

\begin{eg}[probability]
 Consider the probability monad $\mathcal{P}$ of \Cref{probmonad}, or any of its variants in the categories of measurable or topological spaces. 
 We have seen that its algebras can be interpreted as ``convex spaces'' of some kind (\Cref{convexspaces}). The free algebras are now the convex spaces in the form $\mathcal{P}X$ for some set (or space) $X$, with the convex combination operation given by the mixture of probability measures, such as 
 $$
 \dfrac{1}{2} \left( \dfrac{1}{2}\,\mbox{heads} + \dfrac{1}{2}\,\mbox{tails} \right) + \dfrac{1}{2} \left( 1\,\mbox{heads} + 0\,\mbox{tails} \right) \;\coloneqq\; \dfrac{3}{4}\,\mbox{heads} + \dfrac{1}{4}\,\mbox{tails}.
 $$
 Note that we are not mixing ``heads'' and ``tails'' (we know that's impossible), we are mixing \emph{probability measures over them}. 
 
 Spaces in the form $\mathcal{P}X$ are also known as \emph{simplices}, and the points in the image of $\delta:X\to \mathcal{P}X$ are called the \emph{extreme points} of the simplex.
 Can you see why?
\end{eg}

\begin{ex}[linear algebra]
 Consider the vector space monad of \Cref{vectorspacemonad}. Using the fact that \emph{every vector space has a basis}, prove that each algebra is free.
\end{ex}

\begin{ex}[group theory]
 What are the free algebras of the monad of \Cref{freegroupmonad}?
\end{ex}

\subsection{The Eilenberg-Moore adjunction}\label{emadj}

Let $(T,\eta,\mu)$ be a monad on a category $\cat{C}$, and let $A$ and $B$ be $T$-algebras. By construction, any morphism of algebras $f:A\to B$ is first of all a morphism of $\cat{C}$. For example, an additive map between monoids is first of all a function. Therefore there is a fully faithful ``forgetful'' functor $\cat{C}^T\to\cat{C}$. Let's denote this functor by $R^T$.
We have seen some of those forgetful functors already:
\begin{itemize}
 \item The forgetful functor $\cat{Vect}\to\cat{Set}$;
 \item The forgetful functor $\cat{Mon}\to\cat{Set}$;
 \item The forgetful functor $\cat{Grp}\to\cat{Set}$.
\end{itemize}
(Which monads have the categories above as Eilenberg-Moore categories?)

We have also seen that, in many cases, those functors have left-adjoints. For example, the forgetful functor $\cat{Vect}\to\cat{Set}$ has a left-adjoint (see \Cref{adjvectset}), with the intuition of ``forming formal linear combinations of elements out of a set''.
This idea of ``forming formal expressions'', which we now know can be formalized by the ideas of a monad and of free algebras, works in general.
Let's see how. 

\begin{deph}
 Let $(T,\eta,\mu)$ be a monad on a category $\cat{C}$. We define the functor $L^T:\cat{C}\to\cat{C}^T$ as follows.
 \begin{itemize}
  \item It maps an object $X$ to the free $T$-algebra $(TX,\mu)$;
  \item It maps a morphism $f:X\to Y$ to the morphism of algebras $Tf:TX\to TY$.
 \end{itemize}
\end{deph}

The interpretation of this construction is: given $X$, form spaces of formal expressions out of $X$, and given $f:X\to Y$, apply $f$ to each term in the formal expressions to get a formal expression in $Y$. 

We now have a result which is similar, in some sense even dual, to \Cref{kleisliadj}.

\begin{prop}\label{emadjprop}
 Let $(T,\eta,\mu)$ be a monad on $\cat{C}$. 
 \begin{enumerate}
  \item The composite functor $R^T\circ L^T:\cat{C}\to\cat{C}$ is naturally isomorphic to $T$;
  \item The functor $L^T$ is left-adjoint to $R^T$;
  \item\label{unitem} The unit of the adjunction is given by the unit of the monad $\eta$;
  \item\label{counitem} The counit of the adjunction is given by the structure maps $e$ of the algebras.
 \end{enumerate}
\end{prop}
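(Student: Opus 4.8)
The plan is to verify the four claims in order, using the machinery already established. The crucial observation is that an adjunction $L^T \ladj R^T$ is equivalently given (by the alternative definition of adjunction, Theorem on adjunctions via unit and counit) by a pair of natural transformations satisfying the triangle identities. So rather than exhibit the bijection $\Hom_{\cat{C}^T}(L^T X, (A,e)) \cong \Hom_\cat{C}(X, R^T(A,e))$ directly, I would produce the candidate unit and counit and check the triangle identities; the bijection then follows formally.

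First I would prove (1): applying $R^T$ to $L^T X = (TX,\mu)$ just returns the underlying object $TX$, and applying $R^T$ to $L^T f = Tf$ returns $Tf$ itself (the forgetful functor acts as the identity on underlying morphisms). So $R^T \circ L^T = T$ on the nose, not merely up to isomorphism. Next, for (3), I would define $\eta$ as the candidate unit: for each object $X$, the component is $\eta_X : X \to TX$, which by (1) is a morphism $X \to R^T L^T X$; naturality of $\eta$ as a natural transformation $\id_\cat{C} \Rightarrow T = R^T L^T$ is exactly part of the monad data, so nothing new is needed. For (4), I would define the candidate counit $\e$ at a $T$-algebra $(A,e)$ to be the structure map $e : TA \to A$ itself. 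Here there is something to check: $\e_{(A,e)}$ must be a morphism of $T$-algebras from $L^T R^T (A,e) = (TA,\mu)$ to $(A,e)$, and this is precisely the composition diagram in \eqref{algebradiagrams}, which says $e \circ Te = e \circ \mu$; also naturality of $\e$ in $(A,e)$ is the defining square \eqref{Tmorphism} of a morphism of algebras applied to $f : A \to B$, i.e.\ $f \circ e = e \circ Tf$. So again the required commutativities are already available from the definitions of algebra and algebra morphism.

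The step that needs genuine verification — and which I expect to be the main (though still routine) obstacle — is the triangle identities \eqref{trianglegeneral} for this $\eta$ and $\e$. In components these read, for $X$ an object of $\cat{C}$ and $(A,e)$ a $T$-algebra,
$$
\e_{L^T X} \circ L^T \eta_X = \id_{L^T X}
\qquad\text{and}\qquad
R^T \e_{(A,e)} \circ \eta_{R^T (A,e)} = \id_{R^T(A,e)}.
$$
Unwinding the functors, the first becomes $\mu_X \circ T\eta_X = \id_{TX}$, which is the right unitality diagram of the monad in \eqref{monaddiagrams}; the second becomes $e \circ \eta_A = \id_A$, which is the unit diagram of the algebra in \eqref{algebradiagrams}. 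Both therefore hold. Having established the triangle identities, the alternative definition of adjunction gives the bijection $\Hom_{\cat{C}^T}(L^T X, (A,e)) \cong \Hom_\cat{C}(X, R^T(A,e))$, natural in both arguments, completing (2), and identifying $\eta$ and $\e$ as its unit and counit, which is (3) and (4). As a sanity check one can also recover the bijection explicitly: a morphism of algebras $f^\sharp : (TX,\mu) \to (A,e)$ restricts along $\eta_X$ to $f^\flat = f^\sharp \circ \eta_X : X \to A$, and conversely a morphism $g : X \to A$ extends to $g^\sharp = e \circ Tg : TX \to A$, with the two constructions mutually inverse by the same two identities used above.
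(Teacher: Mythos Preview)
Your proposal is correct and follows essentially the same approach as the paper: both prove part (1) by direct inspection, then establish the adjunction via the unit--counit characterization, taking $\eta$ as the unit and the algebra structure maps as the counit (checking the latter is a $T$-algebra morphism via the composition square and natural via the definition of $T$-morphism), and finally reducing the two triangle identities to the right unitality law of the monad and the unit law of an algebra, respectively. Your explicit sanity check of the bijection at the end is what the paper records separately as the subsequent corollary.
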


\begin{proof}

 \begin{enumerate}
  \item The composite functor $R^T\circ L^T$ acts as follows,
  $$
  \begin{tikzcd}[sep=small]
   X \ar{dd}{f} &&& (TX,\mu) \ar{dd}{Tf} &&& TX \ar{dd}{Tf} \\
  & \; \ar[mapsto]{r}{L^T} &\; &&\; \ar[mapsto]{r}{R^T} &\; \\
   Y &&& (TY,\mu) &&& TY
  \end{tikzcd}
  $$
  therefore, almost by definition, it has the same action on objects and morphisms as $T$.
  
  \item We will define the adjunction in terms of the unit and counit, which will show \ref{unitem} and \ref{counitem} as well. So first of all, the unit of the monad $\eta:\id_\cat{C}\Rightarrow T = R^T\circ L^T$ is already in the desired form for a unit of the adjunction. The counit $\e:L^T\circ R^T\Rightarrow \id_{\cat{C}^T}$ is defined as follows. Given a $T$-algebra $(A,e)$, 
  $$
  L^T\circ R^T (A,e) \;=\; L^T(A) \;=\; (TA,\mu).
  $$
  The very structure map $e:TA\to A$ defines a morphism of algebras $(TA,\mu)\to (A,e)$, since the following diagram commutes,
  $$
  \begin{tikzcd}
  TTA \ar{d}{\mu} \ar{r}{Te} & TA \ar{d}{e} \\
  TA \ar{r}{e} & A
  \end{tikzcd}
  $$
  being simply the multiplication diagram of \eqref{algebradiagrams}. 
  Moreover, every morphism of $\cat{C}^T$ commutes with the structure maps $e$ of the algebras, so that $e:(TA,\mu)\to (A,e)$ is actually natural in the algebra $A$, and so it induces a natural transformation $\e:L^T\circ R^T\Rightarrow \id_{\cat{C}^T}$.  
  
  In order to have an adjunction we have to show that $\eta$ and $\e$ satisfy the triangle identities \eqref{trianglegeneral}. The first one says (why?) that the following diagram of $\cat{C}^T$ has to commute for all objects $X$ of $\cat{C}$.
  $$
  \begin{tikzcd}
   (TX,\mu) \ar{dr}[swap]{\id} \ar{r}{T\eta} & (TTX,\mu) \ar{d}{\mu} \\
   & (TX,\mu)
  \end{tikzcd}
  $$
  (Note that the map $\mu$ appearing on the right is the structure map of the free algebra $(TX,\mu)$.) This diagram commutes since it corresponds to the right unitality diagram of the monad \eqref{monaddiagrams}.
  
  The second triangle identity says (why?) that the following diagram of $\cat{C}$ has to commute for each $T$-algebra $(A,e)$.
  $$
  \begin{tikzcd}
   A \ar{dr}[swap]{\id} \ar{r}{\eta} & TA \ar{d}{e} \\
   & A
  \end{tikzcd}
  $$
  This commutes, since it is the unit condition for algebras \eqref{algebradiagrams}. \qedhere
 \end{enumerate}
\end{proof}

\begin{cor}\label{uniem}
The adjunction amounts to a natural bijection
  $$
  \begin{tikzcd}
   \Hom_{\cat{C}} (X, A) \ar{r}{\cong} & \Hom_{\cat{C}^T}\big( (TX,\mu), (A,e) \big)
  \end{tikzcd}
  $$
  for each object $X$ of $\cat{C}$ and each $T$-algebra $(A,e)$.
In other words, it says that given a morphism $f:X\to A$, where $A$ is (the underlying object of) a $T$-algebra, there is a unique morphism of $T$-algebras $TX\to A$ such that the following diagram of $\cat{C}$ commutes.
$$
\begin{tikzcd}
X \ar{dr}{f} \ar{d}[swap]{\eta} \\
TX \uni{r} & A
\end{tikzcd}
$$
\end{cor}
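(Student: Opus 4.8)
The plan is to read the statement straight off \Cref{emadjprop}. By definition, an adjunction $L^T\ladj R^T$ is precisely a bijection
$$
\Hom_{\cat{C}^T}\big( L^T X, (A,e) \big) \;\cong\; \Hom_{\cat{C}}\big( X, R^T(A,e) \big)
$$
natural in $X$ and in the $T$-algebra $(A,e)$. Since by construction $L^T X=(TX,\mu)$, and $R^T\colon\cat{C}^T\to\cat{C}$ is the forgetful functor, so that $R^T(A,e)=A$ is just the underlying object, this bijection is exactly the one asserted in the corollary. So the first half needs nothing beyond unwinding the definitions of $L^T$ and $R^T$.

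For the reformulation as a universal property I would invoke the unit--counit description of \Cref{unitcounit}. Part \ref{unitem} of \Cref{emadjprop} says the unit of this adjunction is the unit $\eta$ of the monad, and the general transpose formula \eqref{isoeta} then says that the ``flat'' bijection sends a morphism of algebras $g\colon (TX,\mu)\to(A,e)$ to $R^T(g)\circ\eta_X$. But $R^T$ is forgetful, so $R^T(g)$ is simply $g$ regarded as a morphism of $\cat{C}$; hence the bijection above is nothing but ``precompose with $\eta_X$''. Bijectivity of that map is then verbatim the statement that for every $f\colon X\to A$ there is a unique morphism of $T$-algebras $g\colon TX\to A$ with $g\circ\eta_X=f$, i.e.\ the triangle in the corollary.

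If one wants a self-contained argument instead of quoting the adjunction, I would exhibit $g$ by hand: set $g\coloneqq e\circ Tf\colon TX\to A$, which by part \ref{counitem} of \Cref{emadjprop} is just the counit composite $\e_{(A,e)}\circ L^T(f)$. A short diagram chase shows $g$ is a morphism of algebras --- the naturality square required of $g$ factors as the naturality square of $\mu$ applied to $Tf$ stacked on the composition square of $(A,e)$ from \eqref{algebradiagrams} --- and $g\circ\eta_X=e\circ Tf\circ\eta_X=e\circ\eta_A\circ f=f$ by naturality of $\eta$ and the unit axiom for $(A,e)$. Uniqueness is forced, since any such $g$ has transpose $f$ and the transpose is a bijection.

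I do not anticipate a real obstacle: the only point requiring care is bookkeeping the identifications $L^T X=(TX,\mu)$ and $R^T(A,e)=A$, and confirming that the general ``flat'' formula collapses to precomposition with $\eta_X$ once $R^T$ is recognised as the forgetful functor.
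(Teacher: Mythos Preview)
Your proposal is correct and matches the paper's approach: the paper states this as a corollary of \Cref{emadjprop} with no separate proof, since it is exactly the hom-set bijection of the adjunction $L^T\ladj R^T$ once one unwinds $L^T X=(TX,\mu)$ and $R^T(A,e)=A$, together with the standard universal property of the unit. Your explicit description of the transpose as $g=e\circ Tf$ and the accompanying diagram chase are more detail than the paper gives, but entirely in line with its treatment.
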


\begin{eg}[linear algebra]
 We have already seen the adjunction between sets and vector spaces of \Cref{adjvectset}. The morphism $TX\to A$, in that case, is the linear extension of a map from the basis $X$ of $TX$ to the vector space $A$.
\end{eg}

\begin{eg}[algebra]
 If $F$ is the free commutative monoid monad of \Cref{fcm}, this says that given a set $X$ and a monoid $M$, there is a unique morphism of monoids $\tilde{f}:FX\to M$ such that $f=\tilde{f}\circ \eta$. This is the universal property of free monoids, it says that $FX$ is ``freely generated'' by $X$ (in our interpretation, by forming formal sums of elements of $X$). 
\end{eg}

\begin{eg}[probability, analysis]
For the probability monad of \Cref{probmonad}, the statement above says that given a convex space $A$ (for example, $\R$ or $[0,1]$) and a set (or space) $X$, there is a bijection between functions $f:X\to A$ and \emph{affine maps} from the simplex, $\mathcal{P}X\to A$, which agree with $f$ on the extrema of the simplex.

Similar statements holds for the different variants of probability monads, in their respective categories.
\end{eg}

\begin{ex}[group theory]
 Write down the adjunction explicitly for the case of groups, \Cref{freegroupmonad}. Compare with the adjunction of \Cref{adjgrpset}.
\end{ex}

\section{Comonads as extra information}

We now turn to comonads. 
Let's look at a way in which comonads can be motivated, and which helps understanding many comonads arising in practice.

\begin{idea} 
	A comonad is a consistent way to equip spaces with extra information of a specific kind, and let some morphisms access that information.
\end{idea}

As usual, let's show this by giving some examples.

\begin{eg}[several fields]\label{readercomonad}
 Let $E$ be a set. Given any (other) set $X$, the assignment $X\mapsto X\times E$ is functorial (see \Cref{productfunctorial} and \Cref{writermonad}).
 Denote this functor by $C_E$. So $C_E(X)=X\times E$, and given $f:X\to Y$, we have $C_E(f)= f\times\id_E:X\times E\to Y\times E$, which maps $(x,e)$ to $(f(x),e)$.
 
 If we view $E$ as a set of ``extra data'', then an element $(x,e)$ of $C_E(X)=X\times E$ has more information than just $x$. We can equip the functor $C_E$ with a comonad structure following this intuition. 
 \begin{itemize}
  \item The counit $\e: C_E(X)\to X$ is given by the projection $(x,e)\mapsto x$. In other words, this is \emph{forgetting or discarding the extra information}.
  \item Information cannot only be discarded, it can also be copied.\footnote{This is not true for quantum information, see \Cref{nocloning}.} The comultiplication $\nu:C_E(X)\to C_E(C_E(X))$ then \emph{copies the extra information}, it is given by $(x,e)\mapsto(x,e,e)$. 
 \end{itemize}

 It is easy to see that these maps are natural. The comonad axioms \eqref{comonaddiagrams} say now the following. 
 \begin{itemize}
  \item The left counitality diagram says that $\e\circ\nu=\id_{C_E(X)}$. Explicitly, if we start with $(x,e)\in X\times E$, then copying the extra information (getting $(x,e,e)$) and then discarding the last part (getting $(x,e)$) is the same as doing nothing. 
  \item The right unitality diagram says that $C\e\circ\nu=\id_{C_E(X)}$. Again, if we start with $(x,e)$, copying the extra information (getting $(x,e,e)$) and then discarding the first extra datum (the first of the ``$e$'', giving $(x,e)$) is again the same as doing nothing.
  \item The coassociativity diagram says that $\nu\circ\nu=C\nu\circ\nu$. Explicitly, starting again with $(x,e)$, if we again copy the extra information (obtaining $(x,e,e)$), then copying either the first bit or the second bit of the extra information gives the same result, $(x,e,e,e)$. 
 \end{itemize}

 Therefore $(C_E,\e,\nu)$ is a comonad on $\cat{Set}$. We can call it the \emph{reader comonad}. The name will be motivated in \Cref{readerkleisli}. For now, notice that it is somewhat dual to the writer monad of \Cref{writermonad}: there we had ``extra stuff'', that can be put together (sum), here we have ``extra information'', that can be copied. 
\end{eg}

\begin{eg}[basic computer science, dynamical systems]\label{streamcomonad}
 Let $X$ be a set. Denote by $SX$ the space of \emph{infinite sequences} or \emph{streams} in $X$, that is, functions $\N\to X$. (Equivalently, we can see $SX$ as the cartesian product of $\N$-many copies of $X$ with itself.)
 We can denote the elements by $\{x_n\}_{n\in\N}$, or more briefly $\{x_n\}$, sometimes more explicitly by 
 $$
 \{x_0,x_1,x_2,\dots\} .
 $$
 
 This construction is canonically functorial: given $f:X\to Y$, we can define $Sf:SX\to SY$ as the function applying $f$ elementwise, that is,
 $$
 \{x_0,x_1,x_2,\dots\} \;\longmapsto\; \{f(x_0),f(x_1),f(x_2),\dots\} .
 $$
 
 Clearly, a stream on $X$ has more information than just an element of $X$. We can now equip $S$ with a comonad structure as follows. 
 \begin{itemize}
  \item First of all, define the counit $\e:SX\to X$ as the map 
 $$
 \{x_0,x_1,x_2,\dots\}  \;\longmapsto\; x_0 ,
 $$
 which keeps only the first (well, zero-th) value and discards the rest of the stream.
 \item The comultiplication map $\nu:SX\to SSX$ has to map a stream to a \emph{stream of streams} (imagine an infinite two-dimensional matrix). Given a stream 
 $$
 \{x_0,x_1,x_2,\dots\} ,
 $$
 we form the stream of streams
 \begin{align}\label{historyhistory}
  \{ &\{ x_0,x_1,x_2,\dots \} \notag \\
  &\{ x_1,x_2,x_3,\dots \} \\
  &\{ x_2,x_3,x_4,\dots \} \notag \\
  & \dots \qquad \qquad \quad \} \notag
 \end{align}
 In other words, we form a stream of stream which has as first element the original stream, as second element the original stream but starting at $x_1$ (in computer science one says ``popping'', in dynamical systems one says ``shifting''). As third element we have a stream that starts at $x_2$, and so on.
 In symbols,
 $$
 \nu(\{x_n\}_{n\in \N}) \;=\; \{\{ x_{n+m} \}_{n\in\N}\}_{m\in \N} .
 $$
 \end{itemize} 
 One way to interpret $SX$ is that elements of $SX$ are elements of $X$ together with their \emph{history}: $x_0$ is where it was a second ago, $x_2$ where it was two seconds ago, and so on.
 The unit forgets the past and keeps only the present state. The multiplication looks at the \emph{history of the history}: one second ago, the history was only until $x_1$, two seconds ago, the history was only until $x_2$, and so on.
 
 Let's check that the comonad axioms \Cref{comonaddiagrams} hold. 
 \begin{itemize}
  \item The left counitality says that if we take the stream $\{x_0,x_1,x_2,\dots\}$ and look at its history \eqref{historyhistory}, the first element in there is the original stream.
  \item The right counitality diagram says that if we again take the stream $\{x_0,x_1,x_2,\dots\}$ and look at its history \eqref{historyhistory}, then the stream forming by \emph{taking the first element of each stream in the history} gives the original stream.
  \item The coassociativity diagram says that if we once again take the stream $\{x_0,x_1,x_2,\dots\}$ and look at its history \eqref{historyhistory}, then we can look once again at its history (a 3-dimensional stream). We can also form the 3-dimensional stream formed by looking at the history of each stream in the 2-dimensional stream \eqref{historyhistory}. Both constructions give the same 3-dimensional stream, namely
  $$
  \{\{\{ x_{n+m+k} \}_{n\in\N}\}_{m\in \N}\}_{k\in\N}
  $$
 \end{itemize}
 
 Therefore $(S,\e,\mu)$ is a comonad. It is known as the \emph{stream comonad}.
\end{eg}

\begin{ex}[several fields]\label{monoidstream}
 Let $M$ be a monoid. Construct a comonad on $\cat{Set}$ analogous to the stream comonad, but with $M$ in place of $\N$. 
 
 If $M=\Z$ this means that we are looking not just at the past, but also at the future. If $M=\R$ we are looking at continuous time instead of discrete time...and so on. 
\end{ex}

A fairly general phenomenon is that, as for many monads the unit is an embedding of $X$ into its ``extension'' $TX$, for many comonads the counit is a quotient of $CX$ onto $X$, which ``forgets the extra data''. 
In the next exercise, for the readers which have a background in geometry and topology, you can show that the universal covering is an idempotent comonad. The ``extra information'' that it encodes can be interpreted as the ``winding number'', or ``number of times we have gone around a hole''.
(If you find the next example too technical, but you still find these concepts interesting, you can try to read \Cref{ucgraphs}, which has less prerequisite knowledge.)

\begin{ex}[algebraic topology; difficult!]\label{universalcovering}
 Let $\cat{PCLC}_*$ be the category whose
 \begin{itemize}
  \item Objects are path-connected, locally contractible,\footnote{The condition of local contractibility can be weakened, see \cite[Section~1.3]{hatcher}.} pointed topological spaces;
  \item Morphisms are continuous maps preserving the base point. 
 \end{itemize}
 Given a space $(X,x)$ in the category above, denote by $p:UX\to X$ its universal covering (where $UX$ is the space of homotopy classes of paths in $X$ starting at $x$, with its usual topology, and the map $p$ takes the endpoint of each path).
 
 Show that $U$ is a functor as follows. Given $f:(X,x)\to (Y,y)$, and given a path $\gamma$ starting at $x$, $f(\gamma)$ is a path starting at $y$. Since $f$ is continuous, this assignment respects homotopy, and so we have a well-defined map $UX\to UY$. Show that this map is continuous. (Hint: prove that the preimage of a basic open of $UY$ is open in $UX$, using the fact that $X$ is locally contractible.)
 Assume $UX$ based at the constant path at $x$.
 
 Show moreover that the covering map $p:UX\to X$ is natural in $X$. This will form the counit of the comonad. (This does discard the extra information: it literally forgets the path and keeps track only of the endpoint.)
 
 Since $UX$ is simply connected, the map $p:UUX\to UX$ obtained by taking again the universal covering is a homeomorphism. Let $\nu$ be the inverse of this map. Show that $(U,p,\nu)$ is a (necessarily idempotent) comonad  on $\cat{PCLC}_*$. 
\end{ex}

\subsection{Co-Kleisli morphisms}

\begin{deph}
 Let $(C,\e,\nu)$ be a comonad on $\cat{C}$. A \emph{co-Kleisli morphism} of $C$ from $X$ to $Y$ is a morphism $CX\to Y$ of $\cat{C}$. 
\end{deph}

Here is a typical situation in science. Suppose that, during an experiment, we have a process $f$ taking an input $x\in X$ and giving an output $y\in Y$. This could be for example a survey in which we ask people of different age ($X$) what their political views are ($Y$). 
Suppose that we repeat the experiment, feeding again the \emph{same} input $x$ to $f$. It could happen that this time we get a \emph{different} outcome than before, $y'\ne y$. 
For example, suppose a person of age $x$ expresses political view $y$. Another person of the same age $x$ may express a different political view $y'$ (this may even happen by asking the same person twice, in different moments). 
The usual conclusion that we draw, in science, is: $X$ alone (for example, age alone) is \emph{not enough} to determine $Y$. There was some ``hidden'', ``extra'' data that the process $f$ has access to in order to determine $Y$. 
It could depend on hidden information, it could depend on past outcomes, and so on. 
Therefore a better mathematical model of the situation is not quite $f:X\to Y$, but rather $f:CX\to Y$, where $CX$ contains more information than just $X$, as specified by the comonad $C$. 

Note that this is somewhat dual to Kleisli morphisms: there, functions were allowed to have more possible \emph{outcomes}. Here, they are allowed to \emph{depend} on possibly more information.

\begin{eg}[several fields]\label{readerkleisli}
 A co-Kleisli morphism for the reader comonad of \Cref{readercomonad} is a map $k:X\times A\to Y$. We can view it as a map that, when fed $x\in X$, also needs to read a $a\in A$ to give its output.
 This can model the experimental situation of ``hidden variables'' described above.
 
 An example in computer science is a function that needs an extra input in order to carry out the computation, either via user interface (say, keyboard), or by having access to some external environment. This motivates the name ``reader comonad''.
\end{eg}

\begin{eg}[basic computer science, dynamical systems]\label{streamkleisli}
 A co-Kleisli morphism for the stream comonad is a map $SX\to Y$, in other words, it is a function that potentially depends on the whole stream, not just on the current value. 
 
 In dynamical systems (and probability, and so on) this corresponds to a dynamic that \emph{depends on the history}, or \emph{has memory}. Examples are given by non-Markovian stochastic processes (which, in addition, are also random), and by delayed differential equations.
\end{eg}

\begin{eg}[complex analysis]
 Let $z$ be a nonzero complex number.
 The \emph{complex logarithm of $z$} is given by the integral 
 $$
 \int_1^z \dfrac{1}{z'} \, dz'.
 $$
 This does not quite depend only on $z$, but also on the path of integration. More specifically, on the homotopy class of the path of integration. Therefore, the integral above, rather than a function $\C\backslash\{0\}\to\C$, is a function $U(\C\backslash\{0\})\to\C$, where $U$ is the universal covering comonad of \Cref{universalcovering}, and the spaces $\C\backslash\{0\}$ and $\C$ are assumed based at $1$ and $0$, respectively. In other words, the complex logarithm is, rather than an ordinary function, a co-Kleisli morphism. It takes the extra information of ``how many times we have gone around $0$''.
\end{eg}

\begin{eg}[quantum physics]
 The wave function of an electron in a one-dimensional periodic material (such as a crystal) can be obtained by solving Schrödinger's equation with a periodic potential and periodic boundary conditions, or equivalently, by solving Schrodinger's equation on the circle. Denoting a point of the circle by $x_0$, we can see the circle as a pointed space $(S^1,x_0)$.
 By Bloch's theorem, the wave function of our electron will have the form 
 $$
 \psi(x) \;=\; e^{i k (x-x_0)} \, u(x),
 $$
 where $k$ is a real number, and $u(x)$ is a function with the same periodicity as the potential. 
 Note that the probability density of the electron is equal to 
 $$
 \|\psi(x)\|^2 \;=\; \| u(x) \|^2,
 $$
 which is a periodic function, and so it can be written as a function on the graph $(S^1,x_0)$. However, the wave function itself is not periodic, because of its phase ($k$ is not necessarily related to the period of the potential), and so $\psi$ it is not a well-defined function on the circle. Instead, we can model $\psi$ as a (well-defined) \emph{co-Kleisli morphism}, for the universal covering comonad of \Cref{universalcovering}. That is, $\psi:US^1\to \C$. The ``extra information'' to which $\psi$ has access is ``how far we are from $x_0$ not only on the circle, but on its universal covering,  or on the actual periodic material'' (we may be a few entire cells away).
 Note that this extra information is reflected by the phase, which alone does not encode physical information, but plays a role whenever two or more electrons are present (there can be interference).
\end{eg}

As it is the case for Kleisli morphisms, co-Kleisli morphisms also have a meaningful notion of composition.

\begin{deph} 
 Let $(C,\e,\nu)$ be a comonad on $\cat{C}$. Let $k:CX\to Y$ and $h:CY\to Z$ be co-Kleisli morphisms. Their \emph{co-Kleisli composition} is the co-Kleisli morphism $h\circ_{ck} k:CX\to Z$ given by
 $$
 \begin{tikzcd}
  CX \ar{r}{\nu} & CCX \ar{r}{Ck} & CY \ar{r}{h} & Z .
 \end{tikzcd}
 $$
\end{deph}

\begin{eg}[several fields]
 Let $k:X\times E\to Y$ and $h:Y\times E\to Z$ be co-Kleisli morphisms of the reader comonad of \Cref{readercomonad}. Their co-Kleisli composition proceeds as follows.
 $$
 \begin{tikzcd}[row sep=0, column sep=large]
  X\times E \ar{r}{\nu} & X\times E\times E \ar{r}{k\times\id_E} & Y\times E \ar{r}{h} & Z \\
  (x,e) \ar[mapsto]{r} & (x,e,e) \ar[mapsto]{r} & (y,e) \ar[mapsto]{r} & z 
 \end{tikzcd}
 $$
 We first start with $(x,e)\in X\times E$. We copy the extra information, to get $(x,e,e)$. We then feed the first two arguments to $k$, which gives us an element $y\in Y$, and keep track of the additional $e$. We then feed $(y,e)$ to $h$, to get $z\in Z$. 
\end{eg}

\begin{ex}[basic computer science, dynamical systems]
 Write down explicitly the co-Kleisli composition of two composable co-Kleisli morphisms of the stream comonad of \Cref{streamcomonad}.
\end{ex}

Note that, just as comonads are monads in the opposite category, co-Kleisli morphisms are Kleisli morphisms in the opposite category. Therefore they form a category too. In the next exercise you can prove this explicitly.

\begin{ex}
 Prove explicitly that co-Kleisli morphisms also form a category, with identities given by the counits, and composition given by the co-Kleisli composition. 
\end{ex}

\begin{deph}
  This category is called the \emph{co-Kleisli category}, and is denoted, similarly to the Kleisli category, by $\cat{C}_C$. 
\end{deph}

\subsection{The co-Kleisli adjunction}

Just as Kleisli morphisms ``include'' ordinary morphisms via the unit, co-Kleisli morphisms also include ordinary morphisms, via the counit. Explicitly, let $(C,\e,\nu)$ be a comonad on $\cat{C}$. Then an ordinary morphism $f:X\to Y$ of $C$ defines canonically a co-Kleisli morphism
$$
\begin{tikzcd}
 CX \ar{r}{\e} & X \ar{r}{f} & Y.
\end{tikzcd}
$$
The interpretation is that if we start with extra information, we first discard it, and then apply $f$. In other words, $f$ does not really need, nor use, the extra information. 

As for the Kleisli case, this assignment is functorial, and defines an adjunction, as we will see. Denote for convenience $f\circ\e$ by $R_C(f)$.

\begin{ex}
 Show that $R_C$ is part of a functor $\cat{C}\to\cat{C}_C$ which is the identity on objects, with $R_C(\id_X)=\e_X$ and $R_C(f\circ g) = R_C(f)\circ_{ck} R_C(g)$. 
\end{ex}

Conversely, suppose we have a co-Kleisli morphism $k:CX\to Y$. This gives canonically an ordinary morphism $CX\to CY$ as
$$
\begin{tikzcd}
 CX \ar{r}{\nu} & CCX \ar{r}{Ck} & CY .
\end{tikzcd}
$$
Denote this composite $Ck\circ\nu$ by $L_C$. 

\begin{eg}[several fields]
 For the reader comonad, this assignment takes a function $k:X\times E \to Y$ and gives a function $L_C(k):X\times E \to Y\times E$ which maps $(x,e)$ to $(k(x,e),e)$, copying the extra information $e$ to the output. 
\end{eg}

\begin{ex}[basic computer science, dynamical systems]
 What does this give for the stream comonad? (Hint: one looks at all the values that $k$ ``would have assumed in the past''.)
\end{ex}

\begin{ex}
 Prove that $L_C$ is part of a functor $\cat{C}_C\to \cat{C}$, which maps an object $X$ to $CX$. In particular, show that $L_C(\e_X)=\id_X$, and that $L_C(k\circ_{ck} h) = L_C(k)\circ L_C(k)$. 
\end{ex}

\begin{ex}
 Prove the dual to \Cref{kleisliadj}, namely that
 \begin{itemize}
  \item $L_C\circ R_C= C$;
  \item $L_C$ is left-adjoint to $R_C$;
  \item The counit of the adjunction is given by the counit $\e$ of the comonad.
 \end{itemize}
\end{ex}

This adjunction is called the \emph{co-Kleisli adjunction}. Note that the role of the left- and of the right-adjoints are reversed compared to \Cref{kleisliadj}.

\section{Comonads as processes on spaces}

\begin{idea} 
	A comonad is a consistent way to construct, from spaces, processes of a specified structure, and give selected strategies or trajectories.
\end{idea}

\begin{eg}[basic computer science, dynamical systems]
 Consider the stream comonad $S$ on $\cat{Set}$ (\Cref{streamcomonad}). We have seen that a possible interpretation is that a stream in $X$ is the ``history'' of a point of $X$, where the first element of the stream is the current state. 
 We can ``reverse the arrow of time'' and interpret instead a stream as the \emph{future} positions of a point of $x$, which the first element given by the current state. This can be thought of as a process of some kind. In fact, $SX$ is even canonically a \emph{dynamical system}, with the map $SX\to SX$ given by shifts, i.e.~mapping the stream
 $$
 \{x_0,x_1,x_2,\dots\}  \quad\mbox{to}\quad \{x_1,x_2,x_3,\dots\} .
 $$
\end{eg}

\begin{eg}[several fields]
 The stream comonad for a different choice of monoid (\Cref{monoidstream}) has the same interpretation. Notice that even if we choose a group, such as $\Z$ or $\R$, which intuitively indexes both the past and the future, we still have to reverse the direction of time in the interpretation. Moreover, by choosing different indexing monoids, such as $\Z^2$, the process can grow with more general shapes.
\end{eg}

The following example can be considered a discrete analogue of the universal covering comonad of \Cref{universalcovering}.
Before starting with the example, let's give a preliminary definition: a \emph{rooted tree} is a directed graph with a distinguished vertex $x$, called the \emph{root}, and such that for every (other) vertex $y$ there is a unique walk from $x$ to $y$. 
Rooted trees are graphs that look for example like this:
$$
\begin{tikzcd}[sep=tiny]
 &&& x \ar{dl} \ar{dr} \\
 && y \ar{dl} \ar{dr} && z \ar{dr} \\
 &a \ar{dl} \ar{dr} && b  && c \\
 p && q
\end{tikzcd}
$$
and \emph{not} like these:
$$
\begin{tikzcd}[sep=tiny]
 && x \ar{dl} \ar{dr} \\
 & y \ar{dr} && z \ar{dl} \\
  && a
\end{tikzcd}
\qquad
\begin{tikzcd}[sep=tiny]
 && x \ar{ddl}  \\
 \\
  & a \ar{rr} && b \ar{uul} 
\end{tikzcd}
\qquad
\begin{tikzcd}
 x \ar[out=60,in=120,loop,distance=1cm] 
\end{tikzcd}
$$
Let's now give our example.
\begin{eg}[graph theory]\label{ucgraphs}
 Let $\cat{MGraph}_*$ be the category of directed multigraphs with a distinguished vertex (``base point''), and multigraph morphisms preserving the base point (and incidence). 
 
 Consider an object $(G,x)$, which is a multigraph $G$ with a distinguished vertex $x$. Denote now by $UG$ the graph constructed as follows.
 \begin{itemize}
  \item Vertices are \emph{walks} in $G$ (chains of head-to-tail edges) starting at $x$, of finite length, including the trivial walk at $x$. That is, a vertex is a tuple of the form
  $$
  (e_1,\dots,e_n)
  $$
  where, denoting source and target by $s$ and $t$, $s(e_1)=x$, and $t(e_i)=s(e_{i+1})$ for all $i=1,\dots,n-1$.
  \item There is a unique edge of $UG$ from $q$ to $q'$ if and only if they are in the form 
  $$
  q=(e_1,\dots,e_n) \quad\mbox{and}\quad q'=(e_1,\dots,e_n,e_{n+1}) ,
  $$
  with $s(e_{n+1})=t(e_n)$,
  i.e.~if $q'$ can be obtained from $q$ by adding a consecutive edge. 
 \end{itemize}
 Denote the trivial walk at $x$ again by $x$, so that we have a pointed multigraph $(UX,x)$. 
 
 This construction is functorial. Indeed, let $f:(G,x)\to (H,y)$ be a morphism, i.e.~an incidence-preserving map with $f(x)=y$. We can construct a map $Uf:UG\to UH$ which maps 
 \begin{itemize}
  \item The trivial walk at $x$ to the trivial walk at $y$;
  \item The walk $(e_1,\dots,e_n)$ to the tuple $(f(e_1),\dots,f(e_n))$, which is a walk in $H$, since $f$ preserves incidence (and so the edges are consecutive in $H$ too), starting at $y$ (since $f(x)=y$). 
 \end{itemize}
 This map preserves the base point by construction, and it also preserves incidence: the unique edge between
 $$
 (e_1,\dots,e_n) \quad\mbox{and}\quad (e_1,\dots,e_n,e_{n+1})
 $$
 is mapped to the unique edge between
 $$
 (f(e_1),\dots,f(e_n)) \quad\mbox{and}\quad (f(e_1),\dots,f(e_n),f(e_{n+1})) .
 $$
 Therefore $Uf$ is a morphism of pointed multigraphs.
 It is easy to check the functoriality axioms, so we have an endofunctor $U:\cat{MGraph}_*\to\cat{MGraph}_*$. 

 Let's now give to $U$ a comonad structure. First of all, the unit $p:(UG,x)\to(G,x)$ is given by the endpoints of walks. That is, we define, on vertices, $p(x)\coloneqq x$, and
 $$
 p(e_1,\dots,e_n) \;\coloneqq\; t(e_n),
 $$
 the target of the last edge $e_n$. On edges, we map the unique edge between
 $$
 (e_1,\dots,e_n) \quad\mbox{and}\quad (e_1,\dots,e_n,e_{n+1})
 $$
 to $e_{n+1}$. This respects incidence (why?), and by construction it preserves the base points, so we have a morphism of pointed multigraphs. Moreover, this is natural (why?), we have a natural transformation $p:U\Rightarrow \id_{\cat{MGraph}_*}$. 
 
 In order to construct the multiplication, first note that $UG$ is always a rooted tree, with root $x$. Indeed, to a vertex $(e_1,\dots,e_n)$ of $UG$, there is always a unique walk from $x$, namely
 \begin{equation}\label{walkofwalks}
 \begin{tikzcd}[sep=small]
  x \ar{r} & (e_1) \ar{r} & (e_1,e_2) \ar{r} &\dots \ar{r} & (e_1,\dots,e_n) ,
 \end{tikzcd}
 \end{equation}
 where all the arrows above denote the unique walks in $UG$.
 This gives a bijection $\nu:UG\to UUG$,
 which is even an isomorphism of multigraphs (why does it preserve incidence?). It is part of a natural isomorphism $\nu:U\Rightarrow UU$ (check naturality!).
 
 To show that $(U,p,\nu)$ is a comonad, let's check that the diagrams \Cref{comonaddiagrams} commute. 
 \begin{itemize}
  \item The left counitality diagram says that given a walk $(e_1,\dots,e_n)$, and applying $\nu$ to form the walk of walks \eqref{walkofwalks},
  then the endpoint of \eqref{walkofwalks} is the original $(e_1,\dots,e_n)$.
  \item The right counitality diagram says that again, given a walk $(e_1,\dots,e_n)$, and applying $\nu$ to form the walk of walks \eqref{walkofwalks}, if we take the tuple formed by \emph{the endpoints of each arrow in \eqref{walkofwalks}}, then we get again $(e_1,\dots,e_n)$.
  \item The coassociativity diagram says that if we repeat the construction \eqref{walkofwalks} twice, we get the same result as if we took the construction \eqref{walkofwalks} for each element of \eqref{walkofwalks} itself. (Can you write down which \emph{walk of walks of walks} you get that way?)
  
  Therefore $(U,p,\nu)$ is an idempotent comonad. We call it the \emph{comonad of rooted trees} (see \Cref{rootedtrees} for why), or also \emph{discrete universal covering comonad} (see \Cref{ucboth} for why). 
 \end{itemize}
\end{eg}

Also in this example the comonad constructs some ``processes'' on our spaces, in this case ``walks''. The counit forgets the process, and just tells us ``where we are''.

\begin{ex}[graph theory]\label{ucugraph}
 Give an analogous construction for \emph{un}directed (multi)graphs. (Hint: you can consider an undirected edge a pair of directed edges in opposite directions.)
\end{ex}

\begin{ex}[graph theory, algebraic topology]\label{ucboth}
 If you have solved \Cref{universalcovering} and \Cref{ucugraph}, establish the connection between the universal covering for graphs, and the one for topological spaces given in \Cref{universalcovering}.
 
 Also the universal covering comonad of topological spaces can be interpreted in terms of ``processes'': the points of the universal cover are again paths from the base point.
\end{ex}

\begin{ex}[graph theory, dynamical systems]
 Can you think of a function or transition process on a graph which is best modelled by a co-Kleisli morphism of the rooted tree comonad?
 (Hint: think of something that ``depends really on the path, not just on the endpoints''.)
\end{ex}

\subsection{Coalgebras of a comonad}

A comonad not only encodes a ``process'' of some kind, but also a setting in which a the process can be started in a ``default'' way, or a ``strategy''.

\begin{deph}\label{defcoalgebra}
	Let $(C,\e,\mu)$ be a comonad on a category $\cat{C}$. A \emph{coalgebra} of $C$, or \emph{over $C$}, or $C$-coalgebra, consists of
	\begin{itemize}
		\item An object $A$ of $\cat{C}$;
		\item A morphism $i:A\to CA$ of $\cat{C}$,
	\end{itemize}
	such that the following diagrams commute, called ``counit'', and ``comultiplication'' or ``coalgebra square'', respectively.
	\begin{equation}\label{coalgebradiagrams}
		\begin{tikzcd}
			A \ar[swap]{dr}{\id} \ar{r}{i} & CA \ar{d}{\e} \\
			& A
		\end{tikzcd}
		\qquad
		\begin{tikzcd}
			A \ar{d}{i} \ar{r}{i} & CA \ar{d}{\nu} \\
			CA \ar{r}{Ci} & CCA
		\end{tikzcd}
	\end{equation}
\end{deph}

The intuition for the map $i$ is that it assigns to each $a\in A$ a distinguished process, which we can think of as being ``started'' or ``triggered'' by $a$. Let's see some examples.

\begin{eg}[several fields]\label{dynsysascoalg}
 The coalgebras of the stream comonad $S$ (\Cref{streamcomonad}) are exactly dynamical systems. Let's see why.
 Plugging in the definition, a coalgebra consists of a set $A$ together with a map $i:A\to SA$, which we can write as
 $$
 a \;\longmapsto\; \{i_0(a), i_1(a), i_2(a), \dots\} ,
 $$ 
 such that the diagrams \eqref{coalgebradiagrams} commute. The counit square says that for each $a\in A$, $\e(i(a))=a$, which means precisely that $i_0(a)=a$. 
 Therefore $i$ is in the form
 \begin{equation}\label{iform}
  a \;\longmapsto\; \{a, i_1(a), i_2(a), \dots\} .
 \end{equation}
 Let's now turn to the comultiplication square. For each $a\in A$, after forming the string \eqref{iform}, we can either applying $\nu$ to form the ``history'' string of (shifted) strings
 \begin{align*}
  \{ &\{ a,i_1(a),i_2(a),\dots \} \\
  &\{ i_1(a),i_2(a),i_3(a),\dots \} \\
  &\{ i_2(a),i_3(a),i_4(a),\dots \}  \\
  & \dots \qquad \qquad \quad \} ,
 \end{align*}
 or we can apply $i$ again to each element of \eqref{iform}, to form the string of strings
 \begin{align*}
  \{ &\{ a,i_1(a),i_2(a),\dots \} \\
  &\{ i_1(a),i_1(i_1(a)),i_1(i_2(a)),\dots \} \\
  &\{ i_2(a),i_2(i_1(a)),i_2(i_2(a)),\dots \}  \\
  & \dots \qquad \qquad \quad \} .
 \end{align*}
 The comultiplication square says that these two strings of strings must agree. This implies that all the corresponding elements must agree, which means that $i_2(a)=i_1(i_1(a))$, and more generally, $i_n(a)={i_1}^n(a)$.
 Therefore $i$ is in the form 
 $$
 a \;\longmapsto\; \{a, f(a), f(f(a)), \dots\} 
 $$
 for some $f:A\to A$. That is, a coalgebra structure gives a dynamical system on $A$.
 
 Conversely, given a function $f:A\to A$, the map assigning the orbits
 $$
 a \;\longmapsto\; \{a, f(a), f(f(a)), \dots\} 
 $$
 gives a coalgebra structure. Therefore the coalgebras of the stream comonad are exactly dynamical systems (indexed by $\N$, that is, in discrete time).
\end{eg}

 We can view this as a ``canonical way to obtain a stream from each element of $A$''. Every element defines a ``path'' or ``orbit''. This phenomenon happens with many other comonads too.

\begin{ex}[dynamical systems]
 Show that if we form the stream comonad with the monoid $\R_{\ge 0}$ instead of $\N$ (as in \Cref{monoidstream}), its coalgebras are the \emph{continuous-time dynamical systems} on a set. 
\end{ex}

In the exercise above, despite having a dynamical system in continuous time, the trajectories are not continuous in any sense, since we are simply working in the category of sets. 

\begin{ex}[dynamical systems, topology; difficult!]
 Construct a comonad on the category $\cat{CHaus}$ of compact Hausdorff spaces and continuous maps, similar to the stream comonad, where as monoid you take the extended half-line $[0,\infty]$ (with $x+\infty=\infty$ for every $x$). 
 As streams, take \emph{continuous} maps $[0,\infty]\to X$. 
 
 Show that the coalgebras of this comonad are the continuous-time dynamical systems in the traditional sense, i.e.~with continuous trajectories.
\end{ex}

\begin{ex}[dynamical systems]
 What are the coalgebras of the stream comonad for different indexing monoids?
\end{ex}

Here is another very important example.

\begin{eg}[graph theory]\label{rootedtrees}
 The coalgebras of the rooted tree comonad $U$ (\Cref{ucgraphs}) are rooted trees (hence the name). 
 Let's see why. Plugging in the definition, a coalgebra consists of a pointed multigraph $(G,x)$ together with a morphism $i:G\to UG$ preserving the base point (and incidence), and making the diagrams \eqref{coalgebradiagrams} commute. The map $i$ has to map a vertex $y$ of $G$ to a vertex $i(y)$ of $UG$, i.e. a walk in $G$ starting at $x$. The counit condition says that $p(i(y))=y$, that is, the walk $i(y)$ has to end at $y$. Therefore, the map $i$, on vertices is in the form 
 $$
 y \;\longmapsto\; 
 \begin{tikzcd}[sep=small]
  (x \ar{r} &\dots \ar{r} & y ) .
 \end{tikzcd}
 $$
 In particular $i(x)=x$, the trivial walk.
 Let now $e$ be an edge of $G$ from $y$ to $z$. In order to be incidence-preserving, the map $i$ has to map $e$ to an edge $i(e)$ of $UG$ from $i(y)$ to $i(z)$. This means necessarily that $i(z)$ is exactly given by the walk which is the composite of the walk $i(y)$ and $e$,
 \begin{align*}
 y \;&\longmapsto\; 
 \begin{tikzcd}[ampersand replacement=\&, sep=small]
  (x \ar{r} \&\dots \ar{r} \& y )   
 \end{tikzcd} \\
   z \;&\longmapsto\; 
 \begin{tikzcd}[ampersand replacement=\&, sep=small]
  (x \ar{r} \&\dots \ar{r} \& y \ar{r}{e} \& z ) .   
 \end{tikzcd}
 \end{align*}
 The counit diagram now says that, on edges, $p(i(e))=e$.
 Let's show that $G$ is a tree, rooted at $x$. Let $y$ be a vertex of $G$. Consider $i(x)=x$ and $i(y)$. Since $UG$ is a tree rooted at $x$, there is a unique walk $q$ from $x$ to $i(y)$. Applying $p$ to this walk $q$ we get a walk in $G$ from $x$ to $y$. Therefore there is a walk from $x$ to $y$. To show that this walk is unique, suppose that we had walks $r$ and $r'$ from $x$ to $y$. 
 Then $i(r)$ and $i(r')$ both would be walks in $UG$ from $x$ to $i(y)$. But since $UG$ is a tree, necessarily $i(r)=i(r')$. Applying $p$ we get 
 $$
 r \;=\; p(i(r)) \;=\; p(i(r')) \;=\; r' .
 $$
 Therefore $G$ is itself a tree rooted at $x$. 
 (We don't even have to look at the comultiplication square.)
 
 Conversely, suppose that $(G,x)$ is a tree rooted at $x$. Then for each vertex $y$ there exists a unique walk from $x$ to $y$, and this gives a morphism $i:G\to UG$ preserving base point and incidence, which by construction satisfies the counit diagram. Let's see that the comultiplication diagram commutes too. Let $y$ be a vertex of $G$, consider the unique walk $i(y)$ from $x$ to $y$, and denote its edges by $e_1,\dots,e_n$. Then applying the comultiplication map $\nu$ to $i(y)$ we get 
 $$
 \begin{tikzcd}[sep=small]
  x \ar{r} & (e_1) \ar{r} & (e_1,e_2) \ar{r} &\dots \ar{r} & (e_1,\dots,e_n) ,
 \end{tikzcd}
 $$
 and if we apply $Ci$ to $i(y)$, i.e.~apply $i$ to each of intermediate points between $x$ and $y$, we get the same result. Therefore the comultiplication diagram commutes on vertices.  
 We don't have to check that the diagram commutes on edges since we are in a rooted tree, and so between any two vertices there is at most one edge. Therefore $G$ with this map $i$ is a coalgebra.
\end{eg}

Again, here, we see that each vertex of the graph defines canonically a walk leading to it (or starting from it, depending on the point of view).

Another way to prove the characterization above is as follows (why?).

\begin{ex}
 Prove the dual statement to \Cref{stmapiso}, namely, that a comonad is idempotent if and only if all its coalgebras structure maps are isomorphisms. 
\end{ex}

By the same line of reasoning, we also have a similar statement for the universal covering of topological spaces.

\begin{eg}[algebraic topology]
 The exercise above implies that the coalgebra of the universal covering comonad (see \Cref{universalcovering}) are the simply connected (and locally contractible) topological spaces. 
\end{eg}

\begin{ex}[several fields]
 Prove that the coalgebras of the reader comonad $C_E$ (\Cref{readercomonad}) are sets $A$ equipped with a function $e:A\to E$. 
 
 We can interpret this function as a ``default value for the extra information'' that every element $a$ on $A$ carries.
\end{ex}

Dually to the case of algebras over monads, coalgebras over comonads have a notion of morphisms between them.

\begin{deph}\label{defcoemcat}
 Let $(A,i)$ and $(B,i)$ be coalgebras of a comonad $C$ on $\cat{C}$. A \emph{morphism of $C$-coalgebras}, or \emph{$C$-morphism}, is a morphism $f:A\to B$ of $\cat{C}$ such that the following diagram commutes. 
 \begin{equation}\label{Cmorphism}
 \begin{tikzcd}
  A \ar{d}{i} \ar{r}{f} & B \ar{d}{i} \\
  CA \ar{r}{Cf} & CB
 \end{tikzcd}
\end{equation}

 The category of $C$-algebras and $C$-morphisms is called the \emph{category of coalgebras of $C$} or, sometimes, \emph{co-Eilenberg-Moore category}, and it is denoted, similarly to the Eilenberg-Moore category, by $\cat{C}^C$. 
\end{deph}

These morphisms can be interpreted as morphisms which ``respect the dynamics of the processes'' or ``respect the default choices or strategies''.

\begin{eg}[dynamical systems]
 Let $(A,f)$ and $(B,g)$ be dynamical systems in the category of sets, or equivalently, coalgebras over the stream comonad. A morphism of coalgebras, plugging in the definition, is a map $m:A\to B$ such that for each $a\in A$, $Cm(i(a))=i(m(a))$. Recalling that the map $i$ is given by the orbits of $a$, this means that
 $$
 \{m(a), m(f(a)), m(f(f(a)),\dots\}
 $$
 has to be equal to 
 $$
 \{m(a), f(m(a)), f(f(m(a))),\dots\}.
 $$
 This is equivalent to say that $f\circ m=m\circ f$, i.e.~that the following diagram commutes.
 $$
 \begin{tikzcd}
  A \ar{r}{f} \ar{d}{m} & A \ar{d}{m} \\
  B \ar{r}{g} & B
 \end{tikzcd}
 $$
 In other words, the morphisms of coalgebras are precisely the morphisms of dynamical systems. 
\end{eg}

\begin{eg}[several fields]
 Let $(A,e)$ and $(B,e)$ be coalgebras of the reader comonad of \Cref{readercomonad}, i.e.~sets equipped with functions $e:A\to E$ and $e:B\to E$. A function $f:A\to B$ is a morphism of coalgebras if and only if the following diagram commutes (why?),
 $$
 \begin{tikzcd}
 A \ar{r}{f} \ar{d}{e} & B \ar{d}{e} \\
 E \ar{r}{\id} & E
 \end{tikzcd}
 $$
 which can be interpreted as the fact that $f$ has to preserve the ``default choice of extra data''.
\end{eg}

\begin{ex}[graph theory]\label{5.4.19}
 What are the coalgebras of the rooted tree comonads? (Hint: the comonad is idempotent.)
\end{ex}

\subsection{The adjunction of coalgebras}

As in the case of the Eilenberg-Moore category, every morphism of coalgebras over a given comonad $C$ is in particular a morphism of the underlying category $\cat{C}$. Therefore there is a fully faithful ``forgetful'' functor $\cat{C}^C\to \cat{C}$. Let's denote this functor by $L^C$. 

Dually to the case of monads, this forgetful functor has a right-adjoint, which is constructed as follows. Let $X$ be an object of $C$. Then $CX$ is canonically a $C$-coalgebra with structure map $\nu$. (Coalgebras of this form are sometimes called ``cofree'', since they are the dual to free algebras.)
Just as well, given $f:X\to Y$, the morphism $Cf:CX\to CY$ is canonically a morphism of coalgebras (why?). 
This construction gives the desired functor $R^C:\cat{C}\to\cat{C}^C$. 

\begin{eg}[dynamical systems]
 We know that coalgebras of the stream comonads are dynamical systems. Given a set $X$, we can canonically form a dynamical system from it, the one given by \emph{streams and their shifts}. That is, we can form the set of streams $SX$, with the structure map given by $\nu$. Recall from \Cref{dynsysascoalg} that the structure map of a coalgebra $A$ of the stream comonad corresponds to a function $f:A\to A$. The structure map $\nu:SX\to SSX$ corresponds to the function $SX\to SX$ given by shifts, 
 $$
 \{x_0,x_1,x_2,\dots\}\;\longmapsto\; \{x_1,x_2,x_2,\dots\} .
 $$
\end{eg}

\begin{eg}
 Consider the reader comonad of \Cref{readercomonad}. Given a set $X$ we can always form a coalgebra by taking the set $X\times E$, with the map $X\times E\to X\times E\times E$ given by copying the ``extra information'' $E$. Recall that, intuitively, a coalgebra of the reader comonad is a set equipped with a default choice for the extra information. Here, the ``default value'' of the extra information is trivially just the information that we already have.  
\end{eg}

\begin{ex}
 Prove that $R^C$ is indeed right-adjoint to $L^C$ (Hint: this is dual to \Cref{emadjprop}.)
\end{ex}

\begin{cor}
 The adjunction above gives the following universal property. Let $(C,\e,\nu)$ be a comonad on $\cat{C}$. Let $X$ be an object of $\cat{C}$, and let $(A,i)$ be a $C$-coalgebra. For each morphism $f:A\to X$ of $\cat{C}$ there exists a unique morphism of coalgebras $(A,i)\to (CX,\nu)$ such that the following diagram commutes.
 $$
 \begin{tikzcd}
 & CX \ar{d}{\e} \\
 A \uni{ur} \ar{r}[swap]{f} & X
 \end{tikzcd}
 $$
\end{cor}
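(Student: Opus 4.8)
The plan is to read this corollary as nothing more than the universal property of the counit of the adjunction $L^C\ladj R^C$ from the preceding exercise, unpacked exactly as in \Cref{uniprop}. Since $L^C$ is the fully faithful forgetful functor and $R^C$ sends $X$ to the cofree coalgebra $(CX,\nu)$, the adjunction supplies a natural bijection $\Hom_\cat{C}(A,X)\cong\Hom_{\cat{C}^C}\big((A,i),(CX,\nu)\big)$ for every coalgebra $(A,i)$ and object $X$ — the precise dual of \Cref{uniem}. The counit at $X$ is the morphism $\e_X\colon CX\to X$, and all the corollary says is that $\e_X$ witnesses this bijection in the usual way: transposing $f\colon A\to X$ produces the unique coalgebra morphism $\tilde f$ with $\e_X\circ\tilde f=f$. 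So one option is to simply cite the exercise and \Cref{uniprop}; for a self-contained argument I would instead exhibit $\tilde f$ and check the three required properties by a short diagram chase, which is the dual of the proof of \Cref{emadjprop}/\Cref{uniem}.

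Concretely, given $f\colon A\to X$ I would set $\tilde f := Cf\circ i\colon A\to CX$. First I would check that $\tilde f$ fills the triangle: by naturality of $\e$ one has $\e_X\circ Cf=f\circ\e_A$, and the counit law \eqref{coalgebradiagrams} for $(A,i)$ gives $\e_A\circ i=\id_A$, so $\e_X\circ\tilde f=f\circ\e_A\circ i=f$. Next I would verify that $\tilde f$ is a morphism of $C$-coalgebras, i.e.\ that the square \eqref{Cmorphism} commutes: using naturality of $\nu$ followed by the comultiplication square \eqref{coalgebradiagrams} for $(A,i)$,
$$
\nu_X\circ\tilde f=\nu_X\circ Cf\circ i=CCf\circ\nu_A\circ i=CCf\circ Ci\circ i=C(Cf\circ i)\circ i=C\tilde f\circ i .
$$

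Finally, for uniqueness, suppose $g\colon(A,i)\to(CX,\nu)$ is any coalgebra morphism with $\e_X\circ g=f$. Being a coalgebra morphism, $g$ satisfies $\nu_X\circ g=Cg\circ i$; applying $C\e_X$ to both sides and using the comonad counit law $C\e_X\circ\nu_X=\id_{CX}$ from \eqref{comonaddiagrams} on the left together with functoriality on the right yields $g=C\e_X\circ Cg\circ i=C(\e_X\circ g)\circ i=Cf\circ i=\tilde f$. I do not expect a real obstacle here: the only thing to watch is keeping straight which axiom is being invoked at each step — the coalgebra laws for $(A,i)$ versus the comonad laws for $C$, and naturality of $\e$ versus naturality of $\nu$ — since the whole argument is precisely the earlier Eilenberg–Moore computation read in $\cat{C}^\op$.
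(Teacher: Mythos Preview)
Your proposal is correct. The paper states this corollary without proof, as an immediate consequence of the adjunction $L^C\ladj R^C$ established in the preceding exercise (dual to \Cref{uniem}); your first paragraph identifies exactly this, and your explicit construction $\tilde f = Cf\circ i$ together with the three checks is the standard unpacking of that adjunction, carried out correctly.
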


Let's see examples of this universal property. Just as the map for the monad case (\Cref{uniem}) can be interpreted as an \emph{extension} to arbitrary formal expressions (see the examples there), here we can interpret this unique map as a \emph{lifting to the dynamics}.

\begin{eg}[dynamical systems]
 Let $(A,d:A\to A)$ be a dynamical system on the category of sets, and let $X$ be a set. Given a map $g:A\to X$ we can form the morphism of dynamical systems $A\to CX$ given by 
 $$
 a\;\longmapsto\; \{g(a), g(d(a)), g(d(d(a))),\dots\} .
 $$
 This respects the dynamics, since replacing $a$ by $d(a)$ has the same effect as shifting. This map is the unique morphism of dynamical systems $A\to CX$ such that its first component agrees with $g$. This map can be seen as \emph{lifting} $g$ to the dynamics over $X$, or as defining a dynamics on $X$ derived from the dynamics on $A$, with $g$ fixing the initial condition.
\end{eg}

\begin{eg}[graph theory]
 Let $(T,x)$ be a rooted tree, and let $(G,y)$ be any multigraph with a distinguished vertex (``base point''). Given a map $f:T\to G$ preserving incidence and base points, there is a unique map $T\to UG$, preserving incidence and base points, and lifting $f$ to $UG$. This map is constructed as follows. A vertex $v$ of $T$ identifies a unique walk in $T$ from $x$ to $v$, which we had denoted by $i(v)\in UT$. Taking the image of $i(v)$ under $f$ we get a walk in $G$ from $y$ to $f(v)$. This walk can be seen as a vertex of $UG$. Since $T$ and $UG$ are trees, there is only one way of extending this assignment to edges while preserving incidence (how?), this gives the desired map $T\to UG$. 
\end{eg}

\begin{ex}[algebraic topology]
 What's the analogous of the example above for topological spaces and the universal covering comonad?
 
 (Constructions of this kind are known in topology as \emph{homotopy lifting properties}.)
\end{ex}

\section{Adjunctions, monads and comonads}\label{adjmonads}

We have seen that, whenever we have a monad or a comonad, we can obtain an adjunction in two canonical ways. Conversely, whenever we have an adjunction, we can obtain a monad and a comonad canonically.

\begin{thm}
 Let $\cat{C}$ and $\cat{D}$ be categories, and let $F:\cat{C}\to\cat{D}$ and $G:\cat{D}\to\cat{C}$ be adjoint functors, with $F\ladj G$. Denote the unit and the counit by $\eta:\id_\cat{C}\Rightarrow G\circ F$ and $\e:F\circ G\Rightarrow \id_\cat{D}$, respectively. Then
 \begin{enumerate}
  \item\label{monadpart} $G\circ F$ is a monad on $\cat{C}$, with unit $\eta$ and multiplication $G\e F$;
  \item\label{comonadpart} $F\circ G$ is a comonad on $\cat{D}$, with counit $\e$ and comultiplication $F\eta G$.
 \end{enumerate}
\end{thm}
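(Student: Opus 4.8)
The plan is to verify the monad axioms of \Cref{defmonad} directly for $T:=G\circ F$ equipped with the given data, and then deduce the comonad statement by passing to opposite categories. Throughout I will use the alternative description of the adjunction via the triangle identities \eqref{trianglecomp} (see \Cref{triangleidlemma}), together with the fact, established in the section on whiskering, that whiskerings of natural transformations are again natural.

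For the monad part, first note that $\mu:=G\e F$ makes sense as an iterated whiskering: whisker $\e:F\circ G\Rightarrow\id_\cat{D}$ on the right by $F$ to get $\e F:F\circ G\circ F\Rightarrow F$, then on the left by $G$ to get $G\e F:GFGF=TT\Rightarrow GF=T$, which is natural. In components, $\mu_X=G(\e_{FX}):GFGFX\to GFX$. It then remains to check the two unitality triangles and the associativity square of \eqref{monaddiagrams}, which I would do pointwise at an object $X$ of $\cat{C}$. The right unitality identity reduces to $\mu_X\circ(T\eta)_X=G(\e_{FX})\circ GF(\eta_X)=G(\e_{FX}\circ F\eta_X)=G(\id_{FX})=\id_{GFX}$, using the first triangle identity $\e_{FX}\circ F\eta_X=\id_{FX}$ from \eqref{trianglecomp} and functoriality of $G$. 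The left unitality identity reduces to $\mu_X\circ(\eta T)_X=G(\e_{FX})\circ\eta_{GFX}$, which is $\id_{GFX}$ directly by the second triangle identity of \eqref{trianglecomp} applied at the object $FX$ of $\cat{D}$. These steps are pure bookkeeping of which identity matches which diagram.

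The associativity square $\mu\circ T\mu=\mu\circ\mu T$ is the step worth isolating, and I expect it to be the only genuinely substantive point in the first part. Unwinding the whiskerings in components at $X$, the left-hand side becomes $G(\e_{FX})\circ(GFG)(\e_{FX})=G\big(\e_{FX}\circ(FG)(\e_{FX})\big)$ and the right-hand side becomes $G(\e_{FX})\circ G(\e_{FGFX})=G\big(\e_{FX}\circ\e_{FGFX}\big)$, both by functoriality of $G$. Hence it suffices to prove $\e_{FX}\circ(FG)(\e_{FX})=\e_{FX}\circ\e_{FGFX}$ in $\cat{D}$; but this is exactly the naturality square of $\e:F\circ G\Rightarrow\id_\cat{D}$ applied to the morphism $\e_{FX}:FGFX\to FX$. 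This completes the monad axioms. (If one grouped the whiskerings the other way the same square would instead come out as naturality of $\eta$, so there is a built-in symmetry here.)

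For the comonad part, the cleanest route is duality. From $F\ladj G$ one obtains an adjunction $G^\op\ladj F^\op$ between $G^\op:\cat{D}^\op\to\cat{C}^\op$ and $F^\op:\cat{C}^\op\to\cat{D}^\op$, simply by reading the natural bijection $\Hom_\cat{D}(FC,D)\cong\Hom_\cat{C}(C,GD)$ backwards as $\Hom_{\cat{C}^\op}(G^\op D,C)\cong\Hom_{\cat{D}^\op}(D,F^\op C)$. Applying the monad part to this adjunction makes $F^\op\circ G^\op=(F\circ G)^\op$ a monad on $\cat{D}^\op$, and by definition a comonad on $\cat{D}$ is a monad on $\cat{D}^\op$, so $F\circ G$ is a comonad on $\cat{D}$; the diagrams \eqref{comonaddiagrams} are precisely the duals of \eqref{monaddiagrams}. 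The one delicate point is identifying the structure maps: one checks that the unit of $G^\op\ladj F^\op$ at an object $A$ is the preimage of $\id_{GA}$, which by definition of the counit of $F\ladj G$ is $\e_A$ (now read as a morphism of $\cat{D}^\op$), and dually its counit corresponds to $\eta$. Feeding this into the monad part gives unit $\e$ and multiplication $F^\op\eta^\op G^\op$ for the monad on $\cat{D}^\op$, i.e.\ counit $\e$ and comultiplication $F\eta G$ for the comonad $F\circ G$ on $\cat{D}$, as claimed. Alternatively, one can avoid duality and simply redo the componentwise computations above with the roles of $F,G,\eta,\e$ interchanged, in which case associativity of $\mu$ becomes coassociativity of $F\eta G$ via naturality of $\eta$; either way the main obstacle is keeping the whiskering/component bookkeeping straight.
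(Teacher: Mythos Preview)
Your proof is correct and follows essentially the same approach as the paper: both unitality conditions come from the two triangle identities (one directly, one after applying $G$), and associativity comes from applying $G$ to the naturality square of $\e$ at the morphism $\e_{FX}$. The paper states these reductions more tersely and leaves the comonad part entirely as an exercise; your componentwise unwinding and the explicit duality argument for $F\circ G$ are just a more detailed version of the same proof.
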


Note that this theorem is a wide generalization of \Cref{galoisclosure}. 

Since the two statements in the theorem are dual to each other, we will only prove one.

\begin{proof}[Proof of \ref{monadpart}]
The specified natural transformations are by construction natural, and already in the desired form. We only have to prove that they satisfy the monad axioms \eqref{monaddiagrams}. Explicitly, we have to show that the following diagrams commute for each object $C$ of $\cat{C}$.
$$
\begin{tikzcd}
 GFC \ar{dr}[swap]{\id} \ar{r}{\eta} & GFGFC \ar{d}{G\e } \\
 & GFC
\end{tikzcd}
\qquad
\begin{tikzcd}
 GFC \ar{dr}[swap]{\id} \ar{r}{GF\eta} & GFGFC \ar{d}{G\e } \\
 & GFC
\end{tikzcd}
\qquad
\begin{tikzcd}
 GFGFGFC \ar{r}{GFG\e } \ar{d}{G\e } & GFGFC \ar{d}{G\e } \\
 GFGFC \ar{r}{G\e } & GFC
\end{tikzcd}
$$
Now
\begin{itemize}
 \item The first diagram commutes since it corresponds exactly to the second triangle identity \eqref{trianglecomp}, for $D=FC$.
 \item The second diagram commutes since it corresponds to the image under $G$ of the first triangle identity \eqref{trianglecomp}.
 \item The last diagram commutes since it is the image under $G$ of a naturality diagram for $\e$. \qedhere
\end{itemize}
\end{proof}

\begin{ex}
Prove \ref{comonadpart} explicitly. 
\end{ex}

We can say more. Not only does every adjunction give rise to a monad and a comonad, but moreover, the adjunction will always lie ``between'' the Kleisli and Eilenberg-Moore adjunctions, in the way made precise and explained below.

\begin{thm}[{\cite[Proposition~5.2.12]{ctcontext}}]\label{comparisonfunctors}
 Let $\cat{C}$ and $\cat{D}$ be categories, and let $F:\cat{C}\to\cat{D}$ and $G:\cat{D}\to\cat{C}$ be adjoint functors, with $F\ladj G$, and unit and counit $\eta$ and $\e$. Denote the induced monad $G\circ F$ by $T$. 
 \begin{enumerate}
  \item\label{klpart} There is a canonical ``comparison'' functor $J$ from the Kleisli category of $T$ to $\cat{D}$, unique up to isomorphism, which makes the following diagrams commute (up to isomorphism).
  \begin{equation}\label{compfunkl}
  \begin{tikzcd}[column sep=small]
   \cat{C}_T \ar{dr}[swap]{R_T} \uni{rr}{J} && \cat{D} \ar{dl}{G} \\
   & \cat{C}
  \end{tikzcd}
  \qquad
  \begin{tikzcd}[column sep=small]
   \cat{C}_T  \uni{rr}{J} && \cat{D}  \\
   & \cat{C} \ar{ul}{L_T} \ar{ur}[swap]{F}
  \end{tikzcd}
  \end{equation}
  \item\label{empart} There is a canonical ``comparison'' functor $K$ from $\cat{D}$ to the Eilenberg-Moore category of $T$, unique up to isomorphism, which makes the following diagrams commute (up to isomorphism).
  \begin{equation}\label{compfunem}
  \begin{tikzcd}[column sep=small]
   \cat{D} \ar{dr}[swap]{G} \uni{rr}{K} && \cat{C}^T \ar{dl}{R^T} \\
   & \cat{C}
  \end{tikzcd}
  \qquad
  \begin{tikzcd}[column sep=small]
   \cat{D}  \uni{rr}{K} && \cat{C}^T  \\
   & \cat{C} \ar{ul}{F} \ar{ur}[swap]{L^T}
  \end{tikzcd}
  \end{equation}
 \end{enumerate}
\end{thm}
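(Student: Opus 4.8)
The plan is to build the two comparison functors $J$ and $K$ explicitly, check directly that each is a functor making its two triangles commute \emph{on the nose} (which in particular gives commutativity up to isomorphism), and then settle uniqueness. The two halves of the theorem are formally dual: reading the adjunction $F\ladj G$ in the opposite categories turns the Kleisli and Eilenberg--Moore constructions of the induced monad $T=G\circ F$ into each other, so I would prove one direction in detail and deduce the other by reversing all arrows in $\cat{C}$ and $\cat{D}$. I will write out the Eilenberg--Moore side (part \ref{empart}) and treat the Kleisli side (part \ref{klpart}) dually.

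For \ref{empart}: define $K:\cat{D}\to\cat{C}^T$ by $K(D)\coloneqq(GD,\,G\e_D)$ on objects and $K(f)\coloneqq Gf$ on morphisms. I would first verify $(GD,G\e_D)$ is a $T$-algebra in the sense of \eqref{algebradiagrams}: the unit axiom is exactly the second triangle identity \eqref{trianglecomp} at $D$, and the composition axiom is the image under $G$ of the naturality square of $\e$ at the morphism $\e_D:FGD\to D$. Then $Gf$ is a morphism of algebras because square \eqref{Tmorphism} for it is again $G$ applied to a naturality square of $\e$; functoriality of $K$ is then immediate. Commutativity of the triangles \eqref{compfunem} drops out of the descriptions in \Cref{emadj}: since $R^T$ is the forgetful functor, $R^TK(D)=GD$ and $R^TK(f)=Gf$, so $R^T\circ K=G$; and since $L^T(X)=(TX,\mu_X)$ with $L^T(f)=Tf$, while $\mu=G\e F$, we get $K(FX)=(GFX,G\e_{FX})=(TX,\mu_X)$, so $K\circ F=L^T$.

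For uniqueness, suppose $K'$ also satisfies \eqref{compfunem}; after replacing $K'$ by an isomorphic functor I may assume both triangles hold strictly. From $R^T\circ K'=G$ the underlying object of $K'(D)$ is $GD$ — say with algebra structure $\theta_D:TGD\to GD$ — and the underlying morphism of $K'(f)$ is $Gf$. From $K'\circ F=L^T$ applied to the object $GD$ of $\cat{C}$, the algebra $K'(FGD)$ is $(TGD,\mu_{GD})$, and then $\e_D:FGD\to D$ gives a $T$-morphism $K'(\e_D)=G\e_D:(TGD,\mu_{GD})\to(GD,\theta_D)$, i.e.\ $\theta_D\circ T(G\e_D)=G\e_D\circ\mu_{GD}$. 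Combining this with the triangle identity $G\e_D\circ\eta_{GD}=\id_{GD}$ and the monad unit law $\mu_{GD}\circ T\eta_{GD}=\id_{TGD}$ yields
$$
\theta_D \;=\; \theta_D\circ T(G\e_D\circ\eta_{GD}) \;=\; \theta_D\circ T(G\e_D)\circ T\eta_{GD} \;=\; G\e_D\circ\mu_{GD}\circ T\eta_{GD} \;=\; G\e_D ,
$$
so $K'=K$ exactly. I expect this step — recovering an algebra structure that is a priori only visible ``after forgetting'' — to be the real content of \ref{empart}; everything else is diagram unwinding.

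Dually, for \ref{klpart}, define $J:\cat{C}_T\to\cat{D}$ by $J(X)\coloneqq FX$ and, for a Kleisli morphism $k:X\to TY$, $J(k)\coloneqq k^{\sharp}:FX\to FY$, its transpose under the adjunction. Functoriality uses $(\eta_X)^{\sharp}=\id_{FX}$ and, for composites, the transpose formulas together with a naturality square of $\e$ at $\e_{FZ}$ — the same kind of bookkeeping as in \Cref{triangleidsufficient}, and it can also be packaged via \Cref{usefuldiag}. The triangle $G\circ J=R_T$ holds since the explicit description of $\flat\mapsto\sharp$ gives $G(k^{\sharp})=G\e_{FY}\circ GFk=\mu_Y\circ Tk=R_T(k)$, and $J\circ L_T=F$ holds since $(\eta_Y\circ f)^{\sharp}=\e_{FY}\circ F\eta_Y\circ Ff=Ff$ by the first triangle identity. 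The uniqueness argument here has one extra ingredient: in $\cat{C}_T$ every Kleisli morphism $k:X\to Y$ factors as $c_Y\circ_{kl}L_T(k)$, where $c_Y:TY\to Y$ is the Kleisli morphism which, as a map of $\cat{C}$, is $\id_{TY}$ (a one-line Kleisli-composition check from left unitality of $T$). Hence any comparison functor $J'$ is determined once $J'(c_Y)$ is; and $G(J'(c_Y))=R_T(c_Y)=\mu_Y=G(\e_{FY})$, so passing to transposes (which is a bijection) forces $J'(c_Y)=\e_{FY}$, whence $J'=J$. The main obstacle on both sides is the same: the commuting constraints only pin things down ``after applying $G$'' (which need not be faithful), so one must squeeze the extra equation out of a morphism condition in $\cat{C}^T$, respectively out of the transpose bijection together with the factorization in $\cat{C}_T$.
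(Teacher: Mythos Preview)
Your proof is correct, and the explicit constructions of $J$ and $K$ together with the verifications of functoriality and of the two triangles are exactly what the paper does; the paper carries out part~\ref{klpart} in detail and leaves part~\ref{empart} as an exercise, while you make the opposite choice. You actually go further than the paper on uniqueness: the paper only remarks that the object assignment of $J$ is forced by $J\circ L_T=F$ and then simply defines $J$ on morphisms, whereas you give complete uniqueness arguments on both sides (the factorization $k=c_Y\circ_{kl}L_T(k)$ together with the transpose bijection on the Kleisli side, and the recovery $\theta_D=G\e_D$ from the $T$-morphism condition on the Eilenberg--Moore side). Those arguments are sound.

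One point of framing deserves correction. Your sentence ``reading the adjunction $F\ladj G$ in the opposite categories turns the Kleisli and Eilenberg--Moore constructions of the induced monad $T=G\circ F$ into each other'' is not right. Passing to opposites turns $F\ladj G$ into $G^{\op}\ladj F^{\op}$, and the monad induced by \emph{that} adjunction lives on $\cat{D}^{\op}$: it is the opposite of the comonad $F\circ G$ on $\cat{D}$, not the monad $G\circ F$ on $\cat{C}$. So parts~\ref{klpart} and~\ref{empart} are analogous (one is about the initial, the other about the terminal, adjunction resolving $T$), but they are not categorical duals in the sense you claim. Since you argue both halves independently anyway, this does not affect the correctness of your proof---only the rhetoric surrounding it.
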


The proof, and the exercises that follow, use the basic concepts of adjunctions given in \Cref{adjunctions}.

\begin{proof}[Proof of \ref{klpart}]
 Let's construct the functor $J$ explicitly.
 Recall that the objects of the Kleisli category are just the objects of $\cat{C}$, and $L_T$ is the identity on objects. In order for the diagram on the right of \eqref{compfunkl} to commute, we are then forced to define, on objects, $J(X)\coloneqq F(X)$. 
 Note that this, on objects, makes also the diagram on the left of \eqref{compfunkl} commute: for each object $X$, $G JX=GFX=TX$, and recall that, on objects, $R_T(X)=TX$. 
 
 On morphisms, let $k:X\to TY=GFY$ be a Kleisli morphism of $T$, considered as a morphism of $\cat{C}$. Under the adjunction $F\ladj G$, this corresponds to a morphism $k^\sharp:FX\to FY$ of $\cat{D}$, obtained as usual by $k^\sharp=\e\circ Fk$. Let then $J(k)\coloneqq k^\sharp$.
 With this definition, the diagrams \eqref{compfunkl} commute on morphisms. Indeed, given $k:X\to GFY$, we have that 
 $$
 GJ(k) \;=\; G\e\circ GFk \;=\; G\e\circ Tk \;=\; \mu\circ Tk \;=\; R_T(k).
 $$
 Just as well, given $f:X\to Y$, by the triangle identities,
 $$
 JL_T(k) \;=\; \e\circ F (\eta\circ f) \;=\; \e\circ F \eta\circ Ff \;=\; Ff .
 $$
 
 It remains to be shown that $J$ is indeed a functor. To see that it preserves identities, we have that at each object $X$ of $\cat{C}$, $J(\eta)=\eta^\sharp=\id_{FX}$. For composition, let $k:X\to GFY$ and $h:Y\to GFZ$. We have that, by naturality of $\e$, the following diagram commutes.
 $$
 \begin{tikzcd}
  FGFY \ar{d}{\e} \ar{r}{FGFh} & FGFGZ \ar{d}{\e} \ar{r}{FG\e} & FGFZ \ar{d}{\e} \\
  FY \ar{r}{Fh} & FGFZ \ar{r}{\e} & FZ 
 \end{tikzcd}
 $$
 Therefore,
 \begin{align*}
 J(h\circ_{kl}k) \;&=\; \e\circ F(\mu\circ Th \circ k) \\
 &=\; \e\circ F(G\e  \circ GFh \circ k) \\
 &=\; \e\circ FG\e  \circ FGFh \circ Fk \\
 &=\; \e\circ Fh \circ \e\circ Fk \\
 &=\; J(h) \circ J(k) . \qedhere
 \end{align*}
\end{proof}

\begin{ex}[important!]\label{5.5.4}
 Prove \ref{empart}. (Hint: you can write an analogous proof to the one of \ref{klpart}.)
\end{ex}

\begin{ex}[important!]
 Write down the corresponding statements for comonads, dual to \Cref{comparisonfunctors}. (Hint: be extra careful with the direction of the arrows.)
\end{ex}

\begin{deph}
 Let $\cat{C}$ and $\cat{D}$ be categories, and let $F:\cat{C}\to\cat{D}$ and $G:\cat{D}\to\cat{C}$ be adjoint functors, with $F\ladj G$.
 The adjunction $F\ladj G$ is called \emph{monadic} if and only if the comparison functor $\cat{D}\to\cat{C}^T$ is an equivalence of categories. 
 In that case we also call the right-adjoint $G$ a \emph{monadic functor}.
\end{deph}

In other words, an adjunction is monadic if and only if the category $\cat{D}$ is, up to equivalence, the category of algebras of the induced monad.

\begin{eg}[several fields]
 Most adjunctions which we interpret as ``free-forgetful'' are monadic:
 \begin{itemize}
  \item The adjunction between sets and vector spaces of \Cref{adjgrpset};
  \item The adjunction between sets and groups of \Cref{adjgrpset};
  \item By construction, all the other examples of \Cref{emadj}.
 \end{itemize}
\end{eg}

To see easy examples of non-monadic adjunctions, take the Kleisli adjunctions of most monads -- usually, the Kleisli category and the Eilenberg-Moore category are not equivalent. 
We can say more. By \Cref{comparisonfunctors}, given a monad $T$ on $\cat{C}$, there is a canonical comparison functor $J:\cat{C}_T\to\cat{C}^T$, making the following diagram commute (up to isomorphism).
  \begin{equation}
  \begin{tikzcd}[column sep=small]
   \cat{C}_T \ar{dr}[swap]{R_T} \uni{rr}{J} && \cat{C}^T \ar{dl}{R^T} \\
   & \cat{C}
  \end{tikzcd}
  \qquad
  \begin{tikzcd}[column sep=small]
   \cat{C}_T  \uni{rr}{J} && \cat{C}^T  \\
   & \cat{C} \ar{ul}{L_T} \ar{ur}[swap]{L^T}
  \end{tikzcd}
  \end{equation}
  
\begin{prop}\label{eqkleisli}
 The comparison functor $J:\cat{C}_T\to\cat{C}^T$ given above establishes an equivalence between the Kleisli category $\cat{C}_T$ and the full subcategory of $\cat{C}^T$ whose objects are the \emph{free} algebras.
\end{prop}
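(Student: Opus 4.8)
The plan is to show that $J:\cat{C}_T\to\cat{C}^T$ is fully faithful and essentially surjective onto the full subcategory $\cat{C}^T_{\mathrm{free}}\subseteq\cat{C}^T$ of free algebras, and then invoke \Cref{thmeqcat}. Essential surjectivity is immediate from the construction in the proof of \Cref{comparisonfunctors}: the comparison functor into $\cat{C}^T$ sends an object $X$ of $\cat{C}_T$ (which is just an object of $\cat{C}$) to the free algebra $(TX,\mu)=L^T(X)$, because the square on the right of the analogue of \eqref{compfunem} for $J$ must commute (up to isomorphism) and $L_T$ is the identity on objects. Hence every free algebra $(TX,\mu)$ is in the image of $J$, and these are precisely the objects of $\cat{C}^T_{\mathrm{free}}$ by definition.

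The substantive point is full faithfulness. First I would spell out explicitly what $J$ does on morphisms. A Kleisli morphism $k:X\to Y$ is by definition a morphism $k:X\to TY$ of $\cat{C}$, and — reading off the recipe from the proof of \Cref{comparisonfunctors}, applied to the Kleisli adjunction $L_T\ladj R_T$ whose unit is $\eta$ and whose counit I will call $\e^T$ (the component at the free algebra $(TY,\mu)$ being $\mu_Y$, by \ref{counitem} of \Cref{emadjprop}) — the morphism $J(k):(TX,\mu)\to(TY,\mu)$ is the transpose $k^\sharp=\e^T\circ L^T(k)=\mu_Y\circ Tk=R_T(k)$. So $J$ on hom-sets is exactly the map
$$
\Hom_{\cat{C}_T}(X,Y)\;=\;\Hom_\cat{C}(X,TY)\;\xrightarrow{\ R_T\ }\;\Hom_{\cat{C}^T}\big((TX,\mu),(TY,\mu)\big),
\qquad k\;\longmapsto\;\mu_Y\circ Tk .
$$
I must show this is a bijection. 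For surjectivity: given a $T$-algebra morphism $g:(TX,\mu)\to(TY,\mu)$, set $k\coloneqq g\circ\eta_X:X\to TY$. Then $J(k)=\mu_Y\circ Tg\circ T\eta_X$; using that $g$ commutes with the structure maps, i.e.\ $g\circ\mu_X=\mu_Y\circ Tg$, together with naturality of $\mu$ and the right-unitality triangle $\mu\circ T\eta=\id$ from \eqref{monaddiagrams}, this chain collapses to $g$. Concretely, $\mu_Y\circ Tg\circ T\eta_X = g\circ\mu_X\circ T\eta_X = g\circ\id_{TX}=g$. For injectivity: if $\mu_Y\circ Tk=\mu_Y\circ Tk'$, precompose both sides with $\eta_X$; by naturality of $\eta$ one gets $\mu_Y\circ\eta_{TY}\circ k=\mu_Y\circ\eta_{TY}\circ k'$, and the left-unitality triangle $\mu\circ\eta T=\id$ then gives $k=k'$. (These two verifications are, modulo direction of transpose, exactly the round-trip computations already done in the proof of \Cref{triangleidsufficient}, so they are routine.)

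Finally I would note that full faithfulness onto all of $\cat{C}^T$ is \emph{not} claimed and indeed false in general — there are $T$-algebras that are not free — which is why the statement restricts to the full subcategory $\cat{C}^T_{\mathrm{free}}$; this subcategory is genuinely full in $\cat{C}^T$ by definition, so \Cref{thmeqcat} applies with target $\cat{C}^T_{\mathrm{free}}$ and yields the asserted equivalence. The only mild obstacle I anticipate is bookkeeping the various unit/counit conventions consistently (the Kleisli counit being a multiplication map, the Eilenberg–Moore counit being an algebra structure map), but no real difficulty is hidden there; everything reduces to the monad axioms \eqref{monaddiagrams} and the naturality of $\eta$ and $\mu$.
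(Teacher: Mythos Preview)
Your argument is correct. The essential surjectivity step matches the paper exactly, and your explicit verification of full faithfulness is sound: the surjectivity computation $\mu_Y\circ Tg\circ T\eta_X = g\circ\mu_X\circ T\eta_X = g$ and the injectivity computation via precomposition with $\eta_X$ both go through as written.

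The difference from the paper is purely one of packaging. The paper dispatches full faithfulness in a single line by invoking \Cref{uniem}: the Eilenberg--Moore adjunction already provides a natural bijection $\Hom_{\cat{C}}(X,A)\cong\Hom_{\cat{C}^T}\big((TX,\mu),(A,e)\big)$, and setting $(A,e)=(TY,\mu)$ makes the left-hand side literally equal to $\Hom_{\cat{C}_T}(X,Y)$. You instead unpack this bijection by hand, writing down the two transposes $k\mapsto\mu_Y\circ Tk$ and $g\mapsto g\circ\eta_X$ and checking directly from the monad axioms that they are mutually inverse. The two arguments are the same mathematics; yours is more self-contained (it does not depend on having internalised the unit/counit description of the Eilenberg--Moore adjunction), while the paper's is more conceptual (it recognises the needed bijection as an instance of an adjunction already established). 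Your parenthetical remark that this is ``modulo direction of transpose'' the computation from \Cref{triangleidsufficient} is exactly right, and is another way of seeing why the paper's shortcut works.
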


\begin{proof}
 First of all, let $X$ be an object of $\cat{C}$ (or equivalently, of $\cat{C}_T$). We have that, instantiating \Cref{comparisonfunctors} (or, its proof), $J(X)=L^T(X)=(TX,\mu)$. Therefore all the objects in the image of $J$ are free algebras. Conversely, every free algebra is in the image of $J$: given a free algebra $(TY,\mu)$, the object $Y$ is such that $J(Y)=(TY,\mu)$.
 
 It remains to be shown that $J$ is fully faithful. In other words, we have to prove that given objects $X$ and $Y$ of $\cat{C}$, $J$ induces a bijection from Kleisli morphisms between $X$ and $Y$ and morphisms of algebras between $TX$ and $TY$. 
 But this is given exactly by the universal property associated to the Eilenberg-Moore adjunction, as given in \Cref{uniem}, by setting $(A,e)=(TY,\mu)$. 
\end{proof}

\begin{eg}[probability]
 We have see in \Cref{stomat} that, given a stochastic map (or a Markov kernel) $k:X\to\mathcal{P}Y$, we can canonically obtain a map $\mathcal{P}X\to\mathcal{P}Y$. We now know that this map is the unique morphism of (free) algebras making the following diagram commute.
 $$
 \begin{tikzcd}
  X \ar{dr}{k} \ar{d}[swap]{\delta} \\
  \mathcal{P}X \uni{r} & \mathcal{P}Y.
 \end{tikzcd}
 $$
 In the finite case, such a map is an affine map, and so it can be represented by a particular matrix, called a \emph{stochastic matrix}. \Cref{eqkleisli} implies that stochastic maps and stochastic matrices encode the same information: there is an equivalence of categories between the category whose morphisms are stochastic maps between sets, and the category whose morphisms are stochastic matrices between simplices.
\end{eg}

Here is an example where the comparison functor given above is an equivalence.
\begin{ex}[linear algebra]
 Prove that the Kleisli adjunction of the vector space monad (\Cref{vectorspacemonad}) \emph{is} monadic. (Hint: in this case, all algebras are free.)
 
 Why is this related to \Cref{matcat}?
\end{ex}

\begin{ex}[topology]
 Is the forgetful functor $\cat{Top}\to\cat{Set}$ of \Cref{adjtopset} monadic? (Hint: what is the induced monad on $\cat{Set}$?)
\end{ex}

A classic result of category theory, Beck's monadicity theorem, gives a necessary and sufficient condition for an adjunction to be monadic. We will not cover Beck's theorem here, we refer the interested reader to \cite[Chapter~5]{ctcontext}. 

We conclude these notes with the following example, once again on the connection between graphs and categories.

\subsection{The adjunction between categories and multigraphs is monadic}\label{catgraphmonadic}

Consider the adjunction between categories and multigraphs of \Cref{adjcatgraph}.
 Let's write explicitly the monad $T=U\circ\cat{P}$ associated to the adjunction. 
 Given a multigraph $G$, the graph $TG$ has the same vertices as $G$, but, as edges, it has the \emph{walks} of $G$, including the trivial walks at each vertex. Given a morphism $f:G\to H$ of multigraphs, $T$ gives a morphism $Tf:TG\to TH$ given by taking images of walks under $f$ (since $f$ preserves incidence, we get again a walk). 
 
 An algebra over $T$ is a graph $A$ together with a map $c:TA\to A$ preserving incidence, which we can think of as mapping a walk to a single edge by \emph{composing the edges of the walk}. The unit condition of \eqref{algebradiagrams} implies first of all that $c$ has to be the identity on vertices (why?). Moreover, it has to map the edges in $TA$ which come from (single) edges to $A$ to the corresponding edge of $A$.
 
 Let's show that every algebra $A$ is canonically a category (necessarily small, why?), with objects and morphisms given by vertices and edges, and composition given indeed by the map $c$. 
 \begin{itemize}
  \item For each vertex $x$ of $A$ we have the trivial walk of $TA$ at $x$, denote it by $1_x$. Define then the identity morphism $\id_x$ as the edge $c(1_x)$ of $A$. 
  \item For each pairs of composable edges $e_1$ and $e_2$, define their composite $e_2\circ e_1$ as the edge $c(e_1,e_2)$. 
  \item Given an edge $e$ from $x$ to $y$, we have
  $$
  e \circ \id_x \;=\; c(1_x,e) \;=\; c(e) \;=\;e
  $$
  (since composing with a trivial walk gives the same edge $e$), and 
  $$
  \id_y\circ e \;=\; c(e,1_y) \;=\; c(e) \;=\; e ,
  $$
  so the composition given by $c$ is unital.
  \item Given three consecutive edges $e,f,g$ we have by the multiplication square that
  $$
  g\circ (f\circ e) \;=\; c(c(e,f),c(g)) \;=\; c(e,f,g) \;=\; c(c(e),c(f,g)) \;=\; (g\circ f)\circ e 
  $$
 \end{itemize}
 (recall that $\mu$ gives the concatenation of walks),
 so the composition given by $c$ is associative.
 Therefore $(A,c)$ is a category.

 Conversely, every (small) category $\cat{C}$ is canonically a $T$-algebra, with the map $c$ given by identities and composition. In detail,
 \begin{itemize}
  \item For a trivial walk $1_X$ at $X$, we define $c(1_X)=\id_X$;
  \item For a one-morphism walk $f$, we define $c(f)=f$;
  \item For two (or more) composable morphisms $f$ and $g$, we define $c(f,g)=g\circ f$. 
 \end{itemize}
 The unit condition for algebras says that the composition of a single morphism gives again that morphism, this is satisfied by construction. The multiplication condition says that if we have a composable walk of composable walks of morphisms 
 $$
 ((f_{1,1},\dots,f_{1,n_1}),\dots,(f_{m,1},\dots,f_{m,n_m}))
 $$
 then we can either first ``flatten the array'' 
 $$
 (f_{1,1},\dots,f_{1,n_1},\dots,f_{m,1},\dots,f_{m,n_m})
 $$
 and take the composite,
 $$
 f_{1,1}\circ\dots\circ f_{1,n_1}\circ\dots\circ f_{m,1}\circ\dots\circ f_{m,n_m}
 $$
 or we can first take the compositions inside the brackets,
 $$
 (f_{1,1}\circ\dots\circ f_{1,n_1},\dots,f_{m,1}\circ\dots\circ f_{m,n_m})
 $$
 and then compose the resulting morphism,
 $$
 (f_{1,1}\circ\dots\circ f_{1,n_1})\circ\dots\circ (f_{m,1}\circ\dots\circ f_{m,n_m})
 $$
 By associativity of composition, the two procedures have the same result. Therefore $\cat{C}$ is a $T$-algebra.
 
 Consider now two (small) categories $\cat{C}$ and $\cat{D}$ -- we know these are equivalently $T$-algebras. Every functor $\cat{C}\to\cat{D}$ preserves incidence on the underlying graphs, and different functors give different morphisms of graphs. A morphism of graphs $f:\cat{C}\to\cat{D}$ is a morphism of $T$-algebras if and only if the following diagram commutes.
 $$
 \begin{tikzcd}
  T\cat{C} \ar{d}{c} \ar{r}{Tf} & T\cat{D} \ar{d}{c} \\
  \cat{C} \ar{r}{f} & \cat{D}
 \end{tikzcd}
 $$
 Now, functors preserve compositions and identities, and so they make the diagram above commute. In detail, for $1_X$ a trivial walk in $TC$ at $X$, we have that, by functoriality,
 $$
 f(c(1_X)) \;=\; f(\id_X) \;=\; \id_{f(C)} \;=\; c(Tf(1_X)).
 $$
 For a walk given by just one morphism $g$, the diagram commutes since both ways just return $g$.
 For a walk given by two (or more) composable morphisms $e_1,\dots,e_n$, we have that, again by functoriality,
 $$
 f(c(e_1,\dots,e_n)) \;=\; f(e_n\circ\dots\circ e_1) \;=\;f(e_n)\circ\dots\circ f(e_1) \;=\; c(f(e_1),\dots,f(e_n)) .
 $$
 Therefore every functor is a morphism of $T$-algebras.
 
 Conversely, if $f$ makes the diagram above commute, then for each object $X$ of $\cat{C}$,
 $$
 f(\id_X) \;=\; f(c(1_X)) \;=\; c(Tf(1_X)) \;=\; c(1_{f(X)}) \;=\; \id_{f(X)} ,
 $$
 and for each composable morphisms $g:X\to Y$ and $h:Y\to Z$, 
 $$
 f(h\circ g) \;=\; f(c(g,h)) \;=\; c(f(g),f(h)) \;=\; f(h) \circ f(g) .
 $$
 Therefore $f$ is a functor. 
 
 In summary, the $T$-algebras are precisely the (small) categories, and their morphisms are precisely the functors between them. In other words, the extra structure that a graph needs to have in order to be a category is exactly encoded by a monad, whose algebras are categories.

\newpage
\chapter*{Conclusion}
\addcontentsline{toc}{chapter}{\currentname}

This is the end!  Hopefully, these notes have lit your interest in categories and their applications, and are enough to get you started on more advanced material. 

Where does it go from here?
\begin{itemize}
 \item If you are interested in learning pure category theory, or in  applying category theory to areas of pure math such as algebraic geometry or topology, the next step is a real category theory textbook. 
 You can look at the book which these notes have become, \cite{startingcats}, which has extra material, for example an entire chapter on monoidal categories.
 You can also use Emily Riehl's recent book \cite{ctcontext}, which largely inspired this course, as well as the classic texts \cite{borceux} and \cite{joycats}. 
 
 \item If you are interested in algebraic topology or homotopy theory, you can read Emily Riehl's notes \cite{riehl-ssets}, and then her book \cite{riehl-cathom}.
 For algebraic topology from the categorical viewpoint, you can look at J.~P.~May's classic~\cite{may-at}. 
 I recommend however that before approaching those you learn a bit more category theory.
 An excellent reference for classical algebraic topology is \cite{hatcher}.
 
 \item If you are interested in learning applied category theory, the next step could be Brendan Fong and David Spivak's recent book \cite{sevensketches}. It has an overview of possible applications of category theory, from which you can choose where to focus in particular. 
 
 \item If you want to learn about string diagrams, the classic paper is Ross Street and André Joyal's \cite{geotens1}. More introductory references are again \cite{sevensketches}, and Dan Marsden's \cite{marsden-string}.
 
 \item If you are interested in applications to physics, especially quantum information theory using string diagrams, check out Peter Selinger's \cite{selinger-graph} and the book of Bob Coecke and Aleks Kissinger \cite{coecke-quantum}.
\end{itemize}
These are only a few references, the category theory literature is quite vast.

There are a lot of category theory resources online as well. Here are some.
\begin{itemize}
 \item The \href{http://ncatlab.org}{nLab (http://ncatlab.org)} is a wiki about category theory, higher category theory, and their applications. You can learn from it, but also contribute yourself. 
 \item The \href{https://nforum.ncatlab.org/}{nForum (https://nforum.ncatlab.org/)} is the forum of the nLab, where category-related questions and discussions are more than welcome.
 \item The \href{https://golem.ph.utexas.edu/category/}{n-category café (https://golem.ph.utexas.edu/category/)} is a blog on mathematics with a focus on category theory. Many recent ideas in category theory and their applications were born in the comment threads of this blog. 
\end{itemize}

\newpage
\bibliographystyle{alpha}
\bibliography{notes}

\begin{thebibliography}{AHS90}

\bibitem[AHS90]{joycats}
Jiří Adámek, Horst Herrlich, and George Strecker.
\newblock {\em Abstract and concrete categories: the joy of cats}.
\newblock Wiley, 1990.

\bibitem[Bor94]{borceux}
Francis Borceux.
\newblock {\em Handbook of categorical algebra}.
\newblock Cambridge University Press, 1994.

\bibitem[CK17]{coecke-quantum}
Bob Coecke and Aleks Kissinger.
\newblock {\em Picturing Quantum Processes. A First Course in Quantum Theory
  and Diagrammatic Reasoning}.
\newblock Cambridge University Press, 2017.

\bibitem[FS19]{sevensketches}
Brendan Fong and David~I. Spivak.
\newblock {\em An invitation to applied category theory: seven sketches in
  compositionality}.
\newblock Cambridge University Press, 2019.

\bibitem[Hat02]{hatcher}
Allan Hatcher.
\newblock {\em Algebraic Topology}.
\newblock Cambridge University Press, 2002.

\bibitem[JS91]{geotens1}
{André} Joyal and Ross Street.
\newblock The geometry of tensor calculus, i.
\newblock {\em Advances in Mathematics}, 88:55--112, 1991.

\bibitem[Mar14]{marsden-string}
Daniel Marsden.
\newblock Category theory using string diagrams, 2014.

\bibitem[May99]{may-at}
J.~P. May.
\newblock {\em A Concise Course in Algebraic Topology}.
\newblock University of Chicago Press, 1999.

\bibitem[Per18]{thesis}
Paolo Perrone.
\newblock {\em Categorical Probability and Stochastic Dominance in Metric
  Spaces}.
\newblock PhD thesis, University of Leipzig, 2018.

\bibitem[Per24]{startingcats}
Paolo Perrone.
\newblock {\em Starting Category Theory}.
\newblock World Scientific, 2024.

\bibitem[Rie]{riehl-ssets}
Emily Riehl.
\newblock A leisurely introduction to simplicial sets.

\bibitem[Rie14]{riehl-cathom}
Emily Riehl.
\newblock {\em Categorical Homotopy Theory}.
\newblock Cambridge University Press, 2014.

\bibitem[Rie16]{ctcontext}
Emily Riehl.
\newblock {\em Category theory in context}.
\newblock Dover, 2016.

\bibitem[Sel10]{selinger-graph}
Peter Selinger.
\newblock A survey of graphical languages for monoidal categories.
\newblock In {\em New Structures for Physics}, pages 289--355. Springer, 2010.

\bibitem[Shu08]{shulman-sets}
Michael~A. Shulman.
\newblock Set theory for category theory, 2008.

\end{thebibliography}

\end{document}